\documentclass[11pt,reqno]{amsart}

\usepackage{amssymb,mathtools,bm}
\usepackage{enumitem}
\usepackage[expansion=false]{microtype}
\usepackage{url}
\usepackage{aliascnt}
\usepackage{booktabs,array}
\usepackage{needspace}
\usepackage[hidelinks]{hyperref}
\usepackage[capitalize,nameinlink]{cleveref}
\crefname{equation}{equation}{equations}
\Crefname{equation}{Equation}{Equations}
\crefname{section}{Section}{Sections}
\crefname{subsection}{Section}{Sections}
\crefname{appendix}{Appendix}{Appendices}

\theoremstyle{plain}
\newtheorem{theorem}{Theorem}[section]
\newaliascnt{proposition}{theorem}
\newtheorem{proposition}[proposition]{Proposition}
\aliascntresetthe{proposition}
\crefname{proposition}{Proposition}{Propositions}
\newaliascnt{lemma}{theorem}
\newtheorem{lemma}[lemma]{Lemma}
\aliascntresetthe{lemma}
\crefname{lemma}{Lemma}{Lemmas}
\newaliascnt{corollary}{theorem}
\newtheorem{corollary}[corollary]{Corollary}
\aliascntresetthe{corollary}
\crefname{corollary}{Corollary}{Corollaries}
\theoremstyle{definition}
\newaliascnt{remark}{theorem}
\newtheorem{remark}[remark]{Remark}
\aliascntresetthe{remark}
\crefname{remark}{Remark}{Remarks}
\newaliascnt{example}{theorem}
\newtheorem{example}[example]{Example}
\aliascntresetthe{example}
\crefname{example}{Example}{Examples}
\newaliascnt{definition}{theorem}
\newtheorem{definition}[definition]{Definition}
\aliascntresetthe{definition}
\crefname{definition}{Definition}{Definitions}
\newaliascnt{question}{theorem}
\newtheorem{question}[question]{Question}
\aliascntresetthe{question}
\crefname{question}{Question}{Questions}
\numberwithin{equation}{section}

\newcommand{\E}{\mathbb{E}}
\newcommand{\Pp}{\mathbb{P}}
\newcommand{\R}{\mathbb{R}}
\newcommand{\C}{\mathbb{C}}
\newcommand{\1}{\mathbf{1}}
\newcommand{\Tr}{\operatorname{tr}}
\newcommand{\Var}{\operatorname{Var}}
\newcommand{\Cov}{\operatorname{Cov}}
\newcommand{\ESD}{\operatorname{ESD}}

\newcommand{\dd}{\,\mathrm{d}}
\newcommand{\eps}{\varepsilon}
\newcommand{\Imc}{\operatorname{Im}}
\newcommand{\Rec}{\operatorname{Re}}
\newcommand{\diag}{\operatorname{diag}}
\newcommand{\supp}{\operatorname{supp}}
\newcommand{\law}{\operatorname{Law}}
\newcommand{\spec}{\operatorname{spec}}
\newcommand{\rank}{\operatorname{rank}}
\newcommand{\ran}{\operatorname{ran}}
\newcommand{\dK}{d_{\mathrm K}}
\newcommand{\MP}{\mathrm{MP}}
\newcommand{\xrightarrowp}{\xrightarrow{\ \Pp\ }}

\title[Radial MP laws and projection rigidity]{Radial Marchenko--Pastur laws:\\ projection characterizations and rigidity}
\author{Xiaohui Xie}
\address{University of California, Irvine, California 92697, USA}
\email{xhx@uci.edu}
\date{September 2026}
\subjclass[2020]{Primary 60B20, 15B52; Secondary 60F05, 46L54}
\keywords{Marchenko--Pastur law; projection rigidity; sample covariance matrices; dependent coordinates; quadratic forms; free compound Poisson law}

\begin{document}

\begin{abstract}
Let $R_p=\|x_p\|^2/p\Rightarrow\nu$ and $p/N\to c\in(0,\infty)$.
We prove that the radial quadratic-form condition (RQC), which requires
energy in deterministic subspaces to follow the parent radius, holds
if and only if every deterministic rank-$\lfloor\alpha p\rfloor$ projected
covariance has empirical spectral distribution converging to
$\mu_{c\alpha,\nu}$, for one fixed $\alpha\in(0,1)$.
The law $\nu$ may have arbitrary tails: no moments, conditional isotropy,
or independence between radius and direction are required.
We also identify the radius law from these spectra when their common limit
has finite second moment and subspaces of vanishing relative dimension
carry vanishing normalized energy, without moment assumptions on the
original vectors.
A finite-sample inverse bounds quadratic-form defects by bounded tests
of expected projected spectra.
The converse combines angular symmetrization and fantope rigidity with a
determinant comparison between radial column deletion and spectral trimming.
Cancellation of the deleted radial factors makes the comparison error
depend on the deleted fraction, not its magnitudes.
\end{abstract}

\maketitle
\raggedbottom

\section{Introduction}
\label{sec:intro}

Let $x_1,\ldots,x_N$ be independent copies of a random vector $x_p\in\R^p$,
and let
\[
 S_p=\frac1N\sum_{j=1}^Nx_jx_j^T,\qquad p/N\to c\in(0,\infty).
\]
The Marchenko--Pastur theorem \cite{MarchenkoPastur1967} and its extensions
to dependent coordinates \cite{BaiZhou2008,PajorPastur2009,Yaskov2015}
relate the empirical spectral distribution (ESD) of $S_p$ to concentration
of quadratic forms. The usual sufficient condition, for every bounded deterministic matrix
sequence $(A_p)$, is
\begin{equation}\label{eq:yaskov-A}
 \frac{x_p^TA_px_p-\Tr A_p}{p}\xrightarrowp0,
 \qquad \sup_p\|A_p\|<\infty.
\end{equation}
At $A_p=I_p$ this forces $R_p=\|x_p\|^2/p\to1$ in probability.
A nonconcentrating radius therefore requires a different condition.
Subtracting the radial mode gives
\begin{equation}\label{eq:RQC-intro}
 Q_p(A_p):=\frac{x_p^TA_px_p}{p}-R_p\frac{\Tr A_p}{p}
 \xrightarrowp0.
\end{equation}
We call this the radial quadratic-form condition (RQC). For a proportional
projection it requires its normalized energy to agree with the parent
radius, while allowing that radius to fluctuate. If the limiting radius
has no atom at zero, RQC is equivalent to \eqref{eq:yaskov-A} for
$x_p/\sqrt{R_p}$; see \cref{prop:self-normalized}.

When $R_p\Rightarrow\nu$, RQC yields the radial MP law $\mu_{c,\nu}$,
whose Stieltjes transform $s(z)=\int(\lambda-z)^{-1}\mu_{c,\nu}(\dd\lambda)$
satisfies, on $\C_+=\{z\in\C:\Imc z>0\}$,
\begin{equation}\label{eq:RMP-intro}
 \frac1{s(z)}+z=\int_0^\infty\frac{r}{1+crs(z)}\,\nu(\dd r),
 \qquad z\in\C_+.
\end{equation}
The equation is familiar from elliptical and weighted covariance models
\cite{ElKaroui2009,PajorPastur2009,SilversteinBai1995}.
Its qualitative sufficiency under an endogenous radius is already covered
by Yaskov's random-profile universality theorem
\cite[Theorem~2.2]{Yaskov2014}, with profile $R_pI_p$.
Our direct moment-free proof supplies the fixed-point law and the
truncation estimates needed below.

\subsection{The inverse question}

Does radial MP spectral convergence force RQC?
A single full ESD does not: the block model in \cref{ex:block} has
the correct radial MP law and fails RQC. The relevant spectral data are
the covariances of projected samples. For a rank-$q$ orthogonal projection
$P$, let $C_P$ have orthonormal rows spanning its range and put
\[
 S_P=\frac1N\sum_{j=1}^N(C_Px_j)(C_Px_j)^T.
\]
Our principal result, \cref{thm:necessity-free}, states that, whenever
$R_p\Rightarrow\nu$, for any fixed $\alpha\in(0,1)$,
\[
 \mathrm{(RQC)}
 \quad\Longleftrightarrow\quad
 \left\{\begin{gathered}
 \ESD(S_{P_p})\Rightarrow\mu_{c\alpha,\nu}\\
 \text{for every deterministic rank-}\lfloor\alpha p\rfloor
 \text{ sequence }P_p.
 \end{gathered}\right.
\]
All orientations of this one rank are required; neither other ranks nor
the full spectral limit need be assumed. The radius law $\nu$ may have
arbitrary tails. In particular no logarithmic moment is needed.
The radius and direction may be functions of the same randomness,
and the conditional direction need not be isotropic.

One rank suffices because the convex hull of its projections contains
a neighborhood of $(q/p)I$ in the affine space of matrices with trace $q$.
Strict scalar concavity on this neighborhood controls all centered
quadratic forms. We use this standard fantope geometry to prove
dimension-free, one-sided rigidity bounds for arbitrary random positive
semidefinite operators (\cref{thm:bounded-energy}).
The converse extracts this energy comparison from spectra.

\subsection{Determinant comparison and heavy tails}

The expected logarithmic determinant is concave in the common law of
i.i.d.\ columns (\cref{lem:log-law-concavity}). Thus angular symmetrization
increases it. Combining this comparison for a projection and its
complementary energy with strict Jensen inequalities suggests a converse,
but the separate logarithmic expectations may be infinite.

\Cref{lem:frame-trimming}
resolves this problem before taking expectations. Delete the $k$ columns
above a radial threshold $K$, and remove the largest $k$ eigenvalues
from the original spectral logarithm. Interlacing gives one comparison.
For the reverse comparison we use a Gaussian reference carrying the
specified radial law. A fixed cutoff $M$, independent of $K$, isolates
a small collection of high-radius columns. Their direction matrix is
tall, with singular values bounded below with high probability.
A normalized Schur complement cancels the factors $1+r_j$ contributed
by the deleted radii. The remaining error is bounded by a constant times
$k$, irrespective of the radial magnitudes.

After normalization, this gives an error of order $\nu((K,\infty))$.
All logarithmic expectations in the ensuing Jensen argument concern
bounded, radially deleted variables. Letting $K\to\infty$ therefore
identifies the complementary energies for every probability law $\nu$.
\Cref{thm:finite-spectral-inverse} strengthens the argument to a
finite-dimensional inverse bound using capped logarithmic tests of expected
spectra. Its reference carries the actual radius law.
A universal bounded scalar substitute for the logarithm cannot retain
column-law concavity; \cref{prop:bounded-spectral-obstruction} gives
a counterexample even under exact conditional isotropy.

\subsection{Recovering the radius}

The characterization assumes that the actual radius converges to the
specified law. Spectra alone cannot remove this hypothesis.
In \cref{ex:spike}, energy of order one in a single deterministic direction
changes the limiting radius but leaves every proportional projected ESD
unchanged. We exclude this phenomenon by requiring that subspaces of
vanishing relative rank carry vanishing normalized energy in probability;
the precise condition is \eqref{eq:no-small-subspace-energy}.

Our second main result, \cref{thm:radius-identification}, shows that under
this condition, any common one-rank projected spectral limit $\mu$ with
finite second moment determines a unique radius law $\eta$:
\[
 R_p\Rightarrow\eta,\qquad \mathrm{(RQC)},\qquad \mu=\mu_{c\alpha,\eta}.
\]
In particular, a prescribed limit $\mu_{c\alpha,\nu}$ identifies $\nu$
when $\int r^2\,\nu(\dd r)<\infty$. No moments of the actual
finite-dimensional radii are assumed. The proof first recovers population
isotropy and projected second moments under uniform integrability, then
uses truncated samples to establish the required moment bound for every
subsequential radius law.
Radius identification for infinite-second-moment targets remains open;
the characterization with a known radial limit is already moment-free.

\subsection{Context and organization}

Yaskov's projected MP characterization \cite{Yaskov2015b} concerns an
isotropic vector and a concentrating radius. Here the radial mode can
remain random, one rank fraction suffices, and arbitrary radial tails
are allowed. \Cref{thm:necessity} provides a complementary inverse
under approximate conditional isotropy.
The relation to random-profile universality and weighted covariance
theory is discussed in \cref{sec:related}.

The tensor companion \cite{Xie2026} proves the tensor-specific quadratic-form
estimates, lognormal radius limit, critical ESD, and analysis of that law.
Its sufficient radial principle assumes uniform $L^2$ control and uniform
integrability. We use those tensor conclusions only as an example; the
present paper concerns model-independent characterization, determinant
comparison, energy rigidity, and radius recovery.

The main statements are in \cref{sec:statements}.
\Cref{sec:quadratic,sec:sufficient} give the RQC reductions and sufficiency.
\Cref{sec:rigidity,sec:converse} develop projection rigidity and the
moment-free converse; \cref{sec:finite-inverse,sec:radius} treat its
finite-dimensional form and radius identification.
The conditional alternative and examples follow in
\cref{sec:necessity,sec:examples}.
\Cref{sec:population-statement,sec:extremes,sec:related} give the
population extension, limitations, and comparison with earlier work.
The population proof, a multiple-aspect bound, and the bounded-concavity
obstruction are collected in the appendices.

\section{Main statements}
\label{sec:statements}

\subsection{Setting and notation}

Throughout, $p=p_n\to\infty$ and $N=N_n$ are integers with
$c_n:=p/N\to c\in(0,\infty)$; all limits are as $n\to\infty$. For each $n$,
$x=x_p$ is a random vector in $\R^p$, $x_1,\dots,x_N$ are independent copies
of it, $R_j=\|x_j\|^2/p$, and
\[
  S=\frac1N\sum_{j=1}^Nx_jx_j^{\mathsf T},\qquad
  G(z)=(S-zI)^{-1},\qquad s_n(z)=\frac1p\Tr G(z),\qquad z\in\C_+.
\]
For a probability measure $\mu$ on $\R$ we write
$s_\mu(z)=\int(t-z)^{-1}\mu(\dd t)$; this maps $\C_+$ into $\C_+$. $\ESD(H)$
is the empirical spectral distribution of a Hermitian matrix $H$, and
$\dK$ the Kolmogorov distance between distribution functions. Weak convergence
of random measures ``in probability'' means convergence in probability of the
L\'evy distance to the (deterministic) limit. $\|A\|$ is the operator norm and
$\|A\|_1$ the trace norm. For a matrix $A\in\C^{p\times p}$ put
\begin{equation}
  Q_p(A)=\frac{x^{\mathsf T}Ax}{p}-R_p\frac{\Tr A}{p}.
  \label{eq:Qdef}
\end{equation}

\begin{definition}[radial quadratic-form condition]
\label{def:RQC}
The sequence $(x_p)$ satisfies (RQC) if for every deterministic sequence of
orthogonal projections $P_p\in\R^{p\times p}$,
\begin{equation}
  Q_p(P_p)\xrightarrowp0.
  \label{eq:RQC}
\end{equation}
\end{definition}

If $(R_p)$ is tight, as it is whenever $R_p\Rightarrow\nu$,
\cref{prop:rqc-equivalences} shows that \eqref{eq:RQC} is equivalent to the
same statement for every bounded sequence of complex matrices, and to the
uniform statement $\sup_{\|A\|\le1}\Pp(|Q_p(A)|>\eps)\to0$ for every $\eps>0$;
the last two are equivalent to each other without tightness, by
\cref{lem:uniform}(b).
It is the version with projections that is easiest to verify or refute in
examples.

The sample vectors in this paper are real. Complex test matrices are allowed
in $Q_p$; this does not assert the complex-sample analogue, whose Haar
moment identities would require the corresponding unitary formulas.

For reference, the main notation is collected below; local definitions
and hypotheses are given in the indicated sections. For a rank-$q$
projection, $\rho_P=\|Px_p\|^2/q$ denotes its normalized energy.
\begin{center}
\small
\begin{tabular}{@{}l>{\raggedright\arraybackslash}p{0.64\textwidth}@{}}
\toprule
Symbol & Meaning and location\\
\midrule
$R_p,\ Q_p(A)$ & Parent radius and centered quadratic form,
\eqref{eq:Qdef}.\\
$\mu_{c,\nu}$ & Radial MP law, \cref{prop:limit-law}.\\
$\beta=q/p,\ b$ & Rank fraction and $\min(\beta,1-\beta)$,
\cref{sec:rigidity}.\\
$\psi(r)=r/(1+r)$ & Bounded energy test.\\
$\Delta_\psi,\ D_\psi$ & One-sided and absolute projected-energy defects,
\cref{thm:bounded-energy}.\\
$\mathcal W$ & Weighted quadratic-form defect,
\cref{thm:bounded-energy}.\\
$M$ & Random positive semidefinite operator in \cref{sec:rigidity};
elsewhere a locally specified radial cutoff.\\
$F_{q,N,t}(\sigma)$ & Expected normalized log-determinant for i.i.d.\
columns with law $\sigma$, \cref{lem:log-law-concavity}.\\
$\mathcal J_{d,\eta}(t)$ & Limiting radial logarithmic transform,
\cref{lem:log-variational}.\\
$\mathcal I$ & High-radius column indices,
\cref{lem:frame-trimming}.\\
$\underline\varsigma,\overline\varsigma$ & Lower and upper normalized
frame bounds, \cref{lem:frame-trimming}.\\
$V_p$ & Complementary averaged energy,
\eqref{eq:log-complement-energy}.\\
$\mathcal T_{\delta,t},\mathcal T_{\delta,L}$ & Trimmed logarithmic
transform and its capped version, \cref{sec:converse,sec:finite-inverse}.\\
$\Psi(\eta)$ & Finite spherical log-determinant functional;
$\widetilde\Psi(B)$ is its fantope lift, \cref{sec:finite-inverse}.\\
$m_1,\ \Theta$ & First limiting spectral moment and spectral cap
in \cref{sec:radius}.\\
$\mathcal H_p,\rho_p$ & Auxiliary field and radius in a radial reduction,
\cref{def:ACI}.\\
$T_p,\ H$ & Population covariance and its limiting ESD,
\cref{thm:population}.\\
\bottomrule
\end{tabular}
\end{center}

\subsection{The limit law}

\begin{proposition}
\label{prop:limit-law}
Let $\nu$ be a probability measure on $[0,\infty)$ and $c>0$. For every
$z\in\C_+$ the equation
\begin{equation}
  \frac1s+z=\int_0^\infty\frac{r}{1+crs}\,\nu(\dd r)
  \label{eq:RMP}
\end{equation}
has at most one solution $s\in\C_+$; the integral converges absolutely for
every $s\in\C_+$. There is exactly one probability measure $\mu_{c,\nu}$ on
$[0,\infty)$ whose Stieltjes transform solves \eqref{eq:RMP} for all
$z\in\C_+$. It is the free compound Poisson law with rate $1/c$ and jump law
$\law(cR)$, $R\sim\nu$; $\mu_{c,\delta_1}$ is the MP law with ratio $c$;
$\mu_{c,\nu}(\{0\})=\big(1-\nu((0,\infty))/c\big)_+$; and
$\nu\mapsto\mu_{c,\nu}$ is injective.
\end{proposition}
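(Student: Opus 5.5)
Write $\Phi_\nu(s)=\int_0^\infty r(1+crs)^{-1}\,\nu(\dd r)$. The plan is to handle the analytic claims (absolute convergence and uniqueness) by elementary estimates on $\Phi_\nu$. Existence and the identification with a free compound Poisson law will come from free probability via the $R$-transform. The atom at zero and injectivity will be read off from the equation.

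\emph{Convergence.} For $s\in\C_+$ and $r\ge0$ we have $|1+crs|\ge\max(1,cr\Imc s)$ when $\Rec s\ge0$. For general $s\in\C_+$ we have $|1+crs|\ge cr\Imc s$ and also $|1+crs|\ge\Imc(crs)/|s|\cdot|s|$. A cleaner bound uses the distance from $-1/(cs)$: since $-1/(cs)\in\C_+$, the real point $r$ satisfies $|r+1/(cs)|\ge\Imc(1/(cs))^{-}$, i.e.\ $|r+1/(cs)|\ge|\Imc(1/(cs))|>0$. Hence
\[
\frac{r}{|1+crs|}=\frac{r}{c|s|\,|r+1/(cs)|},
\]
and this is bounded uniformly in $r\ge0$, since it tends to $1/(c|s|)$ as $r\to\infty$ and is continuous in $r$. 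So the integrand is bounded and the integral converges absolutely for every probability $\nu$, with no moments needed.

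\emph{Uniqueness.} Suppose $s_1,s_2\in\C_+$ both solve \eqref{eq:RMP}. Subtracting the two equations gives
\[
(s_2-s_1)\Big(\frac1{s_1s_2}-\int\frac{cr^2}{(1+crs_1)(1+crs_2)}\,\nu(\dd r)\Big)=0.
\]
By Cauchy--Schwarz it suffices to show $\int cr^2|1+crs_i|^{-2}\,\nu(\dd r)<|s_i|^{-2}$ for each $i$. Taking imaginary parts of \eqref{eq:RMP} gives
\[
\Imc s\,|s|^{-2}-\Imc z=\Imc s\int cr^2|1+crs|^{-2}\,\nu(\dd r).
\]
Since $\Imc z>0$, this yields the strict inequality. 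This is the standard Silverstein--Bai argument, and I expect it to go through verbatim.

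\emph{Existence and identification.} Let $\pi$ be the free compound Poisson law with rate $\lambda=1/c$ and jump law $\law(cR)$. It exists for any jump law, as a limit of $((1-\lambda/n)\delta_0+(\lambda/n)\law(cR))^{\boxplus n}$, and it is $\boxplus$-infinitely divisible with $R$-transform $\mathcal R(w)=\lambda\int x(1-xw)^{-1}\,\law(cR)(\dd x)$. The $R$-transform relation $G^{-1}(w)=\mathcal R(w)+1/w$, with $G=-s$, becomes
\[
z=-\frac1s+\frac1c\int\frac{cr}{1+crs}\,\nu(\dd r),
\]
which is exactly \eqref{eq:RMP}. This relation holds on a Stolz-type region at infinity. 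Both sides are analytic in $z\in\C_+$, and uniqueness lets us extend it to all of $\C_+$. Since $\pi$ is a limit of laws supported in $[0,\infty)$, it is supported in $[0,\infty)$. Uniqueness of $s\in\C_+$ together with Stieltjes inversion gives uniqueness of $\mu_{c,\nu}$. For $\nu=\delta_1$, \eqref{eq:RMP} is the MP equation. If one prefers to avoid free probability, existence can instead come from $\nu$ with bounded support (classical theory) plus a tightness/Vitali limit over truncations $\nu_K$. For a limit $s$ we must check that it lies in $\C_+$ and is a Stieltjes transform, via $z s(z)\to-1$. The only delicate point is this tightness; I would get it from the mass of $\mu_{c,\nu_K}$ near infinity being controlled by $\nu((K',\infty))$.

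\emph{Atom and injectivity.} The atom is $\mu(\{0\})=\lim_{y\downarrow0}-iy\,s(iy)$. Set $z=iy$, $s=m/(-iy)$, and let $y\to0$. For $r>0$ the integrand $r/(1+crs)$ behaves like $1/(cs)$ when $|s|\to\infty$ and vanishes when $r=0$. This yields $1-m=\nu((0,\infty))\,m/c$ if the atom is positive, that is $m=(1+\nu((0,\infty))/c)^{-1}$. That is wrong, so I would redo the computation carefully: multiplying \eqref{eq:RMP} by $s$ gives $1+zs=\int crs(1+crs)^{-1}\,\nu/c$, and $zs\to-m$ while $crs/(1+crs)\to1$ on $r>0$. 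This gives $1-m=\nu((0,\infty))/c$ in the regime $m>0$, which is the stated $\big(1-\nu((0,\infty))/c\big)_+$. The main obstacle is justifying that $|s(iy)|\to\infty$ exactly when the atom is positive; I would use the imaginary-part identity above to rule out $m>0$ when $\nu((0,\infty))\ge c$. For injectivity, $\mu$ determines $s$ on $\C_+$, hence $\Phi_\nu(s)=1/s+z$ on the open set $s(\C_+)$. The function $\Phi_\nu$ is analytic and determines the Stieltjes transform of the pushforward of $r\,\nu(\dd r)/(1+r)$ under $r\mapsto-1/(cr)$ by identity-theorem continuation. This recovers $\nu$ on $(0,\infty)$, and then $\nu(\{0\})$ is determined by total mass one.
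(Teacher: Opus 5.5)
Your convergence bound and your uniqueness argument (imaginary parts, then Cauchy--Schwarz) are the paper's.

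\emph{Existence.} You take the free compound Poisson law and continue its $R$-transform relation from a truncated cone to all of $\C_+$. The tool for that continuation is the identity theorem applied to $z\mapsto 1/s(z)+z-\Phi_\nu(s(z))$, which is analytic because $|r/(1+crs)|\le 1/(c\Imc s)$; uniqueness of the solution plays no role there. The paper instead gets existence from the Gaussian model $\sqrt R\,g$ through \cref{thm:sufficient}, whose proof uses only the uniqueness part, and cites Bercovici--Voiculescu only to identify the law. Your route is shorter but rests entirely on the free L\'evy--Khintchine theory for unbounded jump laws. The paper's route also yields a random-matrix realization.

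\emph{Injectivity.} Drop the weight $r/(1+r)$. For $r>0$ one has $r/(1+crs)=c^{-1}(s-t)^{-1}$ with $t=-1/(cr)$. Hence $\Phi_\nu(s)=c^{-1}\int(s-t)^{-1}\tilde\nu(\dd t)$, where $\tilde\nu$ is the image of $\nu|_{(0,\infty)}$ itself. This is the paper's substitution $v=-1/(cw)$, performed in the radial variable.

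\emph{The gap: the atom at zero.} Write $m=\mu(\{0\})$ and $\rho=\nu((0,\infty))$. Your corrected computation proves only one implication: if $m>0$ then $m=1-\rho/c$. That already excludes $m>0$ when $\rho\ge c$, since it would give $m\le 0$. So the step you plan to do with the imaginary-part identity is redundant.

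The missing case is $m=0$. There you must show $\rho\ge c$, i.e.\ that $\rho<c$ forces an atom, and your plan does not address it. Your proposed reduction, that $|s(iy)|\to\infty$ exactly when the atom is positive, is also false. For $\nu=\delta_1$ and $c=1$ the MP law has no atom at $0$, yet its density behaves like $(\pi\sqrt t)^{-1}$ near $0$, and $|s(iy)|\asymp y^{-1/2}\to\infty$.

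The repair is one line on your own contour. Since $\mu$ lives on $[0,\infty)$,
\[
 \Rec s(iy)=\int t\,(t^2+y^2)^{-1}\,\mu(\dd t)\ge 0 .
\]
So $w=crs(iy)$ satisfies $|1+w|^2=1+2\Rec w+|w|^2\ge|w|^2$, and therefore $|w/(1+w)|\le\1_{\{r>0\}}$. Multiplying \eqref{eq:RMP} by $s$ then gives
\[
 |1+iy\,s(iy)|\le\rho/c\qquad\text{for every } y>0 .
\]
Letting $y\downarrow0$ gives $1-m\le\rho/c$, which settles the case $m=0$. The same bound is the domination you need to pass to the limit in the case $m>0$.

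The paper argues in the same way on $(-\infty,0)$, after continuing \eqref{eq:RMP} there. On that half-line $s(-\sigma)>0$, so the integrand lies in $[0,\1_{\{r>0\}}]$.
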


\subsection{The one-rank characterization}

For an orthogonal projection $P$ of rank $q$ let $C_P\in\R^{q\times p}$ have
orthonormal rows spanning $\ran P$, so $C_P^{\mathsf T}C_P=P$ and
$C_PC_P^{\mathsf T}=I_q$, and let
\[
  S_P=\frac1N\sum_{j=1}^N(C_Px_j)(C_Px_j)^{\mathsf T}\in\R^{q\times q}.
\]

\begin{theorem}[moment-free one-rank characterization]
\label{thm:necessity-free}
Assume $R_p\Rightarrow\nu$, where $\nu$ is an arbitrary probability measure
on $[0,\infty)$.
Fix any $\alpha\in(0,1)$ and put $q_p=\lfloor\alpha p\rfloor$.
Then the following are equivalent:
\begin{enumerate}[label=(\roman*)]
\item $(x_p)$ satisfies (RQC).
\item For every deterministic rank-$q_p$ projection sequence $P_p$
(all orientations at this one rank),
\[
 \ESD(S_{P_p})\Rightarrow\mu_{c\alpha,\nu}\quad\text{in probability}.
\]
\end{enumerate}
No moments or isotropy assumptions are imposed on the vectors, and $\nu$
may have arbitrary tails. If $\{R_p^2\}$ is uniformly integrable, (ii) also implies
\[
 \sup_{\|A\|\le1}\E|Q_p(A)|^2\to0.
\]
\end{theorem}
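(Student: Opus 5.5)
The plan splits into the two directions and then the $L^2$ upgrade.

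\emph{Sufficiency (i)$\Rightarrow$(ii).} Fix a deterministic rank-$q_p$ projection $P_p$ and set $y=C_{P_p}x\in\R^{q_p}$. I first check that $y$ inherits (RQC) in dimension $q_p$, with parent radius $\rho_P=\|P_px\|^2/q_p$. For any bounded $B\in\R^{q\times q}$, put $A=C_P^{\mathsf T}BC_P$. Then
\[
\frac{y^{\mathsf T}By}{q}-\rho_P\frac{\Tr B}{q}=\frac pq\Big(Q_p(A)-\frac{\Tr B}{q}\,Q_p(P)\Big)\xrightarrowp0,
\]
using the bounded-matrix form of (RQC) from \cref{prop:rqc-equivalences}. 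In particular $Q_p(P)\to0$ gives $\rho_P-R_p\to0$ in probability, so $\rho_P\Rightarrow\nu$. The ratio is $q_p/N\to c\alpha$. The sufficiency theorem of \cref{sec:sufficient} (RQC plus $R\Rightarrow\nu$ gives the ESD limit $\mu_{c,\nu}$, proved moment-free via truncation and the fixed point of \cref{prop:limit-law}) then yields $\ESD(S_{P_p})\Rightarrow\mu_{c\alpha,\nu}$.

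\emph{Necessity (ii)$\Rightarrow$(i).} I argue by contradiction. Suppose (RQC) fails along a subsequence for some projections. By the fantope rigidity result \cref{thm:bounded-energy}, failure of all centered quadratic forms is reflected in a positive one-sided bounded energy defect $\Delta_\psi$ at the single rank $q_p$. Concretely, there are rank-$q_p$ projections $P$ and complements $P^\perp$ whose normalized energies $\rho_P$ and $\rho_{P^\perp}$ do not both track $R_p$; strict concavity makes an averaged Jensen gap strictly positive. The complementary averaged energy $V_p$ of \eqref{eq:log-complement-energy} encodes this. Next I compare spectra through expected log-determinants. The log-determinant is concave in the column law (\cref{lem:log-law-concavity}), so angular symmetrization of the columns can only increase $F_{q,N,t}$. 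The spectral hypothesis (ii), applied to $P$ and to a rank-$q_p$ projection inside the complement direction (or through $\alpha$ and the identity $p=q+(p-q)$ combined with symmetrization), pins the limiting log-transforms to $\mathcal J_{d,\nu}$ from \cref{lem:log-variational}. These two facts together force equality in the Jensen step, and hence $V_p\to0$, which is the contradiction.

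Because $\nu$ may have no logarithmic moment, the logarithms must be trimmed before taking expectations. Here I invoke \cref{lem:frame-trimming}. I delete the $k$ columns with radius above $K$ and remove the top $k$ eigenvalues from the spectral logarithm. The resulting error is $O(k/N)$, which normalizes to $O(\nu((K,\infty)))$. All Jensen comparisons are then run on the trimmed transforms $\mathcal T_{\delta,t}$ of bounded variables, with $K\to\infty$ taken last. I expect the main obstacle to be this bookkeeping: making the trimmed spectral transform of $S_P$ converge to the trimmed transform of $\mu_{c\alpha,\nu}$ using only weak convergence, uniformly enough in $K$ to exchange limits.

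\emph{$L^2$ upgrade.} If $\{R_p^2\}$ is uniformly integrable, then $|Q_p(A)|\le 2R_p$ for $\|A\|\le1$, so $\{Q_p(A)^2\}$ is uniformly integrable uniformly in $A$. Convergence in probability holds uniformly over $\|A\|\le1$ by \cref{lem:uniform}(b). Together these give $\sup_{\|A\|\le1}\E|Q_p(A)|^2\to0$.
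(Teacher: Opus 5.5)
Your sufficiency direction is correct, and slightly more direct than the paper's. You verify RQC for $C_{P_p}x$ with its own radius $\rho_P$ and apply \cref{thm:sufficient} as stated, whereas the paper keeps the parent radius and goes through \cref{rem:general-radius}. Your $L^2$ upgrade is the paper's. The necessity direction, however, names the right ingredients without the argument that connects them, and two steps are missing or misdescribed.

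\emph{First, the complementary bound is missing.} Hypothesis (ii) lives only at rank $q_p$. $R_p$ becomes a convex combination of $\rho_{P_p}$ and rank-$q_p$ information only after Haar averaging, $V_p=\E_{U_p}[\rho_{U_p}\mid x_p]$ as in \eqref{eq:log-complement-energy}. A deterministic rank-$q_p$ projection inside $P_p^\perp$ gives no such identity unless $p=2q_p$. A lower bound on the transform of $\rho_P$ alone cannot force $\rho=R$: take $R$ uniform on $\{1,2\}$, $\theta=1/2$, $\rho\equiv3/2$ and $V=2R-3/2\ge0$, where strict Jensen gives $\mathcal J_{d,\law(\rho)}(t)>\mathcal J_{d,\law(R)}(t)$. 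You never derive $\mathcal J_{d,\law(V_K)}(t)\ge\mathcal J_{d,\nu_K}(t)-C\tau_K/d$. That requires three things:
\begin{enumerate}[label=(\alph*)]
\item the spectral lower bound uniformly over deterministic rank-$q_p$ projections, so it can be integrated against the law of $U_p$;
\item column-law concavity applied to the mixture over $U_p$ (you use \cref{lem:log-law-concavity} only for angular symmetrization);
\item the conditional Jensen inequality \eqref{eq:log-conditional-Jensen}, passing from the mixed law of $\rho_{U_p}\1_{\{R_p\le K\}}$ to the law of its conditional mean $V_p^K$.
\end{enumerate}
The concave fantope lift $\widetilde\Psi$ of \cref{sec:finite-inverse} is an alternative, but it too is a concavity argument you would have to supply.

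The strict gap then comes from evaluating \eqref{eq:log-variational} for $\rho_K$ and $V_K$ at the parent minimizer $s_K=s_{d,\nu_K}(-t)$. This gives \eqref{eq:log-strict-gap} and $\E J_K\le C\tau_K$. Sending $K\to\infty$ needs $s_K\to s_{d,\nu}(-t)>0$ and Fatou. The spectra never ``pin'' the transforms; $\mathcal J_{d,\nu}$ may even be infinite. They give only one-sided bounds at the truncated level $\nu_K$. The conclusion is $\rho=V=R$ in every subsequential joint limit, hence $\rho_{P_p}-R_p\to0$ and RQC through \cref{cor:one-transform}, not ``$V_p\to0$''.

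\emph{Second, \cref{lem:frame-trimming} cannot be applied to the actual projected samples.} Its error $k\log(\overline\varsigma/\underline\varsigma)$ is $O(k)$ only under frame bounds on the directions of the high-radius columns. These are uncontrolled for $C_Px_j$. In the model of \cref{rem:truncation-needed}, which satisfies all the hypotheses, the columns with $R_j>p$ all point along $e_1$, so $G_{\mathcal I}$ is rank deficient with probability bounded below.

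In the paper the actual model uses only the interlacing direction, $F_{q_p,N,t}(\law(C_Px_p^K))\ge\E\,q_p^{-1}\sum_{i>k_p}\log(1+\lambda_i(S_P)/t)$. Its limit $\mathcal T_{\delta_K,t}(\mu_{d,\nu})$, with $\delta_K=\tau_K/d$, follows from (ii). The reverse comparison $\mathcal J_{d,\nu_K}(t)\le\mathcal T_{\delta_K,t}(\mu_{d,\nu})+C\tau_K/d$ concerns only limit laws. It is proved on a Gaussian reference with radius law $\nu$, where singular-value bounds \cite{DavidsonSzarek2001} make $C=C(d,M,t)$ independent of $K$ (\cref{lem:log-tail-transfer}).

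That $K$-independent reverse bound is the real obstacle. The fixed-$K$ convergence of the trimmed transform that you single out is immediate from weak convergence and Fatou, since only a lower bound is needed. No uniformity in $K$ is used: the limits are $p\to\infty$ at fixed $K$, then $K\to\infty$.
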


The proof is in \cref{sec:converse}. It uses a Gaussian determinant comparison
between column deletion and spectral trimming, angular symmetrization, and a strict
concavity argument for complementary energies.

In particular, the theorem applies when $\int\log(1+r)\nu(\dd r)=\infty$.
For example, let $Y\ge0$ have survival function
$\Pp(Y>u)=(1+u)^{-1/2}$ for $u\ge0$, and put $R=e^Y$.
Then $\E\log(1+R)=\infty$ and $\E R^\delta=\infty$ for every $\delta>0$.

\subsection{Radius identification}

The preceding theorem assumes the limiting radius law. To recover it from
spectra, we exclude energy concentrated in small deterministic subspaces:
\begin{equation}\label{eq:no-small-subspace-energy}
 \lim_{\delta\downarrow0}\limsup_{p\to\infty}
 \sup_{\rank P\le\delta p}
 \Pp\left(\frac{x_p^TPx_p}{p}>\varepsilon\right)=0
 \quad\text{for every }\varepsilon>0.
\end{equation}
Under tightness of $R_p$, (RQC) implies this condition because
$p^{-1}x_p^TPx_p=Q_p(P)+R_p\rank(P)/p$.

\begin{theorem}[Radius identification from projected spectra]
\label{thm:radius-identification}
Assume \eqref{eq:no-small-subspace-energy}, fix $\alpha\in(0,1)$, and put
$q_p=\lfloor\alpha p\rfloor$ and $d=c\alpha$.
Suppose that for every deterministic sequence $P_p$ of orthogonal projections
with $\rank P_p=q_p$, $\ESD(S_{P_p})$ converges weakly in probability to
the same deterministic law $\mu$.
\begin{enumerate}[label=(\alph*)]
\item If $\int\lambda^2\,\mu(\dd\lambda)<\infty$, then there is a unique
probability law $\eta$ such that
\[
 R_p\Rightarrow\eta,\qquad \mathrm{(RQC)}\text{ holds},\qquad
 \mu=\mu_{d,\eta}.
\]
In particular, if the specified limit is $\mu_{d,\nu}$ and
$\int r^2\,\nu(\dd r)<\infty$, then $R_p\Rightarrow\nu$.
No moment or convergence assumption on the actual $R_p$ is imposed.
\item If $\{R_p^2\}$ is uniformly integrable, the same conclusions hold
without assuming a second moment of $\mu$, and RQC holds uniformly in
$L^2$ over contractions.
\end{enumerate}
\end{theorem}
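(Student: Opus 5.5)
The argument reduces to \cref{thm:necessity-free}: once we show that $(R_p)$ converges to a law $\eta$ with $\mu=\mu_{d,\eta}$, that theorem gives (RQC). Uniqueness of $\eta$ then follows from injectivity of $\nu\mapsto\mu_{d,\nu}$ in \cref{prop:limit-law}. So the work is to show that every subsequential limit $\eta$ of $R_p$ exists (tightness) and satisfies $\mu_{d,\eta}=\mu$. Here $\mu$ is fixed, so all subsequential limits then coincide.

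\textbf{Step 1 (tightness and projected averages).} Average $S_{P}$ over a family of rank-$q_p$ projections whose convex combination is $(q_p/p)I$. By the fantope geometry used in \cref{thm:bounded-energy}, the first spectral moment $m_1=\int\lambda\,\mu(\dd\lambda)$ controls $\frac1q\Tr S_P=\frac1N\sum_j\rho_{P}(x_j)$. A law-of-large-numbers argument in the $N$ i.i.d.\ columns, truncated at a level $K$, bounds $\E\min(\rho_P,K)$ uniformly in $P$. Averaging over complementary projections bounds $\E\min(R_p,K)$ and hence gives tightness of $R_p$. For part (b), uniform integrability of $R_p^2$ yields these bounds directly, together with the second-moment identities $\E\rho_P^2$ read off from $\int\lambda^2\mu$.

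\textbf{Step 2 (subsequential limits are radial).} Pass to a subsequence with $R_p\Rightarrow\eta$. We must show that the projected energies $\rho_{P_p}$ follow $R_p$, i.e.\ RQC along the subsequence; then the sufficiency direction of \cref{thm:necessity-free} gives $\mu=\mu_{d,\eta}$. Condition \eqref{eq:no-small-subspace-energy} enters here. It rules out the spike of \cref{ex:spike}, so energy in $o(p)$-dimensional subspaces is negligible. Consequently the joint law of $(\rho_{P},\rho_{P^\perp})$ can be tested against finer partitions of $\R^p$ into $k$ blocks of rank $\approx p/k$ without loss from rounding. The variance of $\frac1q\Tr S_P^2$ carries $\E\rho_P^2$. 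Second-moment finiteness of $\mu$ plus the spectral second-moment formula $\int\lambda^2\mu_{d,\sigma}=m_2(\sigma)+d\,m_1(\sigma)^2$ for the limit of each projection gives $\lim\E\rho_P^2$ for \emph{every} $P$ of rank $q_p$ (after truncation). Comparing with $\E R_p^2$ via the fantope average then shows $\E(\rho_P-R_p)^2\to0$ on truncated samples, by the strict convexity step of \cref{thm:bounded-energy} (equality in Jensen forces $\rho_P=\rho_{P^\perp}$).

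\textbf{Step 3 (removing truncation).} We apply the above to samples truncated at radius $K$. The truncated projected spectra converge to laws within Lévy distance $O(\Pp(R_p>K))\cdot c$ of $\mu$ by rank perturbation, since only a fraction of columns is altered. Then $\int\lambda^2\mu<\infty$ bounds $\int r^2\,\eta(\dd r)$ uniformly, and we let $K\to\infty$.

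\textbf{Main obstacle.} The delicate point is Step 2. Without moments on the actual $R_p$, the spectral second moment of $S_P$ need not converge to $\int\lambda^2\mu$, since weak convergence does not carry moments. I would first try to obtain the lower semicontinuity $\liminf$ of truncated second moments together with the Jensen defect from \cref{thm:bounded-energy} using the bounded test $\psi$. This should suffice one-sidedly. If it does not close, I would fall back on the determinant comparison of \cref{lem:frame-trimming} as in the converse proof.
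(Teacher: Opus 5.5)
\textbf{Framing and Step 1.} There are genuine gaps at every step. Start with the framing: to get $\mu=\mu_{d,\eta}$ from the sufficient theorem you already need (RQC) along the subsequence. So the reduction to \cref{thm:necessity-free} is circular, and once RQC is proved it is unnecessary; the whole content is a direct proof of RQC.

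Step~1 cannot work as stated. Weak convergence of $\ESD(S_P)$ gives no upper bound on $q^{-1}\Tr S_P$ or on $\E\min(\rho_P,K)$, and spectra alone do not even give tightness. For $x=g+p\,g_1e_1$, every projected covariance is a rank-two perturbation of a Wishart matrix, so all projected ESDs converge to $\MP_d$ (which has finite second moment). Yet $R_p\to\infty$ in probability. Tightness must come from \eqref{eq:no-small-subspace-energy} directly. One way: take an independent Haar projection $U$ of rank $\lfloor\delta p\rfloor$, so that $\|Ux\|^2/\|x\|^2\to\delta$ uniformly in $x\ne0$; then $(1-o(1))\Pp(R_p>2\eps/\delta)\le\sup_{\rank P\le\delta p}\Pp(x^{\mathsf T}Px/p>\eps)$.

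\textbf{Step 2.} You cannot read $\lim\E\rho_P^2$ off a formula for $\int\lambda^2\,\mu_{d,\sigma}(\dd\lambda)$. That presupposes each projected limit is a radial MP law in the projected radius, which is what you are proving; also, the correct formula is $m_1(\sigma)^2+d\,m_2(\sigma)$. The usable relation is the exact finite-$N$ identity \eqref{eq:inverse-spectral-second}, whose diagonal term is $d_p\E\rho_P^2$. Using it needs two inputs your plan lacks.
\begin{enumerate}[label=(\roman*)]
\item $q^{-1}\Tr(C_P\Gamma_pC_P^{\mathsf T})^2\to m_1^2$ uniformly in $P$, where $\Gamma_p$ is the second-moment matrix after deleting the directions with $\Sigma_p>2m_1+1$. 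This comes from population isotropy $p^{-1}\|\Sigma_p-m_1I\|_1\to0$. That in turn needs uniform integrability of the first spectral moments, which is where \eqref{eq:no-small-subspace-energy} enters via \cref{lem:population-energy}, followed by the fantope probes.
\item $\E\,q^{-1}\Tr S_P^2\to m_{2,\mu}$, i.e.\ uniform integrability of the second spectral moments. The paper gets this from the third-moment formula \eqref{eq:inverse-spectral-third} on a radially truncated core, plus Hoffman--Wielandt for the tail.
\end{enumerate}
Only after (i) and (ii) do the Haar identity and \cref{lem:energy-L2} give $L^2$ RQC. This is exactly the obstacle you flag, and neither of your fallbacks resolves it. In particular, the determinant comparison behind \cref{thm:necessity-free} needs the projected limit to be $\mu_{d,\eta}$ for the actual radial limit $\eta$, which is the unknown here. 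In part~(b), moreover, $\int\lambda^2\mu<\infty$ is not given; it must itself be derived from (ii).

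\textbf{Step 3.} A fixed cutoff $K$ keeps the projected spectra only within Kolmogorov distance about $\tau_K/d$ of $\mu$. They need not converge to a common limit, so Step~2 cannot be applied to $x_p^K$. You also do not say how $\int\lambda^2\mu<\infty$ would bound $\int r^2\eta$.

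The paper uses a fixed $K$ only for a one-sided inequality:
\begin{itemize}
\item Restrict to rank-$q_p$ projections inside the complement of the high-variance projection $E_{p,K,L}$ of $\E x_p^K(x_p^K)^{\mathsf T}$. There the core has bounded third spectral moments.
\item $0\le S_P^K\le S_P$, together with capped tests, gives $\limsup_p\E\int\lambda^2\,\ESD(S_P^K)(\dd\lambda)\le m_{2,\mu}$.
\item The diagonal term of \eqref{eq:inverse-spectral-second}, Haar averaging, and the population-energy bound for $E_{p,K,L}$ then give $\int r^2\,\eta(\dd r)\le m_{2,\mu}/d$.
\end{itemize}
With this bound in hand, a slowly growing cutoff $K_p\to\infty$ (\cref{lem:slow-truncation}) makes $\widehat R_p^2$ uniformly integrable while deleting only $o(q_p)$ columns. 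The exact projected limits are therefore preserved, and the uniformly integrable case applies.
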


The proof is in \cref{sec:radius}. The condition rules out the fixed-direction
energy in \cref{ex:spike}; it imposes no finite-dimensional moment bound.
It is also necessary for the stated conclusion: radial convergence gives
tightness, and RQC then implies \eqref{eq:no-small-subspace-energy}.
Thus, under a common deterministic one-rank projected spectral limit with
finite second moment, \eqref{eq:no-small-subspace-energy} is necessary and
sufficient for radial convergence and RQC. Radius identification for
arbitrary target tails remains open.

\subsection{The sufficient theorem}

\begin{theorem}
\label{thm:sufficient}
Assume $c_n\to c\in(0,\infty)$, $R_p\Rightarrow\nu$ for a probability measure
$\nu$ on $[0,\infty)$, and (RQC). Then there is a deterministic probability
measure $\mu$ on $[0,\infty)$ such that $\ESD(S)\Rightarrow\mu$ weakly in
probability, and for every $z\in\C_+$ its Stieltjes transform is the unique
solution in $\C_+$ of \eqref{eq:RMP}; thus $\mu=\mu_{c,\nu}$. If the matrices
for all $n$ are realized on one probability space and
$\sum_ne^{-ap_n}<\infty$ for every $a>0$, the convergence is almost sure.
\end{theorem}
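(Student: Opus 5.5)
We prove \cref{thm:sufficient} with the Stieltjes-transform/resolvent method in the Bai--Silverstein style. Two changes are needed: the concentration $\|x_j\|^2/p\to1$ is replaced by RQC, and no moments are available, so all estimates must use bounded functionals of $x_j$. The plan has four steps: reduce to fixed $z\in\C_+$; show $s_n(z)-\E s_n(z)\to0$; derive an approximate version of \eqref{eq:RMP} for $\E s_n(z)$; and conclude with the uniqueness part of \cref{prop:limit-law}.

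\emph{Concentration.} Changing one column $x_j$ is a rank-one perturbation of $S$, so it moves $\Tr G(z)$ by at most $1/\Imc z$ (interlacing). A martingale difference (McDiarmid) argument over the $N$ independent columns then gives
\[
\Pp\bigl(|s_n(z)-\E s_n(z)|>\eps\bigr)\le 4\exp\bigl(-c'\eps^2 p\,(\Imc z)^2\bigr).
\]
This bound needs no moments at all. Together with the summability $\sum_n e^{-ap_n}<\infty$ and Borel--Cantelli on a countable dense set of $z$, it yields the almost sure statement. Convergence for each fixed $z$ then gives weak convergence in probability, because Stieltjes transforms are equicontinuous on compacts and tightness holds: $\frac1p\Tr S=\frac1N\sum_j R_j$ is a mean of i.i.d.\ tight variables, so after truncation at level $K$ the mass of the ESD escaping to infinity is controlled by $\nu((K,\infty))$ up to a rank correction.

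\emph{Self-consistent equation.} Write $S_j=S-\frac1N x_jx_j^{\mathsf T}$ and $G_j=(S_j-z)^{-1}$. The Sherman--Morrison identity together with $\Tr\bigl(G(S-z)\bigr)=p$ gives
\[
1+z s_n=\frac1p\sum_j \frac{\frac1N x_j^{\mathsf T}G_jx_j}{1+\frac1N x_j^{\mathsf T}G_jx_j}.
\]
Set $\tau_j=\frac1N x_j^{\mathsf T}G_jx_j$. The function $\tau\mapsto\tau/(1+\tau)$ is bounded by $|z|/\Imc z$ on the relevant region, so every term is bounded regardless of tails. Since $G_j$ is independent of $x_j$, we condition on $G_j$ and apply RQC in its uniform form over bounded complex matrices (\cref{prop:rqc-equivalences}) with $A=G_j\,\Imc z$. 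This is the key step. It gives
\[
\tau_j-c_nR_j\,\tfrac1p\Tr G_j\xrightarrowp0,
\]
uniformly in the conditioning. Moreover $\frac1p\Tr G_j-s_n=O(1/(p\Imc z))$ and $s_n\approx\E s_n$, so $\tau_j\approx c R_j\, \E s_n$.

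Bounded convergence then gives
\[
1+z\E s_n=\frac1c\,\E\frac{cR\,\E s_n}{1+cR\,\E s_n}+o(1).
\]
The expectation is over $R\sim R_p$, which converges weakly to $\nu$, and the integrand is bounded and continuous in $R$ provided $1+cR\,s$ stays away from $0$. Rearranging this identity yields exactly \eqref{eq:RMP} for any subsequential limit $s$ of $\E s_n(z)$.

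\emph{Main obstacle.} The hardest step is keeping $|1+cR s|$ bounded below uniformly in $R$ for $s$ near the limit, so that the integrand stays bounded and continuous. This is also where the boundedness claims above are actually established. For $s\in\C_+$ the argument works with $\Imc(z+ z c R s)$-type inequalities: the standard fact that $\Imc\bigl(z(1+\tau_j)\bigr)\ge\Imc z>0$ (the identity $z\,s$ having positive imaginary part) gives the bound $|\tau/(1+\tau)|\le|z|/\Imc z$. The same bound holds for the limit expression with $s$ replaced by the Stieltjes transform. Finally, any subsequential limit $s$ lies in $\C_+$ and is the transform of a probability measure, by tightness. The uniqueness in \cref{prop:limit-law} then identifies $\mu=\mu_{c,\nu}$.
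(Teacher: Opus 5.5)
Your proposal is correct. It differs from the paper's proof mainly in how the average over columns is handled.

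\textbf{The paper's route.} The paper works pathwise. It carries the random $s_n(z)$ through the exact identity and replaces $a_j$ first by $R_js_n^{(j)}$ and then by $R_js_n$. It controls the $j$-average of the second error $\min\{2K_z,R_j(|z|/\eta)^2\pi/(p\eta)\}$ by splitting off the columns with $R_j>M$, since $\frac1N\sum_jR_j$ need not be bounded. It then passes to the limit against the empirical radial law $\hat\nu_N$ along almost surely convergent subsequences. McDiarmid enters only at the end, applied to the L\'evy distance.

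\textbf{Your route.} You first concentrate $s_n(z)$ around the deterministic $\E s_n(z)$, then take expectations in the exact identity. Exchangeability reduces the $j$-average to a single column. The errors $c_nR_1(\frac1p\Tr G_1-s_n)$ and $c_nR_1(s_n-\E s_n)$ are then of the form $R_1\cdot o_p(1)$. Tightness of the single variable $R_1$, the Lipschitz bound from $|1+\tau_1|,\,|1+c_nR_1\E s_n|\ge\Imc z/|z|$, and bounded convergence remove them, with no cutoff and no $\hat\nu_N$.

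The radial limit then enters only through $\E f_n(R_p)\to\int f\,\dd\nu$. This needs $r\mapsto g_nr/(1+g_nr)$, with $g_n=c_n\E s_n(z)$, to converge uniformly in $r$. That follows from $|1+g_nr|\ge\Imc z/|z|$ together with $|1+gr|\ge r\Imc g$, and it is the precise content of what you call the main obstacle.

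\textbf{What each buys.} Your organization is the classical Bai--Silverstein one and is slightly shorter at the fixed-point step. The cost is a separate concentration step, which is equally moment-free. The paper's pathwise bookkeeping with $J_M$ is reused for tightness and in the truncated population argument.

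\textbf{Two small points.}
\begin{enumerate}
\item Replacing a column is a rank-two perturbation, so the bounded difference of $\Tr G$ is $2/\Imc z$ rather than $1/\Imc z$. This is harmless.
\item Without moments $\frac1N\sum_jR_j$ is not tight. It is the column truncation you mention, not the trace of $S$, that yields tightness of $\ESD(S)$. You could also skip tightness entirely. If a vague limit $\mu$ of $\E\ESD(S)$ had mass $m\in(0,1)$, letting $z=iy$, $y\to\infty$ in the limiting relation $1+zs_\mu=\frac1c\int\frac{crs_\mu}{1+crs_\mu}\,\nu(\dd r)$ gives $1-m=0$. The case $m=0$ is excluded because the left side of your approximate identity tends to $1$ while the right side tends to $0$.
\end{enumerate}
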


No moment of $R_p$, no uniform integrability, no isotropy and no assumption on
the joint law of $R_p$ and $x/\|x\|$ is made. The theorem also holds with
$R_p$ replaced by an arbitrary nonnegative random variable $\rho_p$ in
\eqref{eq:Qdef} and in the hypothesis $\rho_p\Rightarrow\nu$, provided the
condition is assumed in its uniform form over all contractions: the reduction
from projections uses $x^{\mathsf T}Px\le pR_p$ and is not available for a
general $\rho_p$ (\cref{rem:general-radius}). This extension applies to
projected samples with the parent radius.

\section{Quadratic forms: what the radial condition says}
\label{sec:quadratic}

We record the projection reduction and uniformity needed to apply RQC to
independent resolvents. The deterministic-sequence selection argument is
standard; radial truncation allows the reduction without moments.

\subsection{Deterministic sequences are automatically uniform}

For $\eps>0$ and a class $\mathcal A_p\subset\C^{p\times p}$ define
\[
  \phi_p(\eps;\mathcal A_p)=\sup_{A\in\mathcal A_p}\Pp\big(|Q_p(A)|>\eps\big).
\]

\begin{lemma}
\label{lem:uniform}
Let $\mathcal A_p\subset\C^{p\times p}$ be arbitrary nonempty sets.
\begin{enumerate}[label=(\alph*)]
\item For fixed $p$ and $\eps$, $A\mapsto\Pp(|Q_p(A)|>\eps)$ is Borel on
$\C^{p\times p}$.
\item $Q_p(A_p)\to0$ in probability for every deterministic sequence with
$A_p\in\mathcal A_p$ if and only if $\phi_p(\eps;\mathcal A_p)\to0$ for every
$\eps>0$.
\item Suppose $\mathcal A_p$ is Borel and $\phi_p(\eps;\mathcal A_p)\to0$ for
every $\eps>0$, and let $B_p$ be a random matrix, independent of $x_p$, with
$B_p\in\mathcal A_p$ almost surely. Then $\Pp(|Q_p(B_p)|>\eps)\le\phi_p(\eps;\mathcal A_p)$; in
particular $Q_p(B_p)\to0$ in probability.
\end{enumerate}
\end{lemma}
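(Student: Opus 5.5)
For (a) I will show that $A\mapsto\Pp(|Q_p(A)|>\eps)$ is lower semicontinuous, which gives Borel measurability. The map $(A,\omega)\mapsto Q_p(A)$ is jointly measurable, and for each fixed $\omega$ it is continuous (indeed affine) in $A$. Hence if $A_k\to A$, the indicator $\1\{|Q_p(A)|>\eps\}$ is at most $\liminf_k\1\{|Q_p(A_k)|>\eps\}$ pointwise, because the set $\{|Q_p(A)|>\eps\}$ is open in $A$ for fixed $\omega$. Fatou's lemma then yields $\Pp(|Q_p(A)|>\eps)\le\liminf_k\Pp(|Q_p(A_k)|>\eps)$.

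For (b), one direction is trivial: if $\phi_p(\eps;\mathcal A_p)\to0$, then $\Pp(|Q_p(A_p)|>\eps)\le\phi_p(\eps;\mathcal A_p)\to0$ for any admissible sequence. For the converse I argue by contradiction through selection. Suppose that for some $\eps>0$, $\limsup_p\phi_p(\eps;\mathcal A_p)>2\gamma>0$. For each $p$, choose $A_p\in\mathcal A_p$ with $\Pp(|Q_p(A_p)|>\eps)\ge\phi_p(\eps;\mathcal A_p)-1/p$; this is possible by the definition of the supremum, since the sets are nonempty. This deterministic sequence satisfies $\Pp(|Q_p(A_p)|>\eps)>\gamma$ infinitely often, contradicting the hypothesis. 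No measurability is needed here, because only the pointwise supremum is used. The only care point is that $p=p_n$ may repeat along $n$. I handle this by selecting the $A_p$ per $n$, or equivalently by noting that the statement concerns the indexed sequence.

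For (c), I will condition on $B_p$ using independence. By (a), $f(A)=\Pp(|Q_p(A)|>\eps)$ is Borel on $\C^{p\times p}$. The event $\{|Q_p(B_p)|>\eps\}$ is the preimage of a jointly measurable set under the independent pair $(B_p,x_p)$. Fubini's theorem, applied to the product law of $(B_p,x_p)$, gives $\Pp(|Q_p(B_p)|>\eps)=\E f(B_p)$. Since $B_p\in\mathcal A_p$ almost surely and $f\le\phi_p(\eps;\mathcal A_p)$ on $\mathcal A_p$, it follows that $\E f(B_p)\le\phi_p(\eps;\mathcal A_p)$. The Borel assumption on $\mathcal A_p$ makes $\{B_p\in\mathcal A_p\}$ an event, so "almost surely" is meaningful. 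Convergence in probability then follows from the hypothesis.

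The main subtlety is the measurability in (a) and (c), and it is resolved by joint measurability together with Fubini's theorem.
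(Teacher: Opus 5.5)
Your proof is correct and matches the paper's in (b), where you select near-extremal deterministic matrices (the same idea, without the paper's subsequence-and-completion step), and in (c), where you apply Fubini over the product law of $(B_p,x_p)$. The only real difference is in (a): your Fatou argument gives the stronger conclusion that $A\mapsto\Pp(|Q_p(A)|>\eps)$ is lower semicontinuous, whereas the paper obtains Borel measurability from joint measurability and the measurability-of-sections part of Tonelli's theorem; either argument suffices.
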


\begin{proof}
(a) The map $(A,\omega)\mapsto Q_p(A)(\omega)$ is continuous in $A$ for each
$\omega$ and measurable in $\omega$ for each $A$, hence jointly measurable.
The set $\{|Q_p|>\eps\}$ is therefore product-measurable, and the
probability of its $A$-section is a Borel function of $A$ by the
measurability-of-sections part of Tonelli's theorem; no measure on
$\C^{p\times p}$ is involved.

(b) If the uniform statement fails there are $\eps,\delta>0$ and
$p_k\uparrow\infty$ with $\phi_{p_k}(\eps;\mathcal A_{p_k})>\delta$. Choose
$A_{p_k}\in\mathcal A_{p_k}$ with $\Pp(|Q_{p_k}(A_{p_k})|>\eps)>\delta$ and
complete the sequence with arbitrary elements of $\mathcal A_p$ for the other
$p$. Then $Q_p(A_p)\not\to0$ in probability. The converse is trivial.

(c) Independence makes the joint law of $(x_p,B_p)$ a product measure, so by
Fubini
$\Pp(|Q_p(B_p)|>\eps)=\E\,\psi(B_p)$ with
$\psi(A)=\Pp(|Q_p(A)|>\eps)$, which is measurable by (a) and bounded by
$\phi_p(\eps;\mathcal A_p)$ on $\mathcal A_p$.
\end{proof}

The same selection argument applies to any deterministic family
$f_p(P)$: convergence to zero along every sequence $P_p$ is equivalent
to $\sup_P|f_p(P)|\to0$. We use it for expectations of bounded spectral
tests and for tail probabilities. A version with $L^2$ bounds is identical.

\subsection{From projections to all bounded matrices}

\begin{proposition}
\label{prop:rqc-equivalences}
Assume $(R_p)$ is tight. The following are equivalent.
\begin{enumerate}[label=(\roman*)]
\item (RQC): $Q_p(P_p)\to0$ in probability for every deterministic sequence of
orthogonal projections.
\item $Q_p(A_p)\to0$ in probability for every deterministic sequence of complex
matrices with $\sup_p\|A_p\|<\infty$.
\item $\sup_{\|A\|\le1}\Pp(|Q_p(A)|>\eps)\to0$ for every $\eps>0$.
\end{enumerate}
\end{proposition}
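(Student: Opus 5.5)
The equivalence (ii)$\Leftrightarrow$(iii) is \cref{lem:uniform}(b), applied with $\mathcal A_p=\{A:\|A\|\le K\}$ and rescaled using linearity of $Q_p$. The step (ii)$\Rightarrow$(i) is trivial, since projections are contractions. So the content is (i)$\Rightarrow$(iii), and I will prove it in three reductions. Throughout I use that $Q_p$ is linear in $A$ and satisfies $Q_p(A^*)=\overline{Q_p(A)}$ for real $x$. First, writing $A=H_1+iH_2$ with $H_k$ Hermitian and $\|H_k\|\le\|A\|$, it suffices to treat Hermitian $A$. For real $x$ we have $x^{\mathsf T}Hx=x^{\mathsf T}(\Rec H)x$, and $\Tr H=\Tr\Rec H$ is real for Hermitian $H$ since the imaginary part is antisymmetric. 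So it further suffices to treat real symmetric $A$ with $\|A\|\le1$.

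By \cref{lem:uniform}(b), hypothesis (i) gives $\sup_{P}\Pp(|Q_p(P)|>\eps)\to0$ over all orthogonal projections. For symmetric $A$ with $\|A\|\le1$, I use a layer-cake (spectral) decomposition. Write $A=A_+-A_-$ and, for a fixed integer $m$, approximate $A_\pm$ in operator norm within $1/m$ by
\[
\frac1m\sum_{k=1}^m P_k^{\pm},\qquad P_k^{\pm}=\1_{(k/m,\infty)}(A_\pm),
\]
which are spectral projections. Linearity gives $Q_p(A)=\frac1m\sum_k\big(Q_p(P_k^+)-Q_p(P_k^-)\big)+Q_p(E)$ with $\|E\|\le 2/m$. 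The finitely many projection terms satisfy
\[
\Pp\Big(\Big|\tfrac1m\textstyle\sum_k\cdots\Big|>\eps\Big)\le 2m\,\sup_P\Pp(|Q_p(P)|>\eps),
\]
uniformly in $A$.

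The remainder is where tightness enters, and it is the only genuinely non-formal step. The bound $|Q_p(E)|\le \|E\|\,(x^{\mathsf T}x/p+R_p)=2\|E\|R_p\le 4R_p/m$ holds deterministically, so
\[
\Pp(|Q_p(E)|>\eps)\le\sup_p\Pp(R_p>\eps m/4),
\]
which is small for large $m$ by tightness, uniformly in $p$ and in $A$. Choosing $m$ first from tightness and then letting $p\to\infty$ yields (iii). No moments are needed, since only the crude pathwise bound $|Q_p(B)|\le2\|B\|R_p$ is used; this is the place the assumption cannot be dropped. Indeed, the remainder cannot be controlled otherwise, which matches the remark in the paper that for a general $\rho_p$ the projection reduction fails.
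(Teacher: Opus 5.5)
Your proof is correct. The reductions to real symmetric contractions and to uniformity over projections match the paper's, but you handle the key convex-combination step differently.

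\textbf{The paper's route.} It uses the exact layer-cake identity $D=\sum_{k=1}^p w_kE_k$, which can involve up to $p$ projections, so a union bound is useless there. Instead it first turns the uniform-in-probability statement over projections into a uniform truncated $L^1$ bound, $\sup_P\E[|Q_p(P)|\1_{\{R_p\le M\}}]\to0$, using $0\le x^{\mathsf T}Px/p\le R_p$. That bound survives convex combinations, and Markov's inequality plus tightness then remove the truncation.

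\textbf{Your route.} You quantize the spectrum at a fixed mesh $1/m$, so only $2m$ spectral projections appear, independently of $p$, and a plain union bound suffices. Tightness enters only through the pathwise estimate $|Q_p(E)|\le2\|E\|R_p$ on the quantization error. This is more elementary: no expectations or truncated moduli are needed.

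\textbf{A harmless slip.} The union-bound step should read $2m\sup_P\Pp(|Q_p(P)|>\eps/2)$, not $2m\sup_P\Pp(|Q_p(P)|>\eps)$. Also, $\eps$ should be split between the projection part and the remainder before combining the two estimates. Neither change affects the conclusion.

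\textbf{Your closing remark is not apt.} You say the use of tightness "matches" the paper's remark that the projection reduction fails for a general radial variable $\rho_p$. That remark is about the inequality $y^{\mathsf T}Py\le q\rho$ used in the paper's Step~2, which your argument never uses. With a general tight $\rho_p$, your remainder estimate becomes $|Q(E)|\le\|E\|(\|y\|^2/q+\rho)$. Tightness of $\|y\|^2/q$ then follows from the projection hypothesis at $P=I$. So your quantization route appears to sidestep that obstruction rather than illustrate it, which is a point in its favour.
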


\begin{proof}
(iii)$\Rightarrow$(ii)$\Rightarrow$(i) are trivial. Assume (i).

\emph{Step 1: uniformity over projections.} By \cref{lem:uniform}(b) with
$\mathcal A_p$ the set $\Pi_p$ of orthogonal projections,
$\sup_{P\in\Pi_p}\Pp(|Q_p(P)|>\eps)\to0$ for every $\eps$.

\emph{Step 2: truncated $L^1$ bound.} For $P\in\Pi_p$,
$0\le x^{\mathsf T}Px/p\le R_p$ and $0\le R_p\Tr P/p\le R_p$, so
$|Q_p(P)|\le R_p$. Hence for every $M,\eps>0$,
\[
  \eta_p(M):=\sup_{P\in\Pi_p}\E\big[|Q_p(P)|\1_{\{R_p\le M\}}\big]
  \le\eps+M\sup_{P\in\Pi_p}\Pp(|Q_p(P)|>\eps),
\]
and therefore $\eta_p(M)\to0$ for every fixed $M$.

\emph{Step 3: positive semidefinite contractions.} Let $D$ be real symmetric
positive semidefinite with $\|D\|\le1$ and eigenvalues
$1\ge\lambda_1\ge\dots\ge\lambda_p\ge0$, and let $E_k$ be the orthogonal
projection onto the span of eigenvectors of the first $k$ eigenvalues (any
choice in case of ties). With $\lambda_{p+1}=0$ and
$w_k=\lambda_k-\lambda_{k+1}\ge0$ one has $\sum_kw_k=\lambda_1\le1$ and
\[
  D=\sum_{k=1}^pw_kE_k .
\]
Since $A\mapsto Q_p(A)$ is linear, $Q_p(D)=\sum_kw_kQ_p(E_k)$ and
\[
  \E\big[|Q_p(D)|\1_{\{R_p\le M\}}\big]\le\sum_kw_k\,\E\big[|Q_p(E_k)|\1_{\{R_p\le M\}}\big]
  \le\eta_p(M).
\]
The projections $E_k$ depend on $D$, but the bound $\eta_p(M)$ does not. By
Markov's inequality,
\[
  \sup_{D}\Pp(|Q_p(D)|>\eps)\le\frac{\eta_p(M)}{\eps}+\Pp(R_p>M),
\]
the supremum over all positive semidefinite contractions. Let $p\to\infty$,
then $M\to\infty$, using tightness of $R_p$.

\emph{Step 4: general matrices.} A real symmetric contraction is $A_+-A_-$
with $A_\pm$ positive semidefinite contractions, and $Q_p$ is linear. For
complex $A$ with $\|A\|\le1$ write $A=H_1+iH_2$ with
$H_1=(A+A^*)/2$, $H_2=(A-A^*)/(2i)$ Hermitian, $\|H_k\|\le1$. For real $x$,
$x^{\mathsf T}H_kx=x^{\mathsf T}(\Rec H_k)x$, where $\Rec H_k=(H_k+\overline{H_k})/2$
is real symmetric with $\|\Rec H_k\|\le1$, and $\Tr H_k=\Tr\Rec H_k$ because a
Hermitian matrix has a real diagonal. Hence
$Q_p(A)=Q_p(\Rec H_1)+iQ_p(\Rec H_2)$, and
$\sup_{\|A\|\le1}\Pp(|Q_p(A)|>\eps)\le2\sup\Pp(|Q_p(A')|>\eps/2)$ with the
supremum over real symmetric contractions, which is at most
$4\sup_D\Pp(|Q_p(D)|>\eps/4)$ over positive semidefinite contractions. This
is (iii).
\end{proof}

The layer-cake step is the one used by Yaskov \cite[Prop.~2.1]{Yaskov2015b} to
pass from projections to matrices; there it is combined with isotropy and
Vitali's theorem to obtain an $L^1$ bound. The truncation at $R_p\le M$
replaces both.

\subsection{The self-normalized vector}

\begin{proposition}
\label{prop:self-normalized}
Assume $R_p\Rightarrow\nu$ with $\nu(\{0\})=0$, and let $u_p=x_p/\sqrt{R_p}$
on $\{R_p>0\}$ (and $u_p=0$ otherwise), so $\|u_p\|^2=p$ on $\{R_p>0\}$. Then
(RQC) holds for $(x_p)$ if and only if $u_p$ satisfies \eqref{eq:yaskov-A}.
\end{proposition}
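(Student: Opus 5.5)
The plan is to use the identity, valid on $\{R_p>0\}$,
\[
 \frac{u_p^{\mathsf T}Au_p-\Tr A}{p}=\frac{1}{R_p}\Big(\frac{x_p^{\mathsf T}Ax_p}{p}-R_p\frac{\Tr A}{p}\Big)=\frac{Q_p(A)}{R_p},
\]
and to move between the two conditions with the tightness of $R_p$ and of $1/R_p$. Tightness of $1/R_p$ follows from $R_p\Rightarrow\nu$ and $\nu(\{0\})=0$: by the portmanteau theorem, $\limsup_p\Pp(R_p\le\delta)\le\nu([0,\delta])\to0$ as $\delta\downarrow0$. In particular $\Pp(R_p=0)\to0$, so the exceptional event where $u_p=0$ is negligible in probability.

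\emph{RQC $\Rightarrow$ \eqref{eq:yaskov-A} for $u_p$.} Take a deterministic sequence $A_p$ with $\sup_p\|A_p\|<\infty$. Since $R_p$ is tight, \cref{prop:rqc-equivalences} gives $Q_p(A_p)\to0$ in probability. On $\{R_p>\delta\}$ the left side of the identity is bounded by $|Q_p(A_p)|/\delta$. Hence
\[
 \Pp\big(|p^{-1}(u_p^{\mathsf T}A_pu_p-\Tr A_p)|>\eps\big)\le\Pp(|Q_p(A_p)|>\eps\delta)+\Pp(R_p\le\delta).
\]
Let $p\to\infty$ and then $\delta\downarrow0$. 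The event $\{R_p=0\}$ sits inside $\{R_p\le\delta\}$, so it needs no separate treatment, even though there $u_p=0$ and the centered form equals $-\Tr A_p/p$, which need not be small.

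\emph{\eqref{eq:yaskov-A} for $u_p$ $\Rightarrow$ RQC.} For a projection $P_p$ we have, on $\{R_p>0\}$, $Q_p(P_p)=R_p\cdot p^{-1}(u_p^{\mathsf T}P_pu_p-\Tr P_p)$, and $Q_p(P_p)=0$ on $\{R_p=0\}$ since $x_p=0$ there. So
\[
 \Pp(|Q_p(P_p)|>\eps)\le\Pp(R_p>M)+\Pp\big(|p^{-1}(u_p^{\mathsf T}P_pu_p-\Tr P_p)|>\eps/M\big).
\]
Let $p\to\infty$ and then $M\to\infty$, using tightness of $R_p$. This gives \eqref{eq:RQC} for projections, which is the definition of RQC. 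The same argument would work for any bounded $A_p$.

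The only delicate point is the second factor in the first direction: controlling $1/R_p$. This is exactly where $\nu(\{0\})=0$ is needed. Without it, mass of $R_p$ near zero would inflate $Q_p/R_p$, and the direction RQC $\Rightarrow$ \eqref{eq:yaskov-A} would fail, for example when $x_p=0$ with positive probability. The rest is a routine sandwich of probabilities.
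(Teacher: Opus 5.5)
Your proposal is correct and follows essentially the paper's own proof. It uses the identity $Q_p(A)=R_p\,(u_p^{\mathsf T}Au_p-\Tr A)/p$ on $\{R_p>0\}$, uses \cref{prop:rqc-equivalences} to pass from projections to bounded matrices, and shows that $R_p$ is bounded above and below in probability via the portmanteau theorem and $\nu(\{0\})=0$; you simply write out the two probability sandwiches that the paper leaves implicit.
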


\begin{proof}
Since $R_p$ is tight, \cref{prop:rqc-equivalences} lets us read (RQC) as
$Q_p(A_p)\to0$ for every bounded deterministic sequence, which is the form
comparable with \eqref{eq:yaskov-A}. On $\{R_p>0\}$,
$Q_p(A)=R_p\,(u_p^{\mathsf T}Au_p/p-\Tr A/p)$, and
$\Pp(R_p=0)\to\nu(\{0\})=0$. Since
$\limsup_p\Pp(R_p<\delta)\le\nu([0,\delta])\to0$ as $\delta\downarrow0$, and
$R_p$ is tight, $R_p$ is bounded above and below in probability, so the two
convergences are equivalent.
\end{proof}

Under the hypotheses of \cref{prop:self-normalized}, the normalized vector
$u_p$ satisfies the usual MP criterion. The radial spectral limit depends
only on the limiting law of $R_p$, even when $R_p$ and $u_p$ are dependent.
Results requiring independent weights do not apply to this coupling;
examples are given in \cref{sec:examples}.

\section{The sufficient theorem}
\label{sec:sufficient}

We give a direct radial proof of \cref{prop:limit-law,thm:sufficient}.
The qualitative replacement conclusion also follows from
\cite[Theorem~2.2]{Yaskov2014}. Here bounded resolvent summands and radial
column deletion supply the moment-free fixed-point argument.

\subsection{Deterministic bounds}

Write $G_H(z)=(H-zI)^{-1}$ for Hermitian $H$ and $z\in\C_+$.

\begin{lemma}
\label{lem:denominator}
Let $z=E+i\eta\in\C_+$, $t_1,\dots,t_k\ge0$, $w_1,\dots,w_k\ge0$, and
$u=\sum_jw_j/(t_j-z)$. Then $|1+u|\ge\eta/|z|$, and
\begin{equation}
  \Big|\frac{u}{1+u}\Big|=\Big|1-\frac1{1+u}\Big|\le1+\frac{|z|}{\eta}.
  \label{eq:bounded-summand}
\end{equation}
\end{lemma}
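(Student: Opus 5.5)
The plan is to reduce both assertions to a lower bound on $|1+u|$ using only the sign structure of the imaginary and real parts of $u$. Write
\[
 u=\sum_jw_j\frac{t_j-E+i\eta}{|t_j-z|^2},\qquad
 \Imc u=\eta\sum_j\frac{w_j}{|t_j-z|^2}\ge0 .
\]
If $E\le0$, then $t_j-E\ge0$ for every $j$. In that case $\Rec(1+u)\ge1$, so $|1+u|\ge1\ge\eta/|z|$, because $|z|\ge\eta$.

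The main case is $E>0$, where the real part of $u$ may be negative. Here I would compare $zu$ with $u$. The key quantity is
\[
 \Imc\big(z(1+u)\big)=\eta+\Imc(zu),\qquad zu=\sum_jw_j\frac{z}{t_j-z}=\sum_jw_j\Big(\frac{t_j}{t_j-z}-1\Big).
\]
This gives
\[
 \Imc(zu)=\sum_jw_jt_j\,\Imc\frac{1}{t_j-z}=\eta\sum_j\frac{w_jt_j}{|t_j-z|^2}\ge0,
\]
using $t_j\ge0$. Hence $|z|\,|1+u|\ge\Imc(z(1+u))\ge\eta$. This argument holds for every sign of $E$, so the case split is in fact unnecessary and this single computation proves $|1+u|\ge\eta/|z|$ in general. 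The only point that needs care is this step: one must multiply by $z$ rather than bound $\Rec u$ directly, and it is precisely here that the nonnegativity of the $t_j$ enters.

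The second assertion then follows at once. Since $u/(1+u)=1-1/(1+u)$, the triangle inequality gives
\[
 \Big|\frac{u}{1+u}\Big|\le1+\frac1{|1+u|}\le1+\frac{|z|}{\eta},
\]
which is \eqref{eq:bounded-summand}. Both bounds are deterministic and uniform in $k$, in the weights $w_j$, and in the points $t_j$. This uniformity is what allows the lemma to control the resolvent summands in the moment-free argument that follows, regardless of how large the radii are.
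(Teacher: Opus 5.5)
Your proposal is correct and is essentially the paper's proof: both rest on $\Imc\frac{z}{t_j-z}=\frac{t_j\eta}{|t_j-z|^2}\ge0$ for $t_j\ge0$, which gives $|z|\,|1+u|\ge\Imc[z(1+u)]\ge\eta$, and then the triangle inequality on $1-\frac1{1+u}$ gives \eqref{eq:bounded-summand}. The preliminary case $E\le0$ is redundant, as you note yourself, so the single computation can stand alone.
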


\begin{proof}
$\Imc\frac{z}{t-z}=\frac{t\eta}{(t-E)^2+\eta^2}\ge0$ for $t\ge0$, so
$\Imc[z(1+u)]\ge\eta$ and $|z||1+u|\ge\eta$. The second bound follows.
\end{proof}

\begin{lemma}
\label{lem:rank-one}
Let $H,H'$ be Hermitian $p\times p$ with $\rank(H-H')\le k$. Then
$\dK(\ESD(H),\ESD(H'))\le k/p$, and for $z=E+i\eta\in\C_+$,
$|\frac1p\Tr G_H(z)-\frac1p\Tr G_{H'}(z)|\le\pi k/(p\eta)$.
\end{lemma}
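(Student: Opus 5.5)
The plan is to treat the two assertions separately. Both rest on a deterministic comparison of eigenvalue counting functions, after which the resolvent bound follows by integrating against a bounded function.

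\emph{Kolmogorov bound.} Put $F_H(t)=p^{-1}\#\{i:\lambda_i(H)\le t\}$. Let $W=\ker(H-H')$, so that $\dim W\ge p-k$. Fix $t$ and let $E$ be the spectral subspace of $H$ for eigenvalues $\le t$. By Courant--Fischer, $H'$ has at least $\dim(E\cap W)$ eigenvalues $\le t$. This holds because for $v\in E\cap W$ we have $v^*H'v=v^*Hv\le t\|v\|^2$: indeed $(H-H')v=0$ gives $H'v=Hv$. Since $\dim(E\cap W)\ge\dim E-k$, we get $F_{H'}(t)\ge F_H(t)-k/p$. Exchanging the roles of $H$ and $H'$ gives the reverse inequality, and together they yield $\dK\le k/p$. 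An equivalent route is Weyl interlacing: write $H-H'=\sum_{i\le k}$ of rank-one Hermitian terms and use the fact that each rank-one perturbation shifts the counting function by at most $1/p$.

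\emph{Resolvent bound.} Write
\[
\frac1p\Tr G_H(z)=\int\frac{dF_H(t)}{t-z}.
\]
Integrating by parts against $D(t)=F_H(t)-F_{H'}(t)$, which vanishes at $\pm\infty$, gives
\[
\Big|\int\frac{dD(t)}{t-z}\Big|=\Big|\int D(t)\frac{dt}{(t-z)^2}\Big|\le\frac kp\int\frac{dt}{(t-E)^2+\eta^2}=\frac{\pi k}{p\eta}.
\]
The integration by parts is justified because both measures are finitely supported and $1/(t-z)$ is smooth and bounded on the real line.

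The only point that needs care is the subspace-intersection step, where we must check that the count is taken with multiplicity and that ties at $t$ are handled consistently. The right-continuity of $F$ takes care of both. Everything else is routine.
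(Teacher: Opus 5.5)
Your proof is correct and follows the paper's route. For the Kolmogorov bound the paper simply cites the rank inequality (Bai--Silverstein, Theorem~A.43), and your Courant--Fischer argument on the spectral subspace intersected with $\ker(H-H')$ proves it directly; the resolvent bound is the same integration by parts against $F_H-F_{H'}$, giving $\pi k/(p\eta)$.
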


\begin{proof}
The first statement is the rank inequality
\cite[Theorem~A.43]{BaiSilverstein2010}. For the second, integrate by parts:
$|\int\frac{\dd(F_H-F_{H'})(t)}{t-z}|=|\int\frac{(F_H-F_{H'})(t)}{(t-z)^2}\dd t|
\le\frac kp\int_\R\frac{\dd t}{|t-z|^2}=\frac{\pi k}{p\eta}$.
\end{proof}

\subsection{The limit law}

\begin{proof}[Proof of \cref{prop:limit-law}]
\emph{Convergence of the integral.} For $s\in\C_+$ and $r\ge0$,
$|1+crs|\ge\Imc(1+crs)=cr\Imc s$; hence
\begin{equation}
  \Big|\frac{r}{1+crs}\Big|\le\frac1{c\,\Imc s}\qquad(r\ge0,\ s\in\C_+),
  \label{eq:integrand-bounded}
\end{equation}
so the integrand in \eqref{eq:RMP} is bounded and continuous in $r$ on
$[0,\infty)$, and \eqref{eq:RMP} makes sense for every probability measure
$\nu$.

\emph{Uniqueness.} Let $s=u+iy\in\C_+$ solve \eqref{eq:RMP} at $z=E+i\eta$.
Taking imaginary parts,
$-y/|s|^2+\eta=-cy\int r^2|1+crs|^{-2}\nu(\dd r)$, that is
\begin{equation}
  c|s|^2\int\frac{r^2}{|1+crs|^2}\nu(\dd r)=1-\frac{\eta|s|^2}{y}<1.
  \label{eq:stability}
\end{equation}
(The integral is finite: $r^2/|1+crs|^2\le1/(c\Imc s)^2$ by
\eqref{eq:integrand-bounded}.) If $s_1\ne s_2$ are two solutions,
subtracting the equations and dividing by $s_1-s_2$ gives
$\frac1{s_1s_2}=c\int\frac{r^2\,\nu(\dd r)}{(1+crs_1)(1+crs_2)}$, and
Cauchy--Schwarz bounds the modulus of the right side by
$|s_1|^{-1}|s_2|^{-1}$ times the geometric mean of the two quantities in
\eqref{eq:stability}, each less than one; contradiction.

\emph{Existence.} Let $R\sim\nu$ and $g\sim N(0,I_p)$ be independent and
$x=\sqrt R\,g$. Then $R_p=R\|g\|^2/p\Rightarrow\nu$, and for a projection $P$
of rank $q$,
\[
  Q_p(P)=R\Big(\frac{g^{\mathsf T}Pg-q}{p}-\frac qp\Big(\frac{\|g\|^2}{p}-1\Big)\Big).
\]
Both terms in the bracket have variance at most $2/p$, and $R$ is tight, so
(RQC) holds. \Cref{thm:sufficient}, whose proof below uses only the uniqueness
part of this proposition, yields a probability measure on $[0,\infty)$ whose
transform solves \eqref{eq:RMP} on $\C_+$; it is unique because a Stieltjes
transform determines the measure. This is $\mu_{c,\nu}$.

\emph{Structure.} With $\mathcal G(z)=-s(z)$ the Cauchy transform,
\eqref{eq:RMP} reads $z=1/\mathcal G+\mathcal R(\mathcal G)$ with
$\mathcal R(w)=\int r(1-crw)^{-1}\nu(\dd r)$, which is the $R$-transform of the
free compound Poisson law with rate $1/c$ and jump law $\law(cR)$; see
\cite[\S11.4]{Xie2026} for this identification on the half-plane, valid without
moments \cite{BercoviciVoiculescu1993}. For $\nu=\delta_1$ the equation is
$1/s+z=1/(1+cs)$, the MP equation.

\emph{The atom at zero.} Write $\rho=\nu((0,\infty))$ and $a=\mu_{c,\nu}(\{0\})$.
The transform $s$ of $\mu_{c,\nu}$ extends analytically to
$\C\setminus[0,\infty)$ by the same formula, with
$s(-\sigma)=\int(t+\sigma)^{-1}\mu_{c,\nu}(\dd t)\in(0,\sigma^{-1}]$ for
$\sigma>0$. Let $\Omega$ be the connected component of
$\{z\in\C\setminus[0,\infty):\Rec s(z)>0\}$ containing $(-\infty,0)$; being
open it meets $\C_+$ in a nonempty open set. On $\Omega$, $\Rec(1+crs)>0$ gives
$|r/(1+crs)|\le1/(c\Rec s)$ locally uniformly in $z$ and uniformly in $r\ge0$,
so the integral is analytic there, and $s\ne0$ makes $1/s+z$ analytic. The two
sides agree on $\Omega\cap\C_+$, hence on $\Omega$ by the identity theorem,
hence at $z=-\sigma$.
Multiplying \eqref{eq:RMP} by $s=s(-\sigma)$,
\begin{equation}
  1-\sigma s(-\sigma)=\frac1c\int_0^\infty\frac{crs(-\sigma)}{1+crs(-\sigma)}\,\nu(\dd r).
  \label{eq:atom-identity}
\end{equation}
As $\sigma\downarrow0$ the left side tends to $1-a$, because
$\sigma s(-\sigma)=\int\frac{\sigma}{t+\sigma}\mu_{c,\nu}(\dd t)\to\mu_{c,\nu}(\{0\})$
by dominated convergence. Also $\sigma\mapsto s(-\sigma)$ is decreasing, so
$s(-\sigma)$ increases as $\sigma\downarrow0$. If $a>0$ then
$s(-\sigma)\ge a/\sigma\to\infty$, so the integrand increases to
$\1_{\{r>0\}}$ and monotone convergence gives $1-a=\rho/c$. If $a=0$ then the
right side of \eqref{eq:atom-identity} tends to $1$, while its integrand is at
most $\1_{\{r>0\}}$, so $\rho/c\ge1$. In both cases
$a=(1-\rho/c)_+$.

\emph{Injectivity.} Set $\Phi_\nu(w)=\int r(1+crw)^{-1}\nu(\dd r)$, analytic
on $\C_+$ by \eqref{eq:integrand-bounded}. If $\mu_{c,\nu}=\mu_{c,\nu'}$ then
$\Phi_\nu=\Phi_{\nu'}$ on $\{s(z):z\in\C_+\}$, a nonempty open subset of
$\C_+$ ($s$ is a nonconstant analytic map, $s(z)\sim-1/z$ at infinity), hence
on $\C_+$. With $v=-1/(cw)\in\C_+$ for $w\in\C_+$,
$\frac{r}{1+crw}=\frac{vr}{v-r}=-v-\frac{v^2}{r-v}$, so
$\Phi_\nu(w)=-v-v^2s_\nu(v)$ determines $s_\nu$ on $\C_+$, hence $\nu$.
\end{proof}

\subsection{\texorpdfstring{Proof of \cref{thm:sufficient}}{Proof of the sufficient theorem}}

By \cref{prop:rqc-equivalences}, (RQC) gives
\begin{equation}
  \phi_p(\eps):=\sup_{\|A\|\le1}\Pp(|Q_p(A)|>\eps)\longrightarrow0\qquad(\eps>0).
  \label{eq:phi}
\end{equation}
Write $J_M=\{j\le N:R_j>M\}$ and $\hat\nu_N=\frac1N\sum_j\delta_{R_j}$.

\emph{Step 0: the radii.} For bounded continuous $f$,
\[
 \Var\left(\frac1N\sum_jf(R_j)\right)\le\|f\|_\infty^2/N,\qquad
 \E f(R_p)\to\int f\dd\nu,
\]
so $\hat\nu_N\Rightarrow\nu$ in probability (countable convergence-determining
family). Likewise $|J_M|/N-\Pp(R_p>M)\to0$ in probability, and
$\limsup_p\Pp(R_p>M)\le\nu([M,\infty))$ by the Portmanteau theorem. Hence for
every $M$ and $\delta>0$,
\begin{equation}
  \Pp\Big(\frac{|J_M|}{N}>\nu([M,\infty))+\delta\Big)\longrightarrow0.
  \label{eq:JM}
\end{equation}

\emph{Step 1: tightness without moments.} Fix $M,L>0$ and let
$S^{(M)}=\frac1N\sum_{j\notin J_M}x_jx_j^{\mathsf T}$.
The failure of an untruncated termwise estimate is illustrated in
\cref{rem:truncation-needed}. Here
$\rank(S-S^{(M)})\le|J_M|$ and
\[
 \frac1p\Tr S^{(M)}=\frac1N\sum_{j\notin J_M}R_j\le M,
\]
so by \cref{lem:rank-one} and Markov's inequality applied to $\ESD(S^{(M)})$,
\begin{equation}
  \ESD(S)([L,\infty))\le\frac ML+\frac{|J_M|}{p}=\frac ML+\frac{N}{p}\frac{|J_M|}{N}.
  \label{eq:tight}
\end{equation}
Given $\delta>0$, choose $M$ with $\nu([M,\infty))<c\delta/8$, then $L$ with
$M/L<\delta/2$, and apply \eqref{eq:JM} with $\delta'=c\delta/8$: since
$N/p\to1/c$, the right side of \eqref{eq:tight} is at most
$\delta/2+(1/c+o(1))(c\delta/8+c\delta/8)<\delta$ with probability tending to
one. Hence $\Pp(\ESD(S)([L,\infty))>\delta)\to0$. Thus for
each rational $\delta$ there is $L_\delta$ with
\begin{equation}
  \Pp\big(\ESD(S)([L_\delta,\infty))>\delta\big)\longrightarrow0.
  \label{eq:tight-countable}
\end{equation}

\emph{Step 2: the approximate fixed point.} Fix $z=E+i\eta\in\C_+$. Let
$G_j=(S-\frac1Nx_jx_j^{\mathsf T}-z)^{-1}$, $s_n^{(j)}=\frac1p\Tr G_j$, and
$a_j=\frac1px_j^{\mathsf T}G_jx_j$. The identity $SG=I+zG$ and the
Sherman--Morrison formula $x_j^{\mathsf T}Gx_j=pa_j/(1+c_na_j)$ give the exact
identity
\begin{equation}
  1+zs_n(z)=\frac1N\sum_{j=1}^N\frac{a_j}{1+c_na_j}.
  \label{eq:exact}
\end{equation}
In an eigenbasis $(t_l,u_l)$ of $S-N^{-1}x_jx_j^{\mathsf T}$,
\[
 c_na_j=\frac1N\sum_l\frac{|u_l^*x_j|^2}{t_l-z},\qquad t_l\ge0.
\]
The same representation holds for $c_nrs_n^{(j)}$
and $c_nrs_n$ for $r\ge0$; all three are of the form in \cref{lem:denominator}.
Hence, with $K_z=(1+|z|/\eta)/\underline c$, $\underline c=\inf_nc_n>0$,
\begin{equation}
\begin{gathered}
  |1+c_na_j|^{-1},\ |1+c_nrs_n^{(j)}|^{-1},\ |1+c_nrs_n|^{-1}\le\frac{|z|}{\eta},\\
  \Big|\frac{a_j}{1+c_na_j}\Big|\le K_z,\qquad
  \Big|\frac{rw}{1+c_nrw}\Big|\le K_z
\end{gathered}
  \label{eq:denominators}
\end{equation}
for $w\in\{s_n,s_n^{(j)}\}$. For $a,b$ with nonvanishing denominators,
$\frac a{1+c_na}-\frac b{1+c_nb}=\frac{a-b}{(1+c_na)(1+c_nb)}$.

\emph{First replacement.} Put $b_j=R_js_n^{(j)}(z)$. The matrix $\eta G_j$ is
a function of $\{x_k\}_{k\ne j}$, hence independent of $x_j$, and
$\|\eta G_j\|\le1$; and $a_j-b_j=\eta^{-1}Q_p(\eta G_j)$ computed at $x_j$. By
\cref{lem:uniform}(c) and \eqref{eq:phi},
\begin{equation}
  \Pp(|a_j-b_j|>\eps)\le\phi_p(\eps\eta)\qquad\text{for every }j\text{ and }\eps>0.
  \label{eq:loo-prob}
\end{equation}
By \eqref{eq:denominators},
$d_j:=\big|\frac{a_j}{1+c_na_j}-\frac{b_j}{1+c_nb_j}\big|\le\min\{2K_z,(|z|/\eta)^2|a_j-b_j|\}$,
so for every $\eps>0$
\[
  \E\frac1N\sum_jd_j\le\Big(\frac{|z|}{\eta}\Big)^2\eps+2K_z\,\phi_p(\eps\eta)
  \longrightarrow\Big(\frac{|z|}{\eta}\Big)^2\eps,
\]
and $\frac1N\sum_jd_j\to0$ in $L^1$.

\emph{Second replacement.} Put $b_j'=R_js_n(z)$. By \cref{lem:rank-one},
$|s_n^{(j)}-s_n|\le\pi/(p\eta)$, so by \eqref{eq:denominators}
\[
  \Big|\frac{b_j}{1+c_nb_j}-\frac{b_j'}{1+c_nb_j'}\Big|
  \le\min\Big\{2K_z,\ R_j\Big(\frac{|z|}{\eta}\Big)^2\frac{\pi}{p\eta}\Big\},
\]
and the average over $j$ is at most
$2K_z|J_M|/N+M(|z|/\eta)^2\pi/(p\eta)$. Given $\eps>0$, choose $M$ and $\delta$
with $2K_z(\nu([M,\infty))+\delta)<\eps$; by \eqref{eq:JM} the average then
exceeds $2\eps$ with probability tending to zero. As $\eps>0$ was arbitrary the
average is $o_p(1)$. Together,
\begin{equation}
  1+zs_n(z)=s_n(z)\,\frac1N\sum_{j=1}^N\frac{R_j}{1+c_nR_js_n(z)}+o_p(1).
  \label{eq:AFP}
\end{equation}

\emph{Step 3: identification.} Fix a countable dense $\mathcal Z\subset\C_+$.
The statements $\hat\nu_N\Rightarrow\nu$ (through a countable family of test
functions), \eqref{eq:tight-countable} for rational $\delta$, and
\eqref{eq:AFP} for $z\in\mathcal Z$ are countably many convergences in
probability. Given any subsequence, a diagonal argument extracts a further
subsequence along which all of them hold almost surely. Fix such a sample
point. Along the subsequence, \eqref{eq:tight-countable} gives
$\limsup\ESD(S)([L_\delta,\infty))\le\delta$ for every rational $\delta$, so the
sequence $\ESD(S)$ is tight and every weak cluster point $\mu$ is a
probability measure on $[0,\infty)$; along a subsubsequence
$\ESD(S)\Rightarrow\mu$ and $s_n(z)\to s_\mu(z)\in\C_+$ for every $z$. Fix
$z\in\mathcal Z$, $g_n=c_ns_n(z)$, $g=cs_\mu(z)$; then $g_n\to g\in\C_+$ and
for $r\ge0$
\[
  \Big|\frac r{1+g_nr}-\frac r{1+gr}\Big|=\frac{|g_n-g|\,r^2}{|1+g_nr||1+gr|}
  \le\frac{|g_n-g|}{\Imc g_n\,\Imc g}\longrightarrow0
\]
uniformly in $r\in[0,\infty)$, because $|1+hr|\ge r\Imc h$. The limit
$r\mapsto r/(1+gr)$ is bounded and continuous by \eqref{eq:integrand-bounded},
and $\hat\nu_N\Rightarrow\nu$, so
$\frac1N\sum_jR_j/(1+c_nR_js_n(z))\to\int r(1+crs_\mu(z))^{-1}\nu(\dd r)$.
Passing to the limit in \eqref{eq:AFP} gives \eqref{eq:RMP} for $s_\mu$ at
every $z\in\mathcal Z$. By the uniqueness part of \cref{prop:limit-law},
$s_\mu$ is determined on
$\mathcal Z$; two cluster points have analytic transforms agreeing on a set
with an accumulation point in $\C_+$, hence everywhere, hence coincide. So
along the original subsequence a further subsequence converges almost surely to
one deterministic $\mu_{c,\nu}$ independent of the subsequence, which is
convergence in probability. Since the $\mu$ so obtained solves \eqref{eq:RMP}
on $\mathcal Z$ and both sides are analytic in $z$ (the right side through
$s_\mu$, by \eqref{eq:integrand-bounded}), it solves it on $\C_+$; this is the
existence part of \cref{prop:limit-law}.

\emph{Almost sure convergence.} If $\sum_ne^{-ap_n}<\infty$ for all $a>0$, the
L\'evy distance $\ell_n$ between $\ESD(S)$ and $\mu_{c,\nu}$ changes by at most
$2/p$ when one sample is changed: replacing $x_j$ perturbs $S$ by a matrix of
rank at most $2$, so $d_{\mathrm L}\le\dK\le2/p$ by \cref{lem:rank-one}. Hence
McDiarmid's
inequality gives $\Pp(\ell_n-\E\ell_n>t)\le\exp(-t^2p^2/(2N))$; since
$0\le\ell_n\le1$ and $\ell_n\to0$ in probability, $\E\ell_n\to0$, and
Borel--Cantelli finishes. No moment is involved.
\qed

\begin{remark}[a general radial variable]
\label{rem:general-radius}
Let $(y_j,\rho_j)_{j\le N}$ be i.i.d.\ pairs with $y_j\in\R^q$, $\rho_j\ge0$,
$q/N\to c'\in(0,\infty)$, $\rho_1\Rightarrow\nu$, and assume the
\emph{uniform} condition
\[
  \sup_{\|A\|\le1}\Pp\Big(\Big|\frac{y_1^{\mathsf T}Ay_1}{q}-\rho_1\frac{\Tr A}{q}\Big|>\eps\Big)\longrightarrow0
  \qquad(\eps>0).
\]
Then $\ESD(\frac1N\sum_jy_jy_j^{\mathsf T})\Rightarrow\mu_{c',\nu}$ in
probability. The proof is that of \cref{thm:sufficient} with two changes.
\Cref{prop:rqc-equivalences} is not available, because its Step 2 uses
$y^{\mathsf T}Py\le q\rho$, which is false for a general $\rho$; this is why
the uniform condition is assumed directly. Step 1 used
$\frac1N\sum_{j\notin J_M}\|y_j\|^2/q\le M$; instead apply the condition with
$A=I$ to get $\|y_1\|^2/q-\rho_1\to0$ in probability, and remove
$J_M'=\{j:\rho_j>M\text{ or }\|y_j\|^2/q>M+1\}$, after which
$\frac1N\sum_{j\notin J_M'}\|y_j\|^2/q\le M+1$ and the tightness bound of
Step~1 reads $(M+1)/L$; the proportion removed is at most
$\Pp(\rho_1>M)+\Pp(|\|y_1\|^2/q-\rho_1|>1)+o_p(1)$. Steps 2 and 3 use only that
the pairs are i.i.d.\ (so $(y_j,\rho_j)$ is independent of $G_j$) and
$\hat\nu_N\Rightarrow\nu$. This is the form in which the theorem is applied to
projections in \cref{sec:necessity}: the projected samples $C_Px_j$ come with
the parent radius $R_j$, which is a function of $x_j$ but not of $C_Px_j$.
\end{remark}

\subsection{Spectral replacement by the Gaussian model}

The sufficient theorem immediately gives comparison with a Gaussian
reference carrying the same limiting radial law.

\begin{corollary}[replacement]
\label{cor:replacement}
Assume the hypotheses of \cref{thm:sufficient} and let
$\tilde x_p=\sqrt{\rho_p}\,g_p$ where $g_p\sim N(0,I_p)$ and $\rho_p\ge0$ is
any random variable on the same probability space with $\rho_p\Rightarrow\nu$;
$\rho_p$ need not be independent of $g_p$. Let $\tilde S$ be the sample
covariance matrix of $N$ independent copies of $\tilde x_p$. Then
\[
  d_{\mathrm L}\big(\ESD(S),\ESD(\tilde S)\big)\xrightarrowp0 ,
\]
on any coupling of the two models; both limits are deterministic, so the
statement does not depend on which coupling is used.
\end{corollary}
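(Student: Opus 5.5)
The plan is to apply \cref{thm:sufficient} to both models and compare their limits. Both the original model and the Gaussian reference converge to $\mu_{c,\nu}$, so the L\'evy distance between their ESDs tends to zero by the triangle inequality. The original model satisfies the hypotheses by assumption, so $\ESD(S)\Rightarrow\mu_{c,\nu}$ in probability.

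For the reference $\tilde x_p=\sqrt{\rho_p}\,g_p$ I will not use the parent radius $\tilde R_p=\|\tilde x_p\|^2/p=\rho_p\|g_p\|^2/p$. Its law is $\nu$ only in the limit, and $\rho_p$ may be coupled with $g_p$. Instead I invoke \cref{thm:sufficient} in the extended form noted after its statement (\cref{rem:general-radius}), with general radial variable $\rho_p$. This requires the uniform condition
\[
\sup_{\|A\|\le1}\Pp\Big(\Big|\rho_p\Big(\frac{g^{\mathsf T}Ag}{p}-\frac{\Tr A}{p}\Big)\Big|>\eps\Big)\to0 .
\]
For $\|A\|\le1$, the Gaussian quadratic form has $\E|g^{\mathsf T}Ag-\Tr A|^2\le 2\|A\|_F^2\le 2p$ (split complex $A$ into its real symmetric parts as in Step~4 of \cref{prop:rqc-equivalences}). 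Hence the bracket is $O_p(p^{-1/2})$ uniformly over contractions, regardless of dependence on $\rho_p$. Since $\rho_p$ is tight, the product tends to zero uniformly: bound by $\Pp(\rho_p>M)+\Pp(|\cdot|>\eps/M)$ and use Chebyshev for the second term.

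The only point needing care is that the pairs $(\tilde x_j,\rho_j)$ must be i.i.d., which holds because they are independent copies of $(\sqrt{\rho_p}g_p,\rho_p)$. Then \cref{rem:general-radius} gives $\ESD(\tilde S)\Rightarrow\mu_{c,\nu}$ in probability. Alternatively, one can use $\tilde R_p$ directly: $\|g\|^2/p\to1$ gives $\tilde R_p\Rightarrow\nu$, and RQC for projections follows from the same variance bound as in the existence part of \cref{prop:limit-law}. That part did not use independence of $R$ and $g$ beyond tightness times an $L^2$-small factor.

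Finally, since both limits are deterministic and equal, $d_{\mathrm L}(\ESD(S),\ESD(\tilde S))\le d_{\mathrm L}(\ESD(S),\mu_{c,\nu})+d_{\mathrm L}(\mu_{c,\nu},\ESD(\tilde S))\to0$ in probability on any coupling. The main obstacle, if any, is justifying the general-radius version without independence of $\rho_p$ and $g_p$; the uniform-over-$A$ variance bound above handles it.
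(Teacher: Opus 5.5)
Your proof is correct, and the alternative you sketch at the end is exactly the paper's argument. The paper uses the parent radius $\tilde R_p=\rho_p\|g_p\|^2/p$ and gets $\tilde R_p\Rightarrow\nu$ by Slutsky. It bounds the second moment of $(g^{\mathsf T}Ag-\Tr A)/p-(\Tr A/p)(\|g\|^2/p-1)$ by $8/p$ over real symmetric contractions and deduces (RQC) for $(\tilde x_p)$. It then upgrades this with \cref{prop:rqc-equivalences} and applies \cref{thm:sufficient} to both models.

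Your primary route instead feeds $\rho_p$ itself into \cref{rem:general-radius}. That is slightly more economical. Centering at $\rho_p$ removes the $\|g\|^2/p-1$ term and the Slutsky step. Since you verify the uniform condition over all complex contractions directly, no projection-to-contraction reduction is needed.

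The cost is that it leans on the modified Step~1 that the remark only sketches. It also does not directly record that $\tilde x_p$ satisfies (RQC) with its own parent radius. The paper reuses that fact right after the corollary, to get (RMP-$\Pi$) for the reference from the (i)$\Rightarrow$(ii) half of \cref{thm:necessity}. It does follow from your bound anyway, since $\tilde R_p-\rho_p=\rho_p(\|g\|^2/p-1)\to0$ in probability and $|\Tr A|/p\le1$.

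Your stated reason for avoiding $\tilde R_p$ is not a real obstruction. The variable $\rho_p$ also has law $\nu$ only in the limit. Dependence on $g_p$ is harmless in both routes, since only tightness of $\rho_p$ times an $L^2$-small factor is used.
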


\begin{proof}
Write $\tilde R_p=\|\tilde x_p\|^2/p=\rho_p\|g_p\|^2/p$. Since $\|g_p\|^2/p\to1$
in probability and $\rho_p\Rightarrow\nu$, Slutsky's theorem gives
$\tilde R_p\Rightarrow\nu$. For a real symmetric contraction $A$,
\[
  \tilde Q_p(A)=\frac{\tilde x^{\mathsf T}A\tilde x}{p}-\tilde R_p\frac{\Tr A}{p}
  =\rho_p\Big(\frac{g^{\mathsf T}Ag-\Tr A}{p}-\frac{\Tr A}{p}\Big(\frac{\|g\|^2}{p}-1\Big)\Big),
\]
and, for real symmetric $A$, $\Var(g^{\mathsf T}Ag)=2\Tr A^2\le2p$ and
$\Var(\|g\|^2)=2p$, so the bracket has second moment at most $8/p$ uniformly
over real symmetric contractions. With $\rho_p$ tight this gives, for every
$M>0$,
\[
  \sup_A\Pp\big(|\tilde Q_p(A)|>\eps\big)\le\Pp(\rho_p>M)+\frac{8M^2}{p\eps^2},
\]
the supremum over real symmetric contractions. Letting $p\to\infty$ and then
$M\to\infty$ gives $\tilde Q_p(P_p)\to0$ for every deterministic sequence of
orthogonal projections, which is (RQC) for $(\tilde x_p)$;
\cref{prop:rqc-equivalences} upgrades it to the uniform statement over all
complex contractions. No independence between $\rho_p$ and $g_p$ is used. \Cref{thm:sufficient} applies to both models with the same $\nu$, so
both empirical spectral distributions converge in probability to
$\mu_{c,\nu}$, and the triangle inequality for the L\'evy metric finishes.
\end{proof}

If $(x_p)$ satisfies (ACI) with radial law $\nu$, the projected form of the
comparison is again equivalent to the radial condition:
$d_{\mathrm L}(\ESD(S_P),\ESD(\tilde S_P))\to0$ in probability for every
deterministic projection sequence of proportional rank if and only if
$R_p\Rightarrow\nu$ and (RQC). Indeed $(\tilde x_p)$ satisfies (RQC) and
$\tilde R_p\Rightarrow\nu$, hence (RMP-$\Pi$) for $(c,\nu)$ by the implication
of \cref{thm:necessity} that needs no reduction, so
$\ESD(\tilde S_P)\Rightarrow\mu_{c\alpha,\nu}$ for every such $P$; the displayed
convergence is therefore (RMP-$\Pi$) for $(x_p)$, which under (ACI) is equivalent to
condition (i) of \cref{thm:necessity}.

\section{Projection-energy rigidity}
\label{sec:rigidity}

One projection rank suffices for a geometric reason: the convex hull of
rank-$q$ projections contains a neighborhood of $(q/p)I$ in the affine
space of matrices with trace $q$. A strict concavity gap in this
neighborhood controls every centered linear statistic of a random positive
semidefinite matrix. The argument does not require a rank-one matrix or
any moments.

\subsection{The convex hull of fixed-rank projections}

For $1\le q<p$, put $\beta=q/p$, $b=\min(\beta,1-\beta)$, and
\[
 \mathcal F_{p,q}=\{B=B^T:0\le B\le I,\ \Tr B=q\}.
\]
This set, called the fantope, is the convex hull of the rank-$q$
orthogonal projections \cite[Section~2]{VuChoLeiRohe2013}.
Indeed, after diagonalizing $B$, its eigenvalue vector belongs to
$\{\lambda\in[0,1]^p:\sum_i\lambda_i=q\}$. Its extreme points have
only zero and one entries: two fractional entries could be perturbed in
opposite directions, and the integer trace excludes exactly one fractional
entry. Thus $B$ is a finite convex combination of projections in its
eigenbasis.

If $A=A^T$ is a contraction, let $A_0=A-p^{-1}\Tr(A)I$ and $h=b/2$.
Since $\|A_0\|\le2$, both
\begin{equation}\label{eq:fantope-probes}
 B_\pm=\beta I\pm hA_0
\end{equation}
belong to $\mathcal F_{p,q}$. For a random positive semidefinite matrix
$M$, put $R=\Tr M/p$ and
\[
 Q_M(A)=\frac{\Tr(AM)}p-R\frac{\Tr A}p.
\]
The two corresponding energies are exactly
\begin{equation}\label{eq:fantope-energies}
 \frac{\Tr(B_\pm M)}q=R\pm\frac h\beta Q_M(A).
\end{equation}
The projections in each convex combination are deterministic. No
conditional expectation of an unbounded random matrix is needed.

\subsection{Second projected moments control all linear statistics}

\begin{lemma}[Quantitative rigidity from one projection rank]
\label{lem:energy-L2}
Let $M$ be a random real positive semidefinite $p\times p$ matrix,
$R=\Tr M/p$, and $\E R^2<\infty$. For $1\le q<p$, define
\[
 D_2(M,q)=\sup_{\rank P=q}
 \left|\E\left(\frac{\Tr(PM)}q\right)^2-\E R^2\right|.
\]
With $\beta=q/p$ and $b=\min(\beta,1-\beta)$,
\[
 \sup_{\|A\|\le1}\E|Q_M(A)|^2
 \le\frac{8\beta^2}{b^2}D_2(M,q).
\]
The supremum includes complex matrices; for real matrices the constant
can be replaced by $4\beta^2/b^2$.
\end{lemma}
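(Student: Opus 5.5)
The plan is to apply the fantope probes \eqref{eq:fantope-probes} to the functional
\[
 f(B)=\E\Big(\frac{\Tr(BM)}q\Big)^2,
\]
and to exploit two facts: $f$ is convex in $B$, and the quadratic $B_\pm$ cancel their cross terms when added.

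First treat a real symmetric contraction $A$, and put $A_0=A-p^{-1}\Tr(A)I$ and $h=b/2$.
\begin{itemize}
\item[1.] \emph{The probes lie in the fantope.} We have $\Tr A_0=0$ and $\|A_0\|\le2$. So the eigenvalues of $B_\pm=\beta I\pm hA_0$ lie in $[\beta-b,\beta+b]\subset[0,1]$, and $\Tr B_\pm=q$. Hence $B_\pm\in\mathcal F_{p,q}$.
\item[2.] \emph{Decompose into projections.} As explained above, each $B_\pm$ can be written as $\sum_kw_kP_k$, a finite convex combination of deterministic rank-$q$ projections taken in its eigenbasis.
\item[3.] \emph{Pointwise convexity.} The pointwise inequality $\big(\sum_kw_kX_k\big)^2\le\sum_kw_kX_k^2$, applied with $X_k=\Tr(P_kM)/q$ and followed by expectation, gives
\[
 f(B_\pm)\le\sum_kw_kf(P_k)\le\E R^2+D_2(M,q).
\]
All quantities are finite, because $0\le\Tr(PM)/q\le pR/q$ and $\E R^2<\infty$.
\end{itemize}

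Next, the energy identity \eqref{eq:fantope-energies} gives
\[
 f(B_\pm)=\E R^2\pm\frac{2h}{\beta}\E[RQ_M(A)]+\frac{h^2}{\beta^2}\E Q_M(A)^2 .
\]
For real symmetric $A$, $Q_M(A)$ is real. Adding the two identities removes the cross term:
\[
 2\E R^2+\frac{2h^2}{\beta^2}\E Q_M(A)^2\le2\E R^2+2D_2(M,q).
\]
Since $\E R^2$ is finite it can be subtracted, leaving
\[
 \E Q_M(A)^2\le\frac{\beta^2}{h^2}D_2=\frac{4\beta^2}{b^2}D_2(M,q).
\]

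Finally, extend to general contractions.
\begin{itemize}
\item[1.] \emph{Real, not necessarily symmetric, $A$.} Because $M$ is symmetric, $\Tr(AM)=\Tr(A_sM)$ and $\Tr A=\Tr A_s$, where $A_s=(A+A^T)/2$ satisfies $\|A_s\|\le1$. This gives the real constant $4\beta^2/b^2$.
\item[2.] \emph{Complex $A$.} Write $A=\Rec A+i\,\Imc A$ entrywise, with $\Rec A=(A+\bar A)/2$, so that $\|\Rec A\|,\|\Imc A\|\le1$. Since $M$ is real, $Q_M(A)=Q_M(\Rec A)+iQ_M(\Imc A)$ with both parts real. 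Then
\[
 |Q_M(A)|^2=Q_M(\Rec A)^2+Q_M(\Imc A)^2,
\]
and the real bound applied to each part gives the constant $8\beta^2/b^2$.
\end{itemize}

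No step is a serious obstacle. The only points needing care are checking that the probes stay inside $[0,1]$ spectrally, which is why $h=b/2$ is chosen, and keeping $\E R^2$ finite so that it can be cancelled after summing.
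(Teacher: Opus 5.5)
Your proposal is correct and follows the paper's proof. It uses convexity of $B\mapsto\E(\Tr(BM)/q)^2$ on the fantope, then averages at the probes $B_\pm=\beta I\pm(b/2)A_0$ so the cross term cancels, and finishes with symmetrization and a real/imaginary split for complex contractions. The only difference is cosmetic: you take entrywise real and imaginary parts before symmetrizing, whereas the paper first takes the transpose-symmetric part, and both orders give the same constants $4\beta^2/b^2$ and $8\beta^2/b^2$.
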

\begin{proof}
The map $B\mapsto\E(\Tr(BM)/q)^2$ is convex. Its values on
$\mathcal F_{p,q}$ are therefore at most $\E R^2+D_2(M,q)$.
For a real symmetric contraction, average this bound at $B_+$ and $B_-$
in \eqref{eq:fantope-probes}. Equation~\eqref{eq:fantope-energies} gives
\[
 \E R^2+(h/\beta)^2\E|Q_M(A)|^2\le\E R^2+D_2(M,q).
\]
Symmetrization covers real contractions. For a complex contraction,
its transpose-symmetric part is still a contraction; its real and
imaginary parts are real symmetric contractions. The squared modulus
is the sum of the two squared real statistics, giving the stated factor.
\end{proof}

\subsection{A one-sided bounded certificate for positive operators}

Put $\psi(t)=t/(1+t)$. Unlike the preceding second moments, the
expectations below exist for every random $M\ge0$ with finite entries
almost surely, even when all positive moments of its trace are infinite.

\begin{theorem}[One-sided projected-energy rigidity]
\label{thm:bounded-energy}
Let $M$ be a random real positive semidefinite $p\times p$ matrix,
$R=\Tr M/p$, and $R_P=\Tr(PM)/q$. For $1\le q<p$, define
\begin{align*}
 \Delta_\psi(M,q)&=\sup_{\rank P=q}
       \{\E\psi(R)-\E\psi(R_P)\},\\
 D_\psi(M,q)&=\sup_{\rank P=q}|\E\psi(R_P)-\E\psi(R)|,\\
 \mathcal W(M)&=\sup_{\|A\|\le1}
       \E\frac{|Q_M(A)|^2}{(1+R)^3}.
\end{align*}
The matrices in these suprema are deterministic.
Set $\beta=q/p$ and $b=\min(\beta,1-\beta)$. Then
\begin{equation}\label{eq:one-sided-defects}
 0\le\Delta_\psi(M,q)\le D_\psi(M,q)
 \le\frac{1-b}{b}\Delta_\psi(M,q),
\end{equation}
and
\begin{equation}\label{eq:bounded-rigidity}
 \mathcal W(M)\le\frac{8\beta^2}{b^2}\Delta_\psi(M,q).
\end{equation}
For real test matrices the constant is $4\beta^2/b^2$.
Neither bound has a dimension remainder or a restriction on $\rank M$.
For $M=xx^T$ we write $D_\psi(x,q)$, $\Delta_\psi(x,q)$ and
$\mathcal W(x)$ for these quantities. In particular, if
$a\le q/p\le1-a$, then for every $K,t>0$,
\begin{equation}\label{eq:bounded-rigidity-tail}
 \sup_{\|A\|\le1}\Pp(|Q_p(A)|>t)
 \le\Pp(R>K)+\frac{C_a(1+K)^3}{t^2}\Delta_\psi(x,q).
\end{equation}
\end{theorem}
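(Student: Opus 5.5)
The plan is to study the concave functional $f(B)=\E\psi(\Tr(BM)/q)$ on the fantope $\mathcal F_{p,q}$. Since $\psi(t)=t/(1+t)$ is bounded and concave on $[0,\infty)$, and $B\mapsto\Tr(BM)/q$ is linear and nonnegative on $\mathcal F_{p,q}$, $f$ is finite and concave there with no moment assumptions. Two values anchor everything: $f(\beta I)=\E\psi(R)$ and $f(P)=\E\psi(R_P)$ for projections. The basic principle is this. Write any $B\in\mathcal F_{p,q}$ as a finite convex combination of rank-$q$ projections in its eigenbasis, as in the preceding discussion; these projections are deterministic. Concavity then gives $f(B)\ge\inf_{\rank P=q}f(P)\ge\E\psi(R)-\Delta_\psi(M,q)$.

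\emph{Chain \eqref{eq:one-sided-defects}.} For $\Delta_\psi\ge0$, I write $\beta I$ as the uniform average of the coordinate projections onto $q$-subsets. Concavity then gives $\E\psi(R)\ge\min_P f(P)$. The inequality $\Delta_\psi\le D_\psi$ is trivial. For the reverse bound, fix a rank-$q$ projection $P$ and decompose
\[
 \beta I=bP+(1-b)B',\qquad B'=\frac{\beta I-bP}{1-b}.
\]
The eigenvalues of $B'$ are $(\beta-b)/(1-b)$ and $\beta/(1-b)$. Both lie in $[0,1]$ precisely because $b\le\beta$ and $b\le1-\beta$. Also $\Tr B'=q$, so $B'\in\mathcal F_{p,q}$. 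Concavity and the basic principle give
\[
 f(\beta I)\ge bf(P)+(1-b)\bigl(f(\beta I)-\Delta_\psi\bigr),
\]
that is, $f(P)-f(\beta I)\le\frac{1-b}{b}\Delta_\psi$. Combined with $f(\beta I)-f(P)\le\Delta_\psi\le\frac{1-b}b\Delta_\psi$ (recall $b\le1/2$), this yields the bound on $D_\psi$.

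\emph{Rigidity \eqref{eq:bounded-rigidity}.} Take a real symmetric contraction $A$ and the probes $B_\pm$ of \eqref{eq:fantope-probes}. Put $u=(h/\beta)Q_M(A)$, so that the energies are $R\pm u$ by \eqref{eq:fantope-energies}. Both are $\ge0$ because $B_\pm\ge0$, hence $|u|\le R$. The key step is the exact second difference of $\psi=1-1/(1+t)$:
\[
 2\psi(R)-\psi(R+u)-\psi(R-u)=\frac{2u^2}{(1+R)\bigl((1+R)^2-u^2\bigr)}\ge\frac{2u^2}{(1+R)^3}.
\]
This identity is what I expect to be the crux, because it gives a curvature gain with exactly the weight $(1+R)^{-3}$ and no loss from $|u|$ being as large as $R$. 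Taking expectations and using $f(B_\pm)\ge\E\psi(R)-\Delta_\psi$ gives
\[
 (h/\beta)^2\,\E\frac{Q_M(A)^2}{(1+R)^3}\le\Delta_\psi.
\]
With $h=b/2$ this is the real constant $4\beta^2/b^2$. For complex $A$, reduce to two real symmetric contractions exactly as in \cref{lem:energy-L2}. The squared modulus is the sum of two real squares, which doubles the constant to $8\beta^2/b^2$.

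\emph{Tail bound \eqref{eq:bounded-rigidity-tail}.} Take $M=xx^T$, so $Q_M=Q_p$. Then
\[
 \Pp(|Q_p(A)|>t)\le\Pp(R>K)+\Pp\bigl(|Q_p(A)|>t,\ R\le K\bigr),
\]
and on $\{R\le K\}$ one has $|Q_p|^2\le(1+K)^3|Q_p|^2/(1+R)^3$. Chebyshev's inequality and \eqref{eq:bounded-rigidity} then bound the second term by $(1+K)^3\,8\beta^2b^{-2}\Delta_\psi/t^2$. Since $a\le\beta\le1-a$ implies $b\ge a$, this gives $C_a=8/a^2$. None of these steps involves $p$ or $\rank M$, so there is no dimension remainder.
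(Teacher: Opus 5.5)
Your proposal is correct and follows the paper's proof essentially step for step. It uses the concave functional $F(B)=\E\psi(\Tr(BM)/q)$ on the fantope, bounded below by $\E\psi(R)-\Delta_\psi(M,q)$ through the convex-hull decomposition, the splitting $\beta I=bP+(1-b)(\beta I-bP)/(1-b)$ for the two-sided bound, the probes $B_\pm$ with the exact midpoint identity for $\psi$, the real/complex reduction of \cref{lem:energy-L2}, and Markov's inequality on $\{R\le K\}$; the one step you leave implicit is that a real nonsymmetric contraction can be replaced by its symmetric part $(A+A^{\mathsf T})/2$ because $M$ is symmetric, which is the symmetrization the paper uses to obtain the real constant $4\beta^2/b^2$.
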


\begin{proof}
The functional
\[
 F(B)=\E\psi(\Tr(BM)/q)
\]
is concave on $\mathcal F_{p,q}$. The convex-hull identity shows that
$F(B)\ge\E\psi(R)-\Delta_\psi(M,q)$ throughout that set.
The choice $B=\beta I$ gives $\Delta_\psi(M,q)\ge0$.

For a rank-$q$ projection $P$, the matrix
$K_P=(\beta I-bP)/(1-b)$ belongs to $\mathcal F_{p,q}$ and
$\beta I=bP+(1-b)K_P$. Concavity and the lower bound on $F(K_P)$ give
\[
 \E\psi(R)\ge bF(P)+(1-b)F(K_P)
 \ge bF(P)+(1-b)\{\E\psi(R)-\Delta_\psi(M,q)\}.
\]
This bounds the positive deviation of $F(P)$ by
$(1-b)\Delta_\psi(M,q)/b$ and proves \eqref{eq:one-sided-defects}.

For a real symmetric contraction $A$, use $B_\pm$ from
\eqref{eq:fantope-probes} and write $u=(h/\beta)Q_M(A)$.
Their energies $R\pm u$ are nonnegative. The exact midpoint identity is
\[
 \psi(R)-\frac{\psi(R+u)+\psi(R-u)}2
 =\frac{u^2}{(1+R)((1+R)^2-u^2)}
 \ge\frac{u^2}{(1+R)^3}.
\]
Its expected left side is at most $\Delta_\psi(M,q)$, so
\[
 \E\frac{|Q_M(A)|^2}{(1+R)^3}
 \le\frac{4\beta^2}{b^2}\Delta_\psi(M,q).
\]
The real and complex reductions in \cref{lem:energy-L2} prove
\eqref{eq:bounded-rigidity}. The weighted integrands are bounded by $4R^2/(1+R)^3\le16/27$.
Markov's inequality on $\{R\le K\}$ proves
\eqref{eq:bounded-rigidity-tail}.
\end{proof}

The convex hull and the scalar Jensen identity are standard ingredients;
the latter also appears in \cite[Lemma~4.1]{Yaskov2015b}. Their use here
gives a one-sided, dimension-free certificate for random positive
operators. For a random density matrix $H\ge0$ with $\Tr H=1$, take
$M=pH$: then $R=1$ and the conclusion controls
$\Tr(AH)-\Tr A/p$. It concerns deterministic linear tests, not
operator-norm proximity to $I/p$ or rotational invariance.

Under tightness of $\Tr M_p/p$, with $q_p/p$ bounded away from zero and
one, the same argument shows that vanishing
$\Delta_\psi(M_p,q_p)$ is equivalent to uniform convergence in probability
of $Q_{M_p}(A)$ over contractions. Conversely, boundedness and the
Lipschitz property of $\psi$ give the projection defects from this
convergence. Tightness is used only to remove the weight in
\eqref{eq:bounded-rigidity}.

\begin{corollary}[one bounded statistic at one rank]\label{cor:one-transform}
Suppose $(R_p)$ is tight, fix any $\alpha\in(0,1)$, and set
$q_p=\lfloor\alpha p\rfloor$. The following are equivalent:
\begin{enumerate}[label=(\roman*)]
\item (RQC);
\item $\mathcal W(x_p)\to0$;
\item $D_\psi(x_p,q_p)\to0$;
\item for every deterministic rank-$q_p$ sequence $P_p$,
\[
 \E\psi(\|P_px_p\|^2/q_p)-\E\psi(R_p)\to0.
\]
\item $\Delta_\psi(x_p,q_p)\to0$.
\end{enumerate}
If $R_p\Rightarrow\nu$, the common expectation in (iv) is
$\int\psi(r)\nu(\dd r)$ in the limit.
\end{corollary}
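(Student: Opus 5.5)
The plan is to close a cycle of implications, using \cref{thm:bounded-energy} as the quantitative engine and \cref{prop:rqc-equivalences} to pass between projections and all contractions. Throughout, write $M=x_px_p^{\mathsf T}$, so that $Q_M(A)=Q_p(A)$ and $\Tr(PM)/q=\|Px_p\|^2/q_p$. Put $\beta_p=q_p/p\to\alpha$, so that $b_p=\min(\beta_p,1-\beta_p)$ is bounded below by some $a>0$ for large $p$.

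The trivial implications are (ii)$\Rightarrow$(i), (iii)$\Rightarrow$(iv), and (iii)$\Rightarrow$(v). The first holds because the constraint $\|A\|\le1$ is only needed for the operator norm of $Q_p$, so \cref{prop:rqc-equivalences}(iii) follows from $\mathcal W\to0$. Concretely, \eqref{eq:bounded-rigidity-tail}, or directly Markov's inequality on $\{R_p\le K\}$ combined with the weight bound $(1+R_p)^3\le(1+K)^3$, gives
\[
\sup_{\|A\|\le1}\Pp(|Q_p(A)|>t)\le\Pp(R_p>K)+\frac{(1+K)^3}{t^2}\mathcal W(x_p).
\]
Letting $p\to\infty$ and then $K\to\infty$, and using tightness of $R_p$, yields (iii) of \cref{prop:rqc-equivalences}, hence (RQC). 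For (iii)$\Rightarrow$(iv), note that (iv) is just (iii) read along each deterministic sequence. For the converse (iv)$\Rightarrow$(iii), apply the selection argument after \cref{lem:uniform}, taking $f_p(P)=\E\psi(\|Px_p\|^2/q_p)-\E\psi(R_p)$ over rank-$q_p$ projections; this gives $\sup_P|f_p(P)|\to0$, which is (iii). Finally, \eqref{eq:one-sided-defects} gives $\Delta_\psi\le D_\psi\le\frac{1-b}{b}\Delta_\psi$, so (iii)$\Leftrightarrow$(v) holds with constants uniform in $p$ since $b_p\ge a$.

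To close the loop I need (v)$\Rightarrow$(ii) and (i)$\Rightarrow$(iii). For (v)$\Rightarrow$(ii) I use \eqref{eq:bounded-rigidity}: $\mathcal W(x_p)\le 8\beta_p^2b_p^{-2}\Delta_\psi(x_p,q_p)\to0$.

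For (i)$\Rightarrow$(iii), fix a rank-$q_p$ projection $P$ and note
\[
\frac{\|Px_p\|^2}{q_p}-R_p=\frac{p}{q_p}Q_p(P).
\]
Because $\psi$ is $1$-Lipschitz and bounded by $1$,
\[
|\E\psi(\|Px\|^2/q_p)-\E\psi(R_p)|\le\E\min\{1,\beta_p^{-1}|Q_p(P)|\}\le\beta_p^{-1}\eps+\Pp(|Q_p(P)|>\eps).
\]
Taking the supremum over $P$ and using the uniform form \cref{prop:rqc-equivalences}(iii), this tends to $\beta_p^{-1}\eps$, and $\eps$ is arbitrary. This step is where tightness of $R_p$ enters, through \cref{prop:rqc-equivalences}.

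The closing statement follows because $\psi$ is bounded and continuous, so $R_p\Rightarrow\nu$ gives $\E\psi(R_p)\to\int\psi\,\dd\nu$. By (iv), or by (i) through the step just given, the projected expectations share this limit.

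The only delicate point is keeping the constants uniform. The bound $b_p\ge a>0$ for large $p$, which follows from $\alpha\in(0,1)$, makes both the factor $\frac{1-b}{b}$ in \eqref{eq:one-sided-defects} and the factor $8\beta^2/b^2$ in \eqref{eq:bounded-rigidity} bounded. I expect no real obstacle otherwise, since all the heavy lifting is already in \cref{thm:bounded-energy}.
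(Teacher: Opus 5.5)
Your proposal is correct and follows essentially the paper's own proof: \eqref{eq:bounded-rigidity} to get (ii), localization on $\{R_p\le K\}$ plus tightness for (ii)$\Rightarrow$(i), and the Lipschitz bound for $\psi$ through $R_P-R_p=(p/q_p)Q_p(P)$ for (i)$\Rightarrow$(iii). It also uses sequence selection for (iii)$\Leftrightarrow$(iv) and \eqref{eq:one-sided-defects} for (iii)$\Leftrightarrow$(v); the one cosmetic difference is that you invoke \eqref{eq:bounded-rigidity} starting from (v) rather than from (iii), which amounts to the same inequality since $\Delta_\psi\le D_\psi$.
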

\begin{proof}
The equivalence of (iii) and (iv) is the deterministic-sequence selection
argument of \cref{lem:uniform}. Equation~\eqref{eq:bounded-rigidity} gives
(iii)$\Rightarrow$(ii), and localization at $R_p\le B$, followed by
tightness, gives (ii)$\Rightarrow$(i).
Under (i), uniform convergence in probability and boundedness and uniform
continuity of $\psi$ give (iii), since $R_P-R_p=(p/q_p)Q_p(P)$.
Also, (i) gives (ii) directly from the bounded envelope
$|Q_p(A)|^2/(1+R_p)^3\le4R_p^2/(1+R_p)^3\le16/27$.
The equivalence of (iii) and (v) is \eqref{eq:one-sided-defects}.
\end{proof}

\begin{remark}[why tightness matters]\label{rem:bounded-tightness}
For even $p$, let $T=\diag(I_{p/2},2I_{p/2})$, $g\sim N(0,I_p)$,
and $x=\sqrt p\,T^{1/2}g$. Then $R=g^TTg$ is not tight; indeed $R/p\to3/2$.
For every rank-$q$ projection, the projected Gaussian second-moment matrix
is at least $pI_q$. Thus, for $q>2$,
\[
 \E(1+R_P)^{-1}\le\frac{q}{p(q-2)},\qquad
 \E(1+R)^{-1}\le\frac1{p-2}.
\]
Consequently $D_\psi(x,q)=O_a(p^{-1})$ uniformly in the projection.
For $P$ onto the first coordinate block, however,
\[
 Q_p(P)=\tfrac12\|g^{(1)}\|^2-\|g^{(2)}\|^2,\qquad Q_p(P)/p\to-1/4.
\]
Thus transformed-energy preservation does not imply
absolute RQC without radial tightness.
\end{remark}

\subsection{From the bounded energy statistic to spectral replacement}

The geometric certificate also gives a direct finite-dimensional comparison.
The column replacement step below is a quantitative specialization of the
method of \cite{Yaskov2014}; the input through $\Delta_\psi$ is supplied by
\cref{thm:bounded-energy}.

\begin{proposition}\label{prop:defect-replacement}
Let $x_1,\ldots,x_N$ be independent copies of $x\in\R^p$.
Set $R_j=\|x_j\|^2/p$, and independently take Haar unit vectors $u_j$.
Put $\widetilde x_j=\sqrt{pR_j}\,u_j$. Let $B_0\ge0$ be independent of all
these variables, with no bound on its operator norm, and put
\[
 m_x(-t)=\frac1p\Tr\left(B_0+\frac1N\sum_jx_jx_j^T+tI\right)^{-1}
\]
and define $m_{\widetilde x}(-t)$ similarly. Write $c_p=p/N$ and
$\tau(K)=\Pp(R>K)$. For $a\le q/p\le1-a$ and every $t,\delta,K>0$,
\begin{align}
 |\E m_x(-t)-\E m_{\widetilde x}(-t)|
 \le{}&\frac{2\delta}{t^2}
 +\frac{C_a(1+K)^3\Delta_\psi(x,q)}{c_p t\delta^2}
 +\frac{3\tau(K)}{c_p t}
 +\frac{2\sqrt2K}{t^2\sqrt{p+2}}.
 \label{eq:defect-replacement}
\end{align}
No moments or limiting radius law are required.
\end{proposition}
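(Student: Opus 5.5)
We compare the two Stieltjes transforms by replacing the columns one at a time (a Lindeberg swap), and we bound each swap with the bounded resolvent summands of \cref{lem:denominator} at $z=-t$. For $0\le k\le N$, let $S_k=B_0+\frac1N\big(\sum_{j\le k}\widetilde x_j\widetilde x_j^T+\sum_{j>k}x_jx_j^T\big)$. Let $H_k$ be $S_k$ with the $k$-th column removed, and put $G_k=(H_k+tI)^{-1}$; then $\|G_k\|\le1/t$. Note that $H_k$ is independent of both $x_k$ and $\widetilde x_k$. By Sherman--Morrison,
\[
\frac1p\Tr(H_k+N^{-1}yy^T+tI)^{-1}-\frac1p\Tr G_k=-\frac1p\,\frac{N^{-1}y^TG_k^2y}{1+N^{-1}y^TG_ky}.
\]
Each such term is at most $1/(pt)$ in absolute value. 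The telescoping sum therefore reduces $|\E m_x-\E m_{\widetilde x}|$ to $N$ times the difference of this expectation between $y=x_k$ and $y=\widetilde x_k$. That difference must be $o(1/N)$ uniformly, which forces us to use the quadratic-form defect with an error of order $1/(pc_p)$ per column. This is the source of the factor $1/(c_pt)$ in \eqref{eq:defect-replacement}.

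Within one swap, we first truncate the radius. On $\{R_k>K\}$ we bound the summand trivially: the difference has absolute value at most $2/(pt)$ (more carefully, at most $3/(pt)$ after also accounting for the replacement in the denominator). Summed over $N$ columns this gives $3\tau(K)/(c_pt)$. On $\{R_k\le K\}$, write $f(a,b)=-\frac{b}{1+a}$ with $a=N^{-1}y^TG_ky=c_p\,y^TG_ky/p$ and $b=N^{-1}y^TG_k^2y/p\cdot p$. Conditionally on $G_k$, the Haar vector $\widetilde x_k$ gives $a,b$ equal to their radial means $c_pR_k\frac1p\Tr G_k$ and $c_pR_k\frac1p\Tr G_k^2$, up to fluctuations. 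For $u$ uniform on the sphere, $\Var(u^TAu)\le 2\|A\|^2/(p+2)$; with $R_k\le K$ and $\|G_k\|\le 1/t$, this produces the term $2\sqrt2K/(t^2\sqrt{p+2})$ through the Lipschitz bound of $f$, which is $\lesssim 1/t$ in $a$ (since $|1+a|\ge1$) and bounded in $b$.

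For the actual $x_k$, the deviations of $a$ and $b$ from the same radial means are $c_p\,Q_p(G_k)$ and $c_p\,Q_p(G_k^2)$. These are evaluated at matrices $tG_k$ and $t^2G_k^2$, which are random contractions independent of $x_k$. This is the step I expect to be the main obstacle: converting the one-sided defect into a bound in probability with the right normalization. I would use \eqref{eq:bounded-rigidity-tail} together with \cref{lem:uniform}(c), applied conditionally on $G_k$. This gives $\Pp(|Q_p(tG_k)|>\delta',R_k\le K)\le C_a(1+K)^3\Delta_\psi(x,q)/\delta'^2$.

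On the complementary event $\{|Q_p|\le\delta'\}$, the Lipschitz bound of $f$ contributes of order $\delta/(pt^2)$ per column; after summing over the $N$ columns and normalizing by $c_p$, this gives the term $2\delta/t^2$. On the bad event we use the trivial bound $1/(pt)$ per column, which after summation gives the $\Delta_\psi$ term divided by $c_pt\delta^2$. The choice $\delta'=\delta t$ (respectively $\delta t^2$) aligns these two error terms.

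Finally, since $B_0$ is independent of everything and enters only through $G_k$, no bound on its norm is needed. Because $f$ is evaluated only with nonnegative $a$, the denominators stay at least $1$. Collecting the four contributions yields \eqref{eq:defect-replacement}.
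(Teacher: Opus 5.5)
Your route is the paper's: a Lindeberg swap over backgrounds independent of the swapped pair, the Sherman--Morrison increment $-b_v/[p(1+a_v)]$, and comparison of both columns with the common radial values $a_0=R_k\Tr G_k/N$, $b_0=R_k\Tr G_k^2/N$. The spherical column is then handled by the variance bound, and the original column by the rigidity tail bound at the independent contractions $tG_k$, $t^2G_k^2$ via Fubini. The one variant is that you intersect with $\{R_k\le K\}$ before invoking rigidity. That is legitimate: you intersect with the radius event rather than condition on it, so $\Delta_\psi(x,q)$ still applies. However, it rests on Markov's inequality applied to \eqref{eq:bounded-rigidity} on $\{R\le K\}$, not on \eqref{eq:bounded-rigidity-tail} as stated, which carries $\Pp(R>K)$ additively.

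This localized route actually gives $\tau(K)/(c_pt)$ in place of $3\tau(K)/(c_pt)$, and it implies the proposition. Your explanation of the factor $3$ is not correct. On $\{R_k>K\}$ both increments lie in $[-1/(pt),0]$, so they differ by at most $1/(pt)$, and being ``more careful'' cannot enlarge that bound. In the paper, the $3$ arises as one $\tau(K)$ from truncating the spherical column plus two from inserting \eqref{eq:bounded-rigidity-tail}, unlocalized, for the two contractions.

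Two bookkeeping steps need correcting. First, the thresholds must be $\delta$ for both $Q_p(tG_k)$ and $Q_p(t^2G_k^2)$. Since $a-a_0=(c_p/t)Q_p(tG_k)$ and $b-b_0=(c_p/t^2)Q_p(t^2G_k^2)$, this gives $|b-b_0|+t^{-1}|a-a_0|\le2c_p\delta/t^2$ on the good event. The bad event then has probability at most $2C_a(1+K)^3\Delta_\psi(x,q)/\delta^2$, and dividing by $p$ and summing over $N$ columns yields the first two terms of \eqref{eq:defect-replacement}. Your choice $\delta t$, respectively $\delta t^2$, introduces extra powers of $t$ in both terms and does not give the stated bound.

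Second, the Lipschitz estimate in $a$ does not follow from $1+a\ge1$ alone, because $b$ is not bounded. It needs $b\le a/t$ and $b_0\le a_0/t$, that is, $G_k^2\le G_k/t$. This yields $|b/(1+a)-b_0/(1+a_0)|\le|b-b_0|+t^{-1}|a-a_0|$ for nonnegative $a,a_0$.
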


\begin{proof}
Telescope by replacing columns one at a time. At one step let
$G=(B+tI)^{-1}$, where $B$ contains $B_0$ and all other columns, and is
independent of the pair being replaced. For $v=x_j$ or $\widetilde x_j$, set
\[
 a_v=v^TGv/N,\quad b_v=v^TG^2v/N,\quad
 a_0=R_j\Tr G/N,\quad b_0=R_j\Tr G^2/N.
\]
Sherman--Morrison gives the normalized trace increment
$-b_v/[p(1+a_v)]$. Since $G^2\le G/t$, both $b_v/(1+a_v)$ and
$b_0/(1+a_0)$ belong to $[0,1/t]$, and
\[
 \left|\frac{b_v}{1+a_v}-\frac{b_0}{1+a_0}\right|
 \le |b_v-b_0|+t^{-1}|a_v-a_0|.
\]
Let $\Phi(\delta)=\sup_{\|A\|\le1}\Pp(|Q_p(A)|>\delta)$.
For the original column, simultaneous control of the two contractions
$tG,t^2G^2$ bounds the last display by $2c_p\delta/t^2$ outside an
event of probability at most $2\Phi(\delta)$.
The bounded ratio difference controls that exceptional event by
$2\Phi(\delta)/t$ in expectation.
For the spherical column,
\[
 \E\left|u^TAu-\frac{\Tr A}{p}\right|^2
 \le\frac2{p+2}
\]
for real symmetric contractions. On $R_j\le K$ this gives an expected
ratio error at most $2c_pK\sqrt{2/(p+2)}/t^2$; on its complement use $1/t$.
Divide by $p$ and sum the $N$ replacement steps to obtain
\[
 |\E m_x(-t)-\E m_{\widetilde x}(-t)|
 \le\frac{2\delta}{t^2}+\frac{2\Phi(\delta)}{c_p t}
 +\frac{2\sqrt2K}{t^2\sqrt{p+2}}+\frac{\tau(K)}{c_p t}.
\]
Apply \eqref{eq:bounded-rigidity-tail} at cutoff $K$ and absorb constants.
The current radius is never conditioned upon when using the original
quadratic-form bound.
\end{proof}

For tight radii and $c_p$ bounded above and away from zero,
$\Delta_\psi(x_p,q_p)\to0$ makes the right side vanish at every fixed $t>0$:
choose $K$ and $\delta$ first, then let $p\to\infty$, and finally
let $K\to\infty$ and $\delta\downarrow0$.
This comparison requires neither a deterministic limit for the background
nor identification of a common limiting ESD. It is a global resolvent
estimate on the negative real axis; it is not a local or anisotropic law.

\section{The moment-free one-rank converse}
\label{sec:converse}

This section gives the primary proof of the one-rank characterization.

\subsection{Angular symmetrization of the logarithmic determinant}

The spectral input for the converse will be a logarithmic determinant.
Its key property is concavity in the law of an independent column, with
the same law used in every column. This is different from the usual
concavity of the logarithmic determinant in a matrix argument.

\begin{lemma}[Concavity in the column law]\label{lem:log-law-concavity}
Fix $q,N\ge1$ and $t>0$. For a probability law $\sigma$ on $\R^q$ with
$\int\log(1+\|y\|^2)\sigma(\dd y)<\infty$, put
\[
 F_{q,N,t}(\sigma)=\frac1q\E\log\det\left(
 I+\frac1{Nt}\sum_{j=1}^N y_jy_j^T\right),
 \qquad y_j\overset{\mathrm{iid}}\sim\sigma.
\]
Then $F_{q,N,t}$ is finite and concave in $\sigma$. If $\sigma^\circ$ is
the law of $Oy$, where $O$ is an independent Haar orthogonal matrix, then
\begin{equation}\label{eq:log-angular}
 F_{q,N,t}(\sigma)\le F_{q,N,t}(\sigma^\circ).
\end{equation}
The same concavity inequality holds for probability mixtures of laws
whenever their mixture has a finite logarithmic moment.
\end{lemma}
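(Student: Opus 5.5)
Finiteness comes first. Since $\log\det(I+H)\le\Tr\log$ and $\log\det(I+\sum_j v_jv_j^T)\le\sum_j\log(1+\|v_j\|^2)$ for positive semidefinite rank-one sums, we have
\[
0\le\frac1q\log\det\Big(I+\frac1{Nt}\sum_j y_jy_j^T\Big)\le\frac1q\sum_j\log\Big(1+\frac{\|y_j\|^2}{Nt}\Big).
\]
The last inequality follows by adding one column at a time, using the matrix determinant lemma and the bound $v^T(I+H)^{-1}v\le\|v\|^2$. Each summand is at most $\log(1+\|y_j\|^2)+\log(1+1/(Nt))$, so $F_{q,N,t}(\sigma)$ is finite under the logarithmic moment assumption.

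For concavity, write $\Lambda(Y)=q^{-1}\log\det(I+(Nt)^{-1}\sum_j y_jy_j^T)$ as a function of the column tuple $Y=(y_1,\dots,y_N)$, so that $F(\sigma)=\int\Lambda\,d\sigma^{\otimes N}$. The map $\sigma\mapsto\sigma^{\otimes N}$ is not affine, so concavity must come from $\Lambda$ itself.

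The plan is to interpolate column by column. Take $\sigma=\theta\sigma_1+(1-\theta)\sigma_2$ and realize $y_j$ as $y_j^{(\epsilon_j)}$, where the $\epsilon_j\in\{1,2\}$ are i.i.d.\ with $\Pp(\epsilon_j=1)=\theta$ and are independent of i.i.d.\ families $y^{(1)}_j\sim\sigma_1$, $y^{(2)}_j\sim\sigma_2$. Then $F(\sigma)=\E\Lambda(y^{(\epsilon)})$. We must compare this with $\theta\E\Lambda(y^{(1)})+(1-\theta)\E\Lambda(y^{(2)})$, which corresponds to a common $\epsilon_1=\dots=\epsilon_N$.

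This comparison should follow from concavity of $H\mapsto\log\det(I+H)$ in the positive semidefinite matrix $H$. With fixed realizations, the pair of matrices $(H^{(1)},H^{(2)})$ built from all $y^{(1)}$ and all $y^{(2)}$ has the same joint law as any mixed arrangement, column by column. The intended step is a Jensen argument conditional on the multiset of selected columns. Concretely, average over a uniformly random split: by exchangeability, the mixed sum $\sum_j y_j^{(\epsilon_j)}y_j^{(\epsilon_j)T}$ has, conditionally on the counts, the same law as $\sum_{j\le k}y^{(1)}_jy_j^{(1)T}+\sum_{j>k}y^{(2)}_jy_j^{(2)T}$ with $k\sim\mathrm{Bin}(N,\theta)$. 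It then suffices to show that
\[
g(k)=\E\log\det\Big(I+\tfrac1{Nt}\big(H_1^{[k]}+H_2^{[N-k]}\big)\Big)
\]
satisfies $\E g(k)\ge\theta g(N)+(1-\theta)g(0)$ for binomial $k$. I expect this to be the main obstacle, and I would try to prove that $g$ is discrete concave in $k$. The second difference involves adding a $\sigma_1$ column and removing a $\sigma_2$ column. A direct second-difference computation via Sherman--Morrison, using the operator concavity of $\log\det$, gives $g(k+1)-g(k)$ equal to $\E\log(1+a_1)-\E\log(1+a_2)$, where $a_i=(Nt)^{-1}v_i^T(I+H)^{-1}v_i$ and $H$ is a shared background. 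Its monotonicity in the background should follow because adding columns decreases $(I+H)^{-1}$, but the two terms move in opposite directions, so this needs care. If discrete concavity fails, the fallback is a Jensen argument on $k$ using the Bernstein representation: $\E g(k)-[\theta g(N)+(1-\theta)g(0)]$ is nonnegative whenever $g$ is concave.

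Given concavity for finite mixtures, the general mixture statement follows by approximation, and the logarithmic moment bound supplies uniform integrability. The angular inequality \eqref{eq:log-angular} is then immediate. The law $\sigma^\circ=\int\sigma_O\,dO$ is a mixture of rotated laws $\sigma_O$, and $F(\sigma_O)=F(\sigma)$ because $\log\det$ is invariant under $H\mapsto OHO^T$. Concavity for mixtures therefore gives $F(\sigma^\circ)\ge\int F(\sigma_O)\,dO=F(\sigma)$. The mixture here is a continuum, so the approximation step is needed, for instance by discretizing the Haar measure and using continuity of $F$ under the logarithmic moment bound.
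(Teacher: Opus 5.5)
Your finiteness bound is fine, and so is your reduction. Conditioning on the selector counts gives $qF(\sigma_\theta)=\E g(k)$ with $k\sim\mathrm{Bin}(N,\theta)$, so $qF(\sigma_\theta)$ is the Bernstein polynomial of $g$, and discrete concavity of $g$ would suffice.

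The gap is that you never prove $g$ is discretely concave, and that is the entire content of the lemma. The mechanism you suggest cannot supply it. Let $B$ be $I$ plus $k-1$ scaled $\sigma_1$-columns and $N-k-1$ scaled $\sigma_2$-columns, and let $\delta$ be the image of $\sigma_1-\sigma_2$ under $y\mapsto y/\sqrt{Nt}$. Then
\[
 g(k+1)-2g(k)+g(k-1)=\E_B\iint\log\det(B+vv^T+ww^T)\,\delta(\dd v)\,\delta(\dd w),
\]
which is exactly the quantity in the paper's second-derivative formula.

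Passing from $g(k)-g(k-1)$ to $g(k+1)-g(k)$ does not add a column to the background. It swaps a $\sigma_2$-column for a $\sigma_1$-column, and the resulting change in $\E[\log(1+a_1)-\log(1+a_2)]$ is an interaction between the swapped column and the tested one. Monotonicity of $(I+H)^{-1}$ gives no sign for it, since $a_1$ and $a_2$ both decrease. Matrix concavity and monotonicity of the trace functional cannot give it either. $\Tr f$ with $f(\lambda)=\lambda/(t+\lambda)$ has both properties, yet by \cref{prop:bounded-spectral-obstruction} it is not concave in the column law. Your fallback presupposes the concavity of $g$ it is meant to replace.

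The missing idea is specific to the logarithm. Two applications of the determinant lemma give
\[
 \log\det(B+vv^T+ww^T)=\log\det B+\log(1+a(v))+\log(1+a(w))+\log\bigl(1-\langle u_B(v),u_B(w)\rangle^2\bigr),
\]
where $a(v)=v^TB^{-1}v$ and $u_B(v)=B^{-1/2}v/\sqrt{1+a(v)}$.

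\begin{itemize}
\item The terms depending on at most one variable integrate to zero, because $\delta$ has total mass zero.
\item Expand $-\log(1-s^2)=\sum_{\ell\ge1}s^{2\ell}/\ell$ and write $\langle x,y\rangle^{2\ell}=\langle x^{\otimes2\ell},y^{\otimes2\ell}\rangle$. What remains becomes $-\sum_{\ell}\ell^{-1}\|\int u_B(v)^{\otimes2\ell}\,\delta(\dd v)\|^2\le0$. Equivalently, $-\log(1-\langle x,y\rangle^2)$ is a positive-definite kernel on the open unit ball.
\item For bounded supports, Cauchy--Schwarz in the $B^{-1}$ inner product gives $\langle u_B(v),u_B(w)\rangle^2\le A_*^2/(1+A_*)^2<1$, where $A_*$ bounds $a$ on the supports. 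So the series converges absolutely.
\end{itemize}

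General laws then need a separate passage, which your proposal does not address. One option is the paper's truncation $y\1_{\{\|y\|\le K\}}$ with monotone convergence in the positive semidefinite order.

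With these additions your binomial route becomes a correct discrete version of the paper's argument. Your treatment of the Haar mixture is also fine, since rotation invariance of the norm keeps the logarithmic moment constant along a discretization of Haar measure.
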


\begin{proof}
For boundedly supported laws $\sigma_0,\sigma_1$, write
$\sigma_u=(1-u)\sigma_0+u\sigma_1$ and $\delta=\sigma_1-\sigma_0$.
For $N\ge2$, differentiation of the product measure gives
\[
 \frac{\dd^2}{\dd u^2}F_{q,N,t}(\sigma_u)
 =\frac{N(N-1)}q\E_B\iint
 \log\det(B+vv^T+ww^T)\,\delta_N(\dd v)\delta_N(\dd w),
\]
where $\delta_N$ is the pushforward under $y\mapsto y/\sqrt N$, and
$B=tI$ plus the other $N-2$ independent rank-one terms.
Set $a(v)=v^TB^{-1}v$ and
$u_B(v)=B^{-1/2}v/\sqrt{1+a(v)}$. The determinant lemma twice yields
\begin{align*}
 \log\det(B+vv^T+ww^T)
 ={}&\log\det B+\log(1+a(v))+\log(1+a(w))\\
 &+\log\bigl(1-\langle u_B(v),u_B(w)\rangle^2\bigr).
\end{align*}
Terms depending on at most one of $v,w$ integrate to zero.
In fact $\|u_B(v)\|^2=a(v)/(1+a(v))<1$.
If $A_*=\sup_v\|v\|^2/t$ over the two bounded supports, then
$a(v)\le A_*$. Cauchy--Schwarz in the $B^{-1}$ inner product gives
\[
 \begin{aligned}
 \langle u_B(v),u_B(w)\rangle^2
 &=\frac{(v^TB^{-1}w)^2}{(1+a(v))(1+a(w))}\\
 &\le\frac{a(v)a(w)}{(1+a(v))(1+a(w))}
 \le\frac{A_*^2}{(1+A_*)^2}<1.
 \end{aligned}
\]
Expanding the remaining logarithm therefore gives
the absolutely convergent expression
\[
 -\sum_{k\ge1}\frac1k
 \left\|\int u_B(v)^{\otimes 2k}\,\delta_N(\dd v)\right\|^2\le0.
\]
This proves concavity; for $N=1$ the functional is affine.

The bound
\begin{equation}\label{eq:log-column-bound}
 0\le F_{q,N,t}(\sigma)
 \le\frac Nq\int\log\left(1+\frac{\|y\|^2}{Nt}\right)\sigma(\dd y)
\end{equation}
follows by successive rank-one updates. For general laws, replace $y$
by $y\1_{\{\|y\|\le K\}}$ and let $K\to\infty$.
For each coupled sample the covariance increases in the PSD order, so
monotone convergence passes the concavity inequality to the limit;
\eqref{eq:log-column-bound} ensures finiteness.
For bounded laws, Jensen's inequality extends from finite mixtures to
general mixtures by continuity of the bounded product integral under
weak convergence; truncation then gives the stated general version.
Finally, $F_{q,N,t}(O\sigma)=F_{q,N,t}(\sigma)$ for every fixed $O$.
Applying mixture concavity to Haar measure proves \eqref{eq:log-angular}.
\end{proof}

\begin{lemma}[Logarithmic spectral uniform integrability]
\label{lem:log-spectral-UI}
Let $y_q\in\R^q$, let $q/N$ stay bounded above and away from zero,
and suppose $\{\log(1+\|y_q\|^2/q)\}$ is uniformly integrable.
For the covariance $S$ of $N$ independent copies and every $t>0$,
the random spectral integrals of $\log(1+\lambda/t)$ have uniformly
vanishing expected tails. Consequently, if
$\ESD(S)\Rightarrow\mu$ weakly in probability for a deterministic
probability measure $\mu$, then
\[
 \E\frac1q\log\det(I+S/t)
 \longrightarrow\int\log(1+\lambda/t)\,\mu(\dd\lambda)<\infty.
\]
The tail bounds hold uniformly over an additional index if the logarithmic
uniform integrability does. Convergence of expected logarithmic determinants
is uniform when weak ESD convergence is also uniform.
\end{lemma}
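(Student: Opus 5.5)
We must show that the spectral tail integrals $\int_{\{\lambda>L\}}\log(1+\lambda/t)\,\ESD(S)(\dd\lambda)$ have expectations tending to zero as $L\to\infty$, uniformly in $q$. We then combine this with weak convergence in probability.

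\textbf{Step 1: a deterministic domination by column log-norms.} The plan is to dominate the tail of the spectral logarithm by a column-wise quantity. By successive rank-one updates, as in \eqref{eq:log-column-bound}, we have the pathwise bound
\[
\frac1q\log\det(I+S/t)\le\frac1q\sum_{j=1}^N\log\Bigl(1+\frac{\|y_j\|^2}{Nt}\Bigr).
\]
For the tail, split the columns at a radial cutoff. Let $J=\{j:\|y_j\|^2/q>K\}$ and $S=S^{(K)}+S_J$, where $S_J$ collects the columns in $J$. Since $\log\det$ is subadditive in the sense $\log\det(I+A+B)\le\log\det(I+A)+\log\det(I+B)$ for $A,B\ge0$, the contribution of $S_J$ is at most $\frac1q\sum_{j\in J}\log(1+\|y_j\|^2/(Nt))$. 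Its expectation is $\frac Nq\E[\log(1+\|y\|^2/(Nt))\1_{\{\|y\|^2/q>K\}}]$. With $q/N$ bounded, this is small uniformly by the assumed uniform integrability, since $\|y\|^2/(Nt)\le C_t\|y\|^2/q$.

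\textbf{Step 2: the truncated part.} The part $S^{(K)}$ has $\frac1q\Tr S^{(K)}\le (N/q)^{-1}\cdot$ hmm, precisely $\frac1q\Tr S^{(K)}\le K$. Hence by Markov's inequality its ESD puts mass at most $K/L$ above $L$. Using the concave bound $\log(1+\lambda/t)\le\log(1+L/t)+\lambda/L$ for $\lambda\ge L$, together with $\log(1+\lambda/t)\le C_{t}\sqrt{\lambda}$, we get that the spectral integral of $\log(1+\lambda/t)$ over $\{\lambda>L\}$ against $\ESD(S^{(K)})$ is at most $C_t\sqrt K\sqrt{K/L}$ or similar, by Cauchy--Schwarz with the first moment $K$.

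To pass from the eigenvalues of $S$ to those of $S^{(K)}$ on the tail, the plan is to use the determinant split exactly. Write $\log\det(I+S/t)\le\log\det(I+S^{(K)}/t)+\log\det(I+S_J/t)$. Then bound the tail integral for $S$ by the sum of the top-eigenvalue contributions: Weyl interlacing gives $\lambda_{i+|J|}(S)\le\lambda_i(S^{(K)})$, so the eigenvalues of $S$ above $L$ either come from the top $|J|$ indices, which are controlled in aggregate by $\log\det(I+S_J/t)$ plus the matching $S^{(K)}$ terms, or are dominated by eigenvalues of $S^{(K)}$ above $L$. Choosing $K$ first and then $L$ gives uniformly vanishing expected tails. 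The main obstacle is organizing this interlacing bookkeeping cleanly. The first thing I would try is to use Ky Fan / majorization for $\log(1+\cdot/t)$ applied to $S\le S^{(K)}+S_J$ in the sense of eigenvalue sums.

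\textbf{Step 3: convergence.} Write $f=\log(1+\lambda/t)$ and $f_L=\min(f,f(L))$. Since $f_L$ is bounded and continuous, weak convergence in probability together with bounded convergence gives $\E\int f_L\,\dd\ESD(S)\to\int f_L\,\dd\mu$. The tail bound controls $\E\int(f-f_L)\,\dd\ESD(S)$ uniformly. Fatou's lemma along almost surely convergent subsequences gives $\int f\,\dd\mu<\infty$, and monotone convergence gives $\int f_L\,\dd\mu\to\int f\,\dd\mu$. The uniformity statements follow because every bound above depends only on the uniform-integrability modulus and on the uniform weak convergence.
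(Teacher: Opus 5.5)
Your outline matches the paper's proof in every step but one. You split at a radial cutoff $K$, bound the log-determinant difference between $S$ and the core $S^{(K)}$ by the column-tail sum $\frac1q\sum_{j\in J}\log(1+\|y_j\|^2/(Nt))$, control the core by $\frac1q\Tr S^{(K)}\le K$, and remove a cap for the limit. Your subadditivity route gives the same bound as the paper's successive updates. The step you leave open is transferring the spectral tail of $S$ to that of $S^{(K)}$, and it is the only one that needs an idea. The shifted interlacing $\lambda_{i+|J|}(S)\le\lambda_i(S^{(K)})$ you propose is the wrong tool: it says nothing about the top $|J|$ indices. If those are controlled, as you write, by $\log\det(I+S_J/t)$ \emph{plus} the matching core terms, the leftover $\frac1q\sum_{i\le|J|}\log(1+\lambda_i(S^{(K)})/t)$ is not a tail quantity. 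Splitting it at $L$ leaves $\frac{|J|}{q}\log(1+L/t)$, with expectation $\frac Nq\Pp(\|y\|^2/q>K)\log(1+L/t)$, which grows like $\log L$ as $L\to\infty$ at fixed $K$. So the order ``first $K$, then $L$'' fails for that version, which could only be saved by tying $L$ to a power of $K$.

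The missing observation is that $S\ge S^{(K)}$ in the positive semidefinite order. Weyl monotonicity then gives $s_i:=\lambda_i(S)\ge a_i:=\lambda_i(S^{(K)})$ at the \emph{same} index $i$. With $h(\lambda)=\log(1+\lambda/t)$, each difference $h(s_i)-h(a_i)$ is nonnegative. Their sum is exactly $\log\det(I+S/t)-\log\det(I+S^{(K)}/t)$, which you already bounded in Step~1. Since $u\mapsto(u-L)_+$ is nondecreasing and $1$-Lipschitz, $(h(s_i)-L)_+\le(h(a_i)-L)_++h(s_i)-h(a_i)$. Summing over $i$, the normalized tail of $S$ is at most the core tail plus the column-tail sum. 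Your Step~2 bound and your order of limits then go through as stated, and this is exactly the paper's argument. No Ky Fan-type majorization is needed: any partial-sum inequality you would want follows from the nonnegativity of the paired differences.
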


\begin{proof}
Put $r_q=\|y_q\|^2/q$ and $d_q=q/N$.
Split $S=A_K+B_K$ according as the column radius is at most $K$ or
exceeds $K$. Successive determinant updates give
\begin{equation}\label{eq:log-tail-columns}
 0\le\frac1q\log\det(I+S/t)-\frac1q\log\det(I+A_K/t)
 \le\frac1q\sum_{j:r_j>K}\log(1+d_qr_j/t).
\end{equation}
The expectation of the right side tends uniformly to zero as
$K\to\infty$. For fixed $K$, $\E\Tr A_K/q\le K$.
Write $h(\lambda)=\log(1+\lambda/t)$ and order the eigenvalues of
$S,A_K$ as $s_i\ge a_i$. Since
\[
 (h(s_i)-L)_+\le(h(a_i)-L)_++h(s_i)-h(a_i),
\]
the expected normalized sum of the left side is at most the
expectation in \eqref{eq:log-tail-columns} plus
\[
 K\sup_{\lambda:\,h(\lambda)>L}\frac{h(\lambda)}\lambda.
\]
First let $L\to\infty$, then $K\to\infty$. The displayed supremum
tends to zero, proving the tail claim. Apply weak convergence to
$h\wedge L$, take expectations using boundedness, and remove the
truncation. For an additional index, uniform logarithmic integrability
makes the expectation in \eqref{eq:log-tail-columns} small uniformly,
while the core bound $K$ is independent of that index. Thus the same
ordered limits give uniform spectral tails. For each fixed cap,
uniform weak convergence in probability implies uniform convergence of
expectations, since the capped test is bounded. Removing the cap using
these uniform tail bounds proves uniform convergence of the expected
logarithmic determinants; the limit measures inherit the same tail bound
by bounded truncation and passage to the limit.
\end{proof}

\subsection{A variational form of the radial logarithmic transform}

For $d,t>0$ and a radial law $\eta$ with
$\int\log(1+r)\eta(\dd r)<\infty$, define
\[
 \mathcal J_{d,\eta}(t)
 =\int\log(1+\lambda/t)\,\mu_{d,\eta}(\dd\lambda).
\]
Write $s_{d,\eta}=s_{\mu_{d,\eta}}$ for the Stieltjes transform of this law.

\begin{lemma}\label{lem:log-variational}
The preceding integral is finite, and
\begin{equation}\label{eq:log-variational}
 \mathcal J_{d,\eta}(t)=\inf_{s>0}
 \left\{-\log(ts)+\frac1d\int\log(1+drs)\,\eta(\dd r)-1+ts\right\}.
\end{equation}
The unique minimizer is $s=s_{d,\eta}(-t)$.
If $\eta_n\Rightarrow\eta$ and the logarithms of the corresponding
radii are uniformly integrable, then
$\mathcal J_{d,\eta_n}(t)\to\mathcal J_{d,\eta}(t)$.
If $Z,T\ge0$ have finite logarithmic moments and
$T=\E[Z\mid\mathcal G]<\infty$ almost surely, then
\begin{equation}\label{eq:log-conditional-Jensen}
 \mathcal J_{d,\law(T)}(t)\ge\mathcal J_{d,\law(Z)}(t).
\end{equation}
Here conditional expectation of a nonnegative variable is allowed
without assuming its unconditional first moment finite.
\end{lemma}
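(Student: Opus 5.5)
The plan is to treat the right side of \eqref{eq:log-variational} as a one-parameter family of explicit functionals of $s$. Write
\[
 \Phi_{t,\eta}(s)=-\log(ts)+\frac1d\int\log(1+drs)\,\eta(\dd r)-1+ts,\qquad s>0 .
\]
The log-moment of $\eta$ makes $\Phi_{t,\eta}(s)$ finite for every $s>0$. The argument then has four steps: locate the unique critical point, identify the infimum with $\mathcal J_{d,\eta}(t)$ by matching $t$-derivatives and the limit at $t=\infty$, and read off continuity in $\eta$ and the Jensen inequality from the formula.

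\emph{Critical point.} Differentiating under the integral is justified because $r/(1+drs)\le1/(ds)$. This gives
\[
 s\,\Phi_{t,\eta}'(s)=-1+ts+\int\frac{rs}{1+drs}\,\eta(\dd r).
\]
The right side is strictly increasing in $s$, tends to $-1$ as $s\downarrow0$, and tends to $+\infty$ as $s\to\infty$. Hence $\Phi_{t,\eta}$ decreases and then increases, and it has a unique minimizer $s_*(t)$. The condition $\Phi'_{t,\eta}(s)=0$ reads $1/s-t=\int r(1+drs)^{-1}\eta(\dd r)$. This is \eqref{eq:RMP} with $c=d$ at $z=-t$, continued to the negative axis as in the atom computation in the proof of \cref{prop:limit-law}. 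Since $s_{d,\eta}(-t)>0$ satisfies it, uniqueness gives $s_*(t)=s_{d,\eta}(-t)$. The implicit function theorem applies because $s\Phi'$ has positive $s$-derivative at the zero. So $s_*$ is $C^1$ in $t$, and the envelope identity gives
\[
 \frac{\dd}{\dd t}\inf_s\Phi_{t,\eta}=\partial_t\Phi_{t,\eta}(s_*)=s_{d,\eta}(-t)-\frac1t .
\]

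\emph{Identification and finiteness.} Since $\frac{\dd}{\dd t}\log(1+\lambda/t)=\frac1{t+\lambda}-\frac1t$, the candidate derivative of $\mathcal J_{d,\eta}$ is the same function $s_{d,\eta}(-t)-1/t\le0$. I would avoid differentiating an integral not yet known to be finite. Instead, Tonelli gives
\[
 \int\log(1+\lambda/t)\,\mu(\dd\lambda)=\int_t^\infty\Bigl(\frac1u-s_{d,\eta}(-u)\Bigr)\dd u,
\]
with nonnegative integrand $\int\lambda/(u(u+\lambda))\mu(\dd\lambda)$. So it suffices to show that $\inf_s\Phi_{t,\eta}$ is finite, that it tends to $0$ as $t\to\infty$, and then to integrate the envelope derivative from $t$ to $\infty$. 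This also proves finiteness of $\mathcal J_{d,\eta}(t)$.

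For the limit I would use $ts_*(t)=\int t/(t+\lambda)\,\mu(\dd\lambda)\to1$ and $s_*(t)\le1/t\to0$. Then $ts_*-1-\log(ts_*)\to0$. The term $\frac1d\int\log(1+drs_*)\,\eta(\dd r)\to0$ by dominated convergence, with dominating function $\frac1d\log(1+dr/t_0)$. I expect this boundary behaviour at $t=\infty$, together with the finiteness bookkeeping that lets the envelope argument replace a direct differentiation of $\mathcal J$, to be the main obstacle.

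\emph{Continuity in $\eta$.} Uniform integrability of $\log(1+r)$ under $\eta_n$ and the bound $\log(1+drs)\le\log(1+ds)+\log(1+r)$ give $\Phi_{t,\eta_n}\to\Phi_{t,\eta}$ pointwise. The $s$-derivatives are bounded by $1/s+t+1/(ds)$, so the family is equi-Lipschitz on compacts of $(0,\infty)$, and the convergence is locally uniform. The minimizers $s_{d,\eta_n}(-t)$ converge to $s_{d,\eta}(-t)>0$. Indeed, \cref{thm:sufficient} applied to Gaussian models $\sqrt{R}\,g$ with $R\sim\eta_n$ along a diagonal sequence, or directly the fixed-point uniqueness in \cref{prop:limit-law} together with compactness of $(0,1/t]$, gives $\mu_{d,\eta_n}\Rightarrow\mu_{d,\eta}$. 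The minimizers therefore stay in a compact subset of $(0,\infty)$, and the infima converge.

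\emph{Conditional Jensen.} For each fixed $s>0$ the map $r\mapsto\log(1+drs)$ is concave and nonnegative on $[0,\infty)$. Conditional Jensen for nonnegative variables needs only $T=\E[Z\mid\mathcal G]<\infty$ almost surely, and it gives $\E\log(1+dTs)\ge\E\log(1+dZs)$. Hence $\Phi_{t,\law(T)}(s)\ge\Phi_{t,\law(Z)}(s)$ for every $s>0$. Taking infima and applying \eqref{eq:log-variational} to both laws yields \eqref{eq:log-conditional-Jensen}.
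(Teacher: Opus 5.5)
Your proof is correct. The critical-point analysis and the conditional Jensen step are the paper's; the real difference is in how you identify the infimum with $\mathcal J_{d,\eta}(t)$ and obtain finiteness.

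\emph{Identification.} The paper realizes $\mu_{d,\eta}$ through spherical columns $\sqrt{qr}\,u_q$. It invokes \cref{thm:sufficient}, \cref{lem:log-spectral-UI} and the column bound \eqref{eq:log-column-bound} to show that $\mathcal J_{d,\eta}(t)$ is finite and at most $d^{-1}\int\log(1+dr/t)\,\eta(\dd r)$. Only then does it match the $t$-derivatives of the two sides and their vanishing at $t=\infty$. Your identity $\log(1+\lambda/t)=\int_t^\infty\lambda\,(u(u+\lambda))^{-1}\dd u$, combined with Tonelli, gives $\mathcal J_{d,\eta}(t)=\int_t^\infty(u^{-1}-s_{d,\eta}(-u))\,\dd u$ in $[0,\infty]$ with no a priori finiteness. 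Integrating the envelope derivative then yields the formula and finiteness at once. This needs no random-matrix input and no differentiation of $\mathcal J$ under the integral sign. The paper's upper bound is recovered from $\inf_s\Phi_{t,\eta}\le\Phi_{t,\eta}(1/t)$. The price is the implicit-function/envelope bookkeeping. Like the paper, you also rely on the continuation of \eqref{eq:RMP} to $z=-t$ from \cref{prop:limit-law}.

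\emph{Continuity in $\eta$.} The paper keeps the minimizers away from $0$ directly, from $1=ts_n+d^{-1}\int\psi(drs_n)\,\eta_n(\dd r)$ and tightness of $\eta_n$. Your route (a) instead gets convergence of the minimizers from weak continuity of $\eta\mapsto\mu_{d,\eta}$, via a diagonal use of \cref{thm:sufficient}. That is heavier but complete. Your alternative (b) is incomplete as written: $(0,1/t]$ is not compact, so you must separately rule out $s_n\to0$, and that is exactly the tightness argument the paper uses.
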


\begin{proof}
Let $V_\eta(t,s)$ be the expression in braces in
\eqref{eq:log-variational}. Its derivative with respect to $\log s$ is
\[
 -1+ts+\int\frac{rs}{1+drs}\,\eta(\dd r).
\]
This is strictly increasing from $-1$ to infinity as $s$ increases,
so the unique global minimizer solves the radial MP equation at $-t$.
Differentiation is justified since the radial integrands in this
derivative are bounded. At the minimizer the derivative of the value
with respect to $t$ is $s_{d,\eta}(-t)-1/t$.

To identify the value, use spherical columns
$y_q=\sqrt{q r}\,u_q$, with $r\sim\eta$ and $u_q$ uniform on the unit
sphere, independently. The sufficient theorem and
\cref{lem:log-spectral-UI}, together with
\eqref{eq:log-column-bound}, show that $\mathcal J_{d,\eta}(t)$ is finite
and bounded above by $d^{-1}\int\log(1+dr/t)\eta(\dd r)$.
Its derivative with respect to $t$ is also $s_{d,\eta}(-t)-1/t$.
Both functions tend to zero as $t\to\infty$: for the variational
value use
\[
 0\le\inf_s V_\eta(t,s)\le V_\eta(t,1/t)
 =\frac1d\int\log(1+dr/t)\,\eta(\dd r),
\]
and for the spectral value use the same upper bound. Logarithmic
dominated convergence now proves \eqref{eq:log-variational}.

For the asserted continuity, the minimizers belong to $(0,1/t]$.
They are bounded away from zero uniformly in $n$. Otherwise, tightness
and the bounded integrand in their scalar equation would make
its right side tend to zero instead of one. Weak convergence and
logarithmic uniform integrability give uniform convergence of
$V_{\eta_n}(t,s)$ on the resulting compact interval, proving continuity.
Finally, conditional Jensen gives
$\E\log(1+dsT)\ge\E\log(1+dsZ)$ for every $s>0$.
Taking infima proves \eqref{eq:log-conditional-Jensen}.
\end{proof}

\subsection{Proof of the converse at one projection rank}

Deleting a fraction of columns gives a lower bound by a correspondingly
trimmed spectral logarithm. For the Gaussian reference model, the next lemma
bounds the difference in the opposite direction independently of the
magnitudes of the deleted radii.
The lemma itself is deterministic. Deleting $k$ columns changes the matrix
by rank at most $k$, so interlacing gives one inequality. For the reverse
inequality, both the determinant increment and the upper spectral trimming
contain the large factors $\log(1+r_j)$ of the deleted columns.
A Schur-complement normalization cancels these factors, leaving an
$O(k)$ error controlled by frame bounds. In the Gaussian application those
bounds are uniform in the large radii, so the error requires no logarithmic
moment.

\begin{lemma}[A determinant bound for radial deletion]
\label{lem:frame-trimming}
Let $r_j\ge0$ and $t>0$, and let $G\in\R^{q\times N}$ have columns $g_j$. Put
\[
 S=\frac1N\sum_j r_jg_jg_j^T,\qquad
 S^K=\frac1N\sum_{r_j\le K}r_jg_jg_j^T.
\]
Fix $M>0$, write $\mathcal I=\{j:r_j>M\}$, and suppose $|\mathcal I|\le q$ and $G_{\mathcal I}$ has full
column rank when $\mathcal I\ne\varnothing$. Let
\[
 \begin{gathered}
 C_0=I+\frac1{Nt}\sum_{r_j\le M}r_jg_jg_j^T,\\
 \overline\varsigma=\max\{1,\|G\|^2/(Nt)\},\qquad
 \underline\varsigma=\min\{1,s_{\min}(G_{\mathcal I})^2/(Nt\|C_0\|)\}.
 \end{gathered}
\]
For $\mathcal I=\varnothing$ take $\underline\varsigma=1$. For $K\ge M$, let $k=\#\{j:r_j>K\}$ and
order the eigenvalues of $S$ decreasingly. Then
\begin{equation}\label{eq:frame-trimming}
 0\le \log\det(I+S^K/t)
       -\sum_{i=k+1}^q\log(1+\lambda_i(S)/t)
 \le k\log(\overline\varsigma/\underline\varsigma).
\end{equation}
In particular, the bound has no dependence on $\max_j r_j$.
\end{lemma}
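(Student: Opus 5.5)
The lower bound in \eqref{eq:frame-trimming} is interlacing. The upper bound will come from writing both the determinant increment and the top-$k$ spectral block through a Schur complement over the high-radius set $\mathcal I$, so that the factors $r_j$, $j\in D:=\{j:r_j>K\}$, appear identically on both sides and cancel. Throughout put $C=I+S/t$, $C^K=I+S^K/t$, and let $\lambda_i(\cdot)$ denote eigenvalues in decreasing order.

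\emph{Lower bound.} Since $C-C^K=\frac1{Nt}\sum_{j\in D}r_jg_jg_j^T$ is positive semidefinite of rank at most $k$, Weyl interlacing gives $\lambda_i(C^K)\ge\lambda_{i+k}(C)$ for $i\le q-k$. Hence
\[
\log\det C^K\ge\sum_{i\le q-k}\log\lambda_i(C^K)\ge\sum_{i>k}\log\lambda_i(C),
\]
where the first inequality drops the last $k$ factors, each at least $1$.

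\emph{Upper bound: reformulation.} By Ky Fan, $\prod_{i\le k}\lambda_i(C)=\max_{\dim U=k}\det(C|_U)$, so
\[
\sum_{i>k}\log\lambda_i(C)=\log\det C-\max_U\log\det(C|_U).
\]
The claim is therefore equivalent to
\[
\max_U\log\det(C|_U)\le\log\det C-\log\det C^K+k\log(\overline\varsigma/\underline\varsigma).
\]

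\emph{Lower bound on the increment.} Write
\[
\log\det C-\log\det C^K=\log\det(I_k+W),\qquad W=R_D^{1/2}G_D^T(NtC^K)^{-1}G_DR_D^{1/2},\quad R_D=\diag(r_j)_{j\in D}.
\]
Use $I_k+W\ge W$. Since $C^K\le C_0+\frac1{Nt}G_{\mathcal I\setminus D}\Lambda G_{\mathcal I\setminus D}^T$ with arbitrarily large $\Lambda$, the matrix $G_D^T(C^K)^{-1}G_D$ dominates the Schur complement of the $\mathcal I\setminus D$ block in $G_{\mathcal I}^TC_0^{-1}G_{\mathcal I}$. Full column rank of $G_{\mathcal I}$ then bounds its eigenvalues below by $s_{\min}(G_{\mathcal I})^2/\|C_0\|$. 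This yields
\[
\log\det(I+W)\ge\sum_{j\in D}\log r_j+k\log\underline\varsigma,
\]
after also using the trivial bound $\log(1+w)\ge\log 1$ in the case where the $\min$ in $\underline\varsigma$ is $1$.

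\emph{Upper bound on $\det(C|_U)$.} For a $k$-dimensional $U$ with orthonormal basis matrix $V$,
\[
C|_U=V^TC^KV+\frac1{Nt}V^TG_DR_DG_D^TV.
\]
I would factor out $R_D$ by writing the determinant as $\det(C^K|_U)\det\bigl(I_k+R_D^{1/2}G_D^TV(V^TNtC^KV)^{-1}V^TG_DR_D^{1/2}\bigr)$. Then I bound $1+r_j x\le r_j(1+x)$ for $r_j>K\ge M\ge1$. If $M<1$, one instead carries $\max(1,r_j)$ through; this small case split is needed. The remaining factor should be at most $\overline\varsigma^{\,k}$ using $\|G\|^2/(Nt)$, with the $C^K|_U$ part absorbed by the same Schur normalization applied on $\mathcal I\setminus D$.

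\emph{Expected obstacle.} This last step is where I expect the real difficulty. The compression $C^K|_U$ can contain moderately large radii in $(M,K]$, and showing that they cancel against the matching part of $\log\det C^K$, rather than producing $\log K$ terms, is precisely why the frame bound is taken over all of $\mathcal I$ with $C_0$ rather than $C^K$. I would first try proving the two-sided inequality for the Schur complement over all of $\mathcal I$. Concretely, both $\max_U\det(C|_U)$ and $\det C/\det C^K$ would be expressed relative to $\det C_0\cdot\prod_{j\in\mathcal I}r_j$, whose ratio is pinched between $\underline\varsigma^{|\cdot|}$ and $\overline\varsigma^{|\cdot|}$, and the resulting comparisons restricted to $D$. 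Combining the two bounds gives the $k\log(\overline\varsigma/\underline\varsigma)$ error with no dependence on $\max_jr_j$.
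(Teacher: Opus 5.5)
Your lower bound is correct. Your Schur-complement estimate for the increment is also correct. With $\sigma=s_{\min}(G_{\mathcal I})^2/(Nt\|C_0\|)$, the $\Lambda\to\infty$ limit is the Schur complement of the $(\mathcal I\setminus D)$-block of $\frac1{Nt}G_{\mathcal I}^TC_0^{-1}G_{\mathcal I}\ge\sigma I$, so $W\ge\sigma R_D$.

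The gap is the upper bound on $\prod_{i\le k}\lambda_i(C)$. This is the substance of the lemma, you leave it open, and the route you sketch cannot close it. After your step $1+r_jx\le r_j(1+x)$, what remains to be bounded is $\det\bigl(V^T(C^K+\frac1{Nt}G_DG_D^T)V\bigr)\ge\det(C^K|_U)$. This is of order $K$ even when $U$ is the top eigenspace of $C$. For instance, take $q=N=2$, $Nt=1$, $g_1=e_1$ with $r_1\gg K$, and $g_2=\cos\theta\,e_1+\sin\theta\,e_2$ with $r_2=K>M$ and $0<\theta<\pi/2$. The top eigenvector of $C$ tends to $e_1$, and $e_1^TC^Ke_1=1+K\cos^2\theta$. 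Meanwhile $\overline\varsigma/\underline\varsigma=(1+\cos\theta)/(1-\cos\theta)$ does not depend on $K$. A compression to a $k$-plane in $\R^q$ mixes deleted and retained columns, and no Schur normalization on the $q$-dimensional side separates them again.

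The step $I_k+W\ge W$ is also too lossy for the stated constant. It leaves $\sum_{j\in D}\log r_j$, and closing would then require $\prod_{i\le k}\lambda_i(C)\le\overline\varsigma^{\,k}\prod_{j\in D}r_j$. This fails for orthogonal columns $g_j=\sqrt{Nt}\,e_j$: there $\overline\varsigma=1$ and the top eigenvalues are $1+r_j$. Your $\max(1,r_j)$ variant still loses up to a factor $2$ per column.

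The missing idea is to bound the top $k$ eigenvalues in the $N\times N$ companion matrix, where the radii sit on the diagonal and cannot mix.
\begin{itemize}
\item Write $C=I_q+BB^T$ with $B=(Nt)^{-1/2}GR^{1/2}$ and $R=\diag(r_1,\dots,r_N)$.
\item Since $k\le|\mathcal I|\le\min(q,N)$, the $k$ largest eigenvalues of $C$ coincide with those of $I_N+B^TB$.
\item Because $B^TB=\frac1{Nt}R^{1/2}G^TGR^{1/2}\le\overline\varsigma R$, Weyl monotonicity bounds them by the $k$ largest eigenvalues of the diagonal matrix $I_N+\overline\varsigma R$. These are $1+\overline\varsigma r_j$, $j\in D$, since $D$ indexes the $k$ largest radii.
\item Hence $\sum_{i\le k}\log\lambda_i(C)\le\sum_{j\in D}\log(1+r_j)+k\log\overline\varsigma$, using $\overline\varsigma\ge1$. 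The medium radii in $(M,K]$ never enter.
\item On the increment side, replace $I_k+W\ge W$ by $I_k+W\ge I_k+\sigma R_D$. Since $1+\sigma r\ge\min(1,\sigma)(1+r)$, this gives $\log\det C-\log\det C^K\ge\sum_{j\in D}\log(1+r_j)+k\log\underline\varsigma$ with no case split.
\end{itemize}
Subtracting the two bounds, the $\log(1+r_j)$ terms cancel exactly. This is the paper's proof. The paper obtains the increment bound from the Schur complement of a normalized matrix $Z$ with $\underline\varsigma I\le Z\le\overline\varsigma I$, which serves the same purpose as your $\Lambda\to\infty$ step. The Ky Fan reformulation is not needed.
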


\begin{proof}
The lower bound is eigenvalue interlacing under deletion of $k$ positive
rank-one terms. If $k=0$ the assertion is immediate.
Put $D_{\mathcal I}=\diag(r_j:j\in \mathcal I)$ and
\[
 \mathcal V=\frac1{Nt}G_{\mathcal I}^TC_0^{-1}G_{\mathcal I},\qquad
 Z=(I+D_{\mathcal I})^{-1/2}
   (I+D_{\mathcal I}^{1/2}\mathcal V D_{\mathcal I}^{1/2})
   (I+D_{\mathcal I})^{-1/2}.
\]
Since $C_0\ge I$, one has
\[
 \frac{s_{\min}(G_{\mathcal I})^2}{Nt\|C_0\|}I\le \mathcal V\le\frac{\|G\|^2}{Nt}I.
\]
Thus $\underline\varsigma I\le Z\le\overline\varsigma I$. Indeed $Z$ is the sum of
$(I+D_{\mathcal I})^{-1}$ and the congruence of $\mathcal V$ by
$D_{\mathcal I}^{1/2}(I+D_{\mathcal I})^{-1/2}$, whose squared coefficients sum to $I$ with
$(I+D_{\mathcal I})^{-1}$.

Let $J=\{j:r_j>K\}\subset \mathcal I$ and $E=\mathcal I\setminus J$. The determinant lemma,
first for $S$ and then for $S^K$, gives
\[
 \log\det(I+S/t)-\log\det(I+S^K/t)
 =\sum_{j\in J}\log(1+r_j)+\log\det Z-\log\det Z_{EE}.
\]
Use determinant one for an empty principal block. The last difference is
the log determinant of a $k\times k$ Schur complement of $Z$, which is at
least $k\log \underline\varsigma$: its inverse is a principal block of $Z^{-1}\le \underline\varsigma^{-1}I$.
On the other hand,
\[
 I_N+\frac1{Nt}D^{1/2}G^TGD^{1/2}\le I_N+\overline\varsigma D,\qquad D=\diag(r_1,\ldots,r_N).
\]
Since $k\le\min(q,N)$, its largest $k$ eigenvalues agree with those of
$I_q+S/t$. The set $J$ indexes the largest $k$ radii. Eigenvalue monotonicity
and $\overline\varsigma\ge1$ therefore give
\[
 \sum_{i=1}^k\log(1+\lambda_i(S)/t)
 \le\sum_{j\in J}\log(1+\overline\varsigma r_j)
 \le\sum_{j\in J}\log(1+r_j)+k\log \overline\varsigma.
\]
Subtracting the determinant increment cancels the factors
$\log(1+r_j)$ and proves the upper bound.
\end{proof}

For a probability measure $\mu$ on $[0,\infty)$, let $Q_\mu$ be its increasing
quantile function. For $0<\delta<1$ define the finite trimmed logarithm
\[
 \mathcal T_{\delta,t}(\mu)
   =\int_0^{1-\delta}\log(1+Q_\mu(u)/t)\,\dd u.
\]
It is continuous under weak convergence of $\mu$ and convergence of
$\delta$ within $(0,1)$. To see this, choose a continuity point $L$ with
$\mu([0,L])>1-\delta/2$. The retained quantiles are then bounded by $L$
eventually, and their almost-everywhere convergence gives the assertion.
This reasoning also applies to random measures converging in probability.

\begin{lemma}[A tail-probability bound from the projected spectra]
\label{lem:log-tail-transfer}
Assume $R_p\Rightarrow\nu$ and the projected spectral hypothesis (ii) of
\cref{thm:necessity-free}, with no radial moment assumption. Write $d=c\alpha$.
Choose $M>0$ such that $\nu((M,\infty))<d/4$.
For a continuity point $K\ge M$ of $\nu$, put
\[
 x_p^K=x_p\1_{\{R_p\le K\}},\qquad
 \nu_K=\law(R\1_{\{R\le K\}}),\quad R\sim\nu,\qquad
 \tau_K=\nu((K,\infty)).
\]
For each $t>0$ there is a finite constant $C=C(d,M,t)$, independent of $K$,
such that
\begin{equation}\label{eq:log-tail-transfer}
 \liminf_p\inf_{\rank P=q_p}F_{q_p,N,t}(\law(C_Px_p^K))
 \ge\mathcal J_{d,\nu_K}(t)-\frac{C\tau_K}{d}.
\end{equation}
\end{lemma}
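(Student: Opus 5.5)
The plan is to sandwich the trimmed spectral logarithm $\mathcal T_{\delta,t}(\mu_{d,\nu})$ at $\delta=\tau_K/d$ between the two sides of \eqref{eq:log-tail-transfer}. The lower inequality of \cref{lem:frame-trimming} is used for the actual vectors, and the upper inequality for a Gaussian reference model.

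\emph{Step 1 (actual vectors, interlacing).} Fix a rank-$q_p$ projection $P$ and write $y_j=C_Px_j$. Then $F_{q_p,N,t}(\law(C_Px_p^K))=q_p^{-1}\E\log\det(I+S_P^K/t)$, where $S_P^K$ keeps only the columns with $R_j\le K$. Let $k=\#\{j:R_j>K\}$. Deleting $k$ positive rank-one terms gives $\lambda_i(S_P^K)\ge\lambda_{i+k}(S_P)$. This interlacing does not use the Gaussian structure, so
\[
 \frac1{q_p}\log\det(I+S_P^K/t)\ge\mathcal T_{k/q_p,t}(\ESD(S_P)).
\]
Since $K$ is a continuity point of $\nu$ and $R_p\Rightarrow\nu$, Step~0 of the proof of \cref{thm:sufficient} gives $k/q_p\to\tau_K/d$ in probability, and $\tau_K/d<1/4$. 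By hypothesis (ii), $\ESD(S_{P_p})\Rightarrow\mu_{d,\nu}$ in probability along every deterministic sequence $P_p$. Continuity of $\mathcal T_{\delta,t}$ jointly in the measure and in $\delta$, together with Fatou's lemma for the nonnegative integrand, then gives
\[
 \liminf_p\frac1{q_p}\E\log\det(I+S_{P_p}^K/t)\ge\mathcal T_{\tau_K/d,t}(\mu_{d,\nu}).
\]
The selection argument of \cref{lem:uniform} turns this bound along every sequence into the infimum over $P$. So it remains to prove $\mathcal T_{\tau_K/d,t}(\mu_{d,\nu})\ge\mathcal J_{d,\nu_K}(t)-C\tau_K/d$.

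\emph{Step 2 (Gaussian reference, frame trimming).} Take $y_j=\sqrt{r_j}\,g_j$ with $r_j\sim\nu$ i.i.d.\ and $g_j\sim N(0,I_q)$ independent. Here $r_j$ plays the role of the radius and $N/q\to1/d$.
\begin{itemize}[label={}]
\item By \cref{thm:sufficient} (the existence argument of \cref{prop:limit-law}), the full ESD converges to $\mu_{d,\nu}$.
\item The columns truncated at $K$ have a bounded radial law, so \cref{thm:sufficient} together with \cref{lem:log-spectral-UI} gives $q^{-1}\E\log\det(I+S^K/t)\to\mathcal J_{d,\nu_K}(t)$.
\end{itemize}
Apply \cref{lem:frame-trimming} with the fixed cutoff $M$ on a good event $\Omega_p$ defined by four conditions:
\begin{itemize}[label={}]
\item $|\mathcal I|/q<1/4+o(1)$, which uses $\nu((M,\infty))<d/4$;
\item $\|G\|^2/N\le(1+\sqrt d)^2+1$;
\item $s_{\min}(G_{\mathcal I})^2/N\ge d(1-\sqrt{1/2})^2/2$, from the standard smallest-singular-value bound for a tall Gaussian matrix with aspect ratio below $1/2$;
\item consequently $\|C_0\|\le1+M\|G\|^2/(Nt)$.
\end{itemize}
The first three conditions are standard Gaussian facts, and $\Pp(\Omega_p)\to1$. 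On $\Omega_p$, $\log(\overline\varsigma/\underline\varsigma)\le C_0(d,M,t)$, a bound independent of $K$. Hence on $\Omega_p$,
\[
 \frac1q\log\det(I+S^K/t)\le\mathcal T_{k/q,t}(\ESD(S))+\frac kq\,C_0.
\]

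\emph{Step 3 (passing to the limit).} Take expectations of the Step~2 inequality and pass to the limit.
\begin{itemize}[label={}]
\item Off $\Omega_p$, bound the left side by itself. The family $q^{-1}\log\det(I+S^K/t)$ is uniformly integrable by \eqref{eq:log-tail-columns} with bounded radii, so this contribution vanishes.
\item On $\Omega_p$, the trimmed term needs a uniformly integrable dominating variable, and I expect this to be the main technical point. Interlacing gives $\mathcal T_{k/q,t}(\ESD(S))\le q^{-1}\log\det(I+S^K/t)$, which is uniformly integrable. So dominated convergence applies, with the trimmed term converging in probability to $\mathcal T_{\tau_K/d,t}(\mu_{d,\nu})$ by continuity in the measure and in $\delta$.
\item $\E k/q\to\tau_K/d$.
\end{itemize}
This yields $\mathcal J_{d,\nu_K}(t)\le\mathcal T_{\tau_K/d,t}(\mu_{d,\nu})+C_0\tau_K/d$. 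Combined with Step~1, this proves \eqref{eq:log-tail-transfer} with $C=C_0(d,M,t)$.

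The genuine obstacle is Step~2: the frame bounds must be uniform in the large radii. This holds because \cref{lem:frame-trimming} cancels the factors $\log(1+r_j)$ and $M$ is chosen independently of $K$. As a result, no logarithmic moment of $\nu$ enters anywhere.
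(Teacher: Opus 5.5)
Your proposal is correct and follows the paper's proof essentially step for step. Both arguments use interlacing, Fatou and sequence selection for the actual projected samples, then \cref{lem:frame-trimming} on a Gaussian reference with the fixed cutoff $M$, Gaussian frame bounds on a high-probability event, and the trace envelope $K\|G\|_{\mathrm F}^2/(qNt)$ to pass to expectations.

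The differences are minor. The paper identifies the deleted reference logarithm with $\mathcal J_{d,\nu_K}(t)$ by capping at $L$ rather than citing \cref{lem:log-spectral-UI}. It also treats $\tau_K=0$ separately. In that case you may invoke only lower semicontinuity of the untrimmed logarithm, since joint continuity in $\delta$ fails at $\delta=0$, and the reference comparison is trivial because $\nu_K=\nu$.
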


\begin{proof}
First suppose $\tau_K>0$, so $0<\delta_K=\tau_K/d<1/4$.
The number $k_p$ of deleted columns satisfies $k_p/q_p\to\delta_K$ in
probability. Interlacing gives
\[
 F_{q_p,N,t}(\law(C_Px_p^K))
 \ge \E\frac1{q_p}\sum_{i=k_p+1}^{q_p}
           \log(1+\lambda_i(S_P)/t).
\]
Set the sum to zero when $k_p\ge q_p$. The spectral hypothesis and
continuity of the trimmed logarithm make the random statistic converge
in probability to $\mathcal T_{\delta_K,t}(\mu_{d,\nu})$.
Nonnegativity and Fatou give the same lower bound for the liminf of its
expectation. Sequence selection makes the conclusion uniform in $P$.

It remains to prove
\begin{equation}\label{eq:gaussian-trimmed-comparison}
 \mathcal J_{d,\nu_K}(t)
 \le\mathcal T_{\delta_K,t}(\mu_{d,\nu})+\frac{C\tau_K}{d}.
\end{equation}
Use a Gaussian reference with $q/N\to d$, independent $r_j\sim\nu$ and
standard Gaussian $G\in\R^{q\times N}$. Its full and radially deleted
covariances have limits $\mu_{d,\nu}$ and $\mu_{d,\nu_K}$ by the sufficient
theorem. Conditional on the radii, $G_{\mathcal I}$ for $\mathcal I=\{j:r_j>M\}$ is a standard
Gaussian matrix with $|\mathcal I|/N\to\nu((M,\infty))<d/4$.
The Gaussian singular-value bounds \cite[Theorem~II.13]{DavidsonSzarek2001} imply that,
on events $\mathcal E_p$ with probability tending to one,
\[
 |\mathcal I|<q/2,\qquad
 \|G\|^2/N\le B_d,\qquad
 s_{\min}(G_{\mathcal I})^2/N\ge A_d>0
\]
when $\mathcal I\ne\varnothing$, for fixed constants $A_d,B_d$.
Moreover $\|C_0\|\le1+MB_d/t$. The constants $\underline\varsigma,\overline\varsigma$ in
\cref{lem:frame-trimming} consequently satisfy
\[
 \underline\varsigma\ge\min\{1,A_d/(t+MB_d)\},\qquad
 \overline\varsigma\le\max\{1,B_d/t\}
\]
on $\mathcal E_p$. Their logarithmic ratio is bounded by a constant
$C(d,M,t)$ independent of $K$ and of all radial magnitudes.

For each fixed $K$ the normalized logarithm of the deleted reference
covariance, and also its trimmed full spectral logarithm by interlacing,
are bounded above by
\[
 \frac1{qt}\Tr S^K\le\frac{K}{qNt}\|G\|_{\mathrm F}^2.
\]
Here $\|\cdot\|_{\mathrm F}$ is the Frobenius norm.
Put $W_p=\|G\|_{\mathrm F}^2/(qN)$. Since its numerator is chi-squared
with $qN$ degrees of freedom, $\E W_p^2=1+2/(qN)$.
For either logarithmic statistic $X_p$ just bounded, Cauchy--Schwarz gives
\[
 \E[X_p\1_{\mathcal E_p^c}]
 \le (K/t)(\E W_p^2)^{1/2}\Pp(\mathcal E_p^c)^{1/2}=o(1)
\]
at fixed $K$. The same $L^2$ envelope makes the trimmed statistics uniformly
integrable. Their convergence in probability therefore gives convergence
in expectation to $\mathcal T_{\delta_K,t}(\mu_{d,\nu})$.

For the deleted reference, the sufficient theorem gives weak spectral
convergence. For $\lambda>L$,
\[
 \log(1+\lambda/t)-\log(1+L/t)
 =\log\left(1+\frac{\lambda-L}{t+L}\right)
 \le\frac{\lambda}{t+L};
\]
the positive part is zero for $\lambda\le L$.
Since $\E\Tr S^K/q=\E[r\1_{\{r\le K\}}]\le K$, capping the logarithm
leaves expected error at most $K/(t+L)$. First take $p\to\infty$ at fixed
$K,L$, then $L\to\infty$, to identify the expected deleted logarithm
with $\mathcal J_{d,\nu_K}(t)$.

Finally $k_p$ is binomial with parameters $N,\tau_K$, so
$\E(k_p/q)=(N/q)\tau_K\to\tau_K/d$.
Taking expectations in \cref{lem:frame-trimming} proves
\eqref{eq:gaussian-trimmed-comparison}.

If $\tau_K=0$, continuity of $\nu$ at $K$ and $R_p\Rightarrow\nu$ give
$\Pp(R_p>K)\to0$. Hence
$\E(k_p/q_p)=(N/q_p)\Pp(R_p>K)\to0$, and Markov's inequality gives
$k_p/q_p\to0$ in probability. The rank inequality therefore preserves
the projected spectral limit $\mu_{d,\nu}$ after deletion.
Lower semicontinuity for the nonnegative logarithm gives
\eqref{eq:log-tail-transfer} with zero error, since $\nu_K=\nu$ has bounded
support. Uniformity again follows by sequence selection.
\end{proof}

The converse proceeds through subsequential joint limits of the parent
and projected energies $(R_p,\rho_{P_p})$. An averaged complementary
projection gives an energy $V_p$ with
$R_p=\theta_p\rho_{P_p}+(1-\theta_p)V_p$.
At a fixed radial cutoff, angular symmetrization, column-law concavity,
and the deletion/trimming bound give lower logarithmic-transform bounds
for both projected energies. Comparing these bounds with the parent
transform controls a nonnegative strict Jensen gap by the radial tail
probability. Sending the cutoff to infinity forces $\rho=V=R$ in every
joint limit. Deterministic-sequence selection then makes projected-energy
convergence uniform, and projection-energy rigidity yields RQC.

\begin{proof}[Proof of Theorem~\ref{thm:necessity-free}]
Assume (ii), and write $d=c\alpha$, $\beta_p=q_p/p$.
Fix a deterministic projection sequence $P_p$. Along any subsequence we
can extract a further subsequence such that
\[
 (R_p,\rho_{P_p})\Rightarrow(R,\rho),\qquad
 \rho_P=\|Px_p\|^2/q_p,\quad R\sim\nu.
\]
Indeed $0\le\rho_P\le R_p/\beta_p$, so the pair is tight.

If $\beta_p\le1/2$, let $U_p$ be an independent Haar rank-$q_p$
subprojection of $P_p^\perp$. If $\beta_p>1/2$, let
$U_p=P_p^\perp+H_p$, with $H_p$ an independent Haar rank-$(2q_p-p)$
subprojection of $P_p$. Set
\[
 V_p=\E_{U_p}[\rho_{U_p}\mid x_p],\qquad
 \theta_p=\min(\beta_p,1-\beta_p).
\]
The Haar mean gives the exact identity
\begin{equation}\label{eq:log-complement-energy}
 R_p=\theta_p\rho_{P_p}+(1-\theta_p)V_p.
\end{equation}
Thus $V_p$ converges jointly to
$V=(R-\theta\rho)/(1-\theta)\ge0$, where
$\theta=\min(\alpha,1-\alpha)>0$.

Fix $t>0$, choose $M$ as in \cref{lem:log-tail-transfer}, and let
$C=C(d,M,t)$. Fix a continuity cutoff $K\ge M$ of $\nu$. Define
\[
 \begin{aligned}
 (R_p^K,\rho_{P_p}^K,V_p^K)
   &=(R_p,\rho_{P_p},V_p)\1_{\{R_p\le K\}},\\
 (R_K,\rho_K,V_K)&=(R,\rho,V)\1_{\{R\le K\}}.
 \end{aligned}
\]
The first triple converges weakly to the second and is uniformly bounded.
It is exactly the energy triple of $x_p^K$ and the same projections.
All logarithmic integrability and continuity arguments below therefore
concern bounded variables.

Let $\sigma_{q,\eta}^{\circ}$ denote the law of $\sqrt{qr}\,u_q$ with
independent $r\sim\eta$ and uniform spherical $u_q$.
Angular symmetrization, spherical convergence, and
\cref{lem:log-spectral-UI,lem:log-tail-transfer} give
\begin{equation}\label{eq:log-one-sided}
 \mathcal J_{d,\law(\rho_K)}(t)
 \ge\mathcal J_{d,\nu_K}(t)-C\tau_K/d.
\end{equation}
To obtain the complementary inequality, put
$\bar\eta_{p,K}=\law(\rho_{U_p}\1_{\{R_p\le K\}})$, averaging over
both $x_p$ and $U_p$. Column-law concavity and angular symmetrization give
\begin{align}
 F_{q_p,N,t}(\sigma_{q_p,\bar\eta_{p,K}}^\circ)
 &\ge\E_{U_p}F_{q_p,N,t}
   (\sigma_{q_p,\law(\rho_{U_p}\1_{\{R_p\le K\}}\mid U_p)}^\circ)
   \notag\\
 &\ge\E_{U_p}F_{q_p,N,t}(\law(C_{U_p}x_p^K\mid U_p)).
 \label{eq:log-probe-mixture}
\end{align}
The liminf of the last expression is at least
$\mathcal J_{d,\nu_K}(t)-C\tau_K/d$ by the uniform bound in
\cref{lem:log-tail-transfer}. Extract a further subsequence on which
$\bar\eta_{p,K}$ converges. Its support is uniformly bounded, so spherical
convergence and logarithmic uniform integrability identify the limit of
the left side. Conditional Jensen in \cref{lem:log-variational} gives
\[
 \mathcal J_{d,\law(V_p^K)}(t)\ge
 \mathcal J_{d,\bar\eta_{p,K}}(t).
\]
Continuity for bounded radial laws now yields
\begin{equation}\label{eq:log-complement-lower}
 \mathcal J_{d,\law(V_K)}(t)
 \ge\mathcal J_{d,\nu_K}(t)-C\tau_K/d.
\end{equation}

Let $s_K=s_{d,\nu_K}(-t)$. Evaluating the three variational formulas at
this parent minimizer and using $R_K=\theta\rho_K+(1-\theta)V_K$ gives
\begin{align}
 \mathcal J_{d,\nu_K}(t)
 \ge{}&\theta\mathcal J_{d,\law(\rho_K)}(t)
 +(1-\theta)\mathcal J_{d,\law(V_K)}(t)+\frac1d\E J_K,\notag\\
 J_K={}&\log(1+ds_KR_K)-\theta\log(1+ds_K\rho_K)\notag\\
       &\hspace{2em}-(1-\theta)\log(1+ds_KV_K)\ge0.
 \label{eq:log-strict-gap}
\end{align}
The lower bounds above imply the quantitative estimate
\begin{equation}\label{eq:truncated-gap-bound}
 \E J_K\le C\tau_K.
\end{equation}
The constant is independent of $K$, and $\tau_K\to0$ for every probability
law on $[0,\infty)$. Hence $\E J_K\to0$ along continuity cutoffs tending
to infinity, without a radial tail assumption.

No logarithmic moment is needed to pass the minimizers to the limit.
Their equation is
\[
 1=ts_K+\frac1d\E\psi(ds_KR_K),\qquad 0<s_K\le1/t.
\]
Tightness of $(R_K)$ and boundedness of $\psi$ prevent $s_K\to0$.
Every other subsequential limit solves the unique negative-real radial
equation for $\nu$, hence $s_K\to s_\nu=s_{d,\nu}(-t)>0$.
On the joint-limit probability space, $J_K$ therefore converges pointwise to
\[
 J=\log(1+ds_\nu R)-\theta\log(1+ds_\nu\rho)
                    -(1-\theta)\log(1+ds_\nu V)\ge0.
\]
Each value of $J$ is finite because $R,\rho,V$ are finite almost surely;
its expectation has not been presumed finite. Fatou's lemma along the
chosen cutoffs gives $\E J=0$. Strict scalar concavity proves
$\rho=V=R$ almost surely.

Every subsequential joint limit has equal projected and parent energies.
Thus $\rho_{P_p}-R_p\to0$ in probability for every rank-$q_p$ sequence.
Sequence selection makes this uniform over those projections. Boundedness
and the Lipschitz property of $\psi$ imply $D_\psi(x_p,q_p)\to0$.
The bounded projected-energy criterion gives (RQC).

If $\{R_p^2\}$ is uniformly integrable, the bound $|Q_p(A)|\le2R_p$
upgrades uniform convergence in probability to
$\sup_{\|A\|\le1}\E|Q_p(A)|^2\to0$.
Finally, (i)$\Rightarrow$(ii) follows from
\cref{thm:sufficient,rem:general-radius} applied to the projected samples
with their parent radius.
\end{proof}

\section{A finite-dimensional spectral inverse}
\label{sec:finite-inverse}

Our goal here is a finite-sample bound on quadratic-form defects in terms
of bounded tests of expected projected spectra. The quantitative estimate
uses a finite radial Jensen inequality in place of the limiting
variational formula.
Throughout this section, put
\[
 \beta=q/p,\qquad b=\min(\beta,1-\beta),\qquad d=q/N,
 \qquad 1\le q<p,\quad t>0,
\]
and abbreviate
\[
 \Psi(\eta)=F_{q,N,t}(\sigma^\circ_{q,\eta}),\qquad
 \sigma^\circ_{q,\eta}=\law(\sqrt{qr}\,u),
\]
where $r\sim\eta$ and the uniform unit vector $u\in\R^q$ are independent.
Only bounded radial laws are used as arguments of $\Psi$ below.

\begin{lemma}[Finite-sample strict radial Jensen inequality]
\label{lem:finite-radial-gap}
Let $0<b\le1/2$, let bounded nonnegative variables satisfy
$r=bz+(1-b)w$, and suppose
$\Pp(r>M)\le d/8$ for some $M>0$. Put
\[
 \kappa=\frac{\min(1,d/t)^2}{32(1+M/(bt))^2}.
\]
Then
\begin{align}
 &\Psi(\law(r))-b\Psi(\law(z))-(1-b)\Psi(\law(w))\notag\\
 &\hspace{2em}\ge\frac{b(1-b)\kappa}{2d}
           \E[\psi(z)-\psi(w)]^2 .
 \label{eq:finite-radial-gap}
\end{align}
In particular, the constant does not depend on the upper support bound.
For bounded $r\pm v\ge0$ with $\Pp(r>M)\le d/8$, the midpoint version is
\begin{equation}\label{eq:finite-midpoint-gap}
 \begin{gathered}
 \Psi(\law(r))-\frac{\Psi(\law(r+v))+\Psi(\law(r-v))}2
 \ge\frac{\kappa_*}{2d}\E\frac{v^2}{(1+r)^2},\\
 \kappa_*=\frac{\min(1,d/t)^2}{32(1+2M/t)^2}.
 \end{gathered}
\end{equation}
\end{lemma}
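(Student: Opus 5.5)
The plan is to work on a common probability space. Take i.i.d.\ triples $(z_j,w_j)$, $j\le N$, with the given joint law, set $r_j=bz_j+(1-b)w_j$, and let $u_j$ be independent uniform unit vectors in $\R^q$. For $\lambda\in[0,1]$ put
\[
 C_\lambda=I+\frac{q}{Nt}\sum_j\bigl(r_j+\lambda(1-b)(z_j-w_j)\bigr)u_ju_j^T .
\]
Then $C_0$, $C_{b'}$ and $C_{-b''}$ (extending $\lambda$ to the segment whose endpoints give all-$z$ and all-$w$ radii) realize $\Psi(\law(r))$, $\Psi(\law(z))$ and $\Psi(\law(w))$ through $\frac1q\E\log\det$. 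The coupling is pointwise, not a mixture of laws, so \cref{lem:log-law-concavity} is not the right tool. Instead the left side of \eqref{eq:finite-radial-gap} is the integrated Jensen gap of the concave scalar function $\lambda\mapsto\frac1q\log\det C_\lambda$. Taylor's formula with integral remainder bounds this gap below by $b(1-b)/2$ times an average of
\[
 \frac1q\operatorname{tr}\bigl(C_\lambda^{-1}\Delta C_\lambda^{-1}\Delta\bigr)
 =\frac1q\Bigl(\frac q{Nt}\Bigr)^2\sum_{j,k}\delta_j\delta_k\,(u_j^TC_\lambda^{-1}u_k)^2,
\]
with $\delta_j=z_j-w_j$ and $\Delta$ the corresponding rank-$N$ direction. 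The midpoint statement \eqref{eq:finite-midpoint-gap} is the same computation along $r\pm\lambda v$ with $b=1/2$. This explains why $2M$ replaces $M/b$ there.

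The core step is a lower bound on this quadratic form that keeps only the diagonal and survives arbitrarily large radii. First I would discard bad columns. The columns with $r_j>M$ form a set $\mathcal I$ with $\E|\mathcal I|\le dN/8\cdot N/q$, so $|\mathcal I|\le q/8$ on average. For the remaining columns, $z_j\le M/b$ and $w_j\le M/(1-b)$. I would then use the Hadamard (Oppenheim-type) inequality $A\circ A\ge\lambda_{\min}(A)\,\diag(A)$ on the Gram matrix $A=U_{\mathcal J}^TC_\lambda^{-1}U_{\mathcal J}$ of a suitable set $\mathcal J$ of good columns of size at most $q/2$. On such a set, the spherical frame has smallest singular value bounded below, and this is where $\min(1,d/t)$ enters. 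The high-radius columns can only decrease $C_\lambda^{-1}$ on a subspace of dimension at most $q/8$, and I would handle them by restricting to the orthogonal complement of their span, as in the Schur-complement normalization of \cref{lem:frame-trimming}. The diagonal entries satisfy
\[
 u_j^TC_\lambda^{-1}u_j\ \ge\ \bigl(1+q r_j^\lambda u_j^TC_{\lambda,-j}^{-1}u_j/(Nt)\bigr)^{-1}u_j^TC_{\lambda,-j}^{-1}u_j
\]
by Sherman--Morrison, with $r_j^\lambda$ the interpolated radius. Integrating $\delta_j^2 a^2/(1+a r_j^\lambda)^2$ over $\lambda$ then produces $(\psi(az_j)-\psi(aw_j))^2$ up to constants. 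Comparing $\psi(a\cdot)$ with $\psi$ costs a factor $\min(1,a)^2$, where $a\asymp d/t$ is controlled by the frame bound and $\|C_{-j}\|\lesssim1+M/(bt)$ on good columns. Collecting constants gives $\kappa$.

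I expect the main obstacle to be this strict-curvature lower bound uniformly in the unbounded radii: the cross terms $\delta_j\delta_k$ have no sign, and large columns inflate $C_\lambda$. My first attempt would be the restriction-to-good-columns device above: choose $\mathcal J$ randomly among good columns with $|\mathcal J|\le q/2$, apply Oppenheim there, and use monotonicity of the Jensen gap under adding a common PSD block. Averaging over the random choice of $\mathcal J$ and using exchangeability of columns would convert the sum over $j\in\mathcal J$ into $N\,\E[\cdot]$ for a single column, which gives the factor $1/(2d)$. Finally, the columns with $r>M$ contribute nonnegatively and are simply dropped, so no dependence on the upper support bound appears.
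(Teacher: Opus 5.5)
Your reformulation is a valid identity. Along the segment $C_\mu$, the left side of \eqref{eq:finite-radial-gap} equals the expected Jensen gap of the concave function $\mu\mapsto\frac1q\log\det C_\mu$, whose curvature is $\frac1q(d/t)^2\sum_{j,k}\delta_j\delta_k(u_j^TC_\mu^{-1}u_k)^2$. The gap in your proposal is the lower bound on this curvature.

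\textbf{The cross terms are not controlled.} Because all $N$ columns move at once, the off-diagonal terms $\delta_j\delta_k(u_j^TC_\mu^{-1}u_k)^2$ have no sign. The hypotheses allow $z-w$ to take both signs, and $C_\mu$ depends on every $\delta_j$. If all $\delta_j$ had one sign, entrywise nonnegativity of $A\circ A$ would give the diagonal bound for free, but that is not the situation here. Your proposed repair fails for three reasons.
\begin{enumerate}[label=(\roman*)]
\item A positive semidefinite form such as $\delta^T(A\circ A)\delta$ is not bounded below by its principal restriction to an index set $\mathcal J$. The matrix $\begin{pmatrix}1&-1\\-1&1\end{pmatrix}$ at $\delta=(1,1)$ is a counterexample. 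You also cannot switch to a path that moves only the columns in $\mathcal J$, because the endpoints of the gap are fixed with \emph{all} columns at $z$ or all at $w$.
\item Monotonicity under adding a common positive semidefinite block points the wrong way. For symmetric $\Delta$, the map $X\mapsto\Tr(X\Delta X\Delta)$ is increasing on $X\ge0$, since its derivative in a direction $H\ge0$ is $2\Tr(H\Delta X\Delta)\ge0$. Extra fixed background therefore only lowers the curvature. In any case the high-radius columns are not a fixed block, since they move too.
\item Even if the restriction were allowed, the count is wrong. Exchangeability turns $\E\sum_{j\in\mathcal J}$ into $|\mathcal J|\le q/2$ times a single-column expectation, not $N$ times. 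When $N\gg q$ this loses a factor of order $d$ against the claimed $1/(2d)$, because $\kappa\propto\min(1,d/t)^2$.
\end{enumerate}
Separately, $\min(1,d/t)$ in $\kappa$ does not come from frame singular values. Those would introduce failure probabilities that do not appear in the stated constant.

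\textbf{The missing idea is the column-law concavity you set aside.} Let $Z$ pick $z$ with probability $b$ and $w$ otherwise, independently of the triple. Then $\law(Z)=b\law(z)+(1-b)\law(w)$, and the spherical lift is linear in the radial law. So \cref{lem:log-law-concavity} gives $\Psi(\law(Z))\ge b\Psi(\law(z))+(1-b)\Psi(\law(w))$. It remains to bound $\Psi(\law(r))-\Psi(\law(Z))$.
\begin{itemize}
\item Replace $r_j$ by $Z_j$ one column at a time. By the determinant lemma, each step costs exactly $q^{-1}\E J_a(z,w)$, with $a=(d/t)u^TB^{-1}u$ independent of the current triple. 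There are no cross terms at all.
\item Scalar strong concavity gives $J_a\ge\frac{b(1-b)}2\min(a,1)^2[\psi(z)-\psi(w)]^2$.
\item To get $\E\min(a,1)^2\ge\kappa$, combine three facts: $\min(a,1)\ge\min(d/t,1)\,u^TB^{-1}u$; conditional spherical averaging $\E[u^TB^{-1}u\mid B]=q^{-1}\Tr B^{-1}$, followed by Jensen; and $q^{-1}\Tr B^{-1}\ge 1/(4(1+M/(bt)))$ on $\{h\le q/2\}$. The last bound comes from compressing $B$ to the complement of the high-radius directions, using that background weights are at most $r_i/b$, and applying the arithmetic--harmonic mean inequality.
\item Summing the $N$ steps produces the factor $N/q=1/d$.
\end{itemize}
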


\begin{proof}
Let $Z$ select $z$ with probability $b$ and $w$ otherwise, independently
of the triple. Column-law concavity gives
$\Psi(\law(Z))\ge b\Psi(\law(z))+(1-b)\Psi(\law(w))$.
Replace the $N$ independent radial weights $r_j$ by $Z_j$ one at a time.
For each replacement, condition on all other columns and apply the
determinant lemma. Its expected loss, including the factor $1/q$, is
$q^{-1}\E J_a(z,w)$, where
\[
 \begin{split}
 J_a(z,w)&=\log(1+ar)-b\log(1+az)-(1-b)\log(1+aw),\\
 a&=(d/t)u^TB^{-1}u,\qquad
 B=I+(d/t)\sum_{i\ne j}v_i u_i u_i^T.
 \end{split}
\]
The background $B$ and the current direction $u$ are independent of the
current triple. Nonnegativity gives $z_i\le r_i/b$ and
$w_i\le r_i/(1-b)\le r_i/b$, since $b\le1/2$.
Each background weight, whether $r_i$ or $Z_i$, is therefore at most
$r_i/b$, and the underlying $r_i$ remain independent.
Scalar strong concavity gives the pointwise bound
\begin{equation}\label{eq:finite-scalar-gap}
 J_a(z,w)\ge
 \frac{b(1-b)a^2(z-w)^2}{2(1+a\max(z,w))^2}
 \ge\frac{b(1-b)}2\min(a,1)^2[\psi(z)-\psi(w)]^2.
\end{equation}

Let $h=\#\{i\ne j:r_i>M\}$. Markov's inequality gives
$\Pp(h>q/2)\le2(N-1)\Pp(r>M)/q\le1/4$.
On $h\le q/2$, compress $B$ to the orthogonal complement of those
high-radius directions, of dimension $m\ge q/2$.
The compressed trace is at most $q(1+M/(bt))$.
Interlacing and the arithmetic--harmonic mean inequality imply
\[
 \frac1q\Tr B^{-1}\ge
 \frac{m^2}{q^2(1+M/(bt))}\ge\frac1{4(1+M/(bt))}.
\]
Since $0\le u^TB^{-1}u\le1$,
$\min(a,1)\ge\min(d/t,1)u^TB^{-1}u$.
Conditional spherical averaging, Jensen's inequality, and
$\Pp(h\le q/2)\ge3/4$ give
\[
 \E\min(a,1)^2
 \ge\frac{3\min(d/t,1)^2}{64(1+M/(bt))^2}\ge\kappa.
\]
The bound is uniform over the replacement position.
This coefficient is independent of $(r,z,w)$. Taking expectations in
\eqref{eq:finite-scalar-gap} and summing the $N$ losses proves the claim.
For the midpoint assertion use mixing weight $1/2$ in the same argument,
so background weights are at most $2r_i$. Replace the scalar bound by
\[
 \begin{aligned}
 &\log(1+ar)-\frac{\log(1+a(r+v))+\log(1+a(r-v))}2\\
 &\qquad=-\frac12\log\left(1-\frac{a^2v^2}{(1+ar)^2}\right)
 \ge\frac{\min(a,1)^2v^2}{2(1+r)^2}.
 \end{aligned}
\]
The coefficient estimate is now $\kappa_*$, proving
\eqref{eq:finite-midpoint-gap}.
\end{proof}

\begin{corollary}[A finite logarithmic inverse]\label{cor:finite-log-inverse}
Let $R=\|x\|^2/p<\infty$ almost surely and choose $M>0$ with
$\tau_M:=\Pp(R>M)\le d/8$. For $K\ge M$, put
$x^K=x\1_{\{R\le K\}}$, $R_K=R\1_{\{R\le K\}}$, and
\[
 e_K=\Psi(\law(R_K))-
     \inf_{\rank P=q}F_{q,N,t}(\law(C_Px^K)).
\]
Then $e_K\ge0$ and, with $\kappa$ as in \cref{lem:finite-radial-gap},
\begin{equation}\label{eq:finite-log-inverse}
 \Delta_\psi(x,q)\le\tau_K+
          \sqrt{\frac{2d e_K}{b(1-b)\kappa}},
 \qquad \tau_K=\Pp(R>K).
\end{equation}
There is also the linear-error bound
\begin{equation}\label{eq:finite-log-weighted}
 \sup_{\|A\|\le1}\E\frac{|Q_x(A)|^2\1_{\{R\le K\}}}{(1+R)^2}
 \le\frac{16d\beta^2}{b^2\kappa_*}e_K,
 \qquad Q_x(A)=x^TAx/p-R\Tr A/p.
\end{equation}
The constant is halved for real matrices.
\end{corollary}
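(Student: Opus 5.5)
We bound every projected energy using the finite radial Jensen inequality of \cref{lem:finite-radial-gap}. The inputs are a lower bound $\Psi(\law(\cdot))\ge\Psi(\law(R_K))-e_K$, established for each projected energy and for averaged complementary energies, together with the exact decompositions of $R_K$ available in the fantope.

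\emph{Step 1: three comparison principles for $\Psi$.}
\begin{enumerate}[label=(\alph*)]
\item \emph{Angular symmetrization.} For a deterministic rank-$q$ projection $P$, the energy of $C_Px^K$ is $\rho_P^K=\rho_P\1_{\{R\le K\}}$. Hence \eqref{eq:log-angular} gives $F_{q,N,t}(\law(C_Px^K))\le\Psi(\law(\rho_P^K))$, and therefore $\Psi(\law(\rho_P^K))\ge\Psi(\law(R_K))-e_K$.
\item \emph{Random projections.} If the projection $U$ is random and independent of $x$, mixture concavity in \cref{lem:log-law-concavity}, applied to the spherical laws, gives $\Psi(\law(\rho_U^K))\ge\E_U\Psi(\law(\rho_U^K\mid U))\ge\Psi(\law(R_K))-e_K$.
\item \emph{Conditional Jensen for $\Psi$.} If $V=\E[Z\mid x]$ with bounded $Z\ge0$, then $\Psi(\law(V))\ge\Psi(\law(Z))$. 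I would prove this exactly as in the replacement argument of \cref{lem:finite-radial-gap}. Replace the radial weights one column at a time. Condition on the background $B$ and the direction $u$, which are independent of the current weight. The determinant lemma reduces each step to the scalar conditional Jensen inequality for $r\mapsto\log(1+ar)$.
\end{enumerate}

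\emph{Step 2: the bound \eqref{eq:finite-log-inverse} and $e_K\ge0$.} Fix $P$ and build $U$ and $V^K=\E_U[\rho_U^K\mid x]$ as in the proof of \cref{thm:necessity-free}, with $\theta=b$. This gives the exact identity $R_K=b\rho_P^K+(1-b)V^K$. Since $R_K\le R$, the hypothesis $\Pp(R_K>M)\le d/8$ holds, so \cref{lem:finite-radial-gap} applies with $r=R_K$, $z=\rho_P^K$ and $w=V^K$. By Step 1, both $\Psi(\law(\rho_P^K))$ and $\Psi(\law(V^K))$ are at least $\Psi(\law(R_K))-e_K$. The left side of \eqref{eq:finite-radial-gap} is therefore at most $e_K$. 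Since the gap is nonnegative, this already gives $e_K\ge0$, and it yields
\[
\E[\psi(\rho_P^K)-\psi(V^K)]^2\le\frac{2de_K}{b(1-b)\kappa}.
\]
Because $R_K$ lies between $\rho_P^K$ and $V^K$ and $\psi$ is monotone, we have $|\psi(R_K)-\psi(\rho_P^K)|\le|\psi(V^K)-\psi(\rho_P^K)|$. Cauchy--Schwarz then bounds $\E\psi(R)-\E\psi(\rho_P)$ on $\{R\le K\}$ by the square root above. On $\{R>K\}$ the difference is at most $1$, which contributes $\tau_K$. Taking the supremum over $P$ gives \eqref{eq:finite-log-inverse}.

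\emph{Step 3: the weighted bound \eqref{eq:finite-log-weighted}.} For a real symmetric contraction $A$, use the fantope probes $B_\pm=\beta I\pm hA_0$ with $h=b/2$. Each $B_\pm$ is a finite convex combination of deterministic projections. Selecting a projection at random according to these weights, combined with (b) and (c) of Step 1, gives $\Psi(\law(R_K\pm v))\ge\Psi(\law(R_K))-e_K$. Here $v=(h/\beta)Q_x(A)\1_{\{R\le K\}}$, by \eqref{eq:fantope-energies}. The midpoint inequality \eqref{eq:finite-midpoint-gap} then gives
\[
\frac{\kappa_*}{2d}\,\E\frac{v^2}{(1+R_K)^2}\le e_K,
\]
so the weighted sup is at most $8d\beta^2e_K/(b^2\kappa_*)$ for real $A$. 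The complex reduction of \cref{lem:energy-L2} doubles this constant to $16d\beta^2/(b^2\kappa_*)$.

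The main obstacle is Step 1(c), conditional Jensen for the finite functional $\Psi$ rather than for its limit $\mathcal J$. The one-column replacement must keep the background independent of the current pair $(Z,V)$. I expect this to work because the columns are i.i.d. copies of $x$ and $U$ is averaged inside each column.
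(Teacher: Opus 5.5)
Your argument is correct. Its skeleton is the paper's: the decomposition $R_K=b\rho_P^K+(1-b)V^K$, \cref{lem:finite-radial-gap} with both endpoint functionals bounded below by $\Psi(\law(R_K))-e_K$, monotonicity of $\psi$ plus Cauchy--Schwarz, and the midpoint bound at the probes $B_\pm$. Your $V^K$ is in fact the paper's complementary energy: the Haar mean of your $U$ is the fantope point $B_P=(\beta I-bP)/(1-b)$, so $V^K=(x^K)^{\mathsf T}B_Px^K/q$.

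The difference is in how you certify the lower bound at non-projection points of the fantope. The paper sets $\widetilde\Psi(B)=\Psi(\law((x^K)^{\mathsf T}Bx^K/q))$, coupling the same $x_j^K,u_j$ in every argument. The matrix inside the log-determinant is affine in $B$, so $\widetilde\Psi$ is concave by plain concavity of $\log\det$. The convex-hull identity then gives $\widetilde\Psi\ge\inf_PF_{q,N,t}(\law(C_Px^K))$ on the whole fantope at once, and $e_K\ge0$ is read off at $B=\beta I$.

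You instead draw an independent projection for each column. This forces you to use the column-law concavity of \cref{lem:log-law-concavity} and then a separate finite conditional Jensen inequality (your Step 1(c)). In effect you are running a finite-sample version of the route in the proof of \cref{thm:necessity-free}.

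Step 1(c) does hold, and more simply than by column replacement. The map $(r_1,\dots,r_N)\mapsto\log\det(I+(d/t)\sum_jr_ju_ju_j^{\mathsf T})$ is jointly concave. Conditional Jensen given all the $x_j,u_j$, together with $\E[Z_j\mid x_1,\dots,x_N,u_1,\dots,u_N]=V_j$, then gives the inequality directly. The same joint concavity, applied with one common $U$ for all columns, already yields $\Psi(\law(V^K))\ge\E_U\Psi(\law(\rho^K_U\mid U))$. That makes column-law concavity unnecessary at this step, and it is precisely the paper's shortcut.

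Deriving $e_K\ge0$ from the nonnegativity of the gap is also valid. It does, however, spend the strict lemma where the center $\beta I$ of the fantope suffices.
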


\begin{proof}
On the fantope $\mathcal F_{p,q}$ define
$\widetilde\Psi(B)=\Psi(\law((x^K)^TBx^K/q))$.
Couple the same independent $x_j^K,u_j$ in every argument. The matrix
inside its logarithmic determinant is affine in $B$, so $\widetilde\Psi$ is
concave. On projections, angular symmetrization gives
$\widetilde\Psi(P)\ge F_{q,N,t}(\law(C_Px^K))$.
The convex-hull identity therefore bounds $\widetilde\Psi$ below by the
infimum defining $e_K$ throughout the fantope. At $B=\beta I$ this proves
$e_K\ge0$.

For fixed $P$, the matrix $B_P=(\beta I-bP)/(1-b)$ also belongs to
the fantope. The coupled energies satisfy
$R_K=b\rho_P^K+(1-b)\rho_{B_P}^K$.
Apply \cref{lem:finite-radial-gap}. Both endpoint functionals are at least
the same infimum, so
\[
 \E[\psi(\rho_P^K)-\psi(R_K)]^2
 \le\E[\psi(\rho_P^K)-\psi(\rho_{B_P}^K)]^2
 \le\frac{2d e_K}{b(1-b)\kappa}.
\]
Here monotonicity of $\psi$ and the between-endpoints identity give the
first inequality. Cauchy--Schwarz and a bound of $\tau_K$ on the omitted
tail event prove \eqref{eq:finite-log-inverse}.

For a real symmetric contraction $A$, use the two fantope points
$B_\pm=\beta I\pm(b/2)(A-p^{-1}\Tr(A)I)$.
Their truncated energies are $R_K\pm v$, with
$v=(b/(2\beta))Q_x(A)\1_{\{R\le K\}}$.
Both values of $\widetilde\Psi$ are at least the same infimum.
Equation~\eqref{eq:finite-midpoint-gap} therefore proves
\eqref{eq:finite-log-weighted} with constant $8d\beta^2/(b^2\kappa_*)$.
Symmetrization and the real--imaginary decomposition used in
\cref{lem:energy-L2} give the stated real and complex versions.
\end{proof}

We next bound $e_K$ using only bounded tests of expected spectral measures.
Let
\[
 S^\circ=\frac qN\sum_{j=1}^N R_j u_j u_j^T,\qquad
 \bar\mu^\circ=\E\ESD(S^\circ),\qquad \bar\mu_P=\E\ESD(S_P),
\]
where the reference uses independent uniform directions and independent
radii with the \emph{actual} law of $R$. For $L>0$ put
$B_L=\log(1+L/t)$ and define the one-sided spectral discrepancy
\begin{equation}\label{eq:capped-spectral-discrepancy}
 \chi_L=\sup_{\rank P=q}\sup_{0\le v\le B_L}
 \left[\int\min\{\log(1+\lambda/t),v\}
             (\bar\mu^\circ-\bar\mu_P)(\dd\lambda)\right]_+.
\end{equation}
With $d_{\rm BL}$ defined by tests of sup norm and Lipschitz constant
at most one,
\[
 \chi_L\le\max(B_L,1/t)
       \sup_{\rank P=q}d_{\rm BL}(\bar\mu^\circ,\bar\mu_P).
\]
No expected logarithm of an untruncated covariance is used.

\begin{theorem}[Finite-dimensional spectral-to-energy inverse]
\label{thm:finite-spectral-inverse}
Under the notation and assumptions of \cref{cor:finite-log-inverse}, set
\[
 \begin{gathered}
 A_d=d/256,\qquad B_d=16(1+\sqrt d)^2,\qquad
 C=\log\frac{\max(1,B_d/t)}{\min(1,A_d/(t+MB_d))},\\
 \gamma=(1-1/\sqrt2)^2/8,\qquad
 \eta_{q,N}=\min\{1,(e/4)^{q/2}+e^{-N/2}
                       +e^{-\gamma q}+2Ne^{-q/8}\}.
 \end{gathered}
\]
For every $K\ge M$ and $L>0$ define
\begin{align}
 \mathcal E_{K,L}={}&\chi_L+
 B_L\left\{\frac{2\sqrt{N\tau_K(1-\tau_K)}}q+
       \frac2q+\sqrt{\frac{N\log(2(q+1))}{2q^2}}\right\}\notag\\
 &+\frac{C\tau_K}{d}+\frac{K\eta_{q,N}}t+\frac K{t+L}.
 \label{eq:finite-spectral-error}
\end{align}
Then
\begin{equation}\label{eq:finite-spectral-inverse}
 \Delta_\psi(x,q)\le\tau_K+
      \sqrt{\frac{2d\mathcal E_{K,L}}{b(1-b)\kappa}}.
\end{equation}
More directly, for any $0<U\le K$ and $\varepsilon>0$,
\begin{equation}\label{eq:finite-spectral-probability}
 \sup_{\|A\|\le1}\Pp(|Q_x(A)|>\varepsilon)
 \le\Pp(R>U)+
 \frac{16d\beta^2(1+U)^2}{b^2\kappa_*\varepsilon^2}\mathcal E_{K,L}.
\end{equation}
\end{theorem}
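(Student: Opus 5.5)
The plan is to reduce both conclusions to \cref{cor:finite-log-inverse}. Given that corollary, it suffices to show $e_K\le\mathcal E_{K,L}$ for every $K\ge M$ and $L>0$. Then \eqref{eq:finite-spectral-inverse} is \eqref{eq:finite-log-inverse} with $e_K$ replaced by $\mathcal E_{K,L}$. For \eqref{eq:finite-spectral-probability} I would use \eqref{eq:finite-log-weighted}. On $\{R\le U\}\subset\{R\le K\}$ the weight $(1+R)^{-2}$ is at least $(1+U)^{-2}$, so Markov's inequality gives the second term and $\Pp(R>U)$ covers the rest. So everything reduces to a finite-sample version of \eqref{eq:gaussian-trimmed-comparison}: a deterministic upper bound on $\Psi(\law(R_K))$ minus $F_{q,N,t}(\law(C_Px^K))$, uniform in the rank-$q$ projection $P$.

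\textbf{Lower bound for the projected side.} Fix $P$ and let $k$ be the number of columns with $R_j>K$. Interlacing, as in the first part of \cref{lem:log-tail-transfer}, gives
\[
F_{q,N,t}(\law(C_Px^K))\ge\E\frac1q\sum_{i>k}\log(1+\lambda_i(S_P)/t).
\]
Next I replace the random $k$ by its mean $N\tau_K$ and replace the trimmed sum by a capped spectral integral. The cap is $\min\{\log(1+\lambda/t),v\}$ with $v\le B_L$. Three error sources arise. First, $|k-N\tau_K|$: its expectation is at most $\sqrt{N\tau_K(1-\tau_K)}$, and each unit of it costs at most $B_L/q$ once capped. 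This should produce the term $2B_L\sqrt{N\tau_K(1-\tau_K)}/q$, with the factor $2$ because the same error is paid on both sides. Second, rounding $N\tau_K$ to an integer number of eigenvalues costs $2B_L/q$. Third, the expectation of a capped trimmed spectral sum is not a functional of $\bar\mu_P$. To handle this, write the trimmed capped sum as $\min_v$ of a linear functional of the ESD (the top-$k$ removal is a variational minimum over thresholds $v$), and replace $v$ by a finite grid over $[0,B_L]$. A union bound over about $q+1$ grid points, with McDiarmid in the column variables (each column changes the ESD by rank one), gives the $\sqrt{N\log(2(q+1))/(2q^2)}$ term. The result is a lower bound by $\int\min\{\log(1+\lambda/t),v^*\}\,\bar\mu_P$ minus these errors, for a suitable $v^*\le B_L$. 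By definition \eqref{eq:capped-spectral-discrepancy}, this is at least the same integral against $\bar\mu^\circ$ minus $\chi_L$.

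\textbf{Upper bound for the reference side.} For the spherical reference $S^\circ$ with the actual radii, I apply \cref{lem:frame-trimming} on a good event. To do so I realize $\sqrt q\,u_j=g_j\sqrt q/\|g_j\|$ with Gaussian $g_j$ and absorb $q/\|g_j\|^2$ into the radii. The good event requires three things: $|\mathcal I|<q/2$ (Markov, using $\tau_M\le d/8$, together with a Chernoff tail); $\|G\|^2/N\le B_d$; and $s_{\min}(G_{\mathcal I})^2/N\ge A_d$, by Davidson--Szarek with explicit constants. Its complement probability is $\eta_{q,N}$; the four summands in $\eta_{q,N}$ are respectively the singular-value, chi-square, norm-ratio and $2Ne^{-q/8}$ row-norm tails. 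On this event \cref{lem:frame-trimming} gives
\[
\Psi(\law(R_K))\le\E\frac1q\sum_{i>k}\log(1+\lambda_i(S^\circ)/t)+\frac{C\,\E k}{q},
\]
and $\E k/q=\tau_K/d$. Off the event, the trace bound $\frac1{qt}\Tr S^{\circ,K}\le K/t$ yields $K\eta_{q,N}/t$. Capping the logarithm at $L$ costs $K/(t+L)$ by the inequality $\log(1+\lambda/t)-\log(1+L/t)\le\lambda/(t+L)$ used earlier. The trimmed capped reference sum is then converted to the same $\int\min\{\cdot,v^*\}\,\bar\mu^\circ$ with the same concentration errors.

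\textbf{Main obstacle.} Combining the two sides gives $e_K\le\mathcal E_{K,L}$. The step I expect to be hardest is the third error source: making the trimmed spectral logarithm, with random trimming level, comparable through a single choice of cap $v$ on both sides. The two sides must be matched at the same $v^*$ so that the one-sided $\chi_L$ applies. My first attempt would be the identity
\[
\sum_{i>k}\min(h_i,B_L)=\min_{v}\Big[\sum_i\min(h_i,v)+k(B_L-v)\Big]-kB_L+\cdots,
\]
which is linear in the ESD for fixed $v$, combined with an $\epsilon$-net in $v$ and McDiarmid. I also expect that tracking the exact constants in $\eta_{q,N}$ will need care.
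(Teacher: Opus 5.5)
Your architecture is the paper's. You reduce to $e_K\le\mathcal E_{K,L}$, use interlacing on the projected side, and apply \cref{lem:frame-trimming} on a Gaussian good event for the reference. You then cap at $L$, replace the random trim fraction by $\delta=\tau_K/d$ (the doubled $\sqrt{N\tau_K(1-\tau_K)}$ term), and finish with a grid-plus-bounded-differences estimate. The gap is in the step you flag as the main obstacle.

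\textbf{The variational identity is a maximum, not a minimum.} With $h_i=h_L(\lambda_i)$, which is nonincreasing in $i$,
\[
 \frac1q\sum_{i>k}h_i=\max_{0\le v\le B_L}\Bigl\{\frac1q\sum_i\min(h_i,v)-\frac kq\,v\Bigr\},
\]
attained at $v=h_k$. More generally,
\[
 \mathcal T_{\delta,L}(\mu)=\max_{0\le v\le B_L}\Bigl\{\int\min(h_L,v)\,\mu(\dd\lambda)-\delta v\Bigr\}
\]
for the quantile-trimmed integral, so no rounding of $N\tau_K$ is ever needed. The bracket is concave in $v$. A minimum over $[0,B_L]$ therefore sits at an endpoint, and the identity you wrote is false.

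\textbf{Why the sign matters.} As a maximum of linear functionals, $\mathcal T_{\delta,L}$ is convex in $\mu$.
\begin{itemize}
\item On the original side you need a lower bound on an expectation. Jensen gives $\E\,\mathcal T_{\delta,L}(\ESD(S_P))\ge\mathcal T_{\delta,L}(\bar\mu_P)$ outright, with no concentration.
\item Concentration is needed only on the reference side, where you must bound the expectation of a maximum from above: $\E\max_v\le\max_v\E+\E\sup_v|X(v)|$. Bounded differences over the grid $v_\ell=\ell B_L/q$ give the $B_L\sqrt{N\log(2(q+1))/(2q^2)}$ term.
\item The $2B_L/q$ term is the grid discretization error, since $X$ is $2$-Lipschitz in $v$. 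It is not a rounding cost.
\end{itemize}
The matching you worried about is then immediate. Take $v^*$ to be the maximizer for $\bar\mu^\circ$; then $\mathcal T_{\delta,L}(\bar\mu^\circ)-\mathcal T_{\delta,L}(\bar\mu_P)\le\int\min(h_L,v^*)\,(\bar\mu^\circ-\bar\mu_P)(\dd\lambda)\le\chi_L$.

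\textbf{What your plan would actually prove.} You run McDiarmid on both sides, add a separate rounding cost, and leave the grid error unaccounted. That yields an inequality of the same shape, but with the concentration terms roughly doubled, not the stated $\mathcal E_{K,L}$.

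\textbf{Minor point: the summands of $\eta_{q,N}$.} Your attribution is off.
\begin{itemize}
\item $(e/4)^{q/2}$ is the Chernoff bound for $|\mathcal I|>q/2$.
\item $e^{-N/2}$ is the tail of $\|Z\|\le\sqrt q+2\sqrt N$.
\item $e^{-\gamma q}$ is the tail of $s_{\min}(Z_{\mathcal I})$, conditional on $0<|\mathcal I|\le q/2$.
\item $2Ne^{-q/8}$ is the column-norm event $\sqrt q/2\le\|z_j\|\le2\sqrt q$.
\end{itemize}
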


\begin{proof}
It suffices to prove $e_K\le\mathcal E_{K,L}$.
Represent the spherical reference directions by
$g_j=\sqrt q\,u_j=z_j\sqrt q/\|z_j\|$, with independent standard Gaussian
$z_j$. Let $\mathcal I=\{j:R_j>M\}$.
Chernoff's inequality gives
$\Pp(|\mathcal I|>q/2)\le(e/4)^{q/2}$ and, simultaneously for all $j$,
$\sqrt q/2\le\|z_j\|\le2\sqrt q$ except with probability
at most $2Ne^{-q/8}$.
The Gaussian singular-value bounds \cite{DavidsonSzarek2001} give
\[
 \|Z\|\le\sqrt q+2\sqrt N,\qquad
 s_{\min}(Z_{\mathcal I})\ge(1-1/\sqrt2)\sqrt q/2
\]
except with probabilities $e^{-N/2}$ and $e^{-\gamma q}$, respectively;
the second assertion is conditional on $\mathcal I$ with $0<|\mathcal I|\le q/2$.
The singular-value condition is omitted when $\mathcal I$ is empty.
Consequently, outside an event of probability at most $\eta_{q,N}$,
\[
 \|G\|^2/N\le B_d,\qquad s_{\min}(G_{\mathcal I})^2/N\ge A_d
\]
when needed. In \cref{lem:frame-trimming}, the resulting logarithmic
ratio is at most $C$, independently of $K$ and the radial magnitudes.
The radially deleted reference logarithm is at most $K/t$ samplewise,
because its normalized trace is at most $K$.
The bad event therefore costs at most $K\eta_{q,N}/t$.

Write $h_L(\lambda)=\min\{\log(1+\lambda/t),B_L\}$ and
\[
 \mathcal T_{\delta,L}(\mu)=\int_0^{1-\delta}h_L(Q_\mu(u))\,\dd u,
 \qquad \delta=\tau_K/d\le1/8.
\]
The definition includes $\delta=0$. If $k$ columns are deleted, take the
trim fraction to be $\min(k/q,1)$.
Interlacing bounds the portion clipped off the retained reference logarithm
by $K/(t+L)$, since
$[\log(1+\lambda/t)-B_L]_+\le\lambda/(t+L)$ and the retained eigenvalues
are bounded by those of the core.
The trimmed integral is $B_L$-Lipschitz in its trim fraction.
In each model $k\sim\mathrm{Bin}(N,\tau_K)$ and $\E(k/q)=\delta$.
Since clipping $k/q$ to $[0,1]$ can only decrease its distance from
$\delta\in[0,1]$, replacing the random fraction by $\delta$ costs at most
\[
 B_L\E|k/q-\delta|
 \le B_L\sqrt{\Var(k/q)}
 =\frac{B_L\sqrt{N\tau_K(1-\tau_K)}}q.
\]
Thus \cref{lem:frame-trimming} and original-model interlacing give
\begin{align*}
 e_K\le{}&
 \E \mathcal T_{\delta,L}(\ESD(S^\circ))
 -\inf_P\E \mathcal T_{\delta,L}(\ESD(S_P))\\
 &+\frac{2B_L\sqrt{N\tau_K(1-\tau_K)}}q
   +\frac{C\tau_K}{d}+\frac{K\eta_{q,N}}t+\frac K{t+L}.
\end{align*}

For any probability measure,
\begin{equation}\label{eq:trimmed-cap-variational}
 \mathcal T_{\delta,L}(\mu)=\max_{0\le v\le B_L}
 \left\{\int\min(h_L(\lambda),v)\,\mu(\dd\lambda)-\delta v\right\}.
\end{equation}
In particular this functional is convex, so the original-model expectation
is at least $\mathcal T_{\delta,L}(\bar\mu_P)$.
For the reference, replacing one column changes each capped spectral
integral by at most $B_L/q$: the two rank-one additions interlace the
same background. Write $X(v)$ for this integral minus its expectation.
The bounded-differences bound gives
$\E e^{sX(v)}\le\exp(s^2NB_L^2/(8q^2))$ for every real $s$.
At grid points $v_\ell=\ell B_L/q$, $0\le\ell\le q$, summing this bound
for both signs and using Jensen gives, for $s>0$,
\[
 \E\max_\ell|X(v_\ell)|
 \le\frac{\log(2(q+1))}{s}+\frac{sNB_L^2}{8q^2}.
\]
Minimizing in $s$ yields $B_L\sqrt{N\log(2(q+1))/(2q^2)}$.
Each integral and its expectation are $1$-Lipschitz in $v$, so $X$ is
$2$-Lipschitz. A grid point within $B_L/q$ adds at most $2B_L/q$.
The excess of the expected maximum in
\eqref{eq:trimmed-cap-variational} over the maximum of expectations
is at most $\E\sup_v|X(v)|$. Therefore
\[
 \E \mathcal T_{\delta,L}(\ESD(S^\circ))
 \le \mathcal T_{\delta,L}(\bar\mu^\circ)
       +B_L\left\{\frac2q+\sqrt{\frac{N\log(2(q+1))}{2q^2}}\right\}.
\]
Finally, \eqref{eq:trimmed-cap-variational} bounds
$\mathcal T_{\delta,L}(\bar\mu^\circ)-\mathcal T_{\delta,L}(\bar\mu_P)$ by $\chi_L$.
This proves $e_K\le\mathcal E_{K,L}$.
Equation~\eqref{eq:finite-log-weighted} and Markov's inequality on
$\{R\le U\}\subset\{R\le K\}$ prove
\eqref{eq:finite-spectral-probability}.
\end{proof}

\begin{remark}[Asymptotic consequence]
\label{rem:finite-second-proof}
Under \cref{thm:necessity-free}(ii) and $R_p\Rightarrow\nu$,
the reference has the same limit, so $\chi_L\to0$ for fixed $L$;
with a fixed continuity cutoff $M$ satisfying $\nu((M,\infty))<d/8$,
where $d=c\alpha$, the ordered limits $p\to\infty$, $L\to\infty$,
$K\to\infty$ in \eqref{eq:finite-spectral-inverse}
recover $\Delta_\psi\to0$ and RQC.
The reference still uses the actual radius law; this does not solve the
radius-identification question below.
\end{remark}

\begin{corollary}[One-sided spectral tests]\label{cor:one-sided-spectra}
Assume $R_p\Rightarrow\nu$, $p/N\to c>0$, and
$q_p=\lfloor\alpha p\rfloor$ for fixed $\alpha\in(0,1)$.
Fix any one $t>0$. Then (RQC) is equivalent to the following condition:
for every deterministic rank-$q_p$ projection sequence and every $v>0$,
\[
 \begin{gathered}
 \liminf_p\E\int f_{t,v}(\lambda)\,\ESD(S_{P_p})(\dd\lambda)
 \ge\int f_{t,v}(\lambda)\,\mu_{c\alpha,\nu}(\dd\lambda),\\
 f_{t,v}(\lambda)=\min\{\log(1+\lambda/t),v\}.
 \end{gathered}
\]
In particular, equality or prior convergence of the projected ESDs need
not be assumed in this criterion.
\end{corollary}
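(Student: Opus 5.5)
The forward direction is direct. Assume (RQC). Then \cref{thm:necessity-free}(i)$\Rightarrow$(ii) gives $\ESD(S_{P_p})\Rightarrow\mu_{c\alpha,\nu}$ in probability for every deterministic rank-$q_p$ sequence. Since $f_{t,v}$ is bounded and continuous, bounded convergence turns this into convergence of $\E\int f_{t,v}\,\dd\ESD(S_{P_p})$ to $\int f_{t,v}\,\dd\mu_{c\alpha,\nu}$. That is stronger than the displayed liminf inequality.

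For the converse, the plan is to feed the one-sided tests into \cref{thm:finite-spectral-inverse}, in the asymptotic form of \cref{rem:finite-second-proof}. The only spectral quantity entering $\mathcal E_{K,L}$ is $\chi_L$ in \eqref{eq:capped-spectral-discrepancy}. This is the positive part of $\int\min\{\log(1+\lambda/t),v\}\,(\bar\mu^\circ-\bar\mu_P)(\dd\lambda)$, taken over $v\in[0,B_L]$ and over projections. So only an \emph{upper} bound on reference-minus-projected integrals is needed, which is exactly what the one-sided hypothesis should provide.

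The reference $S^\circ$ uses independent radii with the actual law of $R_p$ and spherical directions. By \cref{thm:sufficient} (via \cref{cor:replacement}, or directly since spherical columns satisfy (RQC)), $\ESD(S^\circ)\Rightarrow\mu_{c\alpha,\nu}$ in probability. By bounded convergence, $\int f_{t,v}\,\dd\bar\mu^\circ\to\int f_{t,v}\,\dd\mu_{c\alpha,\nu}$ for each $v$.

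The next step is to make the hypothesis uniform over both $v$ and $P$, which is the main technical point. The map $v\mapsto\int f_{t,v}\,\dd\mu$ is $1$-Lipschitz for every probability $\mu$. Hence on the compact interval $[0,B_L]$ a finite grid reduces the supremum over $v$ to finitely many tests, at a cost of twice the mesh. For each fixed $v$, the deterministic-sequence selection argument following \cref{lem:uniform} upgrades the liminf over every sequence $P_p$ to
\[
\liminf_p\inf_{\rank P=q_p}\E\int f_{t,v}\,\dd\ESD(S_P)\ge\int f_{t,v}\,\dd\mu_{c\alpha,\nu}.
\]
If this failed, one would choose near-minimizing $P_p$ along a subsequence and contradict the hypothesis for that sequence. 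Combining the grid reduction with the reference limit gives $\limsup_p\chi_L\le0$ for each fixed $L$; since $\chi_L\ge0$, this means $\chi_L\to0$.

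It remains to take limits in \eqref{eq:finite-spectral-inverse}. Fix $M$ a continuity point of $\nu$ with $\nu((M,\infty))<d/8$, where $d=c\alpha$. Then $\tau_M\le d_p/8$ eventually, since $d_p=q_p/N\to d$. The constants $C$, $\kappa$, $A_d,B_d$ stay bounded because $d_p\to d$ and $b$ stays bounded away from zero. At fixed $K,L$, the terms $B_L(\cdots)$ and $K\eta_{q,N}/t$ vanish as $p\to\infty$, and $\tau_K\to\nu((K,\infty))$ along continuity points $K$. Letting $p\to\infty$, then $L\to\infty$ (which kills $K/(t+L)$), then $K\to\infty$, gives $\Delta_\psi(x_p,q_p)\to0$. \Cref{cor:one-transform}, applicable since $R_p$ is tight, then yields (RQC).

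The fixed single $t$ suffices, since \cref{thm:finite-spectral-inverse} is stated for one $t$. I expect the uniformity in $P$ and $v$ to be the only step needing care; the rest is bookkeeping of the order of limits.
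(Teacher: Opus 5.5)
Your proposal is correct and follows the paper's proof essentially step for step. The forward direction uses the sufficient theorem on projected samples, which is what \cref{thm:necessity-free}(i)$\Rightarrow$(ii) amounts to. The converse combines the spherical reference limit, sequence selection and a finite $v$-grid to get $\chi_L\to0$, then takes the ordered limits $p\to\infty$, $L\to\infty$, $K\to\infty$ in \cref{thm:finite-spectral-inverse}; you have additionally written out the bookkeeping the paper leaves implicit, namely eventual validity of $\tau_M\le d_p/8$ and vanishing of each term of $\mathcal E_{K,L}$.
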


\begin{proof}
The sufficient theorem gives the forward implication.
For the reverse, the spherical reference has the stated radial MP limit.
Sequence selection and a finite grid of cap values, using the
$1$-Lipschitz dependence of $f_{t,v}$ on $v$, give $\chi_L\to0$ for
every fixed $L$. Apply \cref{thm:finite-spectral-inverse} with the
ordered limits in \cref{rem:finite-second-proof}.
\end{proof}

\section{Radius identification}
\label{sec:radius}

Condition~\eqref{eq:no-small-subspace-energy} itself implies asymptotic
tightness of $R_p$. For fixed $\delta>0$, let $U_p$ be an independent Haar
projection of rank $\lfloor\delta p\rfloor$. Conditional on any nonzero $x_p$,
$\|U_px_p\|^2/\|x_p\|^2\to\delta$ in probability, uniformly in the vector.
Consequently
\[
 (1-o(1))\Pp(R_p>2\varepsilon/\delta)
 \le\sup_{\rank P\le\delta p}\Pp(x_p^TPx_p/p>\varepsilon).
\]
Take the limsup in $p$ and then let $\delta\downarrow0$.
The proof of \cref{thm:radius-identification} has two stages. Under uniform
integrability of $R_p^2$, first projected spectral moments recover
$p^{-1}\|\Sigma_p-m_1I\|_1\to0$, where
$m_1=\int\lambda\,\mu(\dd\lambda)$; second spectral moments and Haar projection
identities then give uniform $L^2$ RQC. Injectivity of the radial MP map
identifies the radius law. For general actual radii, the tightness just
proved permits subsequential radial limits. The finite second moment of
the common spectral limit bounds the second moment of each such radial
limit. A slowly increasing radial cutoff produces uniformly integrable
squared radii while preserving all projected spectral limits, reducing
the argument to the first stage. RQC then transfers back in probability.
We first record two preparatory lemmas.

\begin{lemma}[Small-subspace and population energy]
\label{lem:population-energy}
Suppose $\{R_p\}$ is uniformly integrable, and let $\Sigma_p=\E x_px_p^T$.
Then \eqref{eq:no-small-subspace-energy} is equivalent to
\begin{equation}\label{eq:population-energy-UI}
 \lim_{L\to\infty}\limsup_p\frac1p
       \Tr\bigl(\Sigma_p\1_{\{\Sigma_p>L\}}\bigr)=0.
\end{equation}
\end{lemma}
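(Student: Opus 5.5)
Both implications rest on one observation. For any deterministic projection $P$ of rank $k$, $\E\,x_p^TPx_p/p=\Tr(P\Sigma_p)/p$. Over such $P$, this quantity is maximized by the top-$k$ spectral projection of $\Sigma_p$ (Ky Fan). Write $\lambda_1\ge\dots\ge\lambda_p$ for the eigenvalues of $\Sigma_p$. Uniform integrability of $R_p$ gives $\Tr\Sigma_p/p=\E R_p\le C$ uniformly in $p$.

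\emph{(\ref{eq:population-energy-UI})$\Rightarrow$(\ref{eq:no-small-subspace-energy}).} Fix $L$. Let $\rank P\le\delta p$ and set $E_L=\1_{\{\Sigma_p>L\}}$. Then
\[
 \Tr(P\Sigma_p)\le \Tr(P\Sigma_p E_L)+\Tr(P\Sigma_p(I-E_L))\le\Tr(\Sigma_pE_L)+L\,\rank P,
\]
because $\Sigma_p$ commutes with $E_L$ and $0\le P\le I$. Hence
\[
 \sup_{\rank P\le\delta p}\E\frac{x^TPx}{p}\le\frac1p\Tr(\Sigma_p\1_{\{\Sigma_p>L\}})+L\delta .
\]
Markov's inequality bounds the probability in \eqref{eq:no-small-subspace-energy} by $\varepsilon^{-1}$ times this quantity. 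Take the limsup in $p$, then let $\delta\downarrow0$, then let $L\to\infty$. This direction uses no uniform integrability beyond finiteness of $\Sigma_p$.

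\emph{(\ref{eq:no-small-subspace-energy})$\Rightarrow$(\ref{eq:population-energy-UI}).} Let $k_L=\#\{i:\lambda_i>L\}$. Then $k_L\le\Tr\Sigma_p/L\le Cp/L$, so given $\delta$ we can take $L\ge C/\delta$ to ensure $k_L\le\delta p$. Let $P_L$ be the spectral projection onto $\{\Sigma_p>L\}$. Then
\[
 \frac1p\Tr(\Sigma_p\1_{\{\Sigma_p>L\}})=\E\frac{x^TP_Lx}{p},
\]
and we must show this expectation is small. Split it at a level $\varepsilon$ and a radial cutoff $K$, using $x^TP_Lx/p\le R_p$:
\[
 \E\frac{x^TP_Lx}{p}\le\varepsilon+K\,\Pp\Big(\frac{x^TP_Lx}{p}>\varepsilon\Big)+\E[R_p\1_{\{R_p>K\}}].
\]
Choose $K$ so that the last term is below $\varepsilon$ uniformly in $p$ (uniform integrability). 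Then choose $\delta$ so that \eqref{eq:no-small-subspace-energy} makes $\limsup_p\sup_{\rank P\le\delta p}\Pp(\cdot>\varepsilon)\le\varepsilon/K$. Finally choose $L\ge C/\delta$. This gives a limsup at most $3\varepsilon$ for every $L$ beyond that threshold, which is \eqref{eq:population-energy-UI}.

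The only real step is this second direction, where a probability statement has to be converted into an expectation statement. The conversion needs two ingredients: the pointwise envelope $x^TPx/p\le R_p$, which holds for every projection, and the rank counting $k_L\le Cp/L$. The rank counting is what puts the high spectral subspace among the small-rank projections covered by \eqref{eq:no-small-subspace-energy}; I expect it to be the point needing care, namely checking that $C$ is uniform in $p$, which again follows from uniform integrability.
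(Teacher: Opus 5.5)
Your proposal is correct and is essentially the paper's proof. One direction applies Markov's inequality to $\Tr(P\Sigma_p)\le L\,\rank P+\Tr(\Sigma_p\1_{\{\Sigma_p>L\}})$; the other bounds the expected energy of a small-rank projection by $\varepsilon+K\,\Pp(x_p^TPx_p/p>\varepsilon)+\E[R_p\1_{\{R_p>K\}}]$ and applies this to the spectral projection $\1_{\{\Sigma_p>L\}}$, whose relative rank is at most $\sup_p\E R_p/L$, exactly as you do (the Ky Fan remark is harmless but unused).
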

\begin{proof}
Write $Z_P=x_p^TPx_p/p\le R_p$. For $\varepsilon,K>0$,
\[
 \E Z_P\le\varepsilon+K\Pp(Z_P>\varepsilon)
                    +\E[R_p\1_{\{R_p>K\}}].
\]
Take the supremum over $\rank P\le\delta p$, then the limsup in $p$.
First let $\delta\downarrow0$, then $K\to\infty$, then
$\varepsilon\downarrow0$. The small-subspace condition and uniform
integrability give vanishing expected energy in these subspaces.
With $C_0=\sup_p\E R_p<\infty$ and $E_{p,L}=\1_{\{\Sigma_p>L\}}$,
\[
 \rank E_{p,L}/p\le C_0/L,\qquad
 \Tr(\Sigma_pE_{p,L})/p=\E Z_{E_{p,L}}.
\]
This proves \eqref{eq:population-energy-UI}. For fixed $L$ the rank
bound is $O(p/L)$, not necessarily $o(p)$; the relative rank vanishes
in the ordered limit $p\to\infty$, $L\to\infty$.
Conversely, for $\rank P\le\delta p$,
\[
 \E Z_P=\Tr(P\Sigma_p)/p
 \le L\delta+\Tr(\Sigma_pE_{p,L})/p.
\]
Markov's inequality, followed by $\delta\downarrow0$ and $L\to\infty$,
gives \eqref{eq:no-small-subspace-energy}.
\end{proof}

\begin{lemma}[Slow radial truncation]
\label{lem:slow-truncation}
If $X_n\ge0$, $X_n\Rightarrow X$, and $\E X^2<\infty$, there are deterministic
cutoffs $K_n\to\infty$, each a continuity point of the law of $X$, such that
\[
 \widehat X_n=X_n\1_{\{X_n\le K_n\}}\Rightarrow X,
 \qquad \{\widehat X_n^2\}\text{ is uniformly integrable}.
\]
\end{lemma}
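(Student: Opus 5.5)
The idea is a diagonal construction: choose $K_n$ growing slowly enough that the truncated second moments $\E[X_n^2\1_{\{X_n\le K_n\}}]$ converge to $\E X^2$. Weak convergence together with convergence of second moments gives uniform integrability of the squares.

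\emph{Step 1 (fixed cutoffs).} Let $D$ be the set of continuity points of $\law(X)$. Its complement is countable, so $D$ is dense. For fixed $K\in D$, the map $x\mapsto x^2\1_{\{x\le K\}}$ is bounded and continuous except at $K$, which is a null point for $\law(X)$. Hence, by the mapping theorem,
\[
 a_n(K):=\E\bigl[X_n^2\1_{\{X_n\le K\}}\bigr]\longrightarrow \E\bigl[X^2\1_{\{X\le K\}}\bigr]=:a(K).
\]
In the same way, $X_n\1_{\{X_n\le K\}}\Rightarrow X\1_{\{X\le K\}}$ and $\Pp(X_n>K)\to\Pp(X>K)$.

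\emph{Step 2 (diagonal choice).} Pick $K_1<K_2<\dots$ in $D$ with $K_m\to\infty$. Then pick indices $n_1<n_2<\dots$ such that, for $n\ge n_m$,
\[
 |a_n(K_j)-a(K_j)|\le 1/m \quad\text{and}\quad |\Pp(X_n>K_j)-\Pp(X>K_j)|\le 1/m \qquad\text{for all } j\le m.
\]
Set $K_n=K_m$ for $n_m\le n<n_{m+1}$, and $K_n=K_1$ for $n<n_1$. Then $K_n\to\infty$, every $K_n\in D$, and
\[
 |a_n(K_n)-a(K_n)|\to0,\qquad a(K_n)\to\E X^2
\]
by monotone convergence. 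So $\E\widehat X_n^2\to\E X^2<\infty$.

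\emph{Step 3 (weak convergence).} For bounded Lipschitz $f$ and any fixed $K\in D$ with $K\le K_n$ eventually,
\[
 \widehat X_n\ne X_n\1_{\{X_n\le K\}}\quad\text{only on } \{X_n>K\}.
\]
This gives
\[
 \bigl|\E f(\widehat X_n)-\E f(X)\bigr|\le \bigl|\E f(X_n\1_{\{X_n\le K\}})-\E f(X\1_{\{X\le K\}})\bigr|+2\|f\|_\infty\bigl(\Pp(X_n>K)+\Pp(X>K)\bigr).
\]
Letting $n\to\infty$ and then $K\to\infty$ along $D$ gives $\widehat X_n\Rightarrow X$. The limit $\limsup_n\Pp(X_n>K)\le\Pp(X\ge K)$ follows from the Portmanteau theorem.

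\emph{Step 4 (uniform integrability).} Since $\widehat X_n^2\Rightarrow X^2$, the squares are nonnegative, and $\E\widehat X_n^2\to\E X^2<\infty$, the standard converse of Vitali's theorem gives uniform integrability. Concretely, for $L\in D$,
\[
 \E\bigl[\widehat X_n^2\1_{\{\widehat X_n>L\}}\bigr]=\E\widehat X_n^2-\E\bigl[\widehat X_n^2\1_{\{\widehat X_n\le L\}}\bigr]\longrightarrow\E\bigl[X^2\1_{\{X>L\}}\bigr],
\]
and the right side is small for large $L$. Finitely many initial $n$ pose no problem, since each $\widehat X_n$ is bounded by $K_n$.

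The only delicate point is Step 2: $K_n$ must be chosen so that both the moment error and the tail error are controlled simultaneously. The countable diagonal choice within continuity points handles this. Everything else is routine.
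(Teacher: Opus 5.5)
Your proof is correct and follows the paper's construction. Both choose continuity-point cutoffs diagonally so that the truncated second moments $\E[X_n^2\1_{\{X_n\le K_n\}}]$ converge to $\E X^2$, and then use that weak convergence plus convergence of means of nonnegative variables gives uniform integrability. The differences are cosmetic. The paper gets $\Pp(X_n>K_n)\to0$ directly from tightness and proves uniform integrability via $\E(Y_n-\Theta)_+\to\E(Y-\Theta)_+$ together with $Y_n\1_{\{Y_n>2\Theta\}}\le2(Y_n-\Theta)_+$. You instead work with truncated tail expectations at continuity points, and the tail-probability control you build into Step 2 is not actually needed.
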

\begin{proof}
Choose continuity points $L_j\uparrow\infty$. At each fixed $j$,
$\E[X_n^2\1_{\{X_n\le L_j\}}]\to\E[X^2\1_{\{X\le L_j\}}]$.
Choose increasing integers $n_j$ so that the difference is at most
$1/j$ for all $n\ge n_j$, and set $K_n=L_j$ for
$n_j\le n<n_{j+1}$, with a fixed finite cutoff for earlier indices.
Tightness gives $\Pp(X_n>K_n)\to0$, hence $\widehat X_n\Rightarrow X$,
and the diagonal construction gives $\E\widehat X_n^2\to\E X^2$.
For $Y_n=\widehat X_n^2$ and $Y=X^2$, bounded truncation then yields
\[
 \E(Y_n-\Theta)_+=\E Y_n-\E(Y_n\wedge \Theta)\longrightarrow\E(Y-\Theta)_+.
\]
The bound $Y_n\1_{\{Y_n>2\Theta\}}\le2(Y_n-\Theta)_+$ proves asymptotic uniform
integrability. Each of the finitely many remaining variables is bounded,
so the whole family is uniformly integrable.
\end{proof}

\begin{proof}[Proof of \cref{thm:radius-identification}]
We first assume uniform integrability of $R_p^2$, then use the second
moment of $\mu$ to remove this assumption.
Write $\beta_p=q_p/p$ and $d_p=q_p/N\to d$.
For every bounded continuous spectral test, convergence in probability
implies convergence of its expectation. Sequence selection, as in
\cref{lem:uniform}(b), makes these deterministic expectation bounds
uniform over rank-$q_p$ orientations. Uniform moment bounds below then
permit unbounded tests. Haar projections are always independent of the
sample: we condition on their realized orientation and integrate a
uniform deterministic bound.

\emph{Population energy and the first spectral moment.}
Let $\Sigma_p=\E x_px_p^T$. By \cref{lem:population-energy},
\eqref{eq:population-energy-UI} holds.

For any vector $y\in\R^q$ with $\Lambda=\E yy^T$, independence gives
\begin{equation}\label{eq:inverse-spectral-second}
 \E\frac1q\Tr S_y^2
 =\frac{N-1}{Nq}\Tr\Lambda^2+\frac1{Nq}\E\|y\|^4.
\end{equation}
Set $E_{p,L}=\1_{\{\Sigma_p>L\}}$ and
$x_p^L=(I-E_{p,L})x_p$.
For fixed $L$, \eqref{eq:inverse-spectral-second} bounds the expected
second spectral moments of all projections of $x_p^L$, since its
covariance norm is at most $L$ and $\sup_p\E R_p^2<\infty$.
Put $a_{p,L}=p^{-1}\Tr(\Sigma_pE_{p,L})$.
The decomposition into low and high population components gives
\[
 \sup_P\E\frac1q\|S_P-S_P^L\|_1
 \le\beta_p^{-1}\{a_{p,L}+2\sqrt{\E R_p\,a_{p,L}}\}.
\]
The eigenvalue trace-norm inequality and
$(\lambda-\Theta)_+\le\lambda^2/\Theta$ give
\[
 \sup_P\E\int(\lambda-\Theta)_+\,\ESD(S_P)(\dd\lambda)
 \le \frac{C_L}{\Theta}
 +\beta_p^{-1}\{a_{p,L}+2\sqrt{\E R_p\,a_{p,L}}\},
\]
where $C_L$ is a uniform bound for the core's second spectral moment.
First let $\Theta\to\infty$ at fixed $L$, then $L\to\infty$.
Since $\lambda\1_{\{\lambda>2\Theta\}}\le2(\lambda-\Theta)_+$,
\eqref{eq:population-energy-UI} proves uniform integrability of
the first spectral moments. Consequently, with
$m_1=\int\lambda\,\mu(\dd\lambda)<\infty$,
\[
 \sup_{\rank P=q_p}\left|\frac{\Tr(P\Sigma_p)}{q_p}-m_1\right|\to0.
\]
The fantope probes \eqref{eq:fantope-probes}, together with their center
$\beta_pI$, imply
\begin{equation}\label{eq:identified-population-isotropy}
 \frac1p\|\Sigma_p-m_1I\|_1\to0.
\end{equation}
For example, if the preceding supremum is $\epsilon_p$, the trace-norm
quantity is at most $(1+2\beta_p/b_p)\epsilon_p$, where
$b_p=\min(\beta_p,1-\beta_p)$.
Indeed, the two probes $B_\pm=\beta_pI\pm(b_p/2)A_0$ have normalized
traces against $\Sigma_p$ within $\epsilon_p$ of $m_1$.
Subtracting their bounds gives
$|\Tr(A_0\Sigma_p)/p|\le2\beta_p\epsilon_p/b_p$.
The center gives $|\Tr\Sigma_p/p-m_1|\le\epsilon_p$; adding the scalar
part of a real symmetric contraction and using trace-norm duality
proves the displayed constant.

\emph{The second spectral moment and energy rigidity.}
Let $E_p=\1_{\{\Sigma_p>2m_1+1\}}$, $z_p=(I-E_p)x_p$, and
$\Gamma_p=\E z_pz_p^T$.
Equation~\eqref{eq:identified-population-isotropy} gives
\[
 \begin{gathered}
 \rank E_p=o(p),\quad e_p:=x_p^TE_px_p/p\to0\text{ in }L^1,\quad
 \|\Gamma_p\|\le2m_1+1,\\
 p^{-1}\|\Gamma_p-m_1I\|_1\to0.
 \end{gathered}
\]
Since $e_p\le R_p$, uniform integrability of $R_p^2$ gives
$\E e_p^2\to0$. The sample covariances of $z_p$ and $x_p$ differ by rank
at most $2\rank E_p$, so their projected spectral limits agree.

The second spectral moments for $z_p$ are uniformly integrable.
To verify this, write $z_p^K=z_p\1_{\{R_p\le K\}}$ and split each sample
covariance into its core and its positive tail $T_P^K$.
By \eqref{eq:inverse-spectral-second},
\[
 \E\frac1q\Tr(T_P^K)^2
 \le\frac{2m_1+1}{\beta_p}\E[R_p\1_{\{R_p>K\}}]
   +\frac{p/N}{\beta_p}\E[R_p^2\1_{\{R_p>K\}}],
\]
which vanishes uniformly as $K\to\infty$.
For fixed $K$, the core has uniformly bounded third spectral moments.
The exact formula is
\begin{align}
 \E\frac1q\Tr S_y^3
 ={}&\frac{(N-1)(N-2)}{N^2q}\Tr\Lambda^3\notag\\
 &+\frac{3(N-1)}{N^2q}\E[\|y\|^2y^T\Lambda y]
   +\frac1{N^2q}\E\|y\|^6 .
 \label{eq:inverse-spectral-third}
\end{align}
For $y=C_Pz_p^K$, use $\|\Lambda\|\le2m_1+1$ and $\|y\|^2\le pK$.
Order the full and core eigenvalues decreasingly as $\lambda_i\ge a_i$,
and write $b_i=\lambda_i-a_i\ge0$. For $a,b\ge0$ and $\Theta>0$,
\[
 (a+b)^2\1_{\{a+b>\Theta\}}
 \le 2a^2\1_{\{a>\Theta/2\}}+4b^2.
\]
Indeed, on $a>\Theta/2$ use $(a+b)^2\le2a^2+2b^2$; on the remaining
part of $\{a+b>\Theta\}$ one has $b>\Theta/2\ge a$.
Hoffman--Wielandt gives $\sum_i b_i^2\le\Tr(T_P^K)^2$. Consequently
\begin{equation}\label{eq:inverse-second-tail}
 \sup_P\E\frac1q\sum_i\lambda_i^2\1_{\{\lambda_i>\Theta\}}
 \le\frac{4C_K}{\Theta}
       +4\sup_P\E\frac1q\Tr(T_P^K)^2,
\end{equation}
where $C_K$ bounds the core's third spectral moment.
First let $\Theta\to\infty$ for fixed $K$, then $K\to\infty$.
This proves the required second-moment uniform integrability.

It follows that $m_{2,\mu}=\int\lambda^2\,\mu(\dd\lambda)<\infty$.
The bounded norm and trace-norm approximation of $\Gamma_p$ imply
$q^{-1}\Tr(C_P\Gamma_pC_P^T)^2\to m_1^2$ uniformly.
Equation~\eqref{eq:inverse-spectral-second} therefore yields
\[
 \sup_{\rank P=q_p}
 \left|\E\left(\frac{\|Pz_p\|^2}{q_p}\right)^2
                   -\frac{m_{2,\mu}-m_1^2}{d}\right|\to0.
\]
For an independent Haar rank-$q$ projection $U$,
\begin{equation}\label{eq:inverse-Haar-second}
 \E_U\left(\frac{\|Uz_p\|^2}{q}\right)^2
 =\frac{p(q+2)}{q(p+2)}
       \left(\frac{\|z_p\|^2}{p}\right)^2.
\end{equation}
Thus the parent second radial moment converges to the same value, and
\cref{lem:energy-L2} gives uniform $L^2$ RQC for $z_p$.
For every contraction,
\[
 |Q_{x_p}(A)-Q_{z_p}(A)|\le2\sqrt{R_pe_p}+e_p.
\]
Indeed,
$x_px_p^T-z_pz_p^T=(x_p-z_p)x_p^T+z_p(x_p-z_p)^T$.
Its trace norm divided by $p$ is at most
$\sqrt{e_p}(\sqrt{R_p}+\sqrt{R_p-e_p})\le2\sqrt{R_pe_p}$;
the centering term contributes at most $e_p$.
Its square tends to zero in probability and has a constant multiple of
$R_p^2$ as envelope, so RQC transfers in $L^2$ to $x_p$.
Every subsequential radius limit $\eta$ now satisfies $\mu=\mu_{d,\eta}$
by the sufficient theorem. Tightness and injectivity in
\cref{prop:limit-law} give a unique limit for $R_p$.
This proves the uniformly integrable case.

\emph{Remove moment assumptions on the actual radii.}
Assume now only \eqref{eq:no-small-subspace-energy} and $m_{2,\mu}<\infty$.
Tightness was proved at the start of this section. Fix a subsequence with
$R_p\Rightarrow\eta$ and a continuity cutoff $K$ of $\eta$.
Let $x_p^K=x_p\1_{\{R_p\le K\}}$ and
$\Sigma_{p,K}=\E x_p^K(x_p^K)^T$.
For $E_{p,K,L}=\1_{\{\Sigma_{p,K}>L\}}$, its rank is at most $pK/L$.
The bounded vector $x_p^K$ inherits the small-subspace condition, so
\eqref{eq:population-energy-UI} applies to it and
\[
 \lim_{L\to\infty}\limsup_p
 \E[(x_p^K)^TE_{p,K,L}x_p^K/p]=0.
\]
Fix $L>K/(1-\alpha)$. Since $\rank E_{p,K,L}\le pK/L$ and
$q_p=\lfloor\alpha p\rfloor$, its complement has dimension at least $q_p$.
Consider rank-$q_p$ projections $P$ inside this
complement. Their core vectors $C_Px_p^K$ have covariance norm at most
$L$ and squared norm at most $pK$.
Equation~\eqref{eq:inverse-spectral-third} bounds their third spectral
moments uniformly in $p$ and $P$ at fixed $K,L$; no uniformity in
$K$ or $L$ is needed.
Let $S_P^K$ be the covariance of these core vectors and $S_P$ the
original projected covariance on the same samples.
Since $0\le S_P^K\le S_P$, eigenvalue monotonicity gives
\[
 \E\int(\lambda^2\wedge \Theta)\,\ESD(S_P^K)(\dd\lambda)
 \le \E\int(\lambda^2\wedge \Theta)\,\ESD(S_P)(\dd\lambda).
\]
At fixed $K,L$, the omitted core tail is at most $C_{K,L}/\sqrt \Theta$
by its third-moment bound. Thus
\[
 \limsup_p\sup_{\substack{\rank P=q_p\\P\le I-E_{p,K,L}}}
 \E\int\lambda^2\,\ESD(S_P^K)(\dd\lambda)
 \le \int(\lambda^2\wedge \Theta)\,\mu(\dd\lambda)+C_{K,L}/\sqrt \Theta.
\]
Let $\Theta\to\infty$. This uses no moment convergence for the original
matrices. The nonnegative diagonal-column term in
\eqref{eq:inverse-spectral-second} is
\[
 \frac{\E\|C_Px_p^K\|^4}{Nq_p}
 =d_p\,\E\left(\frac{\|Px_p^K\|^2}{q_p}\right)^2,
\]
so
\[
 \limsup_p\sup_{\substack{\rank P=q_p\\P\le I-E_{p,K,L}}}
 \E\left(\frac{\|Px_p^K\|^2}{q_p}\right)^2\le\frac{m_{2,\mu}}{d}.
\]
Let $F_{p,K,L}=I-E_{p,K,L}$, of rank $m_p$, and take an independent
Haar rank-$q_p$ projection $U$ in its range.
For every vector, including zero, spherical second moments give
\[
 \E_U\left(\frac{\|Ux_p^K\|^2}{q_p}\right)^2
 =\frac{p^2(q_p+2)}{q_pm_p(m_p+2)}
       \left(\frac{\|F_{p,K,L}x_p^K\|^2}{p}\right)^2.
\]
The coefficient is
$(p/m_p)^2(1+2/q_p)/(1+2/m_p)\ge1$ because $q_p\le m_p\le p$.
Averaging the preceding uniform deterministic-projection bound therefore
bounds the expected squared complementary energy by $m_{2,\mu}/d$
in the limsup.

Write $r_p^K=\|x_p^K\|^2/p\le K$ and
$e_{p,K,L}=(x_p^K)^TE_{p,K,L}x_p^K/p$.
Since the complementary energy is $r_p^K-e_{p,K,L}$,
\[
 0\le (r_p^K)^2-(r_p^K-e_{p,K,L})^2
 =2r_p^Ke_{p,K,L}-e_{p,K,L}^2\le2Ke_{p,K,L}.
\]
Its expected upper bound vanishes in the ordered limit
$p\to\infty$, $L\to\infty$, at fixed $K$.
Moreover $\E(r_p^K)^2\to\int_{[0,K]}r^2\,\eta(\dd r)$ by weak
convergence at the continuity cutoff $K$.
At fixed $K$ the preceding bounds give the explicit chain
\[
 \begin{aligned}
 \int_{[0,K]}r^2\,\eta(\dd r)
 &=\lim_p\E(r_p^K)^2\\
 &\le \limsup_p\E(r_p^K-e_{p,K,L})^2
                  +2K\limsup_p\E e_{p,K,L}\\
 &\le m_{2,\mu}/d+2K\limsup_p\E e_{p,K,L}.
 \end{aligned}
\]
First send $L\to\infty$, still at fixed $K$, using the population-energy
tail bound. Then send $K\to\infty$ through continuity cutoffs.
Monotone convergence proves
\begin{equation}\label{eq:inverse-radius-moment-bootstrap}
 \int r^2\,\eta(\dd r)\le m_{2,\mu}/d<\infty.
\end{equation}

There are now deterministic cutoffs $K_p\to\infty$, increasing slowly
enough along this subsequence, such that
\[
 \widehat R_p=R_p\1_{\{R_p\le K_p\}}\Rightarrow\eta,\qquad
 \{\widehat R_p^2\}\text{ is uniformly integrable}.
\]
This is \cref{lem:slow-truncation} applied along the chosen subsequence.
The vectors $\widehat x_p=x_p\1_{\{R_p\le K_p\}}$ inherit the
small-subspace condition. Tightness gives $\Pp(R_p>K_p)\to0$, so
if $k_p$ columns are deleted, the rank inequality gives
$\dK(\ESD(S_P),\ESD(\widehat S_P))\le k_p/q_p$ for every $P$, and
$\E(k_p/q_p)=d_p^{-1}\Pp(R_p>K_p)\to0$.
Every projected ESD limit is therefore preserved.
The already proved case gives RQC and $\mu=\mu_{d,\eta}$ for
$\widehat x_p$. Since $\widehat x_p=x_p$ with probability tending to
one, RQC transfers in probability to $x_p$.
This works along every subsequential radius limit; injectivity again
gives full radius convergence and RQC.
Finally, apply the uniformly integrable case's spectral-moment calculation
to independent spherical columns with radius law $\nu$. For
$\int r^2\,\nu(\dd r)<\infty$ it gives second spectral moment
$(\int r\,\nu(\dd r))^2+d\int r^2\,\nu(\dd r)$.
Hence $\mu_{d,\nu}$ has finite second moment, so part (a) applies.
Injectivity of $\eta\mapsto\mu_{d,\eta}$ then gives $\eta=\nu$.
\end{proof}

The bootstrap uses finiteness of the \emph{limiting spectral} second moment,
not convergence of the original second spectral moments. Rare enormous
columns can prevent the latter. The first part of the proof also explains
the role of the geometric assumption: when the radii are uniformly
integrable, it is exactly the population-energy condition
\eqref{eq:population-energy-UI}. For infinite-second-moment target laws,
the bootstrap \eqref{eq:inverse-radius-moment-bootstrap} no longer supplies
the integrability needed by this argument.

\section{A conditional-isotropy alternative}
\label{sec:necessity}

\subsection{Radial reductions}

\begin{definition}[projected radial MP property]
\label{def:RMP-proj}
Let $\nu$ be a probability measure on $[0,\infty)$. $(x_p)$ has property
(RMP-$\Pi$) for $(c,\nu)$ if for every $\alpha\in(0,1]$ and every deterministic
sequence of orthogonal projections $P_p$ with $\rank P_p/p\to\alpha$,
$\ESD(S_{P_p})\Rightarrow\mu_{c\alpha,\nu}$ weakly in probability.
\end{definition}

The law in (RMP-$\Pi$) is $\mu_{c\alpha,\nu}$ with one and the same $\nu$ for
every $\alpha$; the projected samples have their own radii, and part of the
content of the definition is that these are not allowed to matter. Nothing is
assumed here about the law of $R_p$: that $R_p\Rightarrow\nu$ is part of the
conclusion of \cref{thm:necessity}.

The following alternative can recover the radius law through a conditional
second-moment hypothesis, using all proportional ranks.

\begin{definition}[radial reduction; approximate conditional isotropy]
\label{def:ACI}
A \emph{radial reduction} of $(x_p)$ is a pair $(\mathcal H_p,\rho_p)$ in which
$\mathcal H_p\subseteq\sigma(x_p)$ is a $\sigma$-field with
$\E[\|x_p\|^2\mid\mathcal H_p]<\infty$ almost surely and $\rho_p\ge0$ is
$\mathcal H_p$-measurable, such that the conditional second-moment matrix
$\Sigma_p=\E[x_px_p^{\mathsf T}\mid\mathcal H_p]$ satisfies
\begin{equation}
  \frac1p\big\|\Sigma_p-\rho_pI_p\big\|_1\xrightarrowp0.
  \label{eq:ACI}
\end{equation}
We say that $(x_p)$ satisfies (ACI) with radial law $\nu$ if it admits a radial
reduction with $\rho_p\Rightarrow\nu$. A radial reduction transfers to
independent copies: since $\mathcal H_p\subseteq\sigma(x_p)$, the family
$\mathcal B=\{B\in\mathcal B(\R^p):x_p^{-1}(B)\in\mathcal H_p\}$ is a
$\sigma$-field with $\mathcal H_p=x_p^{-1}(\mathcal B)$, and $\rho_p=g(x_p)$
for some $\mathcal B$-measurable $g\ge0$; for a copy $x_j$ of $x_p$ we write
$\mathcal H_j=x_j^{-1}(\mathcal B)$ and $\rho_j=g(x_j)$, so that
$\mathcal H_1,\dots,\mathcal H_N$ are independent.
\end{definition}

Two reductions matter. The \emph{canonical} one is $\mathcal H_p=\sigma(R_p)$,
$\rho_p=R_p$; there $\E[\|x_p\|^2\mid\mathcal H_p]=pR_p<\infty$ automatically,
$\Tr\Sigma_p=pR_p$, and \eqref{eq:ACI} says that $x_p$ is isotropic given its
own radius, in an approximate trace-norm sense. Exact conditional isotropy
$\Sigma_p=R_pI$ holds whenever the law of $x_p$ is invariant under a group of
orthogonal transformations that preserve $\|x\|$ and act irreducibly on $\R^p$
(a symmetric matrix commuting with such a group has a nonzero real eigenspace,
which is invariant and hence all of $\R^p$),
for instance under all sign changes and coordinate permutations; it holds for
the tensor model when the base law is symmetric (\cref{lem:tensor-ACI}), and for
elliptical laws. The \emph{trivial} reduction is
$\mathcal H_p=\{\emptyset,\Omega\}$, $\rho_p=1$, for which \eqref{eq:ACI} reads
$\frac1p\|\E x_px_p^{\mathsf T}-I\|_1\to0$: this is isotropy in the sense of
\cite{Yaskov2015b}, with $\nu=\delta_1$. Neither reduction subsumes the other
(\cref{rem:reductions}). The definition therefore allows any radial
reduction satisfying \eqref{eq:ACI}, rather than requiring either of these
two choices. The largest choice $\mathcal H_p=\sigma(x_p)$
is restrictive: there $\Sigma_p=x_px_p^{\mathsf T}$ has eigenvalues
$pR_p$ and $0$, so
$\frac1p\|\Sigma_p-\rho_pI\|_1=|R_p-\rho_p/p|+(1-1/p)\rho_p$, which tends to
$0$ only if $\rho_p\to0$ and $R_p\to0$; the reduction is then available only
when $\nu=\delta_0$, where $|Q_p(A)|\le2R_p$ makes (RQC) automatic and both
sides of \cref{thm:necessity} hold.

\begin{theorem}
\label{thm:necessity}
Let $c_n\to c\in(0,\infty)$, let $\nu$ be a probability measure on
$[0,\infty)$, and assume that $(x_p)$ satisfies (ACI) with radial law $\nu$
through a radial reduction $(\mathcal H_p,\rho_p)$. Then the following are
equivalent.
\begin{enumerate}[label=(\roman*)]
\item $R_p\Rightarrow\nu$ and (RQC).
\item (RMP-$\Pi$) for $(c,\nu)$.
\end{enumerate}
Moreover (ii) implies $R_p-\rho_p\to0$ in probability. The implication
(i)$\Rightarrow$(ii) uses no hypothesis on $(\mathcal H_p,\rho_p)$.
\end{theorem}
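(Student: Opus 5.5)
The forward direction (i)$\Rightarrow$(ii) does not use the reduction. Fix $\alpha\in(0,1]$ and a deterministic projection sequence $P_p$ with $q_p:=\rank P_p$ and $q_p/p\to\alpha$. Apply \cref{rem:general-radius} to the pairs $(y_j,\rho_j)=(C_{P_p}x_j,R_j)$, which have ratio $q_p/N\to c\alpha$. Its uniform hypothesis is exactly the uniform RQC of \cref{prop:rqc-equivalences}(iii) for test matrices $C_P^{\mathsf T}AC_P$. Indeed,
\[
 \frac{y^{\mathsf T}Ay}{q_p}-R_p\frac{\Tr A}{q_p}=\frac p{q_p}\,Q_p(C_P^{\mathsf T}AC_P),
\]
and $C_P^{\mathsf T}AC_P$ is a contraction whenever $A$ is. This gives $\ESD(S_{P_p})\Rightarrow\mu_{c\alpha,\nu}$. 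For $\alpha=1$ it is \cref{thm:sufficient} directly.

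For (ii)$\Rightarrow$(i), the first step recovers the radius law from (ACI). For a radial reduction, $\E[\|x_p\|^2\mid\mathcal H_p]=\Tr\Sigma_p=p\rho_p+o_p(p)$. I would then show that $R_p-\rho_p\to0$ in probability, by the following route. The conditional law of $x_p$ given $\mathcal H_p$ is approximately isotropic with scale $\rho_p$. Use projected spectra at a small rank fraction $\alpha$, where $\mu_{c\alpha,\nu}$ concentrates near the first moment, or equivalently the conditional Jensen inequality \eqref{eq:log-conditional-Jensen}. The aim is a comparison of the following kind. Take $P$ to be an independent Haar projection of rank $q$. Given $\mathcal H_p$, the projected energy $\rho_P$ has conditional mean close to $\rho_p$ by \eqref{eq:ACI}, so $\E[\rho_P\mid\mathcal H_p]\approx\rho_p$. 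By averaging, the projected spectra give $\mathcal J_{c\alpha,\law(\rho_P)}(t)$ approximately equal to the transform for the limit law $\nu$, along the lines of \cref{lem:log-tail-transfer} with truncation. Conditional Jensen gives $\mathcal J_{c\alpha,\law(\rho_p)}\ge\mathcal J_{c\alpha,\law(\rho_P)}$. Since $\rho_p\Rightarrow\nu$, equality holds in the limit. Strict concavity, as in the proof of \cref{thm:necessity-free}, then forces $\rho_P-\rho_p\to0$ in probability. Averaging over Haar $P$ via \eqref{eq:log-complement-energy}, complementary energies give $R_p-\rho_p\to0$, hence $R_p\Rightarrow\nu$.

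Once $R_p\Rightarrow\nu$ is known, (ii) restricted to $\alpha$ with $q_p=\lfloor\alpha p\rfloor$ is hypothesis (ii) of \cref{thm:necessity-free}, which yields (RQC). The claim $R_p-\rho_p\to0$ is a by-product of the first step.

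The main obstacle is the first step: turning the conditional second-moment information into a statement about the actual radius without moments. Truncation must be done at $\{\rho_p\le K\}\in\mathcal H_p$ so that conditioning is preserved. The Jensen step also needs concavity in the column law jointly with the conditional Haar averaging, with errors controlled by the frame-trimming bound \cref{lem:frame-trimming}. If this proves too delicate, a fallback is available. Use $\alpha=1$ together with the limit $\alpha\downarrow0$ of first spectral moments under truncation, and compare $\E[\psi(R_p)]$ with $\E[\psi(\rho_p)]$ by the strict concavity of $\psi$.
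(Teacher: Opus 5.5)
Your forward direction is the paper's. Your last step is also legitimate: once $R_p\Rightarrow\nu$ is known, (RMP-$\Pi$) at the single rank $\lfloor\alpha p\rfloor$ is hypothesis (ii) of \cref{thm:necessity-free}, which gives (RQC). The gap is the step you flag yourself, $R_p-\rho_p\to0$. It is the whole content of (ii)$\Rightarrow$(i), and it is asserted rather than proved.

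\textbf{Why the proposed route does not close as written.}
\begin{enumerate}[label=(\alph*)]
\item \cref{lem:log-tail-transfer} assumes $R_p\Rightarrow\nu$; it counts the deleted columns through $\Pp(R_p>K)$. That is the conclusion you are after.
\item Truncating on $\{\rho_p\le K\}$ does not bound $R_p$. Since $\E[R_p\mid\mathcal H_p]=\Tr\Sigma_p/p$ and (ACI) holds only in probability, this conditional mean can be huge on that event. The truncated vector then need not even satisfy $\E\log(1+R_p)<\infty$, so angular symmetrization and the spherical limit are unavailable. You would have to truncate on an $\mathcal H_p$-measurable event that also controls $\|\Sigma_p-\rho_pI\|_1/p$.
\item The strict-concavity step cannot be done ``as in'' \cref{thm:necessity-free}. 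There the gap $J_K$ is pointwise nonnegative because of the exact identity $R=\theta\rho+(1-\theta)V$. The limit $K\to\infty$ is also taken on a single joint weak limit of $(R_p,\rho_{P_p})$, which uses tightness of $R_p$. Here you have only an approximate conditional mean. You would need a Bregman-type conditional Jensen gap and a separate tightness argument for $R_p$.
\item The alternatives do not close it either. The small-$\alpha$ first-moment idea presumes $\int r\,\nu(\dd r)<\infty$. The fallback needs $\liminf\E\psi(R_p)\ge\int\psi\,\dd\nu$ from the spectrum, which you do not derive. The only such spectral bound available, \cref{lem:gram-radius-bound}, loses $O(1/d)$.
\end{enumerate}
The route may be repairable, but none of these ingredients is supplied.

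\textbf{The missing idea.} (ACI) fits a bounded resolvent Jensen argument that needs no truncation, log-determinants or moments.
\begin{enumerate}[label=(\roman*)]
\item Set $y_k=C_Px_k$, $A=\sum_{k\le N}y_ky_k^{\mathsf T}$, $M_\eps=(A+\eps NI)^{-1}$, $Z=y_{N+1}^{\mathsf T}M_\eps y_{N+1}$, $f(t)=t/(1+t)$, and $s_P(-\eps)=q^{-1}\Tr(S_P+\eps)^{-1}$.
\item Exchangeability and Sherman--Morrison give $q/N=\E f(Z)+\eps(q/N)\E s_P(-\eps)+O(N^{-1})$. The projected spectral law and the radial equation at $-\eps$ then give $\E f(Z)\to\E f(c\alpha Rs(-\eps))$ with $R\sim\nu$.
\item Condition on $\mathcal G=\sigma(x_1,\dots,x_N)\vee\mathcal H_{N+1}$. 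Then $W=\E[Z\mid\mathcal G]=\Tr(M_\eps C_P\Sigma_{N+1}C_P^{\mathsf T})=\rho_{N+1}(q/N)s_P(-\eps)+o_p(1)$ by (ACI). So $\E f(W)$ has the same limit, because $\rho_p\Rightarrow\nu$.
\item The exact gap identity $\E f(W)-\E f(Z)=\E\,(Z-W)^2/((1+Z)(1+W)^2)$ forces $Z-W\to0$.
\item Letting $\eps=\eps_n\to\infty$ slowly removes the resolvent, giving $\|C_Px\|^2/q-\rho_p\to0$ for every proportional $P$.
\end{enumerate}
Taking $P=I$ yields $R_p-\rho_p\to0$. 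The same statement gives (RQC) directly, without appealing to \cref{thm:necessity-free}.
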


For the canonical reduction, the theorem says that for vectors that are approximately isotropic given their own radius, the radial
condition is necessary and sufficient for the radial MP law under all
proportional projections. Taking the trivial reduction and $\nu=\delta_1$
recovers the necessity half of Yaskov's characterization
\cite[Theorem~3.3]{Yaskov2015b}, in which $R_p\to1$ is a conclusion and not a
hypothesis; the converse half is recovered in probability, and almost surely
under the summability hypothesis of \cref{thm:sufficient} (\cref{sec:related}).

\begin{remark}[the two reductions are not comparable]
\label{rem:reductions}
For an elliptical vector with a nondegenerate radial law the canonical
reduction is exact while the trivial one is useless: if $\E R=\infty$ the
trivial reduction is not even admissible, since $\E[\|x_p\|^2]=\infty$; and if
$\E R<\infty$ then $\E xx^{\mathsf T}=(\E R)I$, so \eqref{eq:ACI} holds only
with the deterministic
$\rho_p=\E R$, whose weak limit $\delta_{\E R}$ is not $\nu$, and (RMP-$\Pi$)
for $(c,\delta_{\E R})$ is false. Conversely, let $p=2m$, let
$g^{(1)},g^{(2)}\sim N(0,I_m)$ be independent, let $\pi=(1+a_p)/2$ with
$p^{-1/2}\ll a_p\to0$, and let
\[
  x=\pi^{-1/2}\big(g^{(1)},0\big)\ \text{with probability }\pi,\qquad
  x=(1-\pi)^{-1/2}\big(0,g^{(2)}\big)\ \text{otherwise}.
\]
Then $\E xx^{\mathsf T}=I_p$ exactly, so the trivial reduction is exact with
$\nu=\delta_1$, and $R_p\to1$ in probability. The canonical reduction fails:
given the branch and the radius the direction is uniform on the sphere of the
active block, so $\Sigma_p=2R_p(\theta I_m\oplus(1-\theta)I_m)$ with
$\theta=\Pp(\text{first branch}\mid R_p)$, and
\[
 \frac1p\|\Sigma_p-R_pI\|_1=R_p|2\theta-1|.
\]
The two conditional laws of $R_p$
are shifted by $\asymp a_p$ against fluctuations of order $m^{-1/2}$, and
$a_p\sqrt m\to\infty$, so $\theta\to\1_{\{\text{first branch}\}}$ and
$\frac1p\|\Sigma_p-R_pI\|_1\to1$. Here (RQC) fails, for the projection onto the
first block, so \cref{thm:necessity} applied through the trivial reduction says
that (RMP-$\Pi$) for $(c,\delta_1)$ fails; the canonical reduction says
nothing.
\end{remark}

\Cref{ex:block} shows that (RMP-$\Pi$) cannot be weakened to the single
statement $\ESD(S)\Rightarrow\mu_{c,\nu}$: there are vectors satisfying (ACI)
for which the full matrix obeys the radial law and (RQC) fails.
When $R_p\Rightarrow\nu$, \cref{prop:law-level} shows that (RQC) is equivalent
to the statement that every proportional projection of $x$ has radial law
$\nu$. Theorem~\ref{thm:necessity-free} identifies these projected radius
laws from the spectral hypothesis without (ACI).

\subsection{The conditional argument}

For isotropic $x_p$, Yaskov's characterization
\cite[Theorem~3.3]{Yaskov2015b} makes \eqref{eq:yaskov-A} equivalent to the MP law
holding for $C_Px_p$ for every sequence of matrices $C_P$ with orthonormal
rows and proportional rank. The proof of necessity has two parts. The first,
\cite[Theorem~2.1]{Yaskov2016}, shows that the MP law for the full matrix forces
$\|x\|^2/p\to1$; applied to $C_Px$, this gives $x^{\mathsf T}Px/p-\rank P/p\to0$
for every proportional projection, and the layer-cake argument of
\cref{prop:rqc-equivalences} does the rest. In the radial setting the analogue
of the first part is vacuous for the full matrix ($\|x\|^2/p=R_p$ by
definition) and is the entire content for projections: one must show that the
radial law for $S_P$, with the parent $\nu$, forces $\|C_Px\|^2/\rank P$ to be
close to $R_p$, not merely to have the same limit law.

The proof of \cite[Theorem~2.1]{Yaskov2016} uses a strict Jensen inequality.
At a negative spectral parameter $z=-\eps$, the leave-one-out quadratic form
$Z$ of a fresh sample is nonnegative, the summands $Z/(1+Z)$ of the exact
identity are bounded, and the MP law fixes the limit of $\E[Z/(1+Z)]$. The
Gaussian model has the same limit and satisfies $Z\approx\E[Z\mid\text{matrix}]$,
so the two limits of $\E f(Z)$ and $\E f(\E[Z\mid\text{matrix}])$ coincide, and
the gap identity for $f(t)=t/(1+t)$ then forces $Z-\E[Z\mid\text{matrix}]\to0$.
Isotropy enters once: to compute $\E[Z\mid\text{matrix}]=\Tr(\text{matrix})$.
When the radius fluctuates, one conditions on a sub-field $\mathcal H$ of the
sample and the conditional mean of $Z$ becomes
$\Tr(M\,\E[xx^{\mathsf T}\mid\mathcal H])$; the argument closes when
this is close to $\rho\Tr M$ for an $\mathcal H$-measurable $\rho$.
Accordingly, \cref{thm:necessity} assumes the existence of a suitable radial
reduction, without fixing $\mathcal H=\sigma(R_p)$: Yaskov's isotropy is the case
$\mathcal H$ trivial, $\rho=1$, and \cref{rem:reductions} shows that it is not
a special case of conditioning on the radius. The different argument of
\cref{thm:necessity-free} removes (ACI) for arbitrary limiting radial laws,
provided the actual parent radius converges to the specified target law.

\Needspace{5\baselineskip}
\subsection{Conditional isotropy in the tensor model}

\begin{lemma}
\label{lem:tensor-ACI}
Let $1\le\ell\le n$ and $x_I=\prod_{i\in I}X_i$ for
$I\in\binom{[n]}{\ell}$, with $X_1,\dots,X_n$ i.i.d.
and $X\overset d=-X$. Then $\E[xx^{\mathsf T}\mid R_p]=R_pI$.
\end{lemma}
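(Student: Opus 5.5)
The argument is a sign-flip symmetry argument, carried out conditionally on the radius. Write $p=\binom n\ell$ and index coordinates by $\ell$-subsets $I$. The radius is
\[
 R_p=\frac1p\sum_{|I|=\ell}\prod_{i\in I}X_i^2,
\]
which depends only on the vector of squares $(X_1^2,\dots,X_n^2)$. We therefore condition on the finer $\sigma$-field $\mathcal G=\sigma(X_1^2,\dots,X_n^2)\supseteq\sigma(R_p)$. Once we show
\[
 \E[xx^{\mathsf T}\mid\mathcal G]=R_pI,
\]
the tower property gives $\E[xx^{\mathsf T}\mid R_p]=\E[R_pI\mid R_p]=R_pI$, because the right side is already $\sigma(R_p)$-measurable.

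\emph{The diagonal entries.} For $I=J$ we have $x_I^2=\prod_{i\in I}X_i^2$, which is $\mathcal G$-measurable. Hence
\[
 \E[xx^{\mathsf T}\mid\mathcal G]_{II}=\prod_{i\in I}X_i^2 .
\]
This is not $R_p$, so conditioning on $\mathcal G$ alone cannot give $R_pI$. The fix is to condition instead on the symmetric $\sigma$-field $\mathcal S$ generated by the multiset $\{X_1^2,\dots,X_n^2\}$; equivalently, $\mathcal S$ consists of the $\mathcal G$-events invariant under permutations of the indices.

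\emph{The symmetry group.} Let a sign vector $\epsilon\in\{\pm1\}^n$ and a permutation $\pi\in S_n$ act by $X_i\mapsto\epsilon_iX_{\pi(i)}$.
\begin{itemize}
\item Because the $X_i$ are i.i.d.\ and $X\overset d=-X$, the law of $(X_1,\dots,X_n)$ is invariant under this action.
\item The action fixes every $\mathcal S$-measurable variable, in particular $R_p$.
\item On $x$ the action is the signed permutation matrix $x_I\mapsto(\prod_{i\in I}\epsilon_{\pi(i)})\,x_{\pi(I)}$.
\end{itemize}
Write $\Sigma=\E[xx^{\mathsf T}\mid\mathcal S]$. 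Invariance of the joint law, together with invariance of $\mathcal S$, gives $\Sigma_{IJ}=\E[g_I(x)g_J(x)\mid\mathcal S]$ for every group element $g$.

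\emph{Off-diagonal entries vanish.} Take $I\ne J$ and choose $i\in I\triangle J$. Flipping only $\epsilon_i$ multiplies $x_Ix_J$ by $-1$, so $\Sigma_{IJ}=-\Sigma_{IJ}=0$.

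\emph{Diagonal entries are constant.} The group $S_n$ acts transitively on $\ell$-subsets, so $\Sigma_{II}=\Sigma_{\pi(I)\pi(I)}$ for every $\pi$, and all diagonal entries share one value $\sigma$. Taking traces, $p\sigma=\E[\|x\|^2\mid\mathcal S]=pR_p$. Hence $\sigma=R_p$ and $\Sigma=R_pI$.

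Since $\sigma(R_p)\subseteq\mathcal S$, the tower property gives the claim: $\E[xx^{\mathsf T}\mid R_p]=\E[R_pI\mid R_p]=R_pI$.

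\emph{The delicate point.} The step needing care is justifying conditional expectations without moments. For the trace identity, note $\|x\|^2=pR_p$ is $\mathcal S$-measurable, so $\E[\|x\|^2\mid\mathcal S]=pR_p$ and this conditional trace is finite almost surely. The off-diagonal products are dominated by $(x_I^2+x_J^2)/2$, which has finite conditional expectation. So every conditional expectation is well defined with no moment assumption on $X$, and the symmetry identities hold through the change of variables under the measure-preserving map, using invariance of $\mathcal S$.
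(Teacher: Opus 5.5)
Your final argument is correct and is essentially the paper's proof: flipping the sign of one $X_i$ with $i\in I\triangle J$ kills the off-diagonal conditional moments, permutations equalize the diagonal ones, and the trace identity $\|x\|^2=pR_p$ fixes their common value. The only difference is that you condition first on the finer symmetric $\sigma$-field $\mathcal S$ of the squares and then apply the tower property, whereas the paper conditions on $\sigma(R_p)$ directly. You should delete the abandoned opening step with $\mathcal G=\sigma(X_1^2,\dots,X_n^2)$, since $\E[xx^{\mathsf T}\mid\mathcal G]=R_pI$ is false.
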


\begin{proof}
$R_p=\binom{n}{\ell}^{-1}\sum_I\prod_{i\in I}X_i^2$ is a symmetric function of
$(X_1^2,\dots,X_n^2)$. For $I\ne J$ pick $i\in I\triangle J$; the map
$X_i\mapsto-X_i$ preserves the joint law, preserves $R_p$, and changes the sign
of $x_Ix_J$, so $\E[x_Ix_J\mid R_p]=0$. For $I=J$, exchangeability of
$(X_1^2,\dots,X_n^2)$ and the symmetry of $R_p$ show that
$\E[x_I^2\mid R_p]$ does not depend on $I$; summing over $I$ gives
$\binom n\ell\E[x_I^2\mid R_p]=\E[pR_p\mid R_p]=pR_p$.
\end{proof}

The same argument applies to any law invariant under a group of signed
permutations (or any group of orthogonal maps preserving $\|x\|$) acting
irreducibly: the conditional second-moment matrix given $R_p$ commutes with the
group and is symmetric, so each of its real eigenspaces is invariant and
therefore all of $\R^p$; it is a multiple of $I$, with trace $pR_p$. For a nonsymmetric base
law the off-diagonal conditional means $\E[x_Ix_J\mid R_p]$ need not vanish,
and we do not know whether (ACI) holds.

\subsection{\texorpdfstring{Proof of \cref{thm:necessity}}{Proof of the necessity theorem}}

\emph{(i)$\Rightarrow$(ii).} Assume $R_p\Rightarrow\nu$ and (RQC). Tightness of
$(R_p)$ makes \cref{prop:rqc-equivalences} available, so \eqref{eq:phi} holds.
Let $P=P_p$ have rank $q$, $q/p\to\alpha\in(0,1]$, and $y=C_Px$. For a $q\times q$ matrix $A$ with $\|A\|\le1$,
$y^{\mathsf T}Ay=x^{\mathsf T}(C_P^{\mathsf T}AC_P)x$, $\|C_P^{\mathsf T}AC_P\|\le1$ and
$\Tr(C_P^{\mathsf T}AC_P)=\Tr A$, so
\[
  \frac{y^{\mathsf T}Ay}{q}-R_p\frac{\Tr A}{q}=\frac pq\,Q_p(C_P^{\mathsf T}AC_P)\xrightarrowp0
\]
uniformly over $\|A\|\le1$: the probability that the left side exceeds
$\eps$ is at most $\phi_p(\eps q/p)$, where $\phi_p$ is the uniform modulus of
the parent in \eqref{eq:phi}. The pairs $(y_j,R_j)=(C_Px_j,\|x_j\|^2/p)$ are
i.i.d.\ and $R_j\Rightarrow\nu$, so \cref{rem:general-radius} applies with
$q/N\to c\alpha$ and gives $\ESD(S_P)\Rightarrow\mu_{c\alpha,\nu}$.

\emph{(ii)$\Rightarrow$(i).} Fix a projection sequence with
$q=\rank P_p$, $q/p\to\alpha\in(0,1]$; the case $\alpha=0$ and non-convergent
rank ratios are treated at the end. We show
\begin{equation}
  \frac{\|C_Px\|^2}{q}-R_p\xrightarrowp0.
  \label{eq:N1}
\end{equation}
Let $y_1,\dots,y_N,y_{N+1}$ be the projections of $N+1$ independent copies of
$x$, with $\rho_1,\dots,\rho_{N+1}$ the corresponding copies of $\rho_p$ and
$R_1,\dots,R_{N+1}$ the parent radii, and put
\begin{gather*}
  A=\sum_{k=1}^Ny_ky_k^{\mathsf T},\qquad B=A+y_{N+1}y_{N+1}^{\mathsf T},\qquad
  M_\eps=(A+\eps NI_q)^{-1},\\ Z=y_{N+1}^{\mathsf T}M_\eps y_{N+1},\qquad f(t)=\frac t{1+t}.
\end{gather*}

\emph{Step (i): the exact identity and its limit.}
$q=\Tr[(B+\eps N)(B+\eps N)^{-1}]=\sum_{k=1}^{N+1}y_k^{\mathsf T}(B+\eps N)^{-1}y_k+\eps N\Tr(B+\eps N)^{-1}$,
and by the Sherman--Morrison formula $y_{N+1}^{\mathsf T}(B+\eps N)^{-1}y_{N+1}=f(Z)$.
The $N+1$ samples are exchangeable and every term is bounded ($f\le1$,
$\eps N\Tr(B+\eps N)^{-1}\le q$), so taking expectations
\[
  q=(N+1)\E f(Z)+\eps N\,\E\Tr(B+\eps NI)^{-1}.
\]
Also $0\le\Tr(A+\eps N)^{-1}-\Tr(B+\eps N)^{-1}=\frac{y_{N+1}^{\mathsf T}M_\eps^2y_{N+1}}{1+Z}\le\|M_\eps\|\le(\eps N)^{-1}$.
Dividing by $N$ and writing $\Tr(A+\eps N)^{-1}=\frac qN\,s_P(-\eps)$ with
$s_P(-\eps)=\frac1q\Tr(S_P+\eps)^{-1}\in(0,\eps^{-1}]$,
\[
  \frac qN=\E f(Z)+\eps\,\frac qN\,\E s_P(-\eps)+O(N^{-1}).
\]
By (RMP-$\Pi$), $s_P(-\eps)\to s(-\eps):=\int(t+\eps)^{-1}\mu_{c\alpha,\nu}(\dd t)$
in probability (the integrand is bounded and continuous on $[0,\infty)$), hence
in $L^1$. Therefore
\begin{equation}
  \E f(Z)\longrightarrow c\alpha\big(1-\eps s(-\eps)\big).
  \label{eq:EfZ}
\end{equation}
Equation \eqref{eq:RMP} for $\mu_{c\alpha,\nu}$ extends from $\C_+$ to
$z=-\eps<0$. Indeed $s$ is the Stieltjes transform of a probability measure on
$[0,\infty)$, so $s(\C_+)\subset\C_+$, $s(\C_-)\subset\C_-$ and $s(t)>0$ for
$t<0$; hence $d(z):=\operatorname{dist}\big(c\alpha s(z),(-\infty,0]\big)>0$ for
every $z\in\C\setminus[0,\infty)$, and
$|r/(1+c\alpha rs(z))|\le1/d(z)$ for all $r\ge0$, locally uniformly in $z$. So
both sides of \eqref{eq:RMP} are analytic on $\C\setminus[0,\infty)$, and they
agree on $\C_+$. So
$1-\eps s=s\int r(1+c\alpha rs)^{-1}\nu(\dd r)$ at $s=s(-\eps)$, that is
\begin{equation}
  c\alpha\big(1-\eps s(-\eps)\big)=\int\frac{c\alpha rs(-\eps)}{1+c\alpha rs(-\eps)}\nu(\dd r)
  =\E f\big(c\alpha R\,s(-\eps)\big),\qquad R\sim\nu.
  \label{eq:EfW-target}
\end{equation}

\emph{Step (ii): the conditional mean.} Let
$\mathcal G=\sigma(x_1,\dots,x_N)\vee\mathcal H_{N+1}$, where $\mathcal H_{N+1}$
is the sub-field of the radial reduction attached to the sample $x_{N+1}$.
The field $\mathcal H_{N+1}$ is independent of the first $N$ samples.
Since $M_\eps$ is bounded and $\mathcal G$-measurable,
\[
  W:=\E[Z\mid\mathcal G]=\Tr\big(M_\eps\,C_P\Sigma_{N+1}C_P^{\mathsf T}\big),\qquad
  \Sigma_{N+1}=\E[x_{N+1}x_{N+1}^{\mathsf T}\mid\mathcal H_{N+1}],
\]
which is finite because $\E[\|x_{N+1}\|^2\mid\mathcal H_{N+1}]<\infty$. Write
$W=W_0+\Delta$ with
$W_0=\rho_{N+1}\Tr M_\eps=\rho_{N+1}\frac qNs_P(-\eps)$.
Since $\|C_P\Gamma C_P^{\mathsf T}\|_1\le\|\Gamma\|_1$ for any $\Gamma$,
\[
  |\Delta|\le\|M_\eps\|\,\|\Sigma_{N+1}-\rho_{N+1}I\|_1\le\frac{p}{\eps N}\cdot\frac1p\|\Sigma_{N+1}-\rho_{N+1}I\|_1\xrightarrowp0
\]
by (ACI). Here $q/N=(q/p)(p/N)\to c\alpha$, $\rho_{N+1}\Rightarrow\nu$ and
$s_P(-\eps)\to s(-\eps)$, a constant, so
$W_0\Rightarrow c\alpha Rs(-\eps)$ with $R\sim\nu$ by Slutsky's theorem and
$\E f(W_0)\to\E f(c\alpha Rs(-\eps))$; as $|f(W)-f(W_0)|\le|\Delta|$ and $f\le1$,
also $\E f(W)\to\E f(c\alpha Rs(-\eps))$. With \eqref{eq:EfZ}--\eqref{eq:EfW-target},
\begin{equation}
  \E f(W)-\E f(Z)\longrightarrow0.
  \label{eq:jensen-gap-vanishes}
\end{equation}

\emph{Step (iii): the Jensen gap.} For $Z\ge0$ and $W=\E[Z\mid\mathcal G]$,
\[
  f(Z)-f(W)=\frac{Z-W}{(1+Z)(1+W)}=\frac{Z-W}{(1+W)^2}-\frac{(Z-W)^2}{(1+Z)(1+W)^2}.
\]
By the tower property for nonnegative variables,
$\E[Z/(1+W)^2]=\E[W/(1+W)^2]\le1/4$, so $\E[(Z-W)/(1+W)^2]=0$ and
\begin{equation}
  \E f(W)-\E f(Z)=\E\frac{(Z-W)^2}{(1+Z)(1+W)^2}\ge0.
  \label{eq:jensen-gap}
\end{equation}
By \eqref{eq:jensen-gap-vanishes} the right side tends to zero, so
$(Z-W)^2/((1+Z)(1+W)^2)\to0$ in probability. Both $Z$ and $W$ are tight, and
neither argument uses the radius: $W=W_0+\Delta$ with
$W_0=\rho_{N+1}\frac qNs_P(-\eps)$ convergent in law and $\Delta\to0$, so $W$ is
tight; and $Z\ge0$ has $\Pp(Z>L)\le\E[\min(1,W/L)]$ by conditional Markov, so
$Z$ is tight as well.
On $\{Z\le L,W\le L\}$, $(Z-W)^2\le(1+L)^3(Z-W)^2/((1+Z)(1+W)^2)$. Hence
$Z-W\to0$ and, with Step (ii), $Z-W_0\to0$ in probability. Multiplying by
$\eps N/q$ and writing $T_\eps=(S_P/\eps+I)^{-1}$,
\begin{equation}
  J(\eps):=\frac1q\Big(y^{\mathsf T}T_\eps y-\rho\Tr T_\eps\Big)\xrightarrowp0
  \qquad\text{for every }\eps>0,
  \label{eq:Jeps}
\end{equation}
where $y=y_{N+1}$, $\rho=\rho_{N+1}$, and $S_P=A/N$ is independent of
$(y,\rho)$.
Choose $\eps_n\to\infty$ so slowly that $J(\eps_n)\to0$ in probability (for
each $k$ pick $n_k$ with $\Pp(|J(k)|>1/k)<1/k$ for $n\ge n_k$ and set
$\eps_n=k$ on $[n_k,n_{k+1})$).

\emph{Step (iv): removing the resolvent.} Diagonalize
$S_P=\sum_{k=1}^q\lambda_ke_ke_k^{\mathsf T}$. Since $\frac1q\sum_k(y,e_k)^2=\|y\|^2/q$,
\[
  J(\eps_n)-\Big(\frac{\|y\|^2}q-\rho\Big)=-\frac1q\sum_{k=1}^q\big((y,e_k)^2-\rho\big)\frac{\lambda_k/\eps_n}{1+\lambda_k/\eps_n}.
\]
Both $\rho$ and $\|y\|^2/q$ are tight. The first converges in law. For the
second, $\E[\|y\|^2/q\mid\mathcal G]\le\rho+\xi_n$ with
$\xi_n=\frac1q\|\Sigma-\rho I\|_1=\frac pq\cdot\frac1p\|\Sigma-\rho I\|_1\to0$
in probability, so conditional Markov and, for $L\ge1$, the splitting
\[
  \E\Big[\min\Big(1,\frac{\rho+\xi_n}L\Big)\Big]
  \le\E\Big[\min\Big(1,\frac{2\rho}L\Big)\Big]+\E\big[\min(1,2\xi_n)\big]
\]
give it: the second term tends to zero by bounded convergence, and the first is
small uniformly in $n$ because $\rho$ is tight.
Put $K_n=\eps_n^{1/2}$ and split the sum according to $\lambda_k\le K_n$ or
$\lambda_k>K_n$. For $\lambda_k\le K_n$ the last factor is at most
$K_n/\eps_n=\eps_n^{-1/2}$, so that part is bounded by
$\eps_n^{-1/2}(\|y\|^2/q+\rho)\to0$ in probability. For
$\mathcal K=\{k:\lambda_k>K_n\}$, the factor is at most $1$ and, conditionally
on $\mathcal G$ (recall $y$ is independent of $S_P$),
\[
  \E\Big[\frac1q\sum_{k\in\mathcal K}\big((y,e_k)^2+\rho\big)\Bigm|\mathcal G\Big]
  =\frac1q\sum_{k\in\mathcal K}\big(e_k^{\mathsf T}C_P\Sigma C_P^{\mathsf T}e_k+\rho\big)
  \le\frac{2\rho|\mathcal K|}{q}+\frac1q\|\Sigma-\rho I\|_1 ,
\]
where the last step uses $e_k^{\mathsf T}C_P\Sigma C_P^{\mathsf T}e_k=\rho+e_k^{\mathsf T}Xe_k$
with $X=C_P(\Sigma-\rho I)C_P^{\mathsf T}$ and
$\sum_{k\in\mathcal K}|e_k^{\mathsf T}Xe_k|=\Tr(UX)\le\|X\|_1\le\|\Sigma-\rho I\|_1$
for the contraction $U=\sum_{k\in\mathcal K}\operatorname{sgn}(e_k^{\mathsf T}Xe_k)e_ke_k^{\mathsf T}$.
Here $|\mathcal K|/q=\ESD(S_P)((K_n,\infty))\to0$ in probability: given
$\delta>0$, pick a continuity point $K$ of $\mu_{c\alpha,\nu}$ with
$\mu_{c\alpha,\nu}((K,\infty))<\delta$; for $n$ large $K_n\ge K$ and
$\ESD(S_P)((K_n,\infty))\le\ESD(S_P)((K,\infty))\to\mu_{c\alpha,\nu}((K,\infty))<\delta$,
so no rate for $K_n$ is needed. By conditional Markov the large-eigenvalue part
tends to zero in probability, and
\begin{equation}
  \frac{\|C_Px\|^2}{q}-\rho_p\xrightarrowp0
  \quad\text{whenever }\rank P_p/p\to\alpha\in(0,1].
  \label{eq:N0}
\end{equation}

\emph{The radius.} Steps (i)--(iv) never used $\alpha<1$, so
\eqref{eq:N0} may be applied to $P_p=I_p$, where $q=p$ and $C_P=I$:
\[
  R_p-\rho_p=\frac{\|x\|^2}{p}-\rho_p\xrightarrowp0 .
\]
In particular $R_p\Rightarrow\nu$, and \eqref{eq:N0} becomes \eqref{eq:N1} for
every proportional projection sequence.

\emph{Other ranks.} If $\rank P_p/p\to0$, then
$R_p\rank(P_p)/p\to0$ by tightness. For fixed $\beta\in(0,1)$ choose
projections $P_p'\ge P_p$ of rank $\lceil\beta p\rceil$; by \eqref{eq:N1} for
$P'$ we get
$0\le x^{\mathsf T}P_px/p\le x^{\mathsf T}P'_px/p=\beta R_p+o_p(1)$, and
$\beta\downarrow0$ gives $Q_p(P_p)\to0$. If $\rank P_p/p\to1$ use $I-P_p$ and
$Q_p(P)=-Q_p(I-P)$. If $\rank P_p/p$ has no limit, take a subsequence along
which it converges to some $\alpha'$, complete it to a full sequence with
$\rank/p\to\alpha'$ by inserting arbitrary projections of rank
$\lfloor\alpha'p\rfloor$ at the remaining indices, apply the above to the
completed sequence, and use the subsequence criterion for convergence in
probability. This proves (RQC) in the form of \cref{def:RQC}, and
\cref{prop:rqc-equivalences} gives the uniform form.
\qed

\subsection{What the argument gives without conditional isotropy}

Step (i) and the gap identity \eqref{eq:jensen-gap} of Step (iii) use no
hypothesis on the conditional law of $x$; the passage from
\eqref{eq:jensen-gap} to $Z-W\to0$ does use one, through the tightness of $W$.
They give $\E f(W)-\E f(Z)=\E\frac{(Z-W)^2}{(1+Z)(1+W)^2}$ with
$W=\Tr(M_\eps C_P\Sigma_{N+1}C_P^{\mathsf T})$, and
$\E f(Z)\to\int f(c\alpha rs)\dd\nu$.
Without (ACI) nothing identifies $\lim\E f(W)$, and the gap identity yields
only $\liminf\E f(W)\ge\int f(c\alpha rs)\dd\nu$, which is Jensen's inequality
and carries no information. So (ACI) enters only through the deviation
$X=C_P(\Sigma_p-\rho_pI)C_P^{\mathsf T}$: in Step~(ii) to identify
$\lim\E f(W)$, and in Step~(iv) to control $\frac1q\Tr X$ and the selected
diagonal of $X$. It suffices for this proof to assume that, with
$\lambda_k,e_k$ the eigenvalues and eigenvectors of $S_P$,
\[
  \frac1q\Tr\big((S_P+\eps)^{-1}X\big)\to0,\qquad
  \frac1q\Tr X\to0,\qquad
  \frac1q\sum_{k:\lambda_k>K}\big|e_k^{\mathsf T}Xe_k\big|\to0
\]
in probability for every $\eps>0$ and every $K>0$; these are what steps (ii)
and (iv) need.

For the canonical reduction, the condition
\[
 \frac1p\|P(\Sigma_p-R_pI)P\|_1\to0
\]
for every deterministic proportional projection sequence is equivalent to
(ACI). By the argument of
\cref{lem:uniform}(b),(c), applied to the functional
$\frac1p\|PYP\|_1$ in place of $Q_p$ on the class of rank-$\lfloor\alpha p\rfloor$
projections, the deterministic statement is uniform and therefore holds for a
Haar distributed $P$ independent of $x$. Write $Y=\Sigma_p-R_pI$, so $\Tr Y=0$.
Averaging over $P$, $O(p)$-equivariance gives
$\E[PYP\mid Y]=a_pY+b_p(\Tr Y)I$ with
$a_p=(q/p)^2+O(1/p)\to\alpha^2>0$, and
$\|PYP\|_1\ge\Tr(\operatorname{sgn}(Y)PYP)$, so
\[
 \E[\|PYP\|_1\mid Y]\ge a_p\|Y\|_1.
\]
Put $V=\|Y\|_1/p$ and $X_P=\|PYP\|_1/p$. Since $0\le X_P\le V$,
the conditional mean bound gives
$V\Pp(X_P>\varepsilon\mid Y)\ge a_pV-\varepsilon$.
Consequently
\[
 \Pp(V>2\varepsilon/a_p)\le \frac2{a_p}\Pp(X_P>\varepsilon)\to0.
\]
As $a_p\to\alpha^2>0$, this proves \eqref{eq:ACI} without a radial
tightness assumption.

The characterization without (ACI) can be expressed in terms of radial laws. Put
$R_P=\|C_Px\|^2/\rank P$, the radius of the projected sample.

\begin{proposition}\label{prop:law-level}
Assume $R_p\Rightarrow\nu$ and fix any $\alpha\in(0,1)$.
Then (RQC) holds if and only if $R_{P_p}\Rightarrow\nu$ for every
deterministic projection sequence of rank $\lfloor\alpha p\rfloor$.
In that case the same statement holds at every proportional rank.
\end{proposition}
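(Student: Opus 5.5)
The forward direction is immediate. Under (RQC) and tightness of $R_p$, \cref{prop:rqc-equivalences} gives $Q_p(P_p)\to0$ in probability for every projection sequence. For $\rank P_p=q_p$ we have
\[
 R_{P_p}-R_p=\frac p{q_p}\,Q_p(P_p),
\]
so $R_{P_p}-R_p\to0$ in probability, and Slutsky's theorem gives $R_{P_p}\Rightarrow\nu$. The same identity applies at every rank with $\rank P_p/p\to\alpha'\in(0,1]$, which yields the final sentence once (RQC) is established.

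For the converse, I would not try to extract the coupling $\rho=R$ directly from equality of laws. Instead I would pass through the bounded criterion of \cref{cor:one-transform}. Fix a deterministic rank-$q_p$ sequence $P_p$. Since $\psi$ is bounded and continuous, $R_{P_p}\Rightarrow\nu$ and $R_p\Rightarrow\nu$ give
\[
 \E\psi(\|P_px_p\|^2/q_p)-\E\psi(R_p)\to\int\psi\dd\nu-\int\psi\dd\nu=0.
\]
This is exactly condition (iv) of \cref{cor:one-transform} with $q_p=\lfloor\alpha p\rfloor$. Tightness of $R_p$ holds because $R_p\Rightarrow\nu$, so the corollary gives (RQC).

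The real content therefore sits in \cref{thm:bounded-energy}. Concavity of $B\mapsto\E\psi(\Tr(BM)/q)$ on the fantope and the midpoint identity at the probes $B_\pm$ turn vanishing one-sided defects into $\mathcal W(x_p)\to0$, and tightness then removes the weight $(1+R)^{-3}$. Because that theorem is already available, I expect no genuine obstacle; the only points to check are minor. Condition (iv) must hold along \emph{every} deterministic sequence, which is exactly the hypothesis, and the selection argument of \cref{lem:uniform} upgrades it to $D_\psi\to0$. The rank also has to be exactly $\lfloor\alpha p\rfloor$, which matches the corollary.

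For the last claim I would argue as follows. Once (RQC) holds, \cref{prop:rqc-equivalences} gives the uniform form, so the displayed identity applies for any proportional rank sequence. For ranks whose ratio does not converge but stays bounded away from $0$, I would use $p/q_p\le C$ directly, and the conclusion again follows from Slutsky's theorem.
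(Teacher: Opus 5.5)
Your proposal is correct and matches the paper's proof: the identity $R_{P_p}-R_p=(p/q_p)\,Q_p(P_p)$ handles the forward direction and every proportional rank, and condition (iv) of \cref{cor:one-transform} gives the converse. The appeal to \cref{prop:rqc-equivalences} in the forward direction is unnecessary, since (RQC) is by definition a statement about every deterministic projection sequence.
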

\begin{proof}
RQC gives $R_P-R_p=Q_p(P)\,p/\rank P\to0$ at every proportional rank.
Conversely, the asserted weak convergence at one rank gives
$\E\psi(R_{P_p})-\E\psi(R_p)\to0$ for $\psi(t)=t/(1+t)$.
Apply \cref{cor:one-transform}, whose geometric proof uses no spectral
hypothesis or conditional isotropy.
\end{proof}

Thus (RQC) is the statement that every proportional projection of $x$ has the
same radial law as $x$ itself. Theorem~\ref{thm:necessity-free} shows that the
projected spectra force this statement for arbitrary $\nu$ once
$R_p\Rightarrow\nu$ is known. That assumption cannot be dropped:
\cref{ex:spike} satisfies (RMP-$\Pi$) for $(c,\delta_1)$ while
$R_p\Rightarrow\law(1+Z^2)$ and (RQC) fails.

The single-projection version is false. Let $q=N$, let $D$ be deterministic
positive semidefinite with $\sup_q\|D\|<\infty$, $\Tr D/q\to1$ and
$\ESD(D)\Rightarrow\rho_D\ne\delta_1$, and let $y=D^{1/2}g$ with
$g\sim N(0,I_q)$. Its radial law is $\delta_1$: $\|y\|^2/q=g^{\mathsf T}Dg/q$
has mean $\Tr D/q\to1$ and variance $2\Tr D^2/q^2\to0$. Its spectral law is
$\mu_{1,\rho_D}$: at $q=N$ the matrices $\frac1ND^{1/2}\mathbf G\mathbf G^{\mathsf T}D^{1/2}$
and $\frac1N\mathbf G^{\mathsf T}D\mathbf G=\frac1N\sum_{i\le q}t_iw_iw_i^{\mathsf T}$,
with $t_i$ the eigenvalues of $D$ and $w_i$ i.i.d.\ $N(0,I_N)$, have the same
characteristic polynomial. The second is a weighted Wishart matrix with
deterministic weights, not a sample covariance matrix of i.i.d.\ vectors, so
\cref{thm:sufficient} does not apply to it; its limit is
\cite[Theorem~1.1]{SilversteinBai1995}, and comparing that equation with
\eqref{eq:RMP} at $c=1$ gives $\rho_D\boxtimes\MP_1=\mu_{1,\rho_D}$. Since
$\rho_D\ne\delta_1$ and $\nu\mapsto\mu_{1,\nu}$ is injective
(\cref{prop:limit-law}), $\mu_{1,\rho_D}\ne\mu_{1,\delta_1}$: the spectral law
of one projected model does not determine its radial law.
The characterization uses all deterministic orientations of one proportional
rank and the matching actual parent law. The bounded-transform criterion
in \cref{cor:one-transform} isolates the energy information recovered by
its spectral argument.

\section{Examples and counterexamples}
\label{sec:examples}

In this section (ACI) always refers to the canonical radial reduction of
\cref{def:ACI}: the sub-field is $\sigma(R_p)$, the scalar is $\rho_p=R_p$,
and $\Sigma_p=\E[xx^{\mathsf T}\mid R_p]$. The first four examples satisfy (RQC), with independent, functional,
or implicit radial-angular coupling.
The remaining ones fail (RQC) in different ways and separate the notions
involved.

\subsection{Models satisfying the radial condition}

\begin{example}[spherical direction, arbitrary radius]
\label{ex:spherical}
Let $x=\sqrt{pR}\,u$ with $u$ uniform on $S^{p-1}$ and $R\ge0$ any random
variable on the same space, with $R_p=R\Rightarrow\nu$. Nothing is assumed about
the joint law of $(R,u)$. For a real symmetric $A$ with $\|A\|\le1$,
$\E\,u^{\mathsf T}Au=\Tr A/p$ and
\[
  \Var(u^{\mathsf T}Au)=\frac{2}{p(p+2)}\Big(\Tr A^2-\frac{(\Tr A)^2}{p}\Big)\le\frac{2}{p+2},
\]
using $\E u_i^4=3/(p(p+2))$ and $\E u_i^2u_j^2=1/(p(p+2))$ for $i\ne j$. Since
$Q_p(A)=R\,(u^{\mathsf T}Au-\Tr A/p)$,
$\Pp(|Q_p(A)|>\eps)\le\Pp(R>M)+2M^2/(\eps^2(p+2))$ uniformly in $A$, and (RQC)
holds. The canonical reduction need not satisfy (ACI): for nonatomic $\nu$
there is a measure-space isomorphism $T$, modulo null sets, of
$S^{p-1}$ onto a Borel subset of $[0,\infty)$ carrying the uniform law to
$\nu$, since both are nonatomic standard probability spaces; with $R=T(u)$ one
has $\sigma(R)=\sigma(u)$ up to null sets, so $u$ is $\sigma(R)$-measurable,
$\Sigma_p=pR\,uu^{\mathsf T}$, and
$\frac1p\|\Sigma_p-R_pI\|_1=2R(1-1/p)\not\to0$. The conclusion of
\cref{thm:sufficient} is unaffected. When $R\perp u$ this is the elliptical
model of \cite{ElKaroui2009,PajorPastur2009}.
A concrete dependent instance is $R=\Phi(\sqrt p\,u_1)$
for a bounded continuous $\Phi>0$, so that $R$ is a function of the first
coordinate of the direction and $R_p\Rightarrow\law\Phi(Z)$, $Z\sim N(0,1)$.
\end{example}

\begin{example}[self-modulated Gaussian]
\label{ex:endogenous}
Let $g\sim N(0,I_p)$, let $h:\R\to\R$ satisfy $\E h(g_1)=0$, $\E h(g_1)^2=1$,
fix $\sigma>0$, and put
\[
  Y_p=\frac1{\sqrt p}\sum_{i=1}^ph(g_i),\qquad
  \Lambda_p=\exp\Big(\sigma Y_p-\frac{\sigma^2}2\Big),\qquad x=\sqrt{\Lambda_p}\,g .
\]
The radius is a nonlinear statistic of the same coordinates that determine the
direction. By the central limit theorem and the law of large numbers,
$R_p=\Lambda_p\|g\|^2/p\Rightarrow\nu=\law\exp(\sigma Z-\sigma^2/2)$, the
lognormal family of the tensor model with $\sigma^2$ in place of $\lambda v$.
For a real symmetric contraction $A$,
\[
  Q_p(A)=\Lambda_p\Big(\frac{g^{\mathsf T}Ag-\Tr A}{p}-\frac{\Tr A}{p}\Big(\frac{\|g\|^2}{p}-1\Big)\Big),
\]
and $\Var(g^{\mathsf T}Ag)=2\Tr A^2\le2p$, so both terms are $O_p(p^{-1/2})$
uniformly in $A$, $\Lambda_p$ is tight, and (RQC) holds. \Cref{thm:sufficient}
gives $\ESD(S)\Rightarrow\mu_{c,\nu}$.

The dependence between radius and direction depends on $h$. If
$h(t)=(t^2-1)/\sqrt2$ then $Y_p=(\|g\|^2-p)/\sqrt{2p}$ is a function of $\|g\|$,
which is independent of $g/\|g\|$, so $R_p\perp u$ and the model reduces
to the standard independent scale-mixture setting. If
$h(t)=t$ then $Y_p=\langle g,e\rangle$ with $e=\1/\sqrt p$, and
$\Lambda_p=\exp(\sigma\|g\|\langle u,e\rangle-\sigma^2/2)$ is asymptotically a
function of the direction $u=g/\|g\|$. In particular,
$\Cov(R_p,\,p\langle u,e\rangle^2)\to\E[e^{\sigma Z-\sigma^2/2}Z^2]-1=\sigma^2\ne0$,
whereas a scale mixture with the same marginals has covariance zero.

(ACI) holds for $h(t)=t$, and its verification shows what the conditional
second-moment matrix looks like when the radius is a function of the
direction. Write $g=Y_pe+g_\perp$ with $g_\perp\sim N(0,I-ee^{\mathsf T})$
independent of $Y_p\sim N(0,1)$. Given $(Y_p,\|g_\perp\|)$ the direction of
$g_\perp$ is uniform on the unit sphere of $e^\perp$, so
\[
  \E[xx^{\mathsf T}\mid Y_p,\|g_\perp\|]
  =\Lambda_p\Big(Y_p^2ee^{\mathsf T}+\frac{\|g_\perp\|^2}{p-1}(I-ee^{\mathsf T})\Big),
\]
and $R_p$ is $\sigma(Y_p,\|g_\perp\|)$-measurable, so the tower property gives
$\Sigma_p=a\,ee^{\mathsf T}+b\,(I-ee^{\mathsf T})$ with
$a=\E[\Lambda_pY_p^2\mid R_p]$ and $b=\E[\Lambda_p\|g_\perp\|^2\mid R_p]/(p-1)$.
From $\Tr\Sigma_p=pR_p$ one gets $(p-1)(b-R_p)=R_p-a$, whence
$\|\Sigma_p-R_pI\|_1=|a-R_p|+(p-1)|b-R_p|=2|a-R_p|$ and
\[
  \E\frac1p\|\Sigma_p-R_pI\|_1\le\frac2p\big(\E[\Lambda_pY_p^2]+\E R_p\big)
  =\frac2p\Big(2+\sigma^2+\frac{\sigma^2}p\Big)\longrightarrow0,
\]
using $\E[\Lambda_pY_p^2]=1+\sigma^2$ and $\E R_p=1+\sigma^2/p$. So
\cref{thm:necessity} applies as well. For $h(t)=(t^3-3t)/\sqrt6$ the radius
depends on the direction through $\sum_iu_i^3$, a genuinely nonlinear
functional, and the verification of $R_p\Rightarrow\nu$ and of (RQC) is the
same; we do not claim (ACI) there, since $\E\Lambda_p=\infty$ for every $p$ and
the argument above is unavailable.
\end{example}

\begin{example}[critical tensor features]
\label{ex:tensor}
Let $X$ be centered with unit variance and $\E X^4<\infty$, $v=\E X^4-1$, and
$x_I=\prod_{i\in I}X_i$ for $I\in\binom{[n]}{\ell}$, $p=\binom n\ell$, $\ell^2/n\to\lambda\in[0,\infty)$.
Reference \cite[Lemma~4.2]{Xie2026} proves
$R_p\Rightarrow\law\exp(\sqrt{\lambda v}Z-\lambda v/2)$, and
\cite[Theorem~6.2]{Xie2026} proves
$\sup_{\|A\|\le1}\E|Q_p(A)|^2\to0$, which implies (RQC) by Chebyshev. The
resulting critical tensor ESD is already proved in
\cite[Theorem~3.1]{Xie2026}, using the sufficient principle in its
Theorem~3.2; that tensor conclusion is not a result of the present paper. Here
the model serves only as an application of the converse theory. The coupling
between $R_p$ and $x/\|x\|$ is implicit: both are functions of
$X_1,\dots,X_n$, and we have no useful description of the conditional law of
the direction given the radius. When $X\overset d=-X$,
\cref{lem:tensor-ACI} gives exact conditional isotropy, and
\cref{thm:necessity} shows that for such base laws the radial condition
(RQC), which \cite{Xie2026} establishes in the stronger $L^2$ form, is not only
sufficient but necessary for the spectral conclusion under projections.
\end{example}

\begin{example}[random low-dimensional support]
\label{ex:subspace}
Let $K\subset\{1,\dots,p\}$ be a uniformly random $k$-subset, $u$ uniform on
the unit sphere of $\mathrm{span}\{e_i:i\in K\}$, $R\sim\nu$ independent, and
$x=\sqrt{pR}\,u$, with $k=k_p\to\infty$, $k/p\to0$. Then $R_p=R$ and
$\Sigma_p=RI$. For any orthogonal projection $P$ of rank $m$, conditioning on
$K$ and using the sphere moments $\E u_i^4=3/(k(k+2))$,
$\E u_i^2u_j^2=1/(k(k+2))$,
\[
  \E\Big[\Big(u^{\mathsf T}Pu-\frac mp\Big)^2\Big]
  =\E\Var(u^{\mathsf T}Pu\mid K)+\Var\Big(\frac1k\sum_{i\in K}P_{ii}\Big)
  \le\frac{2}{k+2}+\frac1{4k},
\]
the second term being the variance of a sample mean of $k$ values in $[0,1]$
drawn without replacement. Thus (RQC) holds, uniformly over projections, as
soon as $k\to\infty$, however slowly, and $\ESD(S)\Rightarrow\mu_{c,\nu}$. For
$k=1$ this is \cref{ex:sparse-radius} with the classical compound Poisson limit.
The passage from the classical to the free law is governed by the dimension of
the support of one sample, not by its radius. For fixed $k$ the bound above no
longer tends to zero and (RQC) fails whenever $\nu\ne\delta_0$: for $P$ the
projection onto the first $\lfloor p/2\rfloor$ coordinates,
$u^{\mathsf T}Pu=\sum_{i\in K\cap[p/2]}u_i^2$ converges in law to a
nondegenerate limit, so $Q_p(P)$ does not vanish. For fixed $k\ge2$ we have not
identified the limit of $\ESD(S)$.
\end{example}

\subsection{Models failing the radial condition}

The following examples separate radial convergence, isotropy, and RQC.
Some have a nonradial spectral limit; the block example has the correct
full radial law but fails under projection. A recurring theme is the
contrast between the \emph{free} compound Poisson law $\mu_{c,\nu}$ and the
\emph{classical} compound Poisson law with the same rate and jump law: when
$\nu$ has finite moments through the order under discussion, the free
cumulants of $\mu_{c,\nu}$ are $\kappa_k=c^{k-1}a_k$, where
$a_k=\int r^k\,\nu(\dd r)$. This follows first for bounded radii from
the $R$-transform and then by truncation; no convergent power series is
asserted for an unbounded law. These are also the
classical cumulants of the law of $c\sum_{i\le K}Y_i$ with $K\sim\mathrm{Poi}(1/c)$
and $Y_i\sim\nu$ i.i.d. Since noncrossing and all partitions agree up to
$k=3$, the two laws share their first three moments and differ at the fourth by
$\kappa_2^2=c^2a_2^2$. A moment-free distinction is the atom at zero:
$e^{-\rho/c}$ for the classical law and $(1-\rho/c)_+$ for the free one, where
$\rho=\nu((0,\infty))$. Second-moment tests cannot separate them.

\begin{example}[sparse vector: unit radius, Poisson spectrum]
\label{ex:sparse}
Let $x=\sqrt p\,e_J$ with $J$ uniform on $\{1,\dots,p\}$. Then $R_p=1$
exactly, $\nu=\delta_1$, and $\Sigma_p=\E[pe_Je_J^{\mathsf T}]=I$, so (ACI)
holds exactly. For the coordinate
projection $P$ onto the first $\lfloor p/2\rfloor$ coordinates,
$Q_p(P)=\1_{\{J\le p/2\}}-\lfloor p/2\rfloor/p\Rightarrow\frac12(\delta_{1/2}+\delta_{-1/2})$,
so (RQC) fails. The matrix $S=c_n\diag(n_1,\dots,n_p)$ with
$n_i=\#\{j:J_j=i\}\sim\mathrm{Bin}(N,1/p)$, and for bounded continuous $f$,
\[
 \E\frac1p\sum_if(c_nn_i)\to\E f(c\,\mathrm{Poi}(1/c)),\qquad
 \Var\left(\frac1p\sum_if(c_nn_i)\right)\to0.
\]
The variance is at most
$\|f\|_\infty^2/p+|\Cov(f(c_nn_1),f(c_nn_2))|$, which tends to zero
because $(n_1,n_2)$ converges to a pair of independent Poisson variables.
Hence $\ESD(S)\Rightarrow\law(c\,\mathrm{Poi}(1/c))$ in probability, a purely
atomic law with atom $e^{-1/c}$ at $0$, whereas $\mu_{c,\delta_1}=\MP_c$ has an
absolutely continuous part and atom $(1-1/c)_+$. Radial convergence, isotropy,
exchangeability of coordinates and conditional isotropy all hold; only (RQC)
fails, and the spectrum is not MP.
\end{example}

\begin{example}[sparse direction, random radius]
\label{ex:sparse-radius}
Let $x=\sqrt{pR}\,e_J$ with $R\sim\nu$ independent of $J$. Then $R_p=R$,
(ACI) holds exactly, (RQC) fails as above whenever $\nu\ne\delta_0$, and
$S=c_n\diag\big(\sum_{j:J_j=i}R_j\big)_i$, so by the same law of large numbers
$\ESD(S)$ converges to the law of $c\sum_{i\le K}Y_i$, $K\sim\mathrm{Poi}(1/c)$,
$Y_i\sim\nu$: the classical compound Poisson law with rate $1/c$ and jump law
$\law(cR)$. The radial law $\mu_{c,\nu}$ is the free compound Poisson law with
the same parameters. They differ whenever $\nu\ne\delta_0$ (atoms at zero
$e^{-\rho/c}\ne(1-\rho/c)_+$ for $\rho>0$). Radial convergence to $\nu$ leaves
the spectrum undetermined between these two laws, and \cref{ex:subspace} shows
how the effective dimension of the support decides between them.
\end{example}

\begin{example}[a spike: MP spectrum with a nondegenerate radius]
\label{ex:spike}
Let $x=g+\sqrt p\,g_1e_1$ with $g\sim N(0,I_p)$. Then
$R_p=\|g\|^2/p+((1+\sqrt p)^2-1)g_1^2/p\Rightarrow\nu=\law(1+Z^2)$, $Z\sim N(0,1)$,
a nondegenerate radial law. But $S=\frac1N\sum_jg_jg_j^{\mathsf T}+E$ with
$\rank E\le2$ (the cross terms span $e_1$ and $\frac1N\sum_jg_{j1}g_j$), so by
\cref{lem:rank-one} $\ESD(S)\Rightarrow\MP_c=\mu_{c,\delta_1}\ne\mu_{c,\nu}$.
(RQC) fails: for $P=e_1e_1^{\mathsf T}$, $Q_p(P)\Rightarrow Z^2$. (ACI) fails
for the canonical reduction: $\Sigma_p$ is diagonal by sign symmetry, and
$\frac1p\|\Sigma_p-R_pI\|_1\Rightarrow2Z^2$.
Indeed, the last $p-1$ diagonal entries are equal and their normalized
sum tends to $1$ in $L^1$, even after conditioning on $R_p$; the trace
identity $\Tr\Sigma_p=pR_p$ then gives the assertion. In fact the same rank-two argument
applied to $C_Px$ shows that (RMP-$\Pi$) for $(c,\delta_1)$ \emph{holds}, so
\cref{thm:necessity} forces more: since (RQC) fails, $(x_p)$ admits no radial
reduction at all with $\rho_p\Rightarrow\delta_1$. Thus proportional projected spectra do not identify the parent radius
without an assumption excluding energy in small subspaces.
\end{example}

\begin{example}[radial law for the full matrix, failure under projection]
\label{ex:block}
Let $p=2q$, $g^{(1)},g^{(2)}\sim N(0,I_q)$, $\xi\sim\mathrm{Bernoulli}(1/2)$,
$R\sim\nu$, all independent, and
$x=\sqrt{2R}\,(\xi g^{(1)},(1-\xi)g^{(2)})\in\R^p$. Then
$R_p=R\|g^{(2-\xi)}\|^2/q\Rightarrow\nu$. Given $(R,\xi,\|g^{(1)}\|,\|g^{(2)}\|)$
the direction of $g^{(2-\xi)}$ is uniform on $S^{q-1}$, so
$\E[xx^{\mathsf T}\mid R,\xi,\|g^{(1)}\|,\|g^{(2)}\|]=2R_p(\xi I_q\oplus(1-\xi)I_q)$;
and $\xi$ is independent of $R_p$ because $\|g^{(1)}\|\overset d=\|g^{(2)}\|$.
Hence $\Sigma_p=\E[xx^{\mathsf T}\mid R_p]=R_pI$: (ACI) holds exactly. $S$ is block
diagonal; the first block is $\frac{N_1}N\cdot\frac1{N_1}\sum_{j\in J_1}2R_jg_j^{(1)}g_j^{(1)\mathsf T}$
with $J_1=\{j:\xi_j=1\}$, $N_1=|J_1|$, $N_1/N\to1/2$, hence
$(1+o(1))\frac1{N_1}\sum_{j\in J_1}R_jg_j^{(1)}g_j^{(1)\mathsf T}$, a weighted
Wishart matrix in dimension $q$ with $q/N_1\to c$ and radii $R_j\Rightarrow\nu$;
by \cref{thm:sufficient} its ESD converges to $\mu_{c,\nu}$, and so does the
second block's. Therefore $\ESD(S)\Rightarrow\mu_{c,\nu}$: the radial law holds
for the full matrix with the correct $\nu$. Yet for $P$ the projection onto the
first block,
\[
 x^{\mathsf T}Px/p-R_p\Tr P/p\Rightarrow R(\xi-\tfrac12),
\]
which is
nondegenerate unless $\nu=\delta_0$, so (RQC) fails, and the projected covariance has ESD $\Rightarrow\mu_{c/2,\frac12(\delta_0+\law(2R))}\ne\mu_{c/2,\nu}$
by the injectivity in \cref{prop:limit-law}, unless $\nu=\delta_0$. This is the
radial form of the block example in \cite[\S2]{Yaskov2016}, due to
\cite{Adamczak2011}; it shows that the
hypothesis of \cref{thm:necessity} must involve projections.
\end{example}

\begin{example}[Gaussian branch and sparse branch]
\label{ex:mixture}
Let $x=\sqrt p\,e_J$ with probability $1/2$ and $x=g$ with probability
$1/2$. Then $R_p\to1$, $\Sigma_p=R_pI$ exactly, and (RQC) fails. Write
$S=D+W$ with $D=c_n\diag(n_i)$ from the sparse samples and $W$ the Wishart
matrix of the Gaussian samples; $W$ is orthogonally invariant and, conditional on the branch indicators,
independent of $D$. Both conditional spectral limits are deterministic
because the branch proportions converge to $1/2$. Conditioning first, $\ESD(S)\Rightarrow\mu_D\boxplus\mu_W$ with
$\mu_D=\law(c\,\mathrm{Poi}(1/(2c)))$ and $\mu_W$ the law of $\frac12Y$,
$Y\sim\MP_{2c}$ \cite[Theorem~2.1]{PasturVasilchuk2000}, whose hypotheses ask
only for weak convergence of the two spectral measures and a uniformly bounded
first absolute moment, both of which hold here. Since $S\succeq D$, Weyl's
inequality gives $\ESD(S)([L,\infty))\ge\ESD(D)((L,\infty))$, so the support of
the limit is unbounded and the limit is not $\MP_c$. (The first three moments
agree with those of $\MP_c$; the fourth exceeds it by $c^2/4$.)
\end{example}

\begin{example}[heavy-tailed coordinates]
\label{ex:heavy}
Let $x=p^{1/2-1/\alpha}(X_1,\dots,X_p)$ with $X_i$ i.i.d.\ symmetric and
$\Pp(|X|>t)\sim t^{-\alpha}$, $0<\alpha<2$. Then
$R_p=p^{-2/\alpha}\sum_iX_i^2$ converges to a positive $(\alpha/2)$-stable law
$\nu$ (Laplace transform $\exp(-\Gamma(1-\alpha/2)\theta^{\alpha/2})$), which
has no mean. The law is signed-permutation invariant, so (ACI) holds exactly.
For the coordinate projection $P$ of rank $\lfloor p/2\rfloor$, the two halves
are independent and
$Q_p(P)\Rightarrow2^{-1-2/\alpha}(S^{(1)}-S^{(2)})$ with $S^{(1)},S^{(2)}$
i.i.d.\ $\sim\nu$, which is nondegenerate: (RQC) fails. By
\cref{thm:necessity}, (RMP-$\Pi$) fails as well. We make no claim about the
ESD of $S$ itself; see \cite{BelinschiDemboGuionnet2009} for heavy-tailed
sample covariance matrices.
\end{example}

\section{Deterministic population covariance}
\label{sec:population-statement}

\begin{theorem}
\label{thm:population}
Let $T=T_p$ be deterministic and positive semidefinite, with
$\sup_p\|T_p\|<\infty$ and $\ESD(T_p)\Rightarrow H\ne\delta_0$. Under the hypotheses of
\cref{thm:sufficient}, the ESD of
\[
  S_T=\frac1N\sum_{j=1}^NT^{1/2}x_jx_j^{\mathsf T}T^{1/2}
\]
converges weakly in probability to the unique probability measure $\mu_{c,\nu,H}$ on $[0,\infty)$
whose Stieltjes transform satisfies, for every $z\in\C_+$,
\begin{equation}
  s(z)=\int\frac{H(\dd t)}{\kappa t-z},\qquad
  m=\int\frac{t\,H(\dd t)}{\kappa t-z},\qquad
  \kappa=\int\frac{r\,\nu(\dd r)}{1+crm},
  \label{eq:T-RMP}
\end{equation}
for some $m=m(z)$ with $\Imc m\ge0$ and $\int r|1+crm|^{-1}\nu(\dd r)<\infty$,
and $\kappa=\kappa(z)\in\C$; the pair $(m,\kappa)$ solving \eqref{eq:T-RMP} is
unique, and $\Imc m>0$.
\end{theorem}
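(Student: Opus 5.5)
We follow the proof of \cref{thm:sufficient}, now with the anisotropic resolvent replacing the scalar trace. Write $y_j=T^{1/2}x_j$, $S_T=N^{-1}\sum_jy_jy_j^{\mathsf T}$, $G=(S_T-z)^{-1}$, and $G_j$ for the leave-one-out resolvent.

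\emph{Tightness.} Tightness of $\ESD(S_T)$ follows exactly as in Step~1. Delete the columns with $R_j>M$; the remaining trace is at most $\|T\|M$, and the rank inequality of \cref{lem:rank-one} together with \eqref{eq:JM} controls the deleted part.

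\emph{Anisotropic fixed point.} The key identity comes from $(S_T-z)G=I$ after multiplying by a deterministic matrix and using Sherman--Morrison:
\[
 G_jy_j=\frac{G_jy_j}{1+N^{-1}y_j^{\mathsf T}G_jy_j}\cdot(\text{rank-one correction}).
\]
The quadratic forms are $a_j=p^{-1}x_j^{\mathsf T}T^{1/2}G_jT^{1/2}x_j$. Since $\eta T^{1/2}G_jT^{1/2}/\|T\|$ is a contraction independent of $x_j$, \cref{lem:uniform}(c) and \eqref{eq:phi} give
\[
 a_j\approx R_j\,m_n,\qquad m_n=p^{-1}\Tr(TG_j)\approx p^{-1}\Tr(TG),
\]
with the replacements made at bounded cost, as in the first and second replacements. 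Here \cref{lem:denominator} bounds the denominators $1+c_na_j$, since $a_j$ again has the form $\sum w_l/(t_l-z)$ with $w_l\ge0$. Summing the resolvent expansion gives the deterministic-equivalent statement
\[
 G\approx(\kappa_nT-z)^{-1},\qquad \kappa_n=\frac1N\sum_j\frac{R_j}{1+c_nR_jm_n},
\]
in the sense of normalized traces against $I$ and against $T$. To obtain it we write
\[
 G-(\kappa_nT-z)^{-1}=G\,(\kappa_nT-S_T)\,(\kappa_nT-z)^{-1},
\]
expand $S_T$ in columns, and in each term apply the rank-one formula and then RQC to the contraction built from $(\kappa_nT-z)^{-1}$ and $G_j$. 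This is the standard Silverstein--Bai argument, with $R_j$ replacing $1$.

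\emph{Main obstacle.} The hard step is controlling $\|(\kappa_nT-z)^{-1}\|$ without moments. We need $\Imc(\kappa_n z)$ to have the right sign, and we need $\kappa_n$ to be bounded because the summands $R_j/(1+c_nR_jm_n)$ may be large when $m_n$ is small. For boundedness we use \eqref{eq:integrand-bounded}, which gives $|\kappa_n|\le 1/(c_n\Imc m_n)$. We therefore need a lower bound on $\Imc m_n$. This comes from $H\ne\delta_0$ and tightness: a positive fraction of the eigenvalues of $T$ is at least some $\tau>0$, and a positive fraction of the eigenvalues of $S_T$ stays below some $L$, which gives $\Imc p^{-1}\Tr(TG)\ge c(\tau,L,z)>0$ with high probability. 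That $\Imc(-z^{-1}\kappa_n)$ and $\Imc z\kappa_n$ have signs keeping $\kappa_nT-z$ invertible follows from the representation of $z\,c_nR\,m_n$ used in \cref{lem:denominator}, as $z\cdot(\text{Stieltjes-type})$ with $\Imc\ge0$.

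\emph{Identification and uniqueness.} Pass to subsequential almost-sure limits along a countable dense set $\mathcal Z$, as in Step~3. The limits $m_n\to m$ and $\kappa_n\to\kappa$ exist, with convergence of $\kappa_n$ uniform in $r$ by the same bound used there, and $(s,m,\kappa)$ solves \eqref{eq:T-RMP}. The integrability condition $\int r|1+crm|^{-1}\nu(\dd r)<\infty$ follows from $\Imc m>0$. Uniqueness of $(m,\kappa)$ follows the contraction argument in the proof of \cref{prop:limit-law}: take imaginary parts of the $m$-equation to obtain the stability inequality
\[
 \int\frac{t^2|\kappa|^{\ldots}}{|\kappa t-z|^2}H(\dd t)\cdot c\int\frac{r^2}{|1+crm|^2}\nu(\dd r)<1,
\]
and then apply Cauchy--Schwarz to the difference of two solutions. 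Analyticity in $z$ extends the equation from $\mathcal Z$ to all of $\C_+$, and the limit is therefore deterministic. This gives convergence in probability.
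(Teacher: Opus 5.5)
Your architecture is the paper's bounded-radius argument: Silverstein's deterministic equivalent, leave-one-out RQC, a lower bound on $\Imc m_n$, and Cauchy--Schwarz uniqueness. You run it on the untruncated sample, and that is where it fails.

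\textbf{The gap: the termwise replacement.} Take your $\kappa_n=\frac1N\sum_jR_j/(1+c_nR_jm_n)$ and $D=\kappa_nT-z$. Expanding $G-D^{-1}=G(\kappa_nT-S_T)D^{-1}$ in columns produces $\frac1N\sum_je_jq_j$, with
\[
 e_j=(1+c_na_j)^{-1},\qquad q_j=\tfrac1p y_j^{\mathsf T}D^{-1}AG_jy_j .
\]
To close the identity you must replace $e_j$ by $(1+c_nR_jm_n)^{-1}$ and $q_j$ by $R_j\,\frac1p\Tr(TD^{-1}AG)$ inside an average over $j$.
\begin{itemize}
\item RQC, through \cref{lem:uniform}(c), gives each replacement only in probability, one column at a time.
\item Averaging needs uniformly bounded (or uniformly integrable) summands. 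Here the summands are $R_je_j$ and $e_j(q_j-R_j\theta_n)$, and their errors carry a factor $R_j$.
\item In \cref{thm:sufficient} the ``bounded cost'' came from $|a_j/(1+c_na_j)|\le K_z$, which holds whatever $R_j$ is. Nothing analogous holds here.
\end{itemize}
This is not a technicality. Take the model of \cref{rem:truncation-needed}: $T=I$, $\Pp(R>t)=1/t$, direction $e_1$ when $R>p$. It satisfies RQC. With probability bounded below, exactly two columns have radius $>p$, and both point along $e_1$. Each then has $R_je_j\ge p/2$. So they contribute at least $c_n$ to $\frac1N\sum_jR_je_j$ but only $O_{\Pp}(p^{-1})$ to your $\kappa_n$. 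The claim $G\approx(\kappa_nT-z)^{-1}$ is therefore asserted, not proved, for unbounded radii.

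\textbf{The missing idea: truncate the fixed-point step.} You truncate only for tightness; the truncation is needed in the fixed-point step itself.
\begin{enumerate}
\item Prove the theorem first when $R_p\le M$. Then $\kappa_n-\hat\kappa_n$ and the errors $\epsilon_j$ are controlled with a factor $M$, and $\|y_j\|^2\le\|T\|pM$.
\item Delete the columns in $J_M$, a perturbation of rank $|J_M|$. The truncated limits $\mu_M=\mu_{c,\nu_M,H}$ satisfy $\dK(\mu_M,\mu_{M'})\le2\nu((M,\infty))/c$. They therefore converge to some $\mu$, and $d_{\mathrm L}(\ESD(S_T),\mu)\le4\nu((M,\infty))/c+o_{\Pp}(1)$.
\item Identify $\mu$ by letting $M\to\infty$ in the system. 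This needs $\Imc m_M(z)\ge\gamma_0(z)$ \emph{uniformly in $M$}. The paper obtains it by removing $J_{M_0}$ from $S_T^{(M)}$ simultaneously for all $M\ge M_0$.
\end{enumerate}

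\textbf{Smaller points.}
\begin{itemize}
\item The obstacle you single out is not one. Since $\Imc m_n\ge0$, you have $\Imc\kappa_n\le0$, so $\kappa_nT-z$ is normal with $\|(\kappa_nT-z)^{-1}\|\le1/\eta$. No bound on $|\kappa_n|$ is needed.
\item ``A positive fraction of eigenvalues of $S_T$ below $L$'' does not by itself bound $\Imc m_n$ from below. The low eigenvectors could avoid the large-eigenvalue part of $T$. Use instead
\[
 \tfrac1p\Tr(T\Pi_L)\ge\tfrac1p\Tr T-\|T\|\,\ESD(S_T)((L,\infty)),
\]
with the tail fraction made small via tightness.
\item For uniqueness you must exclude candidate solutions with $\Imc m=0$, where $\int r^2|1+crm|^{-2}\nu(\dd r)$ may be infinite. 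Your stability inequality presupposes $\Imc m>0$. The paper handles this case as follows: then $\kappa$ is real, and the imaginary-part identity for $m$ reads $0=\gamma>0$.
\end{itemize}
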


The limit is the free multiplicative convolution $H\boxtimes\mu_{c,\nu}$
(see \cite{Voiculescu1991,HiaiPetz2000} for this operation and for asymptotic
freeness), with the convention $H\boxtimes\delta_0=\delta_0$; this identification (\cref{prop:free-mult}) uses freeness results
that we quote rather than prove.

For $H=\delta_1$ one has $m=s$ and \eqref{eq:T-RMP} reduces to \eqref{eq:RMP};
for $\nu=\delta_1$ it is Silverstein's equation \cite{Silverstein1995} in the
variable $m=\lim\frac1p\Tr(TG)$.

The proof, including the free-probability identification, is in
\cref{sec:population}. The analytic fixed-point argument does not use
asymptotic freeness.

\section{Limits of global spectral identification}
\label{sec:extremes}

RQC and weak radial convergence impose no upper bound on the spectral edge.

\begin{proposition}
\label{prop:lambda-max-lower}
For every $n$, $\lambda_{\max}(S)\ge c_n\max_{j\le N}R_j$. Consequently, if
$\supp\nu$ is unbounded, $\lambda_{\max}(S)\to\infty$ in probability under the
hypotheses of \cref{thm:sufficient} (indeed under $R_p\Rightarrow\nu$ alone).
\end{proposition}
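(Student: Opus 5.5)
The deterministic inequality comes from a variational lower bound for the largest eigenvalue of $S$, tested against the normalized direction of each sample. Fix $j$ with $x_j\ne0$ and put $v=x_j/\|x_j\|$. Every summand of $S$ is positive semidefinite, so
\[
 \lambda_{\max}(S)\ge v^{\mathsf T}Sv=\frac1N\sum_{k=1}^N(v^{\mathsf T}x_k)^2\ge\frac1N\|x_j\|^2=\frac pN R_j=c_nR_j .
\]
If $x_j=0$ the bound $c_nR_j=0\le\lambda_{\max}(S)$ is trivial. Taking the maximum over $j\le N$ gives $\lambda_{\max}(S)\ge c_n\max_jR_j$ for every $n$. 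No hypothesis beyond positivity enters.

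For the consequence, we only need $\max_{j\le N}R_j\to\infty$ in probability, since $c_n\to c>0$. Fix $L>0$. Because $\supp\nu$ is unbounded, we have $\nu((L,\infty))=:a>0$. By the Portmanteau theorem for the open set $(L,\infty)$,
\[
 \liminf_p\Pp(R_p>L)\ge a .
\]
Hence $\Pp(R_p>L)\ge a/2$ for all large $n$. Independence of the copies then gives
\[
 \Pp\Big(\max_{j\le N}R_j\le L\Big)=\big(1-\Pp(R_p>L)\big)^N\le(1-a/2)^N\to0,
\]
because $N\to\infty$ (from $p\to\infty$ and $p/N\to c<\infty$). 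Therefore $\Pp(\lambda_{\max}(S)\le c_nL)\to0$ for every $L$, which together with $c_n\to c>0$ proves $\lambda_{\max}(S)\to\infty$ in probability.

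The argument uses only $R_p\Rightarrow\nu$ and $c_n\to c\in(0,\infty)$; RQC is not needed. There is no real obstacle. The one point to state carefully is to use the open set $(L,\infty)$ in the Portmanteau step, so that the liminf inequality goes in the right direction even when $L$ is an atom of $\nu$.
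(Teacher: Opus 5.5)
Your proposal is correct and follows the paper's proof: the bound $S\succeq N^{-1}x_jx_j^{\mathsf T}$ (your Rayleigh-quotient test at $x_j/\|x_j\|$) gives the deterministic inequality. The Portmanteau theorem on the open set $(L,\infty)$, combined with $\Pp(\max_{j\le N}R_j\le L)=\Pp(R_p\le L)^N\to0$, gives the divergence in probability.
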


\begin{proof}
The bound follows from $S\succeq N^{-1}x_jx_j^T$, including when $x_j=0$.
For the second statement let $t>0$ satisfy
$\nu((t,\infty))>0$; the Portmanteau theorem for the open set $(t,\infty)$
gives $\liminf_p\Pp(R_p>t)\ge\nu((t,\infty))>0$, so
$\Pp(\max_{j\le N}R_j\le t)=\Pp(R_p\le t)^N\to0$. When $\supp\nu$ is unbounded
such $t$ exist arbitrarily large.
\end{proof}

\begin{example}[Same global law, different top eigenvalues]
\label{ex:extremes}
Let $\nu$ be supported on $[0,B]$. Take independent spherical directions
$u_j$ and radii with either law $\nu$ or
$(1-p^{-1})\nu+p^{-1}\delta_p$, and form columns $\sqrt{pR_j}\,u_j$.
Both models satisfy RQC and have limiting law $\mu_{c,\nu}$.
In the first model, write $\sqrt p\,u_j=g_j\sqrt p/\|g_j\|$
with independent standard Gaussian $g_j$.
Gaussian norm and singular-value bounds \cite{DavidsonSzarek2001} give
$\min_{j\le N}\|g_j\|^2/p\to1$ and
$\|G\|^2/N=O_{\Pp}(1)$, so the top eigenvalue is bounded in probability.
In the second model, a radius equal to $p$ occurs with probability
$1-(1-p^{-1})^N\to1-e^{-1/c}$; on that event
$\lambda_{\max}(S_p)\ge c_np$.
\end{example}

Quantitative local laws require additional assumptions on angular
fluctuations and on radial tails. For comparison, see the separable
local laws of \cite{KnowlesYin2017} and the non-separable results of
\cite{FanMaPaquetteWang2026}. Such conclusions do not follow from
qualitative RQC.

The remaining inverse question concerns the radius, rather than the
characterization conditional on its limit.

\begin{question}\label{q:radius-recovery}
Assume \eqref{eq:no-small-subspace-energy}, and suppose every deterministic
rank-$\lfloor\alpha p\rfloor$ projected ESD converges in probability to
$\mu_{c\alpha,\nu}$ for a specified probability law $\nu$ with
$\int r^2\,\nu(\dd r)=\infty$.
Must $R_p\Rightarrow\nu$?
\end{question}

The geometric condition already gives radial tightness.
\Cref{thm:radius-identification} answers the finite-second-moment case;
its bootstrap bound \eqref{eq:inverse-radius-moment-bootstrap} does not
extend to the case posed here. Once $R_p\Rightarrow\nu$ is known,
\cref{thm:necessity-free} supplies RQC for arbitrary tails.

\section{Relation to earlier results}
\label{sec:related}

\emph{Classical and Yaskov-type theorems.} \Cref{thm:sufficient} contains the
MP theorem under \eqref{eq:yaskov-A} as the case $\nu=\delta_1$, and its proof
retains the radius in the resolvent argument. Its qualitative conclusion
is also obtained from the more general random-profile replacement theorem
\cite[Theorem~2.2]{Yaskov2014}: set the profile equal to $R_pI_p$.
That theorem's assumptions become RQC and $R_p^2/p\to0$ in probability,
the latter automatic under tightness. It does not require the profile
to be a conditional second-moment matrix. Our direct sufficient proof
makes the radial equation and moment-free tightness explicit.
In the
characterization, \cref{thm:necessity} applied through the trivial radial
reduction ($\mathcal H_p$ trivial, $\rho_p=1$, $\nu=\delta_1$) contains
\cite[Theorem~3.3]{Yaskov2015b}: the hypothesis there is almost sure
convergence of the projected spectra, which implies ours, the conclusion there
is the quadratic-form condition for real symmetric positive semidefinite
matrices, which ours implies by \cref{prop:rqc-equivalences}, and $R_p\to1$ is
recovered as a conclusion rather than assumed. The converse half of
\cite[Theorem~3.3]{Yaskov2015b} is recovered with almost sure convergence under
the summability hypothesis of \cref{thm:sufficient}, and with convergence in
probability otherwise. That characterization appears in
the preprint version \cite[Theorem~3.3]{Yaskov2015b}, whose numbering
we use. The published note \cite{Yaskov2016} states a characterization
in terms of resolvent quadratic forms; its Theorem~2.1 is the
radius-concentration result used here. The radial
conditional version uses a reduction, whereas \cref{thm:necessity-free}
assumes the actual radius law and tests one rank fraction, with no moments.
\Cref{ex:block}
is the radial form of the block example of \cite[\S2]{Yaskov2016} and
\cite{Adamczak2011}, showing that projections are needed.

\emph{Projection-energy rigidity.}
The scalar Jensen identity in our bounded-transform argument is already
present in \cite[Lemma~4.1 and Appendix~B]{Yaskov2015b}; that paper also
uses a single bounded transform to test concentration in Lemma~4.4.
Projection reductions and rank-one resolvent replacement are standard
ingredients. The convex hull of fixed-rank projections, used in
\cref{lem:energy-L2,thm:bounded-energy}, is the standard fantope
\cite{VuChoLeiRohe2013}. The resulting one-sided, dimension-free bounds
apply to arbitrary random positive semidefinite operators, with no radial
moments or rank restriction on the operator. The criterion recovers energy
concentration; it does not assert rotational invariance of the vector law.

\emph{Logarithmic spectral comparison.}
Shannon-transform formulas for covariance models are classical; see, for
example, \cite[Theorem~4.1]{HachemLoubatonNajim2007}. Our converse uses the
finite-sample concavity of expected log-determinants in the common i.i.d.\
column law, proved here by a negative tensor-square expansion. Combining
angular symmetrization with complementary projection laws and a strict
Jensen gap gives the characterization for arbitrary radial laws.
The determinant comparison of \cref{lem:frame-trimming} and Gaussian
singular-value bounds control the truncation error by tail probability alone,
as in \eqref{eq:truncated-gap-bound}. The variational formula and Gaussian
singular-value estimates are established ingredients.
Proposition~\ref{prop:bounded-spectral-obstruction} precludes a universal
bounded scalar replacement of the logarithm; the proof instead compares
finite logarithmic quantities before passing to the cutoff limit.

\emph{Elliptical laws and scale mixtures.} For $x=\sqrt{pR}\,u$ with $u$
uniform and $R\perp u$ the limit is in \cite{ElKaroui2009,PajorPastur2009};
\cref{ex:spherical} shows that independence plays no role: any $R$ with
$R_p\Rightarrow\nu$, however it depends on $u$, gives the same limit.

\emph{Weighted and separable covariance.} For
$\frac1N\sum_jw_jz_jz_j^{\mathsf T}$ with $(w_j)$ independent of $(z_j)$ the
equation \eqref{eq:RMP} is \cite[Theorem~1.1]{SilversteinBai1995} for
coordinates with independent entries and
\cite[Theorem~4.2]{ChengMikulincer2026} under a quadratic-form condition on the
directions. For the separable model $T^{1/2}ZD^{1/2}$ and its limiting
spectral measure, see \cite{CouilletHachem2014} and the references therein,
including \cite{Zhang2007}; see \cite{PaulSilverstein2009} for the absence
of eigenvalues outside the support.
Conditionally on the weights these are sample covariance matrices with
independent columns of different variances, and the proofs use that structure.
Under (RQC) there is no such conditional structure: in the tensor model we
have no description of the conditional law of $x/\sqrt{R_p}$ given $R_p$ beyond
the second-moment identity of \cref{lem:tensor-ACI}. \Cref{prop:self-normalized} explains the
relationship to weighted models. The absence of radial-angular independence
is already allowed by the random-profile theorem \cite{Yaskov2014}.
The survey \cite{Yaskov2025} develops this random-profile formulation for
partially dependent Gram matrices. A converse for a general auxiliary profile
would also have to specify its coupling to the sample; its marginal law alone
cannot identify that coupling.
The new converse and energy-rigidity estimates address necessity and
projected-energy identification.

\emph{Free compound Poisson limits.} Boedihardjo \cite{Boedihardjo2015} shows
that free compound Poisson laws are the natural limit class for sample
covariance matrices under a different set of structural hypotheses on the
sample vectors. The limit class here is the same; the hypotheses are not
comparable, and we do not use his results.

\emph{Division from the tensor paper.} Reference \cite{Xie2026} proves the
critical tensor results: the lognormal radius limit (Lemma~4.2), radial $L^2$
concentration (Theorem~6.2), the abstract sufficient principle under $L^2$ and
uniform-integrability hypotheses (Theorem~3.2), and the tensor spectral limit
(Theorem~3.1), together with its atoms, moments, and support. Those statements
are inputs here and are not claimed again.

The present paper begins where that analysis ends. It weakens the sufficient
principle to moment-free convergence in probability, but its main new direction
is converse: projected radial laws force angular concentration. Applied to the
tensor example, \cref{thm:necessity-free} gives a one-rank characterization
using the quoted convergence of its actual radius. This is a
new implication about projected spectra, not a second proof of the critical
tensor ESD. It requires neither a symmetric base law nor (ACI); the conditional
result \cref{thm:necessity} gives a separate route for symmetric base laws. The
population-covariance and extreme-eigenvalue results are likewise general
radial statements, not tensor asymptotics.

\appendix

\section{Proof of the population-covariance theorem}
\label{sec:population}

\subsection{What changes with a population covariance}

For $y=T^{1/2}x$ the radial condition gives
$y^{\mathsf T}Ay/p-R_p\Tr(TA)/p\to0$: the sample sees $T$ through the trace
$\Tr(TA)$, so the leave-one-out quadratic form $a_j=\frac1py_j^{\mathsf T}G_jy_j$
is close to $R_j\,m_n$ with $m_n=\frac1p\Tr(TG)$ rather than to $R_js_n$. The
exact identity \eqref{eq:exact} then involves $m_n$ on the right and $s_n$ on
the left and does not close. The standard remedy, due to Silverstein
\cite{Silverstein1995,SilversteinBai1995}, is to compare $G$ not with a scalar
but with the deterministic-equivalent resolvent $(\kappa T-z)^{-1}$ for a
suitable scalar $\kappa$, and to derive a closed system for $(s,m,\kappa)$. The
radial version has $\kappa=\frac1N\sum_jR_je_j$ with $e_j=(1+c_na_j)^{-1}$,
which is where the radii enter. Two features are specific to the radial
setting. First, without a first moment on $\nu$ the quantities $R_je_j$ are not
bounded, and the bounded-summand device of \cref{sec:sufficient} is not
available for them; we therefore prove the theorem for bounded radii and pass
to general $\nu$ by removing the samples with large radius, exactly as in
Step~1 of \cref{sec:sufficient}. \Cref{rem:truncation-needed}
gives a model satisfying all hypotheses in which the term-by-term estimate of
step~(4) breaks down for the untruncated matrix, two normalized column contributions to $\kappa_n$
being bounded below by positive constants while the matching contributions
to $\hat\kappa_n$ are $O_{\Pp}(1/p)$. Second, for unbounded $\nu$ the
integral defining $\kappa$ need not converge absolutely at the limit point of
the truncated solutions; a lower bound $\Imc m\ge\gamma_0(z)>0$, uniform in the
truncation level and obtained from the tightness of the ESD, is what makes it
converge and what makes the passage to the limit uniform in $r$.
\Cref{lem:T-uniqueness} derives $\Imc m>0$ for a solution by itself; it is the
uniformity in the truncation level that has to be proved.

\subsection{Uniqueness}

\begin{lemma}
\label{lem:T-uniqueness}
Fix $z=E+i\eta\in\C_+$, $c>0$, a probability measure $\nu$ on $[0,\infty)$ and a
probability measure $H\ne\delta_0$ on $[0,\tau]$. Call $m\in\C$ a solution if
$\Imc m\ge0$, $\int r|1+crm|^{-1}\nu(\dd r)<\infty$, and
\begin{equation}
  m=\int\frac{t\,H(\dd t)}{\kappa(m)t-z},\qquad \kappa(m)=\int\frac{r\,\nu(\dd r)}{1+crm}.
  \label{eq:T-fixed}
\end{equation}
Every solution has $\Imc m>0$, and there is at most one solution.
\end{lemma}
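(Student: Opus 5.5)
The plan is the Silverstein-type imaginary-part argument, done in two steps. First show that every solution has $\Imc m>0$. Then use the resulting strict identity as a contraction bound, as in the uniqueness part of \cref{prop:limit-law}.

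\emph{Step 1: positivity.} Let $m$ be a solution. The hypothesis $\int r|1+crm|^{-1}\nu(\dd r)<\infty$ makes $\kappa=\kappa(m)$ well defined. For $r\ge0$ with $1+crm\ne0$,
\[
 \Imc\frac{r}{1+crm}=-\frac{cr^2\,\Imc m}{|1+crm|^2}\le0,
\]
so $\Imc\kappa\le0$. Hence $\Imc(\kappa t-z)=t\Imc\kappa-\eta\le-\eta<0$ for every $t\in[0,\tau]$. The denominators in \eqref{eq:T-fixed} therefore never vanish, and $|\kappa t-z|\ge\eta$. Taking imaginary parts in \eqref{eq:T-fixed} gives
\[
 \Imc m=\int\frac{t(\eta-t\Imc\kappa)}{|\kappa t-z|^2}H(\dd t)\ge\eta\int\frac{t\,H(\dd t)}{|\kappa t-z|^2}.
\]
Since $H\ne\delta_0$ charges $(0,\tau]$ and the denominator is bounded on $[0,\tau]$, the right side is strictly positive. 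Thus $\Imc m>0$. Once this holds, \eqref{eq:integrand-bounded}, with $c$ in place of $c\alpha$, gives $r^2/|1+crm|^2\le(c\Imc m)^{-2}$, so every integral used below converges absolutely.

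\emph{Step 2: a strict inequality.} Rewriting Step 1 with $-\Imc\kappa=c\,\Imc m\int r^2|1+crm|^{-2}\nu(\dd r)$ gives
\[
 \Imc m=\eta X+\Imc m\cdot a,\qquad X=\int\frac{t\,H(\dd t)}{|\kappa t-z|^2}>0,
\]
\[
 a=c\int\frac{t^2H(\dd t)}{|\kappa t-z|^2}\int\frac{r^2\nu(\dd r)}{|1+crm|^2}.
\]
Because $\Imc m>0$ and $\eta X>0$, this forces $a<1$ for every solution.

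\emph{Step 3: uniqueness.} Let $m_1\ne m_2$ be two solutions with $\kappa_k=\kappa(m_k)$. Subtracting the equations in \eqref{eq:T-fixed} gives
\[
 m_1-m_2=(\kappa_2-\kappa_1)\int\frac{t^2H(\dd t)}{(\kappa_1t-z)(\kappa_2t-z)},
\]
\[
 \kappa_1-\kappa_2=-c(m_1-m_2)\int\frac{r^2\nu(\dd r)}{(1+crm_1)(1+crm_2)}.
\]
Substituting the second identity into the first and dividing by $m_1-m_2$, we get $1=c\,I_HI_\nu$, where $I_H$ and $I_\nu$ denote the two mixed integrals. Cauchy--Schwarz in each integral gives $|c\,I_HI_\nu|\le\sqrt{a_1a_2}$, with $a_k$ the quantity of Step 2 for $m_k$. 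By Step 2, $\sqrt{a_1a_2}<1$, which is a contradiction.

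The only delicate point is Step 1. There one must use the stated integrability hypothesis, rather than the bound \eqref{eq:integrand-bounded}, to give meaning to $\kappa$ before positivity of $\Imc m$ is known. The case $\Imc m=0$ is handled by this hypothesis, together with the observation that then $\Imc\kappa=0$ and still $\Imc(\kappa t-z)=-\eta$. After that, Steps 2 and 3 are routine.
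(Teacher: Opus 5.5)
Your proof is correct and follows essentially the same route as the paper. The paper uses the imaginary-part identity, with $\Imc\kappa\le0$ and the integrability convention making $\kappa$ real when $\Imc m=0$, to exclude $\Imc m=0$ and obtain $\alpha\beta<1$; it then derives the subtraction identity $1=c\,I_HI_\nu$ and applies Cauchy--Schwarz, exactly as you do. The only differences are cosmetic: you handle both cases through the single inequality $\Imc m\ge\eta X>0$ rather than splitting them, and the phrase ``with $c$ in place of $c\alpha$'' is unnecessary, since \eqref{eq:integrand-bounded} is already stated with $c$.
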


\begin{proof}
If $\Imc m>0$ then $|1+crm|\ge cr\Imc m$, so $|r/(1+crm)|\le(c\Imc m)^{-1}$ and
$r^2/|1+crm|^2\le(c\Imc m)^{-2}$: $\kappa(m)$ and
$\alpha(m):=c\int r^2|1+crm|^{-2}\nu(\dd r)$ are finite for every $\nu$. From
$\Imc\frac r{1+crm}=-\frac{cr^2\Imc m}{|1+crm|^2}$ we get
$\Imc\kappa=-\alpha\Imc m\le0$, hence $\Imc(\kappa t-z)\le-\eta$ and
$|\kappa t-z|\ge\eta$ for $t\ge0$, and
\begin{multline}
  \Imc m=\int t\,\frac{-\Imc(\kappa t-z)}{|\kappa t-z|^2}H(\dd t)
  =\alpha\beta\,\Imc m+\gamma,\\
  \beta=\int\frac{t^2H(\dd t)}{|\kappa t-z|^2},\qquad
  \gamma=\eta\int\frac{t\,H(\dd t)}{|\kappa t-z|^2}>0 .
  \label{eq:Im-m}
\end{multline}
If $\Imc m=0$, then $\alpha$ may be infinite, but $\kappa$ is real and finite
by the absolute-convergence convention, so $\Imc(\kappa t-z)=-\eta$ and the
first equality in \eqref{eq:Im-m} reads $0=\gamma>0$: the contradiction uses
only $\kappa\in\R$. So $\Imc m>0$, and then
$(1-\alpha\beta)\Imc m=\gamma>0$ gives $\alpha\beta<1$. For two solutions
$m_1\ne m_2$, direct algebra gives
$m_1-m_2=(\kappa_2-\kappa_1)\int\frac{t^2H(\dd t)}{(\kappa_1t-z)(\kappa_2t-z)}$ and
$\kappa_2-\kappa_1=c(m_1-m_2)\int\frac{r^2\nu(\dd r)}{(1+crm_1)(1+crm_2)}$, so
\[
  1=c\int\frac{r^2\,\nu(\dd r)}{(1+crm_1)(1+crm_2)}\int\frac{t^2\,H(\dd t)}{(\kappa_1t-z)(\kappa_2t-z)},
\]
and Cauchy--Schwarz bounds the right side in modulus by
$\sqrt{\alpha_1\alpha_2}\,\sqrt{\beta_1\beta_2}$, which equals
$\sqrt{(\alpha_1\beta_1)(\alpha_2\beta_2)}<1$.
\end{proof}

Given $m$, \eqref{eq:T-RMP} determines $\kappa$ and $s$; also
$\kappa m=\int(1+\frac z{\kappa t-z})H(\dd t)=1+zs$. For $H=\delta_1$,
$s=m=(\kappa-z)^{-1}$ and \eqref{eq:T-RMP} is \eqref{eq:RMP}. For $\nu=\delta_1$,
$\kappa=(1+cm)^{-1}$ and $\kappa m=1+zs$ give $\kappa=1-c-czs$, so
$s=\int H(\dd t)/((1-c-czs)t-z)$, Silverstein's equation.

\subsection{\texorpdfstring{Proof of \cref{thm:population} for bounded radii}{Proof of the population theorem for bounded radii}}

Assume $R_p\le M$ almost surely for all $p$. Fix $z=E+i\eta$, and put
$\tau:=\sup_p\|T_p\|<\infty$; since $\supp\ESD(T_p)\subseteq[0,\tau]$ and
$\ESD(T_p)\Rightarrow H$, also $\supp H\subseteq[0,\tau]$, and $\|T/\tau\|\le1$.
Notation:
$y_j=T^{1/2}x_j$, $G_j$ the resolvent of $S_T-\frac1Ny_jy_j^{\mathsf T}$,
$a_j=\frac1py_j^{\mathsf T}G_jy_j$, $e_j=(1+c_na_j)^{-1}$,
$m_n=\frac1p\Tr(TG)$, $m_n^{(j)}=\frac1p\Tr(TG_j)$, and
$\phi_p(\eps)=\sup_{\|A\|\le1}\Pp(|Q_p(A)|>\eps)\to0$ from
\cref{prop:rqc-equivalences}.

\emph{(1) Denominators.} In the eigenbasis $(t_l,u_l)$ of the positive
semidefinite matrix $S_T-\frac1Ny_jy_j^{\mathsf T}$,
$c_na_j=\frac1N\sum_l|u_l^{\mathsf T}y_j|^2/(t_l-z)$, and in the eigenbasis
$(\lambda_l,v_l)$ of $S_T$, $c_nrm_n=\frac{c_nr}p\sum_lv_l^{\mathsf T}Tv_l/(\lambda_l-z)$
with $v_l^{\mathsf T}Tv_l\ge0$. \Cref{lem:denominator} gives
$|e_j|\le|z|/\eta$ and $|1+c_nrm_n|^{-1},|1+c_nrm_n^{(j)}|^{-1}\le|z|/\eta$ for
$r\ge0$; also $\Imc(c_na_j)=\frac\eta N\|G_jy_j\|^2\ge0$, so $\Imc e_j\le0$,
and $\Imc m_n,\Imc m_n^{(j)}\ge0$, $|m_n|\le\tau/\eta$.

\emph{(2) Rank-one bounds.} By \cite[Lemma~2.6]{SilversteinBai1995},
\[
 |\Tr[((B-z)^{-1}-(B+vv^*/N-z)^{-1})A]|\le\|A\|/\eta
\]
for Hermitian $B$, $v\in\C^p$, and any $A$. Hence
$|m_n-m_n^{(j)}|\le\tau/(p\eta)$ and $\frac1p|\Tr(A'(G-G_j))|\le\|A'\|/(p\eta)$.

\emph{(3) Leave-one-out concentration.} $(\eta/\tau)T^{1/2}G_jT^{1/2}$ is
independent of $x_j$ with norm $\le1$, and
$a_j-R_jm_n^{(j)}=Q_p(T^{1/2}G_jT^{1/2})$, so by \cref{lem:uniform}(c)
$\Pp(|a_j-R_jm_n^{(j)}|>\eps)\le\phi_p(\eps\eta/\tau)$ for every $j$.

\emph{(4) The scalar $\kappa$.} Let
\[
 \begin{gathered}
 \kappa_n=\frac1N\sum_jR_je_j,\qquad
 \hat\kappa_n=\frac1N\sum_j\frac{R_j}{1+c_nR_jm_n},\\
 \hat\kappa^{(j)}=\frac1N\sum_{k\ne j}\frac{R_k}{1+c_nR_km_n^{(j)}} .
 \end{gathered}
\]
$\hat\kappa^{(j)}$ is a function of $\{x_k\}_{k\ne j}$. All three have
nonpositive imaginary part by (1), so $D=\kappa_nT-z$ and
$D_j=\hat\kappa^{(j)}T-z$ are normal with eigenvalues of imaginary part
$\le-\eta$: $\|D^{-1}\|,\|D_j^{-1}\|\le1/\eta$. By (1)--(3) and the identity
$\frac r{1+crw_1}-\frac r{1+crw_2}=\frac{-cr^2(w_1-w_2)}{(1+crw_1)(1+crw_2)}$,
\begin{gather*}
  |\hat\kappa_n-\hat\kappa^{(j)}|\le\frac{M|z|}{N\eta}+c_nM^2\Big(\frac{|z|}\eta\Big)^2\frac{\tau}{p\eta}
  \quad\text{deterministically, and}\\
  \Big|e_j-\frac1{1+c_nR_jm_n}\Big|\le\min\Big\{\frac{2|z|}{\eta},\,c_n\Big(\frac{|z|}\eta\Big)^2|a_j-R_jm_n|\Big\},
\end{gather*}
so $\E|\kappa_n-\hat\kappa_n|\le M[c_n(|z|/\eta)^2(\eps+M\tau/(p\eta))+(2|z|/\eta)\phi_p(\eps\eta/\tau)]$
for every $\eps>0$, and $\kappa_n-\hat\kappa_n\to0$ in $L^1$. Hence
$\sup_j|\kappa_n-\hat\kappa^{(j)}|\to0$ in probability.

\emph{(5) Master identity.} Since $D-(S_T-z)=\kappa_nT-S_T$ and
$Gy_j=e_jG_jy_j$ (Sherman--Morrison),
\[
  G-D^{-1}=G(\kappa_nT-S_T)D^{-1}
  =\kappa_nGTD^{-1}-\frac1N\sum_je_jG_jy_jy_j^{\mathsf T}D^{-1}.
\]
For deterministic $A$ with $\|A\|\le1$, taking $\frac1p\Tr(A\,\cdot)$,
\begin{equation}
\begin{split}
  \frac1p\Tr(AG)-\frac1p\Tr(AD^{-1})
  &=\kappa_n\frac1p\Tr(AGTD^{-1})-\frac1N\sum_je_jq_j,\\
  q_j&=\frac1py_j^{\mathsf T}D^{-1}AG_jy_j .
\end{split}
  \label{eq:master}
\end{equation}

\emph{(6) The quadratic forms $q_j$.} Set
\[
 \hat q_j=\frac1py_j^{\mathsf T}D_j^{-1}AG_jy_j,\qquad
 \theta_n=\frac1p\Tr(TD^{-1}AG).
\]
Then $|\theta_n|\le\tau/\eta^2$. Since
$\|y_j\|^2\le\tau pM$ and
$D^{-1}-D_j^{-1}=D^{-1}(\hat\kappa^{(j)}-\kappa_n)TD_j^{-1}$, we get
$|q_j-\hat q_j|\le\tau^2M|\kappa_n-\hat\kappa^{(j)}|/\eta^3$. The matrix
$T^{1/2}D_j^{-1}AG_jT^{1/2}$ is a function of $\{x_k\}_{k\ne j}$ with norm
$\le\tau/\eta^2$, so \cref{lem:uniform}(c) gives
\[
 \Pp\left(\left|\hat q_j-\frac{R_j}{p}\Tr(TD_j^{-1}AG_j)\right|>\eps\right)
 \le\phi_p(\eps\eta^2/\tau).
\]
By (2) with $A'=TD_j^{-1}A$, $\frac1p|\Tr(A'(G_j-G))|\le\tau/(p\eta^2)$, and
$\frac1p|\Tr(T(D_j^{-1}-D^{-1})AG)|\le\tau^2|\kappa_n-\hat\kappa^{(j)}|/\eta^3$.
Altogether $q_j=R_j\theta_n+\epsilon_j$ with $|\epsilon_j|\le2\tau M/\eta^2$
and $\sup_j\Pp(|\epsilon_j|>\eps)\to0$ for every $\eps$.

\emph{(7) Closing the identity.} By cyclicity
$\Tr(AGTD^{-1})=\Tr(TD^{-1}AG)=p\theta_n$, so \eqref{eq:master} becomes
\[
 \begin{aligned}
 \frac1p\Tr(AG)-\frac1p\Tr(AD^{-1})
 &=\theta_n\left(\kappa_n-\frac1N\sum_je_jR_j\right)
       -\frac1N\sum_je_j\epsilon_j\\
 &=-\frac1N\sum_je_j\epsilon_j.
 \end{aligned}
\]
Its expected absolute value is bounded by
\[
 \frac{|z|}{\eta}\left(\eps+\frac{2\tau M}{\eta^2}
               \sup_j\Pp(|\epsilon_j|>\eps)\right)
 \longrightarrow \frac{|z|}{\eta}\eps.
\]
Hence, for $A\in\{I,T/\tau\}$,
\begin{equation}
  s_n(z)-\int\frac{\ESD(T_p)(\dd t)}{\kappa_nt-z}\xrightarrowp0,\qquad
  m_n(z)-\int\frac{t\,\ESD(T_p)(\dd t)}{\kappa_nt-z}\xrightarrowp0 .
  \label{eq:det-equiv}
\end{equation}

\emph{(8) Lower bound on $\Imc m_n$.} With $\Pi_L$ the spectral projection of
$S_T$ on $[0,L]$,
\begin{multline*}
  \Imc m_n=\frac\eta p\sum_l\frac{v_l^{\mathsf T}Tv_l}{|\lambda_l-z|^2}\ge\frac{\eta}{(L+|z|)^2}\frac1p\Tr(T\Pi_L)\\
  \ge\frac{\eta}{(L+|z|)^2}\Big(\frac1p\Tr T-\tau\,\ESD(S_T)((L,\infty))\Big).
\end{multline*}
Here $R_p\le M$, so $p^{-1}\Tr S_T\le\tau M$ and
$\ESD(S_T)((L,\infty))\le\tau M/L$ deterministically.
Since $p^{-1}\Tr T\to\bar t_H:=\int t\,H(\dd t)>0$,
choosing $L$ with $\tau^2M/L<\bar t_H/4$ gives
$\Imc m_n(z)\ge\gamma_0(z)>0$ for all sufficiently large $n$.

\emph{(9) Limit.} Fix a countable dense $\mathcal Z\subset\C_+$; along a
subsequence, almost surely, $\hat\nu_N\Rightarrow\nu$, the tightness bounds
hold, \eqref{eq:det-equiv} and $\kappa_n-\hat\kappa_n\to0$ hold for all
$z\in\mathcal Z$, and $\Imc m_n(z)\ge\gamma_0(z)$ eventually. Take a weak
cluster point $\mu$ of $\ESD(S_T)$ (a probability measure by tightness) and,
by compactness of $\{|m|\le\tau/\eta\}$ and a diagonal argument, a
subsubsequence along which $m_n(z)\to m_\infty(z)$ for all $z\in\mathcal Z$,
with $\Imc m_\infty\ge\gamma_0>0$. Then
$\hat\kappa_n\to\kappa_\infty:=\int r(1+crm_\infty)^{-1}\nu(\dd r)$, because the
integrand $r/(1+c_nrm_n)$ converges uniformly on $[0,M]$ (denominators
$\ge\eta/|z|$) and $\hat\nu_N\Rightarrow\nu$; $\kappa_n\to\kappa_\infty$ as
well, and $\Imc\kappa_\infty\le0$. The integrands $1/(\kappa_nt-z)$ and
$t/(\kappa_nt-z)$ converge uniformly on $[0,\tau]$ (denominators $\ge\eta$),
so \eqref{eq:det-equiv} passes to the limit and gives \eqref{eq:T-RMP} with
$(s_\mu,m_\infty,\kappa_\infty)$. By \cref{lem:T-uniqueness}, $m_\infty(z)$,
hence $s_\mu(z)$, is determined for $z\in\mathcal Z$, hence on $\C_+$, and all
cluster points coincide. Convergence in probability follows as in
\cref{sec:sufficient}. \qed

\subsection{General radial laws}

Let $M$ be a continuity point of $\nu$ and $x^{(M)}=x\1_{\{R_p\le M\}}$. Its
radius $R_p\1_{\{R_p\le M\}}$ converges weakly to
$\nu_M=\nu|_{[0,M]}+\nu((M,\infty))\delta_0$, and
$Q^{(M)}_p(A)=\1_{\{R_p\le M\}}Q_p(A)$, so (RQC) holds and the bounded case
gives $\ESD(S_T^{(M)})\Rightarrow\mu_M:=\mu_{c,\nu_M,H}$ in probability, where
$S_T^{(M)}$ is $S_T$ with the samples in $J_M=\{j:R_j>M\}$ removed. By
\cref{lem:rank-one} and \eqref{eq:JM}, for continuity points $M<M'$,
$\dK(\ESD(S_T^{(M)}),\ESD(S_T^{(M')}))\le2|J_M|/p\le2\nu((M,\infty))/c+o_p(1)$.
Passing to the limit at common continuity points of $F_{\mu_M},F_{\mu_{M'}}$
and using right continuity gives
\[
 \dK(\mu_M,\mu_{M'})\le2\nu((M,\infty))/c.
\]
Thus
$(F_{\mu_M})_M$ is uniformly Cauchy; its uniform limit $F$ is nondecreasing,
right-continuous, vanishes on $(-\infty,0)$ and has
$F(+\infty)\ge1-2\nu((M,\infty))/c$ for every $M$, hence is the distribution
function of a probability measure $\mu$, and $\dK(\mu_M,\mu)\le2\nu((M,\infty))/c$.
With the L\'evy metric $d_{\mathrm L}\le\dK$, which metrizes weak convergence,
\begin{multline*}
  d_{\mathrm L}(\ESD(S_T),\mu)\le\dK\big(\ESD(S_T),\ESD(S_T^{(M)})\big)
  +d_{\mathrm L}\big(\ESD(S_T^{(M)}),\mu_M\big)\\+\dK(\mu_M,\mu)
  \le\frac{4\nu((M,\infty))}{c}+o_p(1),
\end{multline*}
and $M\to\infty$ gives $\ESD(S_T)\Rightarrow\mu$ in probability. (Kolmogorov
distance cannot be used for the middle term: $\mu_M$ may have an atom at $0$.)

It remains to identify $\mu$ through \eqref{eq:T-RMP}. Let $(s_M,m_M,\kappa_M)$
solve the system for $\nu_M$. The lower bound (8) is uniform in the truncation
level: fix a continuity point $M_0$ with $\tau\nu((M_0,\infty))/c<\bar t_H/8$
and $L$ with $\tau^2M_0/L<\bar t_H/8$; for every $M\ge M_0$, removing the
samples in $J_{M_0}$ from $S_T^{(M)}$ leaves a matrix of trace $\le\tau M_0p$,
so $\ESD(S_T^{(M)})((L,\infty))\le\tau M_0/L+|J_{M_0}|/p$ simultaneously for all
$M\ge M_0$, and (8) gives $\Imc m_n^{(M)}(z)\ge\gamma_0(z)$ with probability
$\to1$, uniformly in $M\ge M_0$. Since $m_n^{(M)}(z)\to m_M(z)$ in probability
(every cluster point is the unique solution), $\Imc m_M(z)\ge\gamma_0(z)$ for
all $M\ge M_0$. Take $M_k\to\infty$ with $m_{M_k}(z)\to m_\infty(z)$,
$\Imc m_\infty\ge\gamma_0$. Then
$|\frac r{1+crm_{M_k}}-\frac r{1+crm_\infty}|\le|m_{M_k}-m_\infty|/(c\gamma_0^2)$
uniformly in $r>0$, and
$|\int\frac{r\,\nu_{M_k}(\dd r)}{1+crm_\infty}-\int\frac{r\,\nu(\dd r)}{1+crm_\infty}|\le2\nu((M_k,\infty))/(c\gamma_0)$,
so $\kappa_{M_k}\to\kappa_\infty=\int r(1+crm_\infty)^{-1}\nu(\dd r)$, with the
integral absolutely convergent; the $H$-integrals pass to the limit as before,
$s_{\mu_{M_k}}(z)\to s_\mu(z)$, and $(s_\mu,m_\infty,\kappa_\infty)$ solves
\eqref{eq:T-RMP}. \Cref{lem:T-uniqueness} identifies $m_\infty$ and $s_\mu(z)$
for every $z\in\C_+$, so $\mu=\mu_{c,\nu,H}$. \qed

\begin{remark}[Failure of an untruncated termwise estimate]
\label{rem:truncation-needed}
The bounded-radius argument cannot be applied directly by using (8) to bound
$R_je_j$ through $|c_na_j/(1+c_na_j)|\le1+|z|/\eta$ and $a_j\approx R_jm_n^{(j)}$.
For example, take $T=I$, $\Pp(R>t)=1/t$ for
$t\ge1$, and $x=\sqrt{pR}\,u$ with $u$ uniform on the sphere when $R\le p$ and
$u=e_1$ when $R>p$. Then $R_p=R\Rightarrow\nu$, and (RQC) holds: for any projection $P$ and any
$K<p$, the direction is uniform on $\{R\le K\}$, so
$\Pp(|Q_p(P)|>\eps)\le\Pp(R>K)+2K^2/(\eps^2(p+2))=1/K+2K^2/(\eps^2(p+2))$;
let $p\to\infty$, then $K\to\infty$. With
probability tending to $\frac1{2c^2}e^{-1/c}>0$, exactly two samples $j,j'$
have $R_j,R_{j'}>p$, and both then have direction $e_1$; by symmetry the two
contributions add rather than cancel. Evaluate at $z=-t<0$ and let
$B=\frac1N\sum_{k\ne j,j'}x_kx_k^{\mathsf T}$,
$g=e_1^{\mathsf T}(B+tI)^{-1}e_1>0$ and $\zeta=1/(c_ng)>0$. Two applications of the Sherman--Morrison formula give
\[
  R_je_j=\frac{R_j(\zeta+R_{j'})}{\zeta+R_j+R_{j'}},
\]
Since $\zeta>0$,
\[
 R_je_j\ge\frac{R_jR_{j'}}{R_j+R_{j'}}\ge p/2,
\]
and the same holds for $j'$. Their contribution to $\kappa_n$ is at
least $c_n$. On the other hand, convergence of the ESD to a deterministic
probability law gives
$m_n(-t)=p^{-1}\Tr(B_{\rm full}+tI)^{-1}\ge a_t>0$ with probability
tending to one, for some constant $a_t$; here $B_{\rm full}$ is the
full sample covariance. Their contribution to $\hat\kappa_n$ is
therefore at most $2/(Nc_nm_n(-t))=O_{\Pp}(p^{-1})$.
Two normalized column contributions to $\kappa_n$ are
therefore bounded below by positive constants while the corresponding summands of $\hat\kappa_n$
vanish: the estimate of step~(4), which controlled $\kappa_n-\hat\kappa_n$
term by term through $R_j\le M$, has no analogue for the untruncated matrix,
and these two samples must be removed as a rank-two perturbation, which is
what the truncation does.
\end{remark}

\begin{proposition}[free multiplicative convolution]
\label{prop:free-mult}
Under the hypotheses of \cref{thm:population}, $\mu_{c,\nu,H}=H\boxtimes\mu_{c,\nu}$.
Moreover, let $\mathbf G$ be a $p\times N$ standard Gaussian matrix,
$D_R=\diag(R_1,\dots,R_N)$ with $R_j$ i.i.d.\ of law $\nu$ independent of
$\mathbf G$, and $\underline W=\frac1ND_R^{1/2}\mathbf G^{\mathsf T}\mathbf GD_R^{1/2}$
the $N\times N$ companion of the Gaussian surrogate; then
\begin{equation}
  c\,\mu_{c,\nu}+(1-c)\delta_0=\nu\boxtimes\mathrm D_c\MP_{1/c},
  \label{eq:companion}
\end{equation}
where $\mathrm D_c\rho(A)=\rho(A/c)$ is the dilation by $c$ and $\MP_{1/c}$ the
MP law of ratio $1/c$ and mean one.
\end{proposition}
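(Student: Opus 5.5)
The plan is to obtain both identities from asymptotic freeness of a deterministic matrix and an independent orthogonally invariant one, combined with \cref{cor:replacement} and \cref{thm:population}, which let us work with a Gaussian surrogate.

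\emph{Population identity.} Since \cref{thm:population} identifies $\mu_{c,\nu,H}$ by an equation that depends only on $(c,\nu,H)$, we may compute the limit in any model satisfying its hypotheses. We take $x_j=\sqrt{R_j}\,O g_j$, where the $g_j$ are standard Gaussian, the radii $R_j\sim\nu$ are independent, and $O$ is an independent Haar orthogonal matrix. This model satisfies (RQC) by the computation in \cref{cor:replacement}, and rotating every sample by $O$ does not change the law. Then
\[
S_T=T^{1/2}O\,\bigl(\tfrac1N\mathbf GD_R\mathbf G^{\mathsf T}\bigr)\,O^{\mathsf T}T^{1/2},
\]
which has the same nonzero spectrum as $T\cdot O W O^{\mathsf T}$ with $W=\frac1N\mathbf GD_R\mathbf G^{\mathsf T}$. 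By \cref{thm:sufficient}, $\ESD(W)\Rightarrow\mu_{c,\nu}$. Moreover $T$ is deterministic with bounded norm, and $OWO^{\mathsf T}$ is Haar-conjugated and independent of $T$. Asymptotic freeness for Haar-orthogonal conjugation (Voiculescu, Hiai--Petz) then gives $\ESD(T^{1/2}OWO^{\mathsf T}T^{1/2})\Rightarrow H\boxtimes\mu_{c,\nu}$. Two points need care here. First, $\mu_{c,\nu}$ can have unbounded support without moments, so the freeness result must be applied in its version for weakly convergent spectral distributions with one factor bounded. For that we use that $\boxtimes$ on $[0,\infty)$ is weakly continuous, together with a truncation of $W$ via the radial cutoff of Step~1 in \cref{sec:sufficient}, which is a rank perturbation of small relative size. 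Second, when $H=\delta_0$ is excluded there is nothing further to check, and the convention $H\boxtimes\delta_0=\delta_0$ covers the degenerate case $\nu=\delta_0$. Uniqueness of limits then gives $\mu_{c,\nu,H}=H\boxtimes\mu_{c,\nu}$.

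\emph{Companion identity.} The $N\times N$ matrix $\underline W=\frac1ND_R^{1/2}\mathbf G^{\mathsf T}\mathbf GD_R^{1/2}$ has the same nonzero eigenvalues as the $p\times p$ matrix $W$. Matching the zero eigenvalues gives
\[
\ESD(\underline W)=\tfrac pN\ESD(W)+\bigl(1-\tfrac pN\bigr)\delta_0,
\]
and its limit is $c\mu_{c,\nu}+(1-c)\delta_0$. This expression is a probability measure because $\mu_{c,\nu}(\{0\})\ge1-1/c$ by \cref{prop:limit-law}. On the other hand, $\underline W$ has the same nonzero spectrum as $D_R\cdot\frac1N\mathbf G^{\mathsf T}\mathbf G$. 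Here $\frac1N\mathbf G^{\mathsf T}\mathbf G=c\cdot\frac1p\mathbf G^{\mathsf T}\mathbf G$ is orthogonally invariant and independent of the diagonal $D_R$, with ESD tending to $\mathrm D_c\MP_{1/c}$. Since $\ESD(D_R)\Rightarrow\nu$ almost surely, freeness gives the limit $\nu\boxtimes\mathrm D_c\MP_{1/c}$. Again, the unbounded $D_R$ is handled by truncating the radii at $M$, which is a rank perturbation of order $N\nu((M,\infty))$, and then passing $M\to\infty$ using weak continuity of $\boxtimes$.

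\emph{Main obstacle.} The delicate step is justifying asymptotic freeness and the identification of the limit without moment assumptions, when both factors may have unbounded support. I would first truncate both factors to bounded operators, where the standard results of \cite{HiaiPetz2000} apply directly. I would then remove the truncation using the Kolmogorov-distance rank bounds of \cref{lem:rank-one} together with continuity of $\boxtimes$ under weak convergence on $[0,\infty)$ (Bercovici--Voiculescu), exactly mirroring the general-radial-law step in the proof of \cref{thm:population}.
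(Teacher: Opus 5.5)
Your proposal follows essentially the same route as the paper. You evaluate $\mu_{c,\nu,H}$ on the Gaussian surrogate, use Haar-orthogonal invariance of $W$ and asymptotic freeness for bounded radii, remove the radial truncation via the $\dK$ bounds and Bercovici--Voiculescu continuity (valid since both limiting factors are $\neq\delta_0$ when $\nu\neq\delta_0$), and treat the companion identity by zero-eigenvalue bookkeeping plus freeness of $D_R$ and the Wishart factor.

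Two details you leave implicit are supplied by the paper. First, a Gaussian operator-norm bound is needed to make the radially truncated $W$ bounded. Second, the freeness theorem for Haar orthogonal matrices (Collins--\'Sniady) is applied conditionally on the spectra, which needs almost sure convergence of $\ESD(W)$; the paper gets it by choosing dimensions with $\sum_n e^{-ap_n}<\infty$ and invoking the last clause of \cref{thm:sufficient}, and a subsequence argument would also work.
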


\begin{proof}
Since $\mu_{c,\nu,H}$ depends only on $(c,\nu,H)$, evaluate it on the Gaussian
surrogate $\tilde x=\sqrt R\,g$, for which $S_T=T^{1/2}W_\nu T^{1/2}$ with
$W_\nu=\frac1N\mathbf GD_R\mathbf G^{\mathsf T}$. For bounded radii $R\le M$,
$W_\nu\preceq M\cdot\frac1N\mathbf G\mathbf G^{\mathsf T}$, so
$\limsup_n\|W_\nu\|\le M(1+\sqrt c)^2$ almost surely, by the Gaussian norm bounds
\cite[Theorem~II.13]{DavidsonSzarek2001} and Borel--Cantelli, and
$\ESD(W_\nu)\Rightarrow\mu_{c,\nu}$ almost surely by the last clause of
\cref{thm:sufficient}: since $\mu_{c,\nu,H}$ and $\mu_{c,\nu}$ depend only on
$(c,\nu,H)$, we are free to evaluate them along dimensions $p_n=n$,
$N_n=\lceil n/c\rceil$ and any deterministic $T_n\succeq0$ with
$\|T_n\|\le\tau$ and $\ESD(T_n)\Rightarrow H$, for which
$\sum_ne^{-ap_n}<\infty$ for every $a>0$. The law of $W_\nu$ is invariant under
conjugation by a fixed orthogonal matrix, so if $O$ is Haar distributed and
independent of $W_\nu=U\Lambda U^{\mathsf T}$ then
$W_\nu\overset d=(OU)\Lambda(OU)^{\mathsf T}$ with $OU$ Haar and independent of
$\Lambda$: $W_\nu$ is a Haar conjugation of a spectrum independent of $T$.
Apply \cite[Theorem~5.2]{CollinsSniady2006} conditionally on the spectra.
If needed, pass to a subsequence on which all normalized mixed traces of
the bounded pair $(T_p,\Lambda_p)$ converge; a diagonal extraction supplies
such a subsequence. The matrices are real symmetric, so the joint limit
also includes their transposes, as required by that theorem.
Freeness with the independent Haar matrix implies that the limiting
moments of $T_pO_p\Lambda_pO_p^T$ depend only on the two marginal limits.
Every extracted limit is consequently $H\boxtimes\mu_{c,\nu}$.
Since $\spec(T^{1/2}W_\nu T^{1/2})=\spec(W_\nu T)$ and
both limits are compactly supported, $\mu_{c,\nu,H}=H\boxtimes\mu_{c,\nu}$ for
compactly supported $\nu$. For general
$\nu$, $\dK(\mu_{c,\nu_M},\mu_{c,\nu})\le2\nu((M,\infty))/c$ and
$\dK(\mu_{c,\nu_M,H},\mu_{c,\nu,H})\le2\nu((M,\infty))/c$ by the truncation
argument above. Corollary~6.7 of \cite{BercoviciVoiculescu1993} gives joint
weak continuity of $\boxtimes$ on probability measures on $[0,\infty)$
when both limiting factors differ from $\delta_0$. Here $H\ne\delta_0$ by
hypothesis, and $\mu_{c,\nu}\ne\delta_0$ when $\nu\ne\delta_0$ by
\cref{prop:limit-law}. Thus the identity passes to the limit; for
$\nu=\delta_0$ both sides are $\delta_0$. For \eqref{eq:companion}: $p\,\ESD(W_\nu)$ and $N\,\ESD(\underline W)$
differ by $(p-N)\delta_0$, so $\ESD(\underline W)=c_n\ESD(W_\nu)+(1-c_n)\delta_0$;
and $\underline W=D_R^{1/2}(\frac1N\mathbf G^{\mathsf T}\mathbf G)D_R^{1/2}$ with
$\frac1N\mathbf G^{\mathsf T}\mathbf G=c_n\cdot\frac1p\mathbf G^{\mathsf T}\mathbf G$,
whose ESD converges to $\mathrm D_c\MP_{1/c}$; asymptotic freeness of $D_R$ and
the orthogonally invariant Wishart factor, applied conditionally on $D_R$
(which is independent of $\mathbf G$, has $\|D_R\|\le M$ and
$\ESD(D_R)\Rightarrow\nu$ almost surely), gives the right side. The left side
of \eqref{eq:companion} is a probability measure for every $c>0$: by
\cref{prop:limit-law}, $\mu_{c,\nu}(\{0\})\ge1-1/c$, so for $c>1$ the atom of
$c\,\mu_{c,\nu}$ at $0$ absorbs the negative mass $1-c$, while the total mass
is $c+(1-c)=1$. For unbounded $\nu$, apply the identity to $\nu_M$ and let
$M\to\infty$: $\dK(\mu_{c,\nu_M},\mu_{c,\nu})\le2\nu((M,\infty))/c$ and
$\nu_M\Rightarrow\nu$, so both sides converge weakly. For the right side,
Corollary~6.7 applies when $\nu\ne\delta_0$, since the other limiting
factor is the nonzero dilation of $\MP_{1/c}$; for $\nu=\delta_0$ both
sides are $\delta_0$.
\end{proof}

\section{An elementary Gram bound with inverse-aspect error}
\label{app:multi-aspect}

The following Gram inequality bounds the energy deficit directly, with
error $1/d$ and no determinants.

\begin{lemma}[A Gram-matrix bound]\label{lem:gram-radius-bound}
Let $Y=(y_1,\ldots,y_N)\in\R^{q\times N}$ be arbitrary, put
$S=YY^T/N$, $d_q=q/N$, and $r_j=\|y_j\|^2/q$.
Then
\begin{equation}\label{eq:gram-radius-bound}
 \frac1N\sum_{j=1}^N\psi(r_j)
 \ge d_q\int\psi(\lambda/d_q)\,\ESD(S)(\dd\lambda).
\end{equation}
\end{lemma}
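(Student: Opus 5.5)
The plan is to rewrite the right side of \eqref{eq:gram-radius-bound} as an average of $N$ column quadratic forms, and then compare each form with its rank-one counterpart by operator monotonicity.

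\emph{Step 1: rewrite the spectral side as a trace.} Let $\lambda_1,\dots,\lambda_q$ be the eigenvalues of $S$, zeros included; these zeros contribute nothing since $\psi(0)=0$. Since $\ESD(S)$ puts mass $1/q$ on each eigenvalue and $d_q/q=1/N$,
\[
 d_q\int\psi(\lambda/d_q)\,\ESD(S)(\dd\lambda)
 =\frac1N\sum_{i=1}^q\frac{\lambda_i}{d_q+\lambda_i}.
\]
The eigenvalues $\lambda_i/d_q$ are those of $YY^T/q$. Hence the right side equals
\[
 \frac1N\Tr\bigl(YY^T(qI+YY^T)^{-1}\bigr).
\]
Cyclicity of the trace then turns this into
\[
 \frac1N\sum_{j=1}^N y_j^T(qI+YY^T)^{-1}y_j .
\]

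\emph{Step 2: termwise comparison.} For each $j$ we have $YY^T\succeq y_jy_j^T$, so $qI+YY^T\succeq qI+y_jy_j^T\succ0$. Because matrix inversion is order-reversing on positive definite matrices,
\[
 y_j^T(qI+YY^T)^{-1}y_j\le y_j^T(qI+y_jy_j^T)^{-1}y_j .
\]

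\emph{Step 3: evaluate the rank-one form.} By Sherman--Morrison (or by restricting to the span of $y_j$),
\[
 y^T(qI+yy^T)^{-1}y=\frac{\|y\|^2/q}{1+\|y\|^2/q}=\psi(r_j).
\]
This also holds trivially when $y_j=0$. Summing over $j$ and dividing by $N$ gives \eqref{eq:gram-radius-bound}.

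There is no real obstacle in this argument. The only point requiring care is the bookkeeping in Step 1: the factors $d_q$, $q$ and $N$ must match, and the zero eigenvalues must be included in the ESD normalization. The key idea is to express the spectral functional as a sum of column quadratic forms, so that each column can be compared separately.
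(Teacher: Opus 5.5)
Your proof is correct, and it takes a different route from the paper. The paper works with the $N\times N$ Gram matrix $Y^TY/q$. Its diagonal is exactly $(r_j)$, and its nonzero eigenvalues are those of $S/d_q$. Because the diagonal of a symmetric matrix is majorized by its spectrum, the concave Schur--Horn inequality gives $\sum_j\psi(r_j)\ge\Tr\psi(Y^TY/q)$ in one line, and $\psi(0)=0$ absorbs the extra zero eigenvalues. You instead stay in the $q\times q$ picture and use the resolvent form of $\psi$ to write the spectral side as $\frac1N\sum_j y_j^T(qI+YY^T)^{-1}y_j$. You then compare each column with its rank-one resolvent through the L\"owner order. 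Your argument is fully elementary, needing only order reversal of inversion and Sherman--Morrison, and it matches the leave-one-out resolvent calculus used elsewhere in the paper. However, it exploits the specific form $\psi(t)=t/(1+t)$. It extends verbatim to $t/(s+t)$ for each $s>0$ and to positive mixtures of these. The majorization argument is broader: it gives the same inequality for every concave $f$ with $f(0)=0$. Since \cref{thm:multi-aspect-inverse} uses only $\psi$, either proof suffices.
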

\begin{proof}
The diagonal of $Y^TY/q$ is $(r_j)$, so the concave Schur--Horn
inequality gives $\sum_j\psi(r_j)\ge\Tr\psi(Y^TY/q)$.
Its nonzero eigenvalues are those of $S/d_q$; use $\psi(0)=0$ and divide by $N$.
\end{proof}

\begin{theorem}[Moment-free converse at unbounded aspect ratios]
\label{thm:multi-aspect-inverse}
Assume $R_p\Rightarrow\nu$, where $\nu$ is a probability measure on $[0,\infty)$,
fix $\alpha\in(0,1)$, and put $q_p=\lfloor\alpha p\rfloor$.
Let $d_k\to\infty$. For each $k$, suppose there are sample sizes
$N_{p,k}$ with $q_p/N_{p,k}\to d_k$ such that, for every deterministic
rank-$q_p$ sequence $P_p$, the projected covariance formed from $N_{p,k}$
independent copies of $x_p$ has ESD converging in probability to
$\mu_{d_k,\nu}$. Then (RQC) holds, and conversely RQC implies these
spectral limits. No coupling between the ensembles for different $k$ is required.
\end{theorem}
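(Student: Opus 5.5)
The plan is to bypass determinants entirely and use the Gram inequality of \cref{lem:gram-radius-bound} to bound the one-sided defect $\Delta_\psi(x_p,q_p)$ of \cref{thm:bounded-energy}; \cref{cor:one-transform} then converts $\Delta_\psi\to0$ into (RQC). The converse direction is immediate. Under (RQC) and $R_p\Rightarrow\nu$, for each fixed $k$ the projected samples $C_{P_p}x_j$ carry the parent radii $R_j$. The argument in the proof of \cref{thm:necessity}, (i)$\Rightarrow$(ii), together with \cref{rem:general-radius} at aspect ratio $q_p/N_{p,k}\to d_k$, gives $\ESD\Rightarrow\mu_{d_k,\nu}$. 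Nothing in that argument couples different $k$.

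For the main direction, fix $k$ and a deterministic rank-$q_p$ sequence $P_p$. Apply \cref{lem:gram-radius-bound} to the projected samples $y_j=C_{P_p}x_j$, $j\le N_{p,k}$, whose normalized radii are copies of $\rho_{P_p}=\|P_px_p\|^2/q_p$. Taking expectations and using exchangeability gives
\[
 \E\psi(\rho_{P_p})\ge \E\, d_{q}\int\psi(\lambda/d_q)\,\ESD(S_{P_p})(\dd\lambda),\qquad d_q=q_p/N_{p,k}.
\]
The integrand is bounded by $d_q$ and, since $d_q\to d_k$, converges uniformly on $[0,\infty)$ to the bounded continuous function $d_k\psi(\lambda/d_k)$. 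The hypothesis (convergence in probability, hence of expectations of bounded tests) therefore gives
\[
 \liminf_p\E\psi(\rho_{P_p})\ge \iota_k:=d_k\int\psi(\lambda/d_k)\,\mu_{d_k,\nu}(\dd\lambda).
\]
By the sequence-selection argument of \cref{lem:uniform}(b), this lower bound holds uniformly over rank-$q_p$ projections. Since $\E\psi(R_p)\to\int\psi\,\dd\nu$, we obtain
\[
 \limsup_p\Delta_\psi(x_p,q_p)\le\int\psi\,\dd\nu-\iota_k\qquad\text{for every }k.
\]

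It remains to show $\iota_k\to\int\psi\,\dd\nu$ as $d_k\to\infty$; this is the step I expect to be the main obstacle. Because $\psi(0)=0$, $\iota_k$ is the $\psi$-integral of the companion law $d_k\mu_{d_k,\nu}(\cdot\, d_k)+(1-d_k)\delta_0$, which is the pushforward by $\lambda\mapsto\lambda/d_k$ together with the atom correction. By \eqref{eq:companion} in \cref{prop:free-mult} with $c=d_k$, after the matching rescaling, this companion law is $\nu\boxtimes\MP_{1/d_k}$, with $\MP_{1/d_k}$ of mean one. (The intuition is that for $q\gg N$ the directions are almost orthogonal, so $Y^TY/q\approx\diag(r_j)$.) As $d_k\to\infty$, $\MP_{1/d_k}\Rightarrow\delta_1$. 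I would then invoke joint weak continuity of $\boxtimes$ on $[0,\infty)$ (Corollary~6.7 of \cite{BercoviciVoiculescu1993}, valid since $\delta_1\ne\delta_0$; the case $\nu=\delta_0$ is trivial) to get $\nu\boxtimes\MP_{1/d_k}\Rightarrow\nu$. Since $\psi$ is bounded and continuous, $\iota_k\to\int\psi\,\dd\nu$. The care needed here is in matching the normalizations in \eqref{eq:companion} (the dilation $\mathrm D_c$ versus the division by $d_q$ in the lemma); if this causes trouble, a fallback is a direct check for bounded $\nu$ via the Gaussian surrogate, using $\|\mathbf G^T\mathbf G/q-I_N\|\to0$ in operator norm for $N/q\to1/d_k$ small, followed by truncation $\nu_M$ and the bound $\dK\le2\nu((M,\infty))/c$.

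Letting $k\to\infty$ gives $\Delta_\psi(x_p,q_p)\to0$. Since $R_p$ is tight, \cref{cor:one-transform} (v)$\Rightarrow$(i) then yields (RQC), with no moment assumption on $\nu$.
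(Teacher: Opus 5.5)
Your proof is correct, and apart from one step it is the paper's proof. The shared skeleton is: the Gram bound of \cref{lem:gram-radius-bound}, then expectation and sequence selection to get $\liminf_p\inf_P\E\psi(\rho_P)\ge\iota_k$. Hence $\limsup_p\Delta_\psi(x_p,q_p)\le\int\psi\,\dd\nu-\iota_k$, and RQC follows from \cref{thm:bounded-energy} (equivalently \cref{cor:one-transform}) under tightness, with the converse from \cref{thm:sufficient,rem:general-radius}.

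The difference is the step you flagged, $\iota_k\to\int\psi\,\dd\nu$. The paper does it by a scalar computation. Since $\psi(\lambda/d)=1-d/(d+\lambda)$, one has $\iota_k=d(1-a_d)$ with $a_d=d\,s_{d,\nu}(-d)$. The radial MP equation at $z=-d$ gives $1=a_d+d^{-1}\int\psi(a_dr)\,\nu(\dd r)$, so $\iota_k=\int\psi(a_dr)\,\nu(\dd r)$ with $1-1/d\le a_d\le1$. Then $0\le\psi(r)-\psi(a_dr)\le1-a_d\le1/d$ yields the explicit rate $\limsup_p\Delta_\psi(x_p,q_p)\le1/d_k$.

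Your route instead identifies $\iota_k$ with $\int\psi\,\dd(\nu\boxtimes\MP_{1/d_k})$ through \eqref{eq:companion}, and concludes by weak continuity of $\boxtimes$. Your normalization worry resolves correctly: the pushforward of $\nu\boxtimes\mathrm D_d\MP_{1/d}$ under $\lambda\mapsto\lambda/d$ is $\nu\boxtimes\MP_{1/d}$, and the atom correction is harmless because $\psi(0)=0$. This is valid and conceptually transparent; $a_d\to1$ is the scalar shadow of $\MP_{1/d}\to\delta_1$.

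What your route costs is self-containment and a rate. It imports the quoted asymptotic-freeness and $\boxtimes$-continuity results behind \cref{prop:free-mult}, which the paper keeps out of its main line, and it is only qualitative. The paper's one-line fixed-point computation needs only the radial MP equation at a negative spectral parameter, which is established in the paper. It also gives the quantitative $1/d$ deficit that the subsequent remark on the inverse-aspect error refers to.
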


\begin{proof}
Fix $d=d_k>1$. Apply \cref{lem:gram-radius-bound} to the projected
columns and take expectations. Boundedness of $\psi$, weak spectral
convergence, and sequence selection imply, uniformly in $P$,
\[
 \liminf_p\E\psi(\|Px_p\|^2/q_p)
 \ge d\int\psi(\lambda/d)\,\mu_{d,\nu}(\dd\lambda).
\]
Put $a_d=d\,s_{d,\nu}(-d)$. The fixed-point equation gives
\[
 1=a_d+\frac1d\int\psi(a_dr)\,\nu(\dd r),
 \qquad 1-1/d\le a_d\le1,
\]
and
\[
 d\int\psi(\lambda/d)\,\mu_{d,\nu}(\dd\lambda)
 =d(1-a_d)=\int\psi(a_dr)\,\nu(\dd r).
\]
Since
\[
 0\le\psi(r)-\psi(a_dr)
 =\frac{(1-a_d)r}{(1+r)(1+a_dr)}\le\frac1d,
\]
and $\E\psi(R_p)\to\int\psi(r)\nu(\dd r)$, we obtain
\begin{equation}\label{eq:multi-aspect-deficit}
 \limsup_p\Delta_\psi(x_p,q_p)\le\frac1{d_k}.
\end{equation}
Letting $k\to\infty$ forces $\Delta_\psi(x_p,q_p)\to0$.
Tightness and \cref{thm:bounded-energy} give RQC. The converse is
\cref{thm:sufficient,rem:general-radius} for each fixed $k$.
\end{proof}

The $1/d$ error cannot be removed by assuming that a full spectral limit
determines its independently thinned limit. At $p/N\to1$, compare
$x=D_p^{1/2}g$ with $x=\sqrt R\,g$, where $g$ is standard Gaussian,
$D_p\ge0$ is uniformly bounded with $\ESD(D_p)\Rightarrow\eta$
nondegenerate, and
$R\sim\eta$ is independent of $g$. Gaussian transpose duality gives
the same original law $\eta\boxtimes\MP_1$.
Retaining columns with probability $\vartheta$ and normalization $1/N$
gives respective second limiting moments
\[
 \vartheta^2m_2+\vartheta m_1^2,
 \qquad \vartheta^2m_1^2+\vartheta m_2,
 \qquad m_j=\int r^j\eta(\dd r).
\]
These differ for $0<\vartheta<1$. The counterexample concerns full ESDs,
not matched-parent projected spectra or thinning under freeness
\cite{Mukherjee2022}.

\section{An obstruction to bounded spectral concavity}
\label{app:bounded-obstruction}

The logarithmic tail issue cannot be removed by an arbitrary bounded scalar
substitute while keeping a universal finite-sample concavity argument.

\begin{proposition}\label{prop:bounded-spectral-obstruction}
Let $f:[0,\infty)\to\R$ be continuous and nonconstant, with a finite limit at
infinity. For a probability law $\sigma$ on $\R^2$, put
\[
 \mathcal F_f(\sigma)=\frac12\E\Tr
 f\left(\frac{y_1y_1^T+y_2y_2^T}{2}\right),
 \qquad y_1,y_2\overset{\mathrm{iid}}\sim\sigma.
\]
There exist compactly supported laws $\sigma_+,\sigma_-$ with the same
two-point distribution of $r=\|y\|^2/2$, satisfying
$\E[y\mid r]=0$ and $\E[yy^T\mid r]=rI_2$, such that
$\sigma_0=(\sigma_++\sigma_-)/2$ is spherical and
\[
 \mathcal F_f(\sigma_+)=\mathcal F_f(\sigma_-)
 >\mathcal F_f(\sigma_0).
\]
Their separate angular symmetrizations are both $\sigma_0$.
Thus column-law concavity and universal maximization by angular
symmetrization both fail, even with a fixed radial marginal and exact
conditional isotropy.
\end{proposition}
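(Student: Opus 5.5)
The construction is a two-dimensional angular perturbation of the uniform law, with $N=2$ and $q=2$. Fix a two-point radial law for $r$ (possibly degenerate at one point) with values $a,b>0$, to be chosen later. Write $y=\sqrt{2r}\,(\cos\theta,\sin\theta)$, with $\theta$ drawn independently of $r$ from the density
\[
 \frac1{2\pi}\bigl(1\pm\varepsilon\cos(k\theta)\bigr),\qquad 0<\varepsilon<1,
\]
where the sign $\pm$ defines $\sigma_\pm$ and $k\ge3$ is an integer.

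\emph{Constraints.} Since $E[y\mid r]$ involves only the Fourier modes $\pm1$ of the angular law, and $E[yy^T\mid r]-rI_2$ only the modes $\pm2$, a perturbation in mode $k\ge3$ gives $E[y\mid r]=0$ and $E[yy^T\mid r]=rI_2$ exactly. The mixture $\sigma_0=(\sigma_++\sigma_-)/2$ has uniform angle independent of $r$, so it is spherical. Since the two densities are obtained from each other by a rotation or by averaging over rotations, each angular symmetrization of $\sigma_\pm$ is $\sigma_0$. Compact support is automatic.

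\emph{The functional.} For columns with radii $r_1,r_2$ and angle difference $\psi=\theta_1-\theta_2$, the matrix $(y_1y_1^T+y_2y_2^T)/2$ has trace $r_1+r_2$ and determinant $r_1r_2\sin^2\psi$. Its eigenvalues, and hence $\operatorname{tr}f$, are therefore a function $g_{r_1,r_2}(\psi)$ that is even and $\pi$-periodic; only the cosine modes $\cos(2j\psi)$ occur. For i.i.d.\ angles, the density of $\psi$ is the autoconvolution of the angular density, namely
\[
 \frac1{2\pi}\Bigl(1+\frac{\varepsilon^2}2\cos(k\psi)\Bigr).
\]
This is the same for both signs, which gives $\mathcal F_f(\sigma_+)=\mathcal F_f(\sigma_-)$. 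Comparing with $\sigma_0$, whose $\psi$ is uniform, and taking $k=2j$ with $j\ge2$,
\[
 \mathcal F_f(\sigma_\pm)-\mathcal F_f(\sigma_0)
 =\frac{\varepsilon^2}{4}\,\E_{r_1,r_2}\,\widehat g_{r_1,r_2}(2j),
 \qquad
 \widehat g(m)=\frac1{2\pi}\int_0^{2\pi}g(\psi)\cos(m\psi)\,\dd\psi.
\]
The whole problem therefore reduces to choosing the radial law and $j\ge2$ so that this averaged Fourier coefficient is strictly positive.

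\emph{Main obstacle: the sign.} The sign of this coefficient is the difficulty. The Fourier coefficients of the $\psi$-law are squared moduli of the angular characteristic function, hence always nonnegative, so no choice of angular law can flip the sign. It must come from $f$ through $g$. My first attempt is a single radius $r_1=r_2=\rho$, for which the eigenvalues are $\rho(1\pm|\cos\psi|)$ and
\[
 g(\psi)=f\bigl(\rho(1+|\cos\psi|)\bigr)+f\bigl(\rho(1-|\cos\psi|)\bigr),
\]
and I would then exploit the two scaling regimes.
\begin{itemize}[label=--]
\item As $\rho\to\infty$, the finite limit $f(\infty)$ makes $g-2f(\infty)$ a spike of width $\rho^{-1/2}$ near $\psi\in\pi\mathbb Z$, with profile $f(\rho\psi^2/2)-f(\infty)$. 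All its low Fourier coefficients are then asymptotically equal to a multiple of $\rho^{-1/2}\int_0^\infty\bigl(f(t)-f(\infty)\bigr)t^{-1/2}\,\dd t$. If this integral is positive, $j=2$ and large $\rho$ finish the proof.
\item As $\rho\to0$, a Taylor expansion of $f$ near $0$ supplies the coefficients instead.
\end{itemize}
If neither regime gives a positive coefficient, I would use genuinely two-point radii $a\ne b$. The mixed pairs contribute $g_{a,b}$, whose spike profile involves $f$ at the two scales $a$ and $b$, and I would tune the ratio $a/b$ together with $j$. The key fact needed is that a nonconstant continuous $f$ with a finite limit cannot make every coefficient $\widehat g_{r_1,r_2}(2j)$, for all $j\ge2$ and all radii, nonpositive, since that would force $g_{\rho,\rho}$ to be affine in $\cos2\psi$ for every $\rho$. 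I expect this nondegeneracy argument, which rules out such rigidity for nonconstant $f$, to be the step requiring real work.
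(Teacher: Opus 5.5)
\textbf{Gap: one angular law for all radii cannot produce the inequality.} Your Fourier reduction is computed correctly. The trouble is that the condition it reduces to is false for a large class of admissible $f$, so the plan cannot be completed.

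You draw the angle independently of the radius, with one common density at every radius. The difference is then $\frac{\varepsilon^2}{4}\E\,\widehat g_{r_1,r_2}(2j)$, and, as you note, the angular law never supplies a sign. Now take $f(\lambda)=1-e^{-\lambda}$. With $X=r_1e_0e_0^T+r_2e_\psi e_\psi^T$,
\[
 \Tr e^{-X}=2e^{-(r_1+r_2)/2}\cosh\sqrt{\frac{r_1^2+r_2^2}{4}+\frac{r_1r_2}{2}\cos2\psi}.
\]
Since $\cosh\sqrt z=\sum_k z^k/(2k)!$ has nonnegative coefficients, this is a positive-definite function of $\psi$. Hence $\widehat g_{r_1,r_2}(m)\le0$ for every $m\ne0$ and all $r_1,r_2\ge0$.

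The same holds for $f(\lambda)=\lambda/(t+\lambda)$, because $\Tr(t+X)^{-1}$ has the form $C/(A-B\cos2\psi)$ with $A>B\ge0$. By the Bernstein representation it holds for every bounded Bernstein function. These are exactly the cases the paper says the obstruction covers. For such $f$, every member of your family satisfies $\mathcal F_f(\sigma_\pm)\le\mathcal F_f(\sigma_0)$, whatever the radial law, the ratio $a/b$, or $j$.

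So the "key fact" you plan to prove is false. Your argument for it conflates nonpositive coefficients with vanishing ones; only vanishing would force $g_{\rho,\rho}$ to be affine in $\cos2\psi$. Separately, your large-$\rho$ asymptotics would need a rate for $f(t)\to f(\infty)$ and convergence of $\int_0^\infty(f(t)-f(\infty))t^{-1/2}\dd t$. The small-$\rho$ Taylor expansion needs differentiability. None of these is assumed.

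\textbf{Missing idea: let the angular perturbation depend on the radius.} The paper uses densities $1\pm u\cos(2m\phi)$ at $r=a$ and $1\pm v\cos(2m\phi)$ at $r=b$, with $m\ge2$. These still contain only the modes $0,\pm2m$, so conditional centering, conditional isotropy and the symmetrizations are unaffected.

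With equal radial weights, the difference becomes the quadratic form $\frac1{16}(u,v)M_b(u,v)^T$. Here $M_b$ is the $2\times2$ matrix of your coefficients $\widehat g_{a,a}(2m)$, $\widehat g_{a,b}(2m)$, $\widehat g_{b,b}(2m)$. Your construction only evaluates this form at $u=v$. Positivity for some $(u,v)$ requires only that $M_b$ be indefinite, and that is what the paper shows:
\begin{itemize}
\item As $b\to\infty$, $g_{b,b}\to2f(\infty)$ in $L^1$, so the $(b,b)$ entry tends to zero.
\item Also $g_{a,b}\to f(\infty)+f(a\sin^2\phi)$, so the off-diagonal entry tends to the $2m$-th coefficient $B$ of $f(a\sin^2\phi)$.
\item Choose $a$ with $f$ not affine on $[0,a]$. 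Fourier uniqueness then gives $B\ne0$ for some $m\ge2$, hence $\det M_b\to-B^2<0$.
\end{itemize}
The right nondegeneracy statement is therefore $B\ne0$, not a positive coefficient. When $B<0$, as for the Bernstein functions above, the positive direction has $u$ and $v$ of opposite signs. No radius-independent angular law can produce that.
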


\begin{proof}
Write $e_\phi=(\cos\phi,\sin\phi)^T$, $\ell=f(\infty)$, and
\[
 K_{a,b}(\phi)=\Tr f(ae_0e_0^T+be_\phi e_\phi^T),\qquad a,b>0.
\]
This kernel is even and $\pi$-periodic. Its two eigenvalues have sum $a+b$
and product $ab\sin^2\phi$. For fixed $a$ and $b\to\infty$, one tends to
infinity and the other to $a\sin^2\phi$. For the pair $(b,b)$, both tend to
infinity except on the null set $\{\phi=0,\pi\}$ modulo $2\pi$.
Since $f$ is bounded, dominated convergence gives
\[
 K_{a,b}\longrightarrow\ell+f(a\sin^2\phi),\qquad
 K_{b,b}\longrightarrow2\ell
 \quad\text{in }L^1([0,2\pi]).
\]
Choose $a$ such that $f$ is not affine on $[0,a]$. Such an $a$ exists
because a bounded affine function on $[0,\infty)$ is constant.
Fourier uniqueness gives an integer $m\ge2$ with
\[
 B=\frac1{2\pi}\int_0^{2\pi}f(a\sin^2\phi)\cos(2m\phi)\,\dd\phi\ne0:
\]
otherwise the continuous even $\pi$-periodic function would be a linear
combination of $1$ and $\cos2\phi$, making $f$ affine on $[0,a]$.
Put
\[
 A_m(r,s)=\frac1{2\pi}\int_0^{2\pi}
 K_{r,s}(\phi)\cos(2m\phi)\,\dd\phi,\qquad
 M_b=\begin{pmatrix}A_m(a,a)&A_m(a,b)\\
                    A_m(a,b)&A_m(b,b)\end{pmatrix}.
\]
As $b\to\infty$, the determinant tends to $-B^2<0$. For a sufficiently
large finite $b$, choose real $u,v$ with $\max(|u|,|v|)\le1$ and
$(u,v)M_b(u,v)^T>0$.

For $\sigma_\pm$, choose $r=a,b$ with equal probabilities and set
$y=\sqrt{2r}\,e_\phi$. Relative to uniform circle measure, use angular
densities $1\pm u\cos(2m\phi)$ at $r=a$ and $1\pm v\cos(2m\phi)$ at $r=b$.
These are nonnegative probability densities. As $m\ge2$, their first and
second angular moments agree with uniform measure, proving the stated
conditional mean and covariance identities. Their average, as well as either
angular symmetrization, is the spherical law $\sigma_0$.

Rotational invariance cancels the linear terms in the two angular densities.
The quadratic term uses
\[
 \iint K_{r,s}(\phi-\vartheta)\cos(2m\phi)\cos(2m\vartheta)
 \frac{\dd\phi\,\dd\vartheta}{(2\pi)^2}=\frac12A_m(r,s).
\]
The equal radial weights and the normalized trace therefore give exactly
\[
 \mathcal F_f(\sigma_\pm)-\mathcal F_f(\sigma_0)
   =\frac1{16}(u,v)M_b(u,v)^T>0.
\]
This proves the proposition.
\end{proof}

This obstruction includes $f(\lambda)=\lambda/(t+\lambda)$, all nonconstant
bounded Bernstein functions, clipped logarithms, and bounded differences of
logarithms. It concerns a universal finite-sample comparison. It does not
exclude an asymptotic inequality using the projected spectral hypothesis,
or a nonlinear functional of several spectral observations.

\begin{corollary}\label{cor:sublog-spectral-obstruction}
The conclusions of \cref{prop:bounded-spectral-obstruction} also hold for
every continuous nonconstant increasing concave $f$ with
$f(x)=o(\log x)$ as $x\to\infty$, including unbounded such functions.
\end{corollary}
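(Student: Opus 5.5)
The proof of \cref{prop:bounded-spectral-obstruction} uses boundedness of $f$ at exactly one point: the dominated-convergence step giving $K_{a,b}\to\ell+f(a\sin^2\phi)$ and $K_{b,b}\to2\ell$. Everything afterwards sees $K_{r,s}$ only through the Fourier coefficients $A_m(r,s)$ with $m\ge2$, and these vanish on functions of $\phi$ that are constant. My plan is to replace the two limits by the statements
\[
 A_m(a,b)\longrightarrow B,\qquad A_m(b,b)\longrightarrow0\qquad(b\to\infty),
\]
obtained by subtracting $\phi$-independent constants $f(a+b)$ and $2f(b)$ before letting $b\to\infty$. The construction of $\sigma_\pm$, the moment identities, and the final quadratic-form identity are then unchanged. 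All laws are compactly supported, so $\mathcal F_f$ is finite for unbounded $f$ as well.

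\emph{Step 1: choice of $a$.} A nonconstant affine function is not $o(\log x)$, so $f$ is not affine on $[0,\infty)$. Hence some $a$ has $f$ nonaffine on $[0,a]$, and the Fourier argument of the proposition gives $m\ge2$ with $B\ne0$.

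\emph{Step 2: slow variation.} I first show that the dyadic increments $\delta_k=f(2^{k+1})-f(2^k)$ tend to zero. By concavity they are nonincreasing in $k$, by monotonicity they are nonnegative, and they sum to $f(2^k)=o(k)$; a nonincreasing nonnegative sequence with $o(k)$ partial sums tends to zero. It follows that $f(x+h)-f(x)\to0$ for fixed $h$ and that $f(2x)-f(x)\to0$. I then treat the two kernels.
\begin{itemize}
\item[(i)] For $K_{a,b}$, the eigenvalues are $\lambda_1\in[b,a+b]$ and $\lambda_2=ab\sin^2\phi/\lambda_1\to a\sin^2\phi$ uniformly. Thus $f(\lambda_1)-f(a+b)\to0$ uniformly by the first consequence above, and $f(\lambda_2)\to f(a\sin^2\phi)$ uniformly by continuity. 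Subtracting the constant $f(a+b)$ therefore gives $A_m(a,b)\to B$.
\item[(ii)] For $K_{b,b}$, the eigenvalues are $b(1\pm|\cos\phi|)$. The larger one satisfies $0\le f(b(1+|\cos\phi|))-f(b)\le f(2b)-f(b)\to0$.
\end{itemize}

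\emph{Step 3: the smaller eigenvalue of $K_{b,b}$ (main obstacle).} With $t=1-|\cos\phi|$ I must show $f(b)-f(bt)\to0$ in $L^1(\dd\phi)$, and here only a domination argument is available. Given $\varepsilon>0$, choose $x_0$ with $\delta_k\le\varepsilon$ for all $2^k\ge x_0$.
\begin{itemize}
\item On $\{bt\ge x_0\}$, summing dyadic increments gives $f(b)-f(bt)\le\varepsilon(\log_2(1/t)+2)$. This is integrable in $\phi$, because $t\asymp\phi^2$ near $0$ and $\pi$.
\item On $\{bt<x_0\}$, the difference is at most $\varepsilon(\log_2(b/x_0)+2)+f(x_0)-f(0)$. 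This set has measure $O(\sqrt{x_0/b})$, which kills both the constant and the term $\log b$.
\end{itemize}
Hence $\limsup_b\|f(b)-f(bt)\|_{L^1}\le C\varepsilon$, so $K_{b,b}-2f(b)\to0$ in $L^1$ and $A_m(b,b)\to0$.

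\emph{Conclusion.} Since $A_m(a,a)$ is fixed, Steps 2 and 3 give $\det M_b\to-B^2<0$. Choosing $b$ large and $u,v$ exactly as in the proposition yields $\mathcal F_f(\sigma_\pm)-\mathcal F_f(\sigma_0)=\frac1{16}(u,v)M_b(u,v)^T>0$. The radial marginal, conditional isotropy, and symmetrization claims do not depend on $f$.
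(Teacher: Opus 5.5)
\textbf{There is a genuine gap in your Step 2, which Step 3 inherits.} Concavity does \emph{not} make the dyadic increments $\delta_k=f(2^{k+1})-f(2^k)$ nonincreasing. The slopes decrease, but the intervals double, so concavity only gives $\delta_{k+1}\le2\delta_k$; already $f(x)=\min(x,10)$ has $\delta_0,\delta_1,\delta_2=1,2,4$.

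Worse, $\delta_k\to0$ itself can fail. Put $x_n=2^{2^n}$, $2x_0:=0$, $f(0)=0$, and $f'=\varepsilon/(2x_n)$ on $(2x_{n-1},2x_n]$. This $f$ is continuous, increasing and concave, with $f(x)=O(\log\log x)$. Yet $f(2x_n)-f(x_n)=\varepsilon/2$ for every $n$, because $[x_n,2x_n]\subset(2x_{n-1},2x_n]$. So Step 2(ii) fails, and so does the bound $f(b)-f(bt)\le\varepsilon(\log_2(1/t)+2)$ on $\{bt\ge x_0\}$ in Step 3.

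In fact the full limit $A_m(b,b)\to0$ that you aim for is false for this $f$. Take $b=\theta x_n$ with fixed $\theta\in(1,2)$ and put $c_0=2/\theta-1$. The larger eigenvalue $b(1+|\cos\phi|)$ crosses the kink at $2x_n$ where $|\cos\phi|=c_0$. Away from a set of $\phi$ of measure $O(\sqrt{x_{n-1}/x_n})$, one has $2f(b)-K_{b,b}(\phi)=\tfrac{\varepsilon\theta}{2}(|\cos\phi|-c_0)_++o(1)$. On that exceptional set the difference is only $O(n\varepsilon)$, so its contribution vanishes. The $\cos(2m\phi)$ coefficient of this profile has second derivative in $c_0$ equal to a nonzero multiple of $\cos(2m\arccos c_0)/\sqrt{1-c_0^2}$. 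It is therefore not identically zero in $\theta$, and $A_m(b,b)$ stays of order $\varepsilon$ along suitable $b=\theta x_n$.

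\textbf{The missing idea.} Only one large $b$ with $\det M_b<0$ is needed, so convergence along a sequence suffices, and this is all the paper proves. Normalize $f(0)=0$, set $h=1-|\cos\phi|$, and let $D_b=2f(b)-K_{b,b}$. Concavity and monotonicity give $0\le D_b\le f(b)-f(bh)$.

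With $g(u)=f(e^u)\ge0$ increasing, a shift of variables gives $\int_0^T\{g(u)-g(u-v)\}\,\dd u\le v\,g(T)$ for $v\ge0$. Take $v=-\log h(\phi)$, which is integrable on $[0,2\pi]$. By Tonelli, the average over $u\in[0,T]$ of the angular mean of $D_{e^u}$ is $O(f(e^T)/T)\to0$. Hence some $b_k\to\infty$ has angular means of $D_{b_k}$ tending to zero, so $A_m(b_k,b_k)\to0$, while $A_m(a,b_k)\to B$.

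\textbf{What does survive.} Your Step 2(i) is correct, but argue it directly rather than through the dyadic lemma: by concavity, $f(b+a)-f(b)\le a\,(f(b)-f(0))/b\to0$. Step 1, the subtraction of $\phi$-independent constants, and your finiteness remark for compactly supported laws all agree with the paper.
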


\begin{proof}
Subtract $f(0)$ so that $f\ge0$ and $f(0)=0$.
Use the preceding notation. For fixed $a$ the larger eigenvalue of
$ae_0e_0^T+be_\phi e_\phi^T$ belongs to $[b,b+a]$, and
\[
 0\le f(b+a)-f(b)\le a f(b)/b\to0.
\]
The smaller eigenvalue tends to $a\sin^2\phi$ and belongs to $[0,a]$.
Thus $A_m(a,b)$ has the same nonzero limit $B$ as before for a suitable
$a$ and $m\ge2$.

It remains to make $A_m(b,b)\to0$ along a sequence. Put
$h(\phi)=1-|\cos\phi|$ and
\[
 D_b(\phi)=2f(b)-f(b(1+|\cos\phi|))-f(bh(\phi)).
\]
Concavity and monotonicity give
$0\le D_b(\phi)\le f(b)-f(bh(\phi))$.
Writing $g(u)=f(e^u)$, a shift of integration variables yields, for $v\ge0$,
\[
 \int_0^T\{g(u)-g(u-v)\}\,\dd u\le v g(T).
\]
Since $\int_0^{2\pi}-\log h(\phi)\,\dd\phi<\infty$, Tonelli's theorem gives
\[
 \frac1T\int_0^T\int_0^{2\pi}D_{e^u}(\phi)\,
       \frac{\dd\phi}{2\pi}\,\dd u
 \le \frac{C f(e^T)}T\longrightarrow0.
\]
There is therefore a sequence $b\to\infty$ along which the angular mean of
$D_b$ tends to zero. Since $K_{b,b}=2f(b)-D_b$, every nonconstant Fourier
coefficient tends to zero along that sequence. The same two-radius matrix
and angular-density construction proves the result.
\end{proof}

\section*{Acknowledgment}

Language-model tools were used to assist with editing the manuscript.
The author is solely responsible for its mathematical content.

\end{document}